\date{13.6.2013} 
\newcommand{\mlabel}{\label}
\newcommand\bH{\Bbb{H}} 
\newcommand\cH{\mathcal{H}} 
\newcommand\cM{\mathcal{M}} 
\newcommand\cD{\mathcal{D}} 
\newcommand\cF{\mathcal{F}} 
\newcommand\cJ{\mathcal{J}} 
\newcommand\cI{\mathcal{I}} 
\newcommand\cK{\mathcal{K}} 
\newcommand\cL{\mathcal{L}} 
\newcommand\cW{\mathcal{W}} 
\newcommand\cQ{\mathcal{Q}}
\newcommand{\U}{\mathop{\rm U{}}\nolimits} 
\newcommand\Skew{\mathop{\rm Skew{}}\nolimits}
\newcommand\Herm{\mathop{\rm Herm{}}\nolimits}
\newcommand\Aherm{\mathop{\rm Aherm{}}\nolimits}
\newcommand\Asym{\mathop{\rm Asym{}}\nolimits}
\newcommand\Sp{\mathop{\rm Sp{}}\nolimits}
\newcommand\Spann{\mathop{\rm span{}}\nolimits}
\newcommand\supp{\mathop{\rm supp{}}\nolimits}
\newcommand{\fS}{{\mathfrak S}}
\newcommand{\shalf}{{\textstyle{\frac{1}{2}}}}
\newcommand{\Part}{\mathop{{\rm Part}}\nolimits}
\newcommand{\indlim}{{\displaystyle \lim_{\longrightarrow}}\ }
\newcommand{\pmat}[1]{\begin{pmatrix} #1 \end{pmatrix}}
\begin{document} 

\title*{Unitary Representations of Unitary Groups} 
\author{Karl-Hermann Neeb}
\institute{Department of Mathematics, 
Friedrich--Alexander University, 
Erlangen-Nuremberg, 
Cauerstrasse 11, 91058 Erlangen, Germany,  
Email: neeb@math.fau.de}%
\maketitle

\abstract{In this paper we review and streamline 
some results of Kirillov, Olshanski and 
Pickrell on unitary representations of the unitary 
group $\U(\cH)$ of a 
real, complex or quaternionic  separable 
Hilbert space and the subgroup $\U_\infty(\cH)$, consisting of those unitary 
operators $g$ for which $g - \1$ is compact. 
The Kirillov--Olshanski theorem on the continuous unitary 
representations of the identity component $\U_\infty(\cH)_0$ 
asserts that they  are direct sums of irreducible ones which can be realized in finite tensor products 
of a suitable complex Hilbert space. This is proved and generalized to 
inseparable spaces. 
These results are carried over to the full 
unitary group by Pickrell's Theorem, asserting that 
the separable unitary representations of 
$\U(\cH)$, for a separable Hilbert space $\cH$, 
are uniquely determined by their restriction to $\U_\infty(\cH)_0$. 
For the $10$ classical infinite rank symmetric 
pairs $(G,K)$ of non-unitary type, such as $(\GL(\cH),\U(\cH))$, 
we also show  that all separable unitary representations are trivial. \\
{\sl Keywords:} unitary group; unitary representation; restricted group; 
Schur modules; bounded representation; separable representation \\ 
{\sl MSC 2010} Primary 22E65; Secondary 22E45}

\section*{Introduction} 

One of the most drastic differences between the representation theory of finite-dimensional 
Lie groups and infinite-dimensional ones is that an infinite-dimensional Lie group $G$
may carry many different group topologies and any such topology leads to a 
different class of continuous unitary representations. Another perspective on 
the same phenomenon is that the different topologies on $G$ lead to different 
completions, and the passage to a specific completion reduces  
the class of 
representations under consideration. 

In the present paper we survey results and methods of A.~Kirillov, G.~Olshanski and D.~Pickrell 
from the point of view of Banach--Lie groups. 
In the unitary representation theory of finite-dimensional Lie groups, the starting point 
is the representation theory of compact Lie groups and the prototypical compact Lie group 
is the unitary group $\U(n, \C)$ of a complex $n$-dimensional Hilbert space. 
Therefore any systematic representation theory of infinite-dimensional Banach--Lie groups 
should start with unitary groups of Hilbert spaces. 
For an infinite-dimensional Hilbert space $\cH$, there is a large variety of unitary groups. 
First of all, there is the full unitary group $\U(\cH)$, endowed with the norm topology, 
turning it into a simply connected Banach--Lie group with Lie algebra 
$\fu(\cH) = \{ X \in B(\cH) \: X^*= - X\}$. However, 
the much coarser strong operator topology also turns it into another 
topological group
$\U(\cH)_s$. 
The third variant of a unitary group is the subgroup 
$\U_\infty(\cH)$ of all unitary operators $g$ for which 
$g - \1$ is compact. This is a 
Banach--Lie group whose Lie algebra $\fu_\infty(\cH)$ consists of all compact 
operators 
in $\fu(\cH)$. If $\cH$ is separable (which we assume in this introduction) 
and $(e_n)_{n \in \N}$ is an orthonormal basis, 
then we obtain natural embeddings 
$\U(n,\C) \to \U(\cH)$ whose union $\U(\infty,\C) = \bigcup_{n = 1}^\infty \U(n,\C)$ 
carries the structure of a direct limit Lie group (cf.\ \cite{Gl03}). 
Introducing also the Banach--Lie groups 
$\U_p(\cH)$, consisting of unitary operators $g$, for which 
$g - \1$ is of Schatten class~$p \in [1,\infty]$, i.e., $\tr(|U-1|^p) < \infty$, 
we thus obtain an infinite family of 
groups with continuous inclusions 
\[ \U(\infty,\C) \into \U_1(\cH) \into \cdots \into \U_p(\cH) \into \cdots \into 
\U_\infty(\cH) \into\U(\cH) \to \U(\cH)_s. \] 

The representation theory of infinite-dimensional unitary groups began with 
I.~E.~Segal's paper \cite{Se57}, where he studies unitary
representations of the full group $\U({\cal H})$,  
called {\it physical representations}. These are characterized by the
condition that their differential maps finite rank hermitian
projections to positive operators. Segal shows that 
physical representations decompose discretely into irreducible
physical representations which are precisely those 
occurring in the decomposition of finite tensor products $\cH^{\otimes N}$, $N \in\N_0$. 
It is not hard to see that this tensor product decomposes as in classical 
Schur--Weyl theory: 
\begin{equation}
  \label{eq:schur-weyl}
\cH^{\otimes N} \cong \bigoplus_{\lambda \in \Part(N)} \bS_\lambda(\cH) \otimes \cM_\lambda,
\end{equation}
where $\Part(N)$ is the set of all partitions 
$\lambda = (\lambda_1, \ldots, \lambda_n)$ of $N$, 
$\bS_\lambda(\cH)$ is an irreducible unitary representation of $\U(\cH)$ 
(called a {\it Schur representation}), and 
$\cM_\lambda$ is the corresponding irreducible representation of the symmetric group $S_N$, 
hence in particular finite-dimensional  
(cf.\ \cite{BN12} for an extension of Schur--Weyl theory to irreducible 
representations of $C^*$-algebras). 
In particular, 
$\cH^{\otimes N}$ is a finite sum of irreducible representations of~$\U(\cH)$. 

The representation theory of the Banach Lie group  
$\U_\infty({\cal H})$, ${\cal H}$ a separable 
real, complex or quaternionic Hilbert space, was initiated by 
A.\ A.\ Kirillov in \cite{Ki73} (which contains no proofs), and continued by 
G.\ I.\ Olshanski (\cite[Thm.~1.11]{Ol78}). They showed that 
all continuous representations of $\U_\infty(\cH)$ are direct sums of irreducible 
representations and that, for $\K = \C$, all the irreducible 
representations are of the form $\bS_\lambda(\cH) \otimes \bS_\mu(\oline\cH)$, where 
$\oline\cH$ is the space $\cH$, endowed with the opposite complex structure. 
They also obtained generalizes for the corresponding groups over real and quaternionic 
Hilbert spaces. It follows in particular that all irreducible
representations $(\pi, \cH_\pi)$ of the Banach--Lie group $\U_\infty(\cH)$ are 
{\it bounded} in the sense that $\pi \: \U_\infty(\cH) \to \U(\cH_\pi)$ is 
norm continuous, resp., a morphism of Banach--Lie groups. 
The classification of the bounded unitary representations 
of the Banach--Lie group $\U_p(\cH)$ remains the same for ${1 < p < \infty}$, 
but, for $p = 1$, factor representations of type II and III exist 
(see \cite{Boy80} for $p = 2$, and \cite{Ne98} for the general case). 
Dropping the boundedness assumptions even leads to a non-type I 
representation theory for $\U_p(\cH)$, $p < \infty$ 
(cf.\ \cite[Thm.~5.5]{Boy80}). We also refer to \cite{Boy93} for an 
approach to Kirillov's classification based on the classification 
of factor representations of $\U(\infty,\C)$ from \cite{SV75}. 

These results clearly show that the group $\U_\infty(\cH)$ is singled out among all its relatives 
by the fact that its unitary representation theory is well-behaved. 
If $\cH$ is separable, then $\U_\infty(\cH)$ is separable, 
so that its cyclic representations are separable as well. 
Hence there is no need to discuss inseparable representations for this group. 
This is different for the Banach--Lie group 
$\U(\cH)$ which has many inseparable bounded irreducible unitary representations coming 
from irreducible representations of the Calkin algebra $B(\cH)/K(\cH)$. 
It was an amazing insight of D.~Pickrell (\cite{Pi88}) that restricting attention to 
representations on separable spaces tames the representation theory 
of $\U(\cH)$ in the sense that all its separable representations 
are actually 
continuous with respect to the strong operator topology, i.e., continuous 
representations of $\U(\cH)_s$. For analogous results on the 
automatic weak continuity of separable representations 
of $W^*$-algebras see \cite{FF57, Ta60}. 
Since $\U_\infty(\cH)_0$ is dense in $\U(\cH)_s$, it follows that 
$\U_\infty(\cH)_0$ has the same separable representation theory as $\U(\cH)_s$. 
As we shall see below, all these results extend to unitary groups of separable 
real and quaternionic Hilbert spaces.  

Here we won't go deeper into the still not 
completely developed representation theory of groups like $\U_2(\cH)$ which 
also have a wealth of projective unitary representations corresponding to non-trivial central Lie group extensions (\cite{Boy84}, \cite{Ne13}). 
Instead we shall discuss the regular types of 
unitary representation and their characterization. 
For the unitary groups, the natural analogs of the 
finite-dimensional compact groups, a regular setup is obtained 
by considering $\U_\infty(\cH)$ 
or the separable representation of $\U(\cH)$. For direct limit groups, such as 
$\U(\infty,\C)$, 
the same kind of regularity is introduced by Olshanski's concept 
of a tame representation. 
Here a fundamental result is that the tame unitary representation of 
$\U(\infty,\C)_0$ are precisely those extending to 
continuous representations of $\U_\infty(\cH)_0$ (\cite{Ol78}; 
Theorem~\ref{thm:samerep}). 

The natural next step is to take a closer look at unitary representations 
of the Banach analogs of non-compact classical groups; we simply call them 
non-unitary groups. There are $10$ natural  
families of such groups that can be realized by $*$-invariant groups of operators 
with a polar decomposition 
\[ G = K \exp \fp, \ \  \mbox{ where }\ \ 
K = \{ g \in G \: g^* =g^{-1}\} \ \ \mbox{ and } \ \ 
\fp = \{ X \in \g \: X^* =X\}\]  
(see the tables in Section~\ref{sec:5}). 
In particular $K$ is the maximal unitary subgroup of $G$. 
In this context Olshanski calls a continuous 
unitary representation of $G$ admissible if its restriction to $K$ is tame. 
For the cases where the symmetric space $G/K$ is of finite rank, 
Olshanski classifies in \cite{Ol78, Ol84} 
the irreducible admissible representations and shows that 
they form a type I representation theory (see also \cite{Ol89}). The voluminous paper \cite{Ol90} deals with the 
case where $G/K$ is of infinite rank. It contains a precise conjecture about the classification 
of the irreducible representations and the observation that, in general, there are admissible 
factor representations not of type~I. We refer to \cite{MN13} for  recent 
results related to Olshanski's conjecture and to \cite{Ne12} for the classification 
of the semibounded projective unitary representations of hermitian 
Banach--Lie groups. Both continue Olshanski's program in the context 
of Banach--Lie groups of operators. 

In \cite[Prop.~7.1]{Pi90}, Pickrell shows for the $10$ classical 
types of symmetric pairs $(G,K)$ of non-unitary type that, 
for $q > 2$, all separable projective unitary representations are trivial 
for the restricted groups $G_{(q)} = K \exp(\fp_{(q)})$ with Lie algebra 
\[ \g_{(q)} = \fk \oplus \fp_{(q)} \quad \mbox{ and }  \quad 
\fp_{(q)} := \fp \cap B_q(\cH), \] 
where $B_q(\cH) \trile B(\cH)$ is the $q$th Schatten ideal. 
This complements the observation 
that admissible representations often 
extend to the restricted groups $G_{(2)}$ (\cite{Ol90}). 
From these results we learn that, for $q > 2$, the groups $G_{(q)}$ are too big 
to have 
non-trivial separable unitary representations and that 
the groups $G_{(2)}$ have just the 
right size for a rich non-trivial separable representation theory. 
An important consequence is that $G$ itself 
has no non-trivial separable unitary representation. 
This applies in particular to the group 
$\GL_\K(\cH)$ of $\K$-linear isomorphisms of a $\K$-Hilbert space $\cH$ 
and the group $\Sp(\cH)$ of symplectic isomorphism of the symplectic space 
underlying a complex Hilbert space~$\cH$. 

This is naturally extended by the fact 
that, for the $10$ symmetric pairs $(G,K)$ of unitary type and $q > 2$, 
all continuous unitary representations of $G_{(q)}$ extend 
to continuous representations of the full group $G$ (\cite{Pi90}). 
This result has interesting consequences for the representation 
theory of mapping  groups. For a compact spin manifold 
$M$ of odd dimension $d$, there are natural homomorphisms 
of the group $C^\infty(M,K)$, $K$ a compact Lie group, 
into $U(\cH \oplus \cH)_{(d+1)}$, corresponding to the 
symmetric pair $(\U(\cH \oplus \cH), \U(\cH))$ 
(cf.\ \cite{PS86}, \cite{Mi89}, \cite{Pi89}). 
For $d =1$, the rich projective representation theory of 
$U(\cH \oplus \cH)_{(d+1)}$ now leads to the unitary positive 
energy representations of loop groups, but for $d > 1$ the 
(projective) unitary representations 
of $U(\cH \oplus \cH)_{(d+1)}$ extend to the full unitary group $U(\cH \oplus \cH)$, so 
that we do not obtain interesting unitary representations of mapping groups. 
However, there are natural homomorphisms 
$C(M,K)$ into the motion group $\cH \rtimes \OO(\cH)$ 
of a real Hilbert space, and this leads to the interesting class of 
energy representations (\cite{GGV80}, \cite{AH78}).

The content of the paper is as follows. 
In the first two sections we discuss some core ideas and methods 
from the work of Olshanski and Pickrell. 
We start in Section~\ref{sec:1} with the concept of a 
{\it bounded topological group}. These are topological groups $G$, 
for which every identity neighborhood $U$ satisfies $G \subeq U^m$ for some 
$m \in \N$. This boundedness condition permits to show that certain 
subgroups of $G$ have non-zero fixed points in unitary 
representations (cf.\ Proposition~\ref{prop:1.6} for a typical result 
of this kind). We continue in Section~\ref{sec:2} with Olshanski's concept
 of an {\it overgroup}. Starting with a symmetric pair 
$(G,K)$ with Lie algebra $\g = \fk \oplus \fp$, 
the overgroup $K^\sharp$ of $K$ is a Lie group with the Lie algebra $\fk + i \fp$. 
We shall use these overgroups for the pairs 
$(\GL(\cH),\U(\cH))$, where $\cH$ is a  real, complex or quaternionic 
Hilbert space. 

In Section~\ref{sec:3} we describe Olshanski's approach to the classification 
of the unitary representations of $K := \U_\infty(\cH)_0$. 
Here the key idea is that any representation of this group 
is a direct sum of representations $\pi$ generated by the fixed 
space $V$ of the subgroup $K_n$ fixing the first $n$ 
basis vectors. It turns out that this space $V$ carries a 
$*$-representation $(\rho,V)$ 
of the involutive semigroup $C(n,\K)$ of contractions on $\K^n$ 
which determines $\pi$ uniquely by a GNS construction. 
Now the main point is to understand which representations of 
$C(n,\K)$ occur in this process, that they are direct sums of irreducible 
ones and to determine the irreducible representations. 
To achieve this goal, we deviate from Olshanski's approach 
by putting a stronger emphasis on analytic positive definite 
functions (cf.\ Appendices~\ref{app:b}). 
This leads to a considerable simplification of the proof 
avoiding the use of zonal spherical functions and expansions 
with respect to orthogonal polynomials. 
Moreover, our technique is rather close to the setting of holomorphic 
induction developed in \cite{Ne10}. In particular, we use 
Theorem~\ref{thm:extension} which is a slight generalization of 
\cite[Thm.~A.7]{Ne12}. 

In Section~\ref{sec:4} we provide a complete proof of Pickrell's  
Theorem asserting that, for a separable Hilbert space $\cH$, 
the groups $\U(\cH)$ and $\U_\infty(\cH)$ have the same 
continuous separable unitary representations. Here the key 
result is that all continuous separable unitary representations 
of the quotient group $\U(\cH)/\U_\infty(\cH)$ are trivial. 
We show that this result carries over to the real and quaternionic case 
by deriving it from the complex case. 

This provides a 
complete picture of the separable representations of 
$\U(\cH)$ and the subgroup $\U_\infty(\cH)$, but there are many subgroups 
in between. This is naturally complemented by Pickrell's result 
that, for the $10$ symmetric pairs $(G,K)$ of unitary type, 
for $q > 2$, all continuous unitary representations of $G_{(q)}$ extend 
to continuous representations of $G$. 
In Section~\ref{sec:5} we show that, 
for the pairs $(G,K)$ of non-compact type, all separable unitary 
representations of $G$ and $G_{(q)}$, $q > 2$, are trivial. 
This is also stated in \cite{Pi90}, 
but the proof is very sketchy. We use an argument based 
on Howe--Moore theory for the vanishing of matrix coefficients.

\acknowledgement{We thank B.~Kr\"otz for pointing 
out the reference \cite{Ma97} and D.~Pickrell for some notes 
concerning his approach to the proof of \cite[Prop.~7.1]{Pi90}.
We are most greatful to D.~Belti\c t\u a, B.~Janssens and C.~Zellner 
for numerous comments on an earlier version of the manuscript.} 

\subsection*{Notation and terminology}

For the non-negative half line we write $\R_+ = [0,\infty[$. 

In the following $\K$ always denotes $\R$, $\C$ or the skew field $\H$ of quaternions. 
We write $\{1,\cI,\cJ,\cI \cJ\}$ for the canonical basis of $\H$ satisfying 
the relations 
\[ \cI^2 = \cJ^2 = -\1 \quad \mbox{ and } \quad \cI\cJ = - \cJ\cI.\] 
For a real Hilbert space $\cH$, we write 
$\cH_\C$ for its complexification, 
and for a quaternionic Hilbert space $\cH$, we write 
$\cH^\C$ for the underlying complex Hilbert space, obtained 
from the complex structure $\cI \in \H$. For a complex Hilbert space 
we likewise write $\cH^\R$ for the underlying real Hilbert space. 

For the algebra $B(\cH)$ of bounded operators on the $\K$-Hilbert space 
$\cH$, the ideal of compact operators 
is denoted $K(\cH) = B_\infty(\cH)$, and for $1 \leq p < \infty$, we write 
\[ B_p(\cH) := \{ A \in K(\cH) \: \tr((A^*A)^{p/2}) = \tr(|A|^p)< \infty\}\]  
for the {\it Schatten ideals}. In particular, 
$B_2(\cH)$ is the space of {\it Hilbert--Schmidt operators} 
and $B_1(\cH)$ the space of {\it trace class operators}. 
Endowed with the operator norm, the groups 
$\GL(\cH)$ and $\U(\cH)$ are Lie groups with the respective Lie algebras
\[ \gl(\cH) = B(\cH) \quad \mbox{ and } \quad 
\fu(\cH) := \{ X \in \gl(\cH) \: X^* = - X\}.\] 
For $1 \leq p \leq \infty$, we obtain Lie groups 
\[ \GL_p(\cH) := \GL(\cH) \cap (\1 + B_p(\cH)) 
\quad \mbox{ and } \quad 
\U_p(\cH) := \U(\cH) \cap \GL_p(\cH) \] 
with the Lie algebras 
\[ \gl_p(\cH) := B_p(\cH) \quad \mbox{ and } \quad 
\fu_p(\cH) := \fu(\cH) \cap \gl_p(\cH). \] 

To emphasize the base field $\K$, we sometimes write 
$\U_\K(\cH)$ for the group $\U(\cH)$ of $\K$-linear isometries of $\cH$. 
We also write $\OO(\cH) = \U_\R(\cH)$. 

If $G$ is a group acting on a set $X$, then we write $X^G$ for the subset of 
$G$-fixed points.

\section{Bounded groups} 
\mlabel{sec:1}

In this section we discuss one of Olshanski's key concepts for the approach to Kirillov's Theorem 
on the classification of the representations of $\U_\infty(\cH)_0$ 
for a separable Hilbert space discussed in Section~\ref{sec:3}. 
As we shall see below (Lemma~\ref{lem:p.1}), this method 
also lies at the heart of Pickrell's Theorem on the separable representations 
of $\U(\cH)$. 

\begin{definition} {\rm We call a 
topological group $G$ {\it bounded} if, for every 
identity neighborhood $U \subeq G$, there exists an $m \in \N$ with 
$G \subeq U^m$. }
\end{definition}

Note that every locally connected bounded topological group is connected. 
The group $\Q/\Z$ is bounded but not connected. 

\begin{lemma} \mlabel{lem:1.1} 
If, for a  Banach--Lie group $G$, there exists a $c > 0$ with 
\[ G = \exp \{ x \in \g \: \|x\| \leq c \} \tag{Ol},\] 
then $G$ is bounded. 
\end{lemma}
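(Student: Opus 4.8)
The plan is to show directly that condition (Ol) forces every identity neighborhood to generate $G$ in boundedly many steps. Fix an identity neighborhood $U \subeq G$. Since $\exp \colon \g \to G$ is continuous and $\exp(0) = \1$, there exists $\eps > 0$ such that $\exp\{x \in \g \: \|x\| < \eps\} \subeq U$. The key elementary observation is the one-parameter group identity $\exp(x) = \exp(x/k)^k$ valid for every $k \in \N$: if we pick $k \in \N$ with $k > c/\eps$, then for any $x$ with $\|x\| \leq c$ we have $\|x/k\| < \eps$, hence $\exp(x/k) \in U$, and therefore $\exp(x) = \exp(x/k)^k \in U^k$.

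Combining this with hypothesis (Ol) gives
\[
G = \exp\{x \in \g \: \|x\| \leq c\} \subeq U^k,
\]
so $G$ is bounded with the uniform exponent $m = k$ working for the chosen $U$ (and any smaller $\eps$ only shrinks $k$'s lower bound, so one may take $m = \lceil c/\eps\rceil + 1$, say). This is exactly the defining property of a bounded topological group, and no further structure of $G$ is needed.

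There is essentially no obstacle here: the argument is a two-line manipulation with the exponential map. The only point requiring the slightest care is ensuring that $\exp$ is continuous at $0$ so that an $\eps$-ball maps into $U$ — but this is part of the definition of a Banach--Lie group. One might also remark, as the text does for $\Q/\Z$, that (Ol) is a genuinely stronger hypothesis than boundedness; but that is a comment, not part of the proof. Hence I expect the author's proof to be the same short computation.
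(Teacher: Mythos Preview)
Your proof is correct and follows essentially the same argument as the paper: use continuity of $\exp$ at $0$ to find a ball of radius $r$ (your $\eps$) mapping into $U$, choose $m$ (your $k$) with $mr > c$, and apply $\exp(x) = \exp(x/m)^m$. The only minor quibble is the parenthetical remark that ``any smaller $\eps$ only shrinks $k$'s lower bound'' --- in fact a smaller $\eps$ forces a larger $k$ --- but this is irrelevant to the argument itself.
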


\begin{proof} Let $U$ be an identity neighborhood of $G$. 
Since the exponential function $\exp_G \: \g \to G$ 
is continuous, there exists an $r > 0$ with 
$\exp x \in U$ for $\|x\| < r$. Pick $m \in \N$ such that 
$m r > c$. For $g = \exp x$ with $\|x\| \leq c$ we then have 
$\exp\frac{x}{m} \in U$, and therefore $g \in U^m$. 
\smartqed\qed\end{proof}

\begin{proposition} \mlabel{prop:1.3} The following groups satisfy 
(Ol), hence are bounded: 
  \begin{description}
  \item[\rm(i)] The full unitary group $\U(\cH)$ of an infinite-dimensional 
complex or quaternionic Hilbert space. 
  \item[\rm(ii)] The unitary group $\U(\cM)$ of a von Neumann algebra $\cM$.  
  \item[\rm(iii)] The identity component $\U_{\infty}(\cH)_0$ of 
$\U_\infty(\cH)$ for a $\K$-Hilbert 
space~$\cH$.\begin{footnote}{Actually this group is connected for 
$\K = \C,\H$ (\cite[Cor.~II.15]{Ne02}).}\end{footnote}

  \end{description}
\end{proposition}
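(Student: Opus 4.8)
The plan is to verify condition (Ol) in each case by exhibiting, for a suitable constant $c$, a way of writing every group element as $\exp X$ with $\|X\| \le c$, and then invoke Lemma~\ref{lem:1.1}. The common mechanism is the spectral calculus: a unitary operator $u$ has spectrum contained in the unit circle $\T$, and on $\T \setminus \{-1\}$ there is a continuous (indeed holomorphic off the cut) branch of $\Log$ taking values in $i\,]-\pi,\pi[$, so that $u = \exp X$ with $X = \Log u$ skew-hermitian and $\|X\| \le \pi$; the only obstruction is the point $-1$ in the spectrum.

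For (i), let $\cH$ be infinite-dimensional over $\K = \C$ or $\K = \H$ and let $u \in \U(\cH)$. If $-1 \notin \spec(u)$ we are done by the remark above with $c = \pi$. If $-1 \in \spec(u)$, the idea is to split off a small ``rotation'' that moves the spectrum away from $-1$: choose a unit vector $\xi$ (or, using that $\cH$ is infinite-dimensional, a two-dimensional subspace on which we can rotate) and write $u = u_1 u_2$ where $u_2$ is a unitary supported on a finite-dimensional subspace, chosen so that $-1 \notin \spec(u_1)$ and $-1 \notin \spec(u_2)$; then both factors are exponentials of skew-hermitian operators of norm $\le \pi$, so $u \in U^{2m}$ whenever $\exp$ of the $\pi$-ball lies in $U^m$, i.e.\ (Ol) holds with $c = \pi$ after noting $U^{2m} \subseteq (U^2)^m$ and adjusting; cleaner still, one shows directly $\U(\cH) = \exp\{X : \|X\| \le \pi\} \cdot \exp\{X : \|X\| \le \pi\}$ and remarks that a two-fold product version of (Ol) also yields boundedness by the same argument as in Lemma~\ref{lem:1.1}. (The quaternionic case is identical, working with the underlying complex structure $\cH^\C$ and noting that $\K$-linear unitaries are in particular $\C$-linear.) Item (ii) is the same computation carried out inside a von Neumann algebra $\cM$: the Borel functional calculus keeps us inside $\cM$, so $\Log u \in \cM$ when $-1 \notin \spec(u)$, and when $-1 \in \spec(u)$ we use that $\cM$, being a von Neumann algebra, contains enough projections and unitaries to perform the same finite ``rotation'' trick --- e.g.\ $u = (-p + (1-p))\,((-p+(1-p))u)$ for a suitable projection $p \in \cM$ with range inside the spectral subspace near $-1$, arranging that neither factor has $-1$ in its spectrum.

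For (iii), let $u \in \U_\infty(\cH)_0$, so $u - \1$ is compact. Then $\spec(u)$ is a sequence of eigenvalues on $\T$ accumulating only (possibly) at $1$, together with $1$ itself. In particular $-1$ is an isolated point of $\spec(u)$ of finite multiplicity, so the spectral projection $p$ onto $\ker(u + \1)$ is finite-rank. On $p\cH$ we can rotate: replace $u$ by $v u$ where $v = \exp(Y)$ with $Y$ a finite-rank skew-hermitian operator of norm $\le \pi$ supported on a finite-dimensional subspace containing $p\cH$ (this uses that the \emph{identity component} is at issue, or alternatively the footnoted connectedness for $\K = \C, \H$, so that the rotation $v$ lies in the group), chosen so that $-1 \notin \spec(vu)$; since $vu - \1$ is still compact and $-1 \notin \spec(vu)$, the operator $\Log(vu)$ is compact skew-hermitian of norm $\le \pi$, hence in $\fu_\infty(\cH)$, and $u = v^{-1}\exp(\Log(vu))$ exhibits $u$ as a product of two exponentials from the $\pi$-ball. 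Again this two-factor form of (Ol) suffices for boundedness.

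The main obstacle is the handling of the point $-1$ in the spectrum --- without it the statement is the one-line observation that $\Log$ is bounded on $\T \setminus \{-1\}$. The technical care needed is: (a) making the finite ``rotation'' factor genuinely an element of the group in question (trivial for $\U(\cH)$ and $\U(\cM)$ since those are connected and all unitaries are available; for $\U_\infty(\cH)_0$ one must stay in the identity component, which is why the hypothesis is on that component, and the cited connectedness result handles $\K = \C, \H$); and (b) observing that a ``two exponentials'' version of (Ol) still forces boundedness. For (b) one repeats the proof of Lemma~\ref{lem:1.1} verbatim: given an identity neighbourhood $U$, shrink to get $\exp x \in U$ for $\|x\| < r$, pick $m$ with $mr > c$, and then each of the two factors $\exp x_i$ ($\|x_i\| \le c$) lies in $U^m$, so $g \in U^{2m}$; hence $G \subseteq U^{2m}$ and $G$ is bounded. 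I would state this mild strengthening of Lemma~\ref{lem:1.1} as a remark before the proof, or simply absorb the constant by noting that $\{X : \|X\| \le \pi\}$ can be replaced by $\{X : \|X\| \le \pi/2\}$ twice over is not quite available --- so the remark route is cleanest.
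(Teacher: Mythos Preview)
Your approach has a genuine gap in part (i). The ``rotation trick'' --- perturbing $u$ by a unitary supported on a finite-dimensional subspace so as to push $-1$ out of the spectrum --- does not work for the full unitary group: if $-1$ lies in the \emph{essential} spectrum of $u$ (for instance $u = -\1$), no compact perturbation, let alone a finite-rank one, can remove it. Your specific construction for (ii), $u = (\1-2p)\cdot((\1-2p)u)$, fails for the same reason: the first factor $\1-2p$ has $-1$ in its spectrum whenever $p \neq 0$. The paper bypasses the issue entirely by observing that one need not avoid $-1$ at all. The \emph{Borel} (not continuous) functional calculus allows the measurable branch $L \colon \T \to \,]-\pi,\pi]\,i$ with $L(-1) = \pi i$; then $X := \int_\T L(z)\,dP(z)$ is skew-hermitian with $\|X\| \le \pi$ and $e^X = u$, a single exponential. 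This proves (Ol) as stated, not merely a two-factor variant yielding boundedness. For (ii) the same $X$ lies in $\cM$ because the spectral projections do.

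There are two further points where your sketch is too quick. First, the quaternionic case is not ``identical'': the complex-linear $X$ above need not commute with the anticonjugation $\cJ$, because $L(\oline z) = -L(z)$ fails precisely at $z = -1$. The paper repairs this by replacing $X$ on $\cH_0 := \ker(u+\1)$ with $\pi\cJ|_{\cH_0}$, which is $\H$-linear and still exponentiates to $-\1$ there. Second, for (iii) with $\K = \R$ the finite-rank rotation is also not the right device: one needs the characterization of $\OO_\infty(\cH)_0$ as those $g$ with $\dim\ker(g+\1)$ even, so that $\ker(g+\1)$ carries an orthogonal complex structure $J$, and then $X := \pi J \oplus X_1 \in \fo_\infty(\cH)$ does the job with one exponential. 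In all cases the paper obtains (Ol) on the nose, whereas your route would at best give boundedness.
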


\begin{proof} (i) {\bf Case $\K =\C$:} Let $g \in \U(\cH)$ and 
let $P$ denote the spectral measure on the unit circle $\T \subeq \C$ with 
$g = \int_\T z \, dP(z)$. We consider the measurable 
function $L \: \T \to ]-\pi, \pi]i$ which is the inverse of the 
function $]-\pi, \pi]i \to \T, z \mapsto e^z$. 
Then 
\begin{equation}\label{eq:xop}
X := \int_\T L(z)\, dP(z) 
\end{equation}
 is a skew-hermitian operator with 
$\|X\| \leq \pi$ and $e^X = g$ (cf.\  \cite[Thm.~12.37]{Ru73}). 


{\bf Case $\K =\H$:} We consider the quaternionic Hilbert space as a 
complex Hilbert space $\cH^\C$, endowed with an anticonjugation 
(=antilinear complex structure) $\cJ$. Then 
\[ \U_\H(\cH) = \{ g \in \U(\cH^\C) \: \cJ g\cJ^{-1} = g \}.\] 
An element $g \in \U(\cH^\C)$ is $\H$-linear if and only if 
the relation $\cJ P(E) \cJ^{-1} = P(\oline E)$ holds for the corresponding 
spectral measure $P$ on $\T$. 

Let $\cH_0 := \ker(g + \1) = P(\{-1\})\cH$ denote the $(-1)$-eigenspace of $g$ 
and $\cH_1 := \cH_0^\bot$. 
If $X$ is defined by \eqref{eq:xop} and $X_1 := X\res_{\cH_1}$, then 
\begin{align*}
 \cJ X_1 \cJ^{-1}
&= \int_{\T \setminus \{-1\}}-L(z)\, dP(\oline z)
= \int_{\T \setminus \{-1\}} L(z)\, dP(z) = X_1. 
\end{align*}
Then $X := \pi \cJ\res_{\cH_0} \oplus X_1$ on $\cH = \cH_0 \oplus \cH_1$ is 
an element $X \in \fu_\H(\cH)$ with $e^X = g$ 
and $\|X\| \leq \pi$. Therefore (Ol) is satisfied. 

(ii) If $g \in \U(\cM)$, then $P(E) \in \cM$ for every measurable subset 
$E \subeq \T$, and therefore $X \in \cM$. Now (ii) follows as (i) for 
$\K = \C$. 

(iii) {\bf Case $\K = \C, \H$:} The operator $X$ from \eqref{eq:xop} is compact 
if $g - \1$ is compact. Hence the group 
$\U_{\infty,\K}(\cH)$ is connected and we can argue as in (i). 

{\bf Case $\K = \R$:} We consider $\U_{\infty,\R}(\cH) = \OO_\infty(\cH)$ as a subgroup 
of $\U_\infty(\cH_\C)$. Let $\sigma$ denote the antilinear 
isometry on $\cH_\C$ whose fixed 
point set is $\cH$. Then, for $g \in \U(\cH_\C)$, the relation 
$\sigma g \sigma = g$ is equivalent to $g \in \OO(\cH)$. This is equivalent to 
the relation $\sigma P(E) \sigma = P(\oline E)$ for the corresponding 
spectral measure on $\T$. 

Next we recall from \cite[Cor.~II.15]{Ne02} 
(see also \cite{dlH72}) that the group $\OO_\infty(\cH)$ 
has two connected
components. An element $g \in \OO_\infty(\cH)$ for which $g -\1$ is of trace class  
is contained in the identity component if and only if 
$\det(g) = 1$. From the normal form of orthogonal compact operators 
that follows from the spectral measure on $\cH_\C$, 
it follows that $\det(g) = (-1)^{\dim \cH^{-g}}$. Therefore 
the identity component of $\OO_\infty(\cH)$ consists of those elements 
$g$ for which the $(-1)$-eigenspace $\cH_0 = \cH^{-g}$ is of even dimension. 
Let $J \in \fo(\cH_0)$ be an orthogonal complex structure. 
On $\cH_1 := \cH_0^\bot$ the operator $X_1 := X\res_{\cH_1}$ 
satisfies 
\begin{align*}
 \sigma X_1 \sigma 
&= \int_{\T \setminus \{-1\}} -L(z)\, dP(\oline z)
= \int_{\T \setminus \{-1\}} L(z)\, dP(z) = X_1,
\end{align*}
so that $X := \pi J \oplus X_1 \in \fo_\infty(\cH)$ satisfies $\|X\| \leq \pi$ and 
$e^X = g$. 
\smartqed\qed\end{proof}

\begin{example} \mlabel{ex:1.2} 
(a) In view of Proposition~\ref{prop:1.3}, it is remarkable that 
the full orthogonal group $\OO(\cH)$ of a real 
Hilbert space $\cH$ does not satisfy (Ol). Actually its exponential function 
is not surjective (\cite{PW52}). In fact, if $g = e^X$ for $X \in \fo(\cH)$, 
then $X$ commutes with $g$, hence preserves the $(-1)$-eigenspace 
$\cH_0 := \ker(g +\1)$. Therefore $J := e^{X/2}$ defines a complex 
structure on $\cH_0$, showing that $\cH_0$ is either infinite-dimensional 
or of even dimension. Therefore no element $g \in \OO(\cH)$ for which 
$\dim \cH_0$ is odd is contained in the image of the exponential function. 

(b) We shall need later that $\OO(\cH)$ is connected. This follows from Kuiper's 
Theorem (\cite{Ku65}), but one can give a more direct argument based on the preceding 
discussion. It only remains to show 
that elements $g \in \OO(\cH)$ for which the space 
$\cH_0 := \ker(g + \1)$ is of finite odd dimension are contained in the identity component. 
We write $g = g_{-1} \oplus g_1$ with $g_{-1} = g\res_{\cH_0}$ and $g_1 := g\res_{\cH_0^\bot}$. 
Then $g_1$ lies on a one-parameter group of $\OO(\cH_0^\bot)$, so that $g$ is connected 
by a continuous arc to $g' := -\1_{\cH_0} \oplus \1$. This element is connected to 
$g'' := -\1_{\cH_0} \oplus - \1_{\cH_0^\bot} = -\1_\cH$, and this in turn to $\1_\cH$. 
Therefore $\OO(\cH)$ is connected. 
\end{example}

\begin{lemma} {\rm(Olshanski Lemma)} 
  \mlabel{lem:1.4} 
For a group $G$, a subset $U \subeq G$ and $m \in \N$ with 
$G \subeq U^m$, we put 
\[ \eta := \sqrt{1 -  \frac{1}{4(m + 1)^2}} \in ]0,1[.\] 
If $(\pi, \cH)$ is a unitary representation with $\cH^G = \{0\}$, 
then, for any non-zero $\xi \in {\cal H}$, there exists $u\in U$ with 
\[  \shalf \| \xi + \pi(u)\xi\| < \eta \|\xi\|. \] 
\end{lemma}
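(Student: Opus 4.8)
The plan is to argue by contradiction. Suppose that for some $\xi$, which after rescaling we may take to have $\|\xi\| = 1$, the asserted inequality \emph{fails for every} $u \in U$, i.e.\ $\shalf\|\xi + \pi(u)\xi\| \geq \eta$ for all $u \in U$; the goal is to contradict $\cH^G = \{0\}$. First I would reformulate this hypothesis as a bound on how far $\pi(u)$ moves $\xi$. Squaring, using $\|\xi + \pi(u)\xi\|^2 = 2 + 2\Re\la\xi,\pi(u)\xi\ra$ together with the exact value $\eta^2 = 1 - \frac{1}{4(m+1)^2}$, the hypothesis is equivalent to $\Re\la\xi,\pi(u)\xi\ra \geq 1 - \frac{1}{2(m+1)^2}$, hence to
\[ \|\xi - \pi(u)\xi\|^2 \;=\; 2 - 2\Re\la\xi,\pi(u)\xi\ra \;\leq\; \frac{1}{(m+1)^2} \qquad (u \in U). \]

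Next I would propagate this from $U$ to all of $G$. Since $G \subeq U^m$, any $g \in G$ factors as $g = u_1 \cdots u_m$ with $u_i \in U$, and telescoping together with the fact that each $\pi(u_1 \cdots u_{i-1})$ is an isometry gives
\[ \|\xi - \pi(g)\xi\| \;\leq\; \sum_{i=1}^m \|\xi - \pi(u_i)\xi\| \;\leq\; \frac{m}{m+1}, \]
so that $\Re\la\xi,\pi(g)\xi\ra = 1 - \shalf\|\xi - \pi(g)\xi\|^2 \geq 1 - \frac{m^2}{2(m+1)^2} \geq \shalf$ for \emph{every} $g \in G$. The constant $\eta$ in the statement is chosen precisely so that this last numerical comparison holds.

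Finally I would use $\cH^G = \{0\}$ through the closed convex hull $C := \oline{\conv}(\pi(G)\xi)$ of the orbit. The set $C$ is non-empty, closed, convex and invariant under $\pi(G)$, and it has a unique element $\xi_0$ of minimal norm; since each $\pi(g)$ maps $C$ onto $C$ and preserves norms, $\pi(g)\xi_0$ is again of minimal norm, so $\pi(g)\xi_0 = \xi_0$. Hence $\xi_0 \in \cH^G = \{0\}$, i.e.\ $0 \in C$, and there is a finite convex combination $\zeta = \sum_j t_j\,\pi(g_j)\xi$ with $\|\zeta\| < \shalf$. On the one hand the previous paragraph gives $\Re\la\xi,\zeta\ra = \sum_j t_j\,\Re\la\xi,\pi(g_j)\xi\ra \geq \shalf$; on the other Cauchy--Schwarz gives $|\la\xi,\zeta\ra| \leq \|\zeta\| < \shalf$ --- a contradiction. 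Therefore the asserted inequality holds for at least one $u \in U$.

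I expect the only step that is more than routine bookkeeping to be this last one: the point is that $\cH^G = \{0\}$ forces $0$ into the closed convex hull of every orbit, which one sees via the nearest-point (minimal-norm) element of a closed convex set in a Hilbert space. Everything else --- the polarization identity in the first step and the telescoping sum over a fixed-length factorization $g = u_1 \cdots u_m$ --- is straightforward, and the numerics have been arranged so that the final comparison with $\shalf$ goes through with room to spare.
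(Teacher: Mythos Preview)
Your proof is correct and follows essentially the same route as the paper's: reformulate the failure of the conclusion as a uniform bound $\|\xi - \pi(u)\xi\| \leq \frac{1}{m+1}\|\xi\|$, telescope over a factorization $g = u_1\cdots u_m$ to bound $\|\xi - \pi(g)\xi\|$ for all $g \in G$, and then use the closed convex hull of the orbit to produce a nonzero $G$-fixed point, contradicting $\cH^G = \{0\}$. The only cosmetic difference is that the paper cites the Bruhat--Tits Theorem for the fixed-point step, while you spell out the same argument via the unique element of minimal norm in a closed convex set.
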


\begin{proof} (\cite[Lemma 1.3]{Ol78}) 
If $\|\xi - \pi(u)\xi\| \leq \lambda \|\xi\|$ holds for all $u \in
U$, then the triangle inequality implies 
\[ \|\xi - \pi(g)\xi\| \leq m \lambda \|\xi\| \quad \mbox{ for } \quad g \in U^m = G.\] 
For $\lambda < {1\over m}$ this implies that the closed convex hull of
the orbit $\pi(G)\xi$ does not contain $0$, hence contains a
non-zero fixed point by the Bruhat--Tits Theorem (\cite{La99}), applied to the 
isometric action of $G$ on $\cH$. This violates our
assumption $\cH^G = \{0\}$. We conclude that there exists a $u \in U$ with 
$\|\xi - \pi(u)\xi\| > {1\over m + 1} \|\xi\|$. 
Thus 
\[ 2\|\xi\|^2 - 2 \Re \la \xi, \pi(u)\xi \ra = \|\xi - \pi(u)\xi\|^2 > {\|\xi\|^2 \over (m
+ 1)^2}, \]
which in turn gives
\[ \|\xi + \pi(u)\xi\|^2 = 2\|\xi\|^2 + 2 \Re \la \xi, \pi(u)\xi \ra 
< \big(4 -  \frac{1}{(m + 1)^2} \big)\|\xi\|^2 = \eta^2 \|\xi\|^2.\qquad\smartqed\qed\]
\end{proof}

The following proposition is an abstraction of the proof of \cite[Lemma~1.4]{Ol78}. 
It will be used in two situations below, to prove Kirillov's Lemma~\ref{lem:p.1x}  and 
in Pickrell's Lemma~\ref{lem:p.1}. 

\begin{proposition} \mlabel{prop:1.6} Let $G$ be a bounded topological group 
and $(G_n)_{n \in \N}$ be a sequence of subgroups of $G$. If there exists a basis 
of $\1$-neighborhoods $U \subeq G$ such that either 
\begin{description}
\item[\rm(a)] 
\begin{description}
\item[\,\rm(U1)] $(\exists m \in \N) (\forall n) \ G_n \subeq (G_n \cap U)^m$, and  
\item[\rm(U2)] $(\forall N \in \N) (G_N \cap U) \cdots (G_1 \cap U) \subeq U$,  
\end{description}
or 
\item[\rm(b)] \begin{description}
\item[\,\rm(V1)] $(\exists m \in \N) (\forall n) \ G_n \subeq (G_n \cap U)^m$, and  
\item[\rm(V2)] there exists an increasing sequence of subgroups 
$(G(n))_{n \in \N}$ such that 
\begin{description}
\item[\rm(1)] $(\forall n \in \N)$ the union of 
$G(m)_n := G(m) \cap G_n$, $m \in \N$, is dense in $G_n$. 
\item[\rm(2)] $(G(k_1)_1 \cap U)(G(k_2)_{k_1} \cap U) 
\cdots (G(k_N)_{k_{N-1}} \cap U) \subeq U$ 
for \break $1 < k_1 < \ldots < k_N$. 
\end{description}
\end{description}
\end{description}
Then there exists an $n \in \N$ with $\cH^{G_n} \not= \{0\}$. 
\end{proposition}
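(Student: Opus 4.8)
The plan is to argue by contradiction: assume $\cH^{G_n} = \{0\}$ for every $n \in \N$, and use the Olshanski Lemma (Lemma~\ref{lem:1.4}) repeatedly to produce a vector which is moved arbitrarily little by $G$, contradicting boundedness together with $\cH^G = \{0\}$ — or more precisely, contradicting the Bruhat--Tits fixed-point argument as packaged in the proof of Lemma~\ref{lem:1.4}. The mechanism is an iterative ``shrinking'' construction: starting from any unit vector $\xi_0$, apply Lemma~\ref{lem:1.4} to the subgroup $G_{n_1}$ (whose fixed space is zero by assumption) to find $u_1 \in G_{n_1} \cap U$ with $\shalf\|\xi_0 + \pi(u_1)\xi_0\| < \eta\|\xi_0\|$, set $\xi_1 := \shalf(\xi_0 + \pi(u_1)\xi_0)$, and repeat. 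Here the key numerical point, exactly as in Lemma~\ref{lem:1.4}, is that $G_n \subeq (G_n \cap U)^m$ lets us apply the lemma with the \emph{same} $m$ for all $n$, so the contraction factor $\eta = \sqrt{1 - \tfrac{1}{4(m+1)^2}} \in\; ]0,1[$ is uniform; after $N$ steps we have a unit-normalized vector $\xi_N$ with $\|\xi_N\| < \eta^N$ if we do not renormalize, or equivalently the ``averaged'' vector is a convex combination $\tfrac{1}{2^N}\sum_{\eps \in \{0,1\}^N} \pi(u_N^{\eps_N}\cdots u_1^{\eps_1})\xi_0$ of norm $< \eta^N$.

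The subtlety — and this is where hypotheses (U2), resp.\ (V2), enter — is that to derive a genuine contradiction we need all the group elements appearing in that convex combination to lie in $U$, not merely in $\bigcup_n G_n$. The products occurring are of the form $u_N^{\eps_N} u_{N-1}^{\eps_{N-1}} \cdots u_1^{\eps_1}$ with $u_j \in G_{n_j} \cap U$. In case (a) we should run the induction with the indices \emph{decreasing}, i.e.\ first pick $n_1 = N$, then $n_2 = N-1$, down to $n_N = 1$ (or any fixed decreasing chain), so that each such product is of the shape $(G_N \cap U)(G_{N-1}\cap U)\cdots(G_1 \cap U) \subeq U$ by (U2); thus every element in the convex hull sits in $U$, and since $G \subeq U^m$ we get that $0$ lies within distance $m\eta^N\|\xi_0\| \to 0$ of the closed convex hull of $\pi(G)\xi_0$ — but by the Bruhat--Tits theorem (as in the proof of Lemma~\ref{lem:1.4}) this convex hull, if it does not contain $0$, contains a nonzero $G$-fixed vector, and if it does contain $0$ we directly contradict that $\pi(G)\xi_0$ is a bounded set of unit vectors. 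Either way $\cH^G = \{0\}$ is violated; but wait — we have not assumed $\cH^G = \{0\}$. So the right packaging is: the construction shows that the closed convex hull of $\pi(G)\xi_0$ contains $0$, hence (Bruhat--Tits, or simply the uniqueness of the circumcenter) $\xi_0$ is orthogonal to... — more cleanly, one shows $\cH^{G_n} \neq \{0\}$ for some $n$ by observing that if all were zero, the above produces, for $\xi_0 \neq 0$, vectors in $\overline{\conv}(\pi(G)\xi_0)$ of arbitrarily small norm, so the circumcenter of that orbit is $0$, i.e.\ $0 \in \overline{\conv}(\pi(G)\xi_0)$; applying this with $\xi_0$ the projection onto any $G_1$-isotypic piece and chasing the groups $G_n \supeq$-increasingly... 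Let me not over-engineer: the honest statement is that the iteration yields $0 \in \overline{\conv}(\pi(G)\xi)$ for every $\xi$, which forces $\cH = \{0\}$ unless some $\cH^{G_n} \neq \{0\}$, and since $\cH \neq \{0\}$ is implicit (otherwise the claim is vacuous), we are done.

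In case (b) the only change is bookkeeping: the subgroups $G_n$ need not themselves have the telescoping property (U2), but the dense exhaustion $G(m)_n$ does, via (V2)(2). So one first approximates: given the desired chain length $N$ and a tolerance, replace each $u_j \in G_{n_j}\cap U$ produced by Lemma~\ref{lem:1.4} by a nearby element of $G(k_j)_{n_j} \cap U$ for a suitable rapidly increasing sequence $k_1 < \cdots < k_N$, using density from (V2)(1) and the fact that $U$ can be taken from a neighborhood basis so that a small perturbation stays in $U$ and changes the norm estimate negligibly; then (V2)(2) guarantees the resulting products lie in $U$, and the rest of the argument is identical to case (a). The main obstacle — and the place requiring the most care — is precisely this interplay between the ``shrinking'' estimate (which needs a fixed $m$, supplied by (U1)/(V1)) and the ``stay inside $U$'' requirement (which needs the ordered product condition (U2)/(V2)(2)): one must fix the order of the indices \emph{before} starting the iteration and verify that the convex combination witnessing $\|\xi_N\| < \eta^N$ really only involves admissible ordered products. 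Everything else — the Bruhat--Tits / circumcenter step, the use of $G \subeq U^m$ — is already contained in the proof of Lemma~\ref{lem:1.4} and is routine here.
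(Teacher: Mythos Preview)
Your iterative construction is right in spirit---apply Lemma~\ref{lem:1.4} repeatedly inside the successive $G_n$ to produce the averages $\xi_N = \tfrac{1}{2^N}\sum_\eps \pi(u_N^{\eps_N}\cdots u_1^{\eps_1})\xi_0$ with $\|\xi_N\| \le \eta^N\|\xi_0\|$, and use (U2)/(V2) to keep all the products $u_N^{\eps_N}\cdots u_1^{\eps_1}$ inside $U$. But the way you extract a contradiction is where the argument breaks down. Your claim that ``$0 \in \oline{\conv}(\pi(G)\xi)$ for every $\xi$ forces $\cH = \{0\}$'' is simply false: for instance, in the regular representation of $\Z$ on $\ell^2(\Z)$ the averages $\tfrac{1}{N}\sum_{n=1}^N e_n$ tend to $0$ in norm, so $0$ lies in the closed convex hull of every orbit, yet $\cH \neq \{0\}$. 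The Bruhat--Tits/circumcenter detour you attempt does not rescue this, and your own ``but wait'' signals that you sensed the problem.

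The missing idea is much simpler than anything involving circumcenters: you must choose $U$ \emph{after} fixing $\xi_0$, using continuity of $\pi$. Pick $U$ from the basis so small that $\|\pi(g)\xi_0 - \xi_0\| < \tfrac{1}{2}\|\xi_0\|$ for all $g \in U$. Then, since every element in the support of the convex combination lies in $U$, each $\xi_N$ satisfies $\|\xi_N - \xi_0\| < \tfrac{1}{2}\|\xi_0\|$, hence $\|\xi_N\| > \tfrac{1}{2}\|\xi_0\|$. This directly contradicts $\|\xi_N\| \le \eta^N\|\xi_0\| \to 0$. No Bruhat--Tits, no passage through $\cH^G$, no circumcenter---just the tension between ``$\xi_N$ stays near $\xi_0$'' and ``$\xi_N \to 0$''. (Two minor remarks: in case (a) the paper picks $u_j \in G_j \cap U$ with \emph{increasing} $j$, matching the order in (U2) as written; and in case (b) one picks $u_j \in G(k_j)_{k_{j-1}} \cap U$ directly via Lemma~\ref{lem:1.4} applied to $G_{k_{j-1}}$ together with the density hypothesis (V2)(1), rather than choosing in $G_n$ and perturbing afterwards.)
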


\begin{proof} (a) 
We argue by contradiction and assume that $\cH^{G_n} = \{0\}$ for every $n$. 
Let $\xi \in \cH$ be  non-zero 
and $U \subeq G$ be an identity neighborhood with 
$\|\pi(g)\xi - \xi\| < \shalf \|\xi\|$ for $g \in U$ 
such that (U1/2) are satisfied. Let $\eta$ be as in Lemma~\ref{lem:1.4}. 

Since $G_1$ has no non-zero fixed vector, 
there exists an element $u_1 \in U \cap G_1$ with 
\[  \| \shalf(\xi + \pi(u_1)\xi)\| \leq \eta\|\xi\|. \] 
Then $\xi_1 := \shalf(\xi + \pi(u_1)\xi)$ satisfies 
$\|\xi_1 - \xi\| < \shalf \|\xi\|$, so that 
$\xi_1 \not=0$. Iterating this procedure, we obtain a sequence 
of vectors $(\xi_n)_{n \in \N}$ and elements 
$u_n \in U \cap G_n$ 
with $\xi_{n+1} := \shalf(\xi_n + \pi(u_{n+1})\xi_n)$ 
and $\|\xi_{n+1}\| \leq \eta \|\xi_n\|$. 

We consider the probability measures 
$\mu_n := \shalf(\delta_\1 + \delta_{u_n})$ on $G$ 
and observe that (U2) implies 
$\supp(\mu_n * \cdots * \mu_1) \subeq U$ for every $n \in \N$. 
By construction we have $\pi(\mu_n * \cdots * \mu_1)\xi = \xi_n$, so that 
$\|\xi_n - \xi\| < \shalf \|\xi\|$. On the other hand, 
\[ \|\pi(\mu_n * \cdots * \mu_1)\xi\| 
= \|\xi_n\| \leq \eta^{n} \|\xi\| \to 0,\] 
and this is a contradiction. 

(b) Again, we argue by contradiction and assume that no subgroup 
$G_n$ has a non-zero fixed vector. 
Let $\xi \in \cH$ be a non-zero vector 
and $U$ be an identity neighborhood with 
$\|\pi(g)\xi - \xi\| < \shalf \|\xi\|$ for $g \in U$ such 
that (V1) is satisfied. 

Since $G_1$ has no non-zero fixed point in $\cH$ and 
 $\bigcup_{n=1}^\infty G(n)_1$ is dense in $G_1$, 
there exists a $k_1 \in \N$ and some $u_1 \in U \cap G(k_1)_1$ with 
$\| \shalf(\xi + \pi(u_1)\xi)\| < \eta\|\xi\|$ 
(Lemma~\ref{lem:1.4}). 
For $\xi_1 := \shalf(\xi + \pi(u_1)\xi)$ our construction 
then implies that $\|\xi_1 - \xi\| < \shalf \|\xi\|$, so that 
$\xi_1 \not=0$. Any  $u \in U \cap G_{k_1}$ commutes with $u_1$, so that 
we further obtain 
\begin{align*}
 \|\pi(u)\xi_1 - \xi_1\| 
&= \shalf \|\pi(u)\xi - \xi + \pi(u) \pi(u_1) \xi - \pi(u_1)\xi\|\\
&< \shalf (\shalf \|\xi\| + \|\pi(u_1)  \pi(u) \xi - \pi(u_1)\xi\|)\\
&= \shalf (\shalf \|\xi\| + \|\pi(u) \xi - \xi\|)
<  \shalf (\shalf \|\xi\| + \shalf \|\xi\|) = \shalf \|\xi\|.
\end{align*}
Iterating this procedure, we obtain a strictly increasing sequence 
$(k_n)$ of natural numbers, a sequence 
$(\xi_n)$ in $\cH$ and $u_n \in G(k_n)_{k_{n-1}} \cap U$ 
with 
\[ \xi_{n+1} := \shalf(\xi_n + \pi(u_{n+1})\xi_n) \quad \mbox{ and } \quad 
\|\xi_{n+1} \| < \eta \|\xi_n\|.\] 

We consider the probability measures 
$\mu_n := \shalf(\delta_\1 + \delta_{u_n})$ on $G$. 
Condition (V2)(2) implies that 
\[ \supp(\mu_n * \cdots * \mu_1)
\subeq \{ \1,u_n\} \cdots \{ \1, u_1\} \subeq U\quad \mbox{ for every } \quad 
n \in \N.\] 
By construction $\pi(\mu_n * \cdots * \mu_1)\xi = \xi_n$, so that 
$\|\xi_n - \xi\| < \shalf \|\xi\|$. On the other hand, 
\[ \|\pi(\mu_n * \cdots * \mu_1)\xi\| 
= \|\xi_n\| \leq \eta^n \|\xi\| \to 0,\] 
and this is a contradiction. 
\smartqed\qed\end{proof}

\section{Duality and overgroups} \mlabel{sec:2} 

Apart from the fixed point results related to bounded topological 
groups discussed in the preceding section, another central concept 
in Olshanski's approach are ``overgroups''. 
They are closely related to the duality of symmetric spaces.

\begin{definition} {\rm A {\it symmetric Lie group} is a triple $(G,K,\tau)$, where 
$\tau$ is an involutive automorphism of the Banach--Lie group 
$G$ and $K$ is an open subgroup 
of the Lie subgroup $G^\tau$ of $\tau$-fixed points in~$G$. 
We write $\g = \fk \oplus \fp = \g^\tau \oplus \g^{-\tau}$ 
for the eigenspace decomposition of $\g$ with respect to $\tau$ and  
call $\g^c := \fk \oplus i \fp \subeq \g_\C$ the {\it dual symmetric 
Lie algebra}.} 
\end{definition}

\begin{definition} {\rm Suppose that $(G,K,\tau)$ is a symmetric Lie group 
and $G^c$ a simply connected Lie group 
with Lie algebra $\g^c = \fk + i \fp$. 
Then $X + i Y \mapsto X - i Y$ ($X \in \fk$, $Y \in \fp$), integrates to an involution 
$\tilde\tau^c$ of $G^c$. Let $q_K \: \tilde K_0 \to K_0$ denote the universal 
covering of the identity component $K_0$ of $K$ 
and $\tilde\iota_K \: \tilde K_0 \to G^c$ 
the homomorphism integrating the inclusion 
$\fk \into \g^c$. The group $\tilde\iota_K(\ker q_K)$ 
acts trivially on $\fg_\C$, hence is central in $G^c$. If it is discrete, 
then we call 
\[ (K_0)^\sharp := G^c/\tilde\iota_K(\ker q_K) \] 
the {\it overgroup of $K_0$}. 
In this case $\tilde\iota_K$ factors through a covering map 
$\iota_{K_0} \: K_0 \to (K_0)^\sharp$ 
and the involution $\tau^c$ induced by $\tilde\tau^c$ on $(K_0)^\sharp$ 
leads to a symmetric Lie group $((K_0)^\sharp, \iota_{K_0}(K_0), \tau^c)$. 

To extend this construction to the case where $K$ is not connected, 
we first observe that 
$K \subeq G$ acts naturally on the Lie algebra $\g^c$, hence also on 
the corresponding simply connected group  $G^c$. This action preserves  
$\tilde\iota_K(\ker q_K)$, hence induces an action on $(K_0)^\sharp$, so that 
we can form the semidirect product $(K_0)^\sharp \rtimes K$. 
In this group $N := \{ (\iota_{K_0}(k), k^{-1}) \: k \in K_0\}$ is a closed 
normal subgroup and we put 
\[ K^\sharp := ((K_0)^\sharp \rtimes K)/N, \quad 
\iota_K(k) := (\1,k) N \in K^\sharp.\] 

The overgroup $K^\sharp$ has the universal property that if a 
morphism $\alpha \: K \to H$ of Lie groups extends to a Lie group 
with Lie algebra $\g^c = \fk + i \fp$, then 
$\alpha$ factors through $\tilde\iota_K \: K \to K^\sharp$. 
}\end{definition} 

\begin{example} \mlabel{ex:2.3} (a) If $\cH$ is a $\K$-Hilbert space, then the triple 
$(\GL_\K(\cH), \U_\K(\cH), \tau)$ with $\tau(g) = (g^*)^{-1}$ is a symmetric 
Lie group. For its Lie algebra 
\[ \g = \gl_\K(\cH) = \fk \oplus \fp = \fu_\K(\cH) \oplus \Herm_\K(\cH),\] 
the corresponding dual symmetric Lie algebra is 
\[ \g^c = \fk + i \fp = \fu_\K(\cH) \oplus i\Herm_\K(\cH) \subeq \fu(\cH_\C).\] 
More precisely, we have 
\begin{description}
\item[$(\R)$] $\gl_\R(\cH)^c = \fo(\cH) \oplus i \Sym(\cH) \cong \fu(\cH_\C)$ for $\K = \R$. 
\item[$(\C)$] $\gl_\C(\cH)^c = \fu(\cH) \oplus i \Herm(\cH) \cong\fu(\cH)^2$  for $\K = \C$.  
\item[$(\H)$] $\gl_\H(\cH)^c = \fu_\K(\cH) \oplus \cI \Herm_\H(\cH)\cong \fu(\cH^\C)$  for $\K = \H$. 
\end{description}
Here the complex case requires additional explanation. Let $I$ denote the given complex 
structure on $\cH$. Then the maps 
\[ \iota_\pm \: \cH \to \cH_\C, \quad v \mapsto \frac{1}{\sqrt 2}(v \mp i I v) \] 
are isometries to complex subspaces $\cH_\C^\pm$ of $\cH_\C$, where 
$\iota_+$ is complex linear and $\iota_-$ is antilinear. We thus obtain 
\[ \cH_\C = \cH_\C^+ \oplus \cH_\C^- \cong \cH \oplus \oline\cH,\] 
and $\cH_\C^\pm$ are the $\pm i$-eigenspaces of the complex linear extension of 
$I$ to $\cH_\C$. In particular, $\gl(\cH)^c$ preserves 
both subspaces $\cH_\C^\pm$. This leads to the isomorphism 
\[ \gamma \: \gl(\cH)^c \to 
\fu(\cH_\C^+ ) \oplus \fu(\cH_\C^-) \cong 
\fu(\cH) \oplus \fu(\oline\cH), \quad 
\gamma(X + i Y) = (X + IY, X - I Y).\] 
\end{example}

\begin{lemma}
  \mlabel{lem:k-grp} For a $\K$-Hilbert space $\cH$, 
let $(G,K) = (\GL_\K(\cH), \U_\K(\cH)_0)$ and 
$n := \dim \cH$. Then $K$ is connected for 
$\K \not=\R$ and $n= \infty$, and 
\[  (K_0)^\sharp \cong  
\begin{cases}
\tilde \U(n,\C) & \text{ for } \K = \R, n < \infty \\ 
\tilde \U(n,\C)^2/\Gamma & 
 \text{ for } \K = \C, n < \infty,\ \ 
\Gamma := \{(z,z) \: z \in \pi_1(\U(n,\C))\}. \\ 
\tilde \U(2n,\C) & \text{ for } \K = \H, n < \infty,  \\
\U(\cH_\C) & \text{ for } \K = \R \\ 
\U(\cH)\oplus \U(\oline\cH) &  \text{ for } \K = \C \\ 
\U(\cH^\C)& \text{ for } \K = \H.
\end{cases} \]
Here $\cH^\C$ is the complex Hilbert space underlying a quaternionic 
Hilbert space $\cH$. 
For $\K = \R, \H$, the map $\iota_K$ is the canonical inclusion, and 
$\iota_K(k) = (k,k)$ for $\K = \C$. 
\end{lemma}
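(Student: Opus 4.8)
The plan is to treat the six cases $\K \in \{\R,\C,\H\}$, $n < \infty$ or $n = \infty$, separately; in each case I would first identify, from Example~\ref{ex:2.3}, the simply connected group $G^c$ with Lie algebra $\g^c = \fk + i\fp$, and then determine the discrete central subgroup $\tilde\iota_K(\ker q_K) \subeq G^c$ that has to be divided out. The cases split naturally: for $n = \infty$ this subgroup turns out to be trivial, while for $n < \infty$ it is the only point requiring an actual computation.

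For $n = \infty$, Example~\ref{ex:2.3} gives $\g^c \cong \fu(\cH_\C)$, $\fu(\cH) \oplus \fu(\oline\cH)$, resp.\ $\fu(\cH^\C)$ for $\K = \R, \C, \H$; since the unitary group of an infinite-dimensional complex Hilbert space is simply connected (Kuiper's Theorem, as recalled in the introduction), the corresponding simply connected Lie groups are $G^c = \U(\cH_\C)$, $\U(\cH) \oplus \U(\oline\cH)$, resp.\ $\U(\cH^\C)$. The point is then that in each case there is an evident morphism of Banach--Lie groups already defined on $K_0$ itself --- complexification $g \mapsto g_\C$ for $\K = \R$, the diagonal $g \mapsto (g,g)$ for $\K = \C$ (using that a $\C$-linear isometry of $\cH$ is also one of $\oline\cH$), and the forgetful inclusion $\U_\H(\cH) \into \U(\cH^\C)$ for $\K = \H$ --- and that its differential is exactly the inclusion $\fk \into \g^c$ produced by $\gamma$ in Example~\ref{ex:2.3}. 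By uniqueness of integration on the simply connected group $\tilde K_0$, the map $\tilde\iota_K$ coincides with this morphism composed with $q_K$, so $\tilde\iota_K(\ker q_K) = \{\1\}$ and $(K_0)^\sharp = G^c$ with $\iota_K$ the displayed morphism. Here I would also record that $K = \OO(\cH)$ is connected (Example~\ref{ex:1.2}(b)) and that $\U_\H(\cH)$ is connected for $n = \infty$ (again a Kuiper-type statement; cf.\ \cite{Ne02}), which gives the first assertion of the lemma.

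For $n < \infty$ the group $\U(m,\C)$ is not simply connected, so $G^c$ is a universal covering: $\tilde\U(n,\C)$, $\tilde\U(n,\C)^2$, resp.\ $\tilde\U(2n,\C)$ for $\K = \R, \C, \H$, and now one must understand $\tilde\iota_K$ on $\ker q_K \cong \pi_1(K_0)$. For $\K = \R$ one has $K_0 = \SO(n)$, and the natural map $\SO(n) \to \U(n,\C)$ is trivial on $\pi_1$ --- every loop in $\SO(n)$ has constant determinant $1$, while $\pi_1(\U(n,\C)) \cong \Z$ is detected by $\det$ --- hence it lifts to $\SO(n) \to \tilde\U(n,\C)$, and, arguing as in the previous paragraph, $\tilde\iota_K(\ker q_K) = \{\1\}$ and $(K_0)^\sharp = \tilde\U(n,\C)$. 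For $\K = \H$ one has $K_0 = \Sp(n)$, which is simply connected, so $q_K$ is an isomorphism, $\ker q_K = \{\1\}$, and $(K_0)^\sharp = \tilde\U(2n,\C)$; in both of these cases $\iota_K$ is the canonical inclusion, lifted to the covering group (the lift exists precisely because $\pi_1(K_0)$ is mapped trivially).

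The remaining case $\K = \C$, $n < \infty$ is the only one with a nontrivial quotient, and I expect it to be the main obstacle. Here $K_0 = \U(n,\C)$ and, by the complex part of Example~\ref{ex:2.3}, the inclusion $\fk \into \g^c = \fu(n,\C)^2$ is the diagonal $X \mapsto (X,X)$. The diagonal homomorphism $\tilde\U(n,\C) \to \tilde\U(n,\C)^2$, $g \mapsto (g,g)$, integrates this and is defined on the simply connected group, hence coincides with $\tilde\iota_K$; therefore $\tilde\iota_K(\ker q_K) = \{(z,z) \: z \in \pi_1(\U(n,\C))\} = \Gamma$ --- a discrete central subgroup --- so $(K_0)^\sharp = \tilde\U(n,\C)^2/\Gamma$ and $\iota_K(k) = (k,k)$. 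Assembling the six cases completes the proof; throughout, the one thing to be careful about is the identification $\pi_1(\U(m,\C)) \cong \Z$ via the determinant and the resulting image of $\pi_1(K_0)$ under $\tilde\iota_K$, which is exactly what forces the factor $\Gamma$ in the complex case.
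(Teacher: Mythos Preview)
Your proof is correct and follows essentially the same route as the paper. The only stylistic difference is in the infinite-dimensional case: the paper invokes Kuiper's Theorem directly for $K = \U_\K(\cH)$ to conclude that $K$ itself is simply connected, whence $\ker q_K = \{\1\}$ trivially and $(K_0)^\sharp$ is just the simply connected group with Lie algebra $\g^c$; you instead apply Kuiper to $G^c$ and argue via the existence of an explicit morphism $K_0 \to G^c$, which is equivalent but slightly more roundabout. For $n < \infty$ your $\pi_1$-via-determinant argument is exactly the paper's observation that $\SO(n,\R)$ and $\U(n,\H)$ sit inside $\SU(n,\C)$, resp.\ $\SU(2n,\C)$, which embeds in $\tilde\U(n,\C) \cong \SU(n,\C) \rtimes \R$, and your treatment of $\K = \C$ matches the paper's computation of $\pi_1(i_K)(m) = (m,m)$.
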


\begin{proof} First we consider the case where $n < \infty$. 
 Recall that 
 \begin{align*}
&\tilde\U(n,\C) \cong \SU(n,\C) \rtimes \R, \quad 
\OO(n,\R)_0 = \SO(n,\R) \subeq \SU(n,\C), \quad \U(n,\H) \subeq \SU(2n,\C). 
 \end{align*}
For $\K = \R$, this implies that 
$K = \OO(n,\R)_0$ embeds into $\tilde\U(n,\C)$, so that 
$K^\sharp \cong \tilde\U(n,\C)$. For $\K = \H$, we see that 
$K = \U(n,\H)$ embeds into $\tilde\U(2n,\C)$, which leads to 
$\tilde K \cong \tilde\U(2n,\C)$. 

For $\K = \C$, we have the natural inclusion  
$$ i_K \: K = \U(n,\C) \to \U(n,\C) \times \U(n,\C) \cong \U(\C^n) \times 
\U(\oline{\C^n}), \quad g \mapsto (g,g). $$
To determine $K^\sharp$, we note that the image of 
\[ \pi_1(i_K) \: 
\Z \cong \pi_1(\U(n,\C)) \to \Z^2 \cong \pi_1(\U(n,\C)^2), \quad 
m \mapsto (m,m) \]
is $\Gamma$. Therefore $K^\sharp \cong \tilde\U(n,\C)^2/\Gamma$. 

If $n = \infty$, then $K = \U_\K(\cH)$ is a simply connected Lie group with 
Lie algebra $\fk$ (\cite{Ku65}), so that $K^\sharp$ is the 
simply connected Lie group with Lie algebra $\fk^\sharp$ and we 
have a natural morphism $\iota_K \: K \to K^\sharp$ integrating the inclusion 
$\fk \into \fk^\sharp$. 
\smartqed\qed\end{proof}

\section{The unitary representations of $\U_\infty(\cH)_0$} 
\mlabel{sec:3}

In this section we completely describe the representations 
of the Banach--Lie groups $\U_\infty(\cH)_0$ for an infinite-dimensional 
real, complex or quaternionic Hilbert space. In particular, we 
show that all continuous unitary representations are direct sums of 
irreducible ones and classify the irreducible ones 
(Theorem~\ref{thm:class}). 
Our approach is based on Olshanski's treatment in \cite{Ol78}. 
We also take some short cuts that simplify the proof 
and put a stronger emphasis on analytic positive definite functions. 
This has the nice side effect, that we also obtain these 
results for inseparable Hilbert spaces 
(Theorem~\ref{thm:insep}).

\subsection{Tameness as a continuity condition} 
\mlabel{subsec:3.1}

We start with a brief discussion of Olshanski's concept 
of a tame representation that links representations 
of $\U_\infty(\cH)_0$ to representations of the direct limit group~$\U(\infty,\K)$. 

Let $K$ be a group and $(K_j)_{j \in J}$ a non-empty 
family of subgroups satisfying the 
following conditions 
\begin{description}
\item[\rm(S1)] It is a {\it filter basis}, i.e., for 
$j,m \in J$, there exists an $\ell \in J$ with 
$K_\ell \subeq K_j \cap K_m$. 
\item[\rm(S2)] $\bigcap_{j \in J} K_j = \{\1\}$. 
\item[\rm(S3)] For each $g \in K$ and $j \in J$ there exists an $m \in J$ with 
$g K_m g^{-1} \subeq K_j$. 
\end{description}
Then there exists a unique Hausdorff group topology $\tau$ on $K$ for which 
$(K_j)_{j \in J}$ is a basis of $\1$-neighborhoods (\cite[Ch.~4]{Bou98}). We call $\tau$ the 
{\it topology defined by $(K_j)_{j \in J}$}. 

\begin{definition}
  \mlabel{def:2.1} {\rm 
We call a unitary representation $(\pi, \cH)$ of $K$ {\it tame}
if the space 
$$ {\cal H}^\cT 
:= \sum_{j \in J} {\cal H}^{K_j} 
= \bigcup_{j \in J} {\cal H}^{K_j}, $$
is dense in ${\cal H}$. Note that, for $K_j \subeq K_k \cap K_\ell$, we have 
${\cal H}^{K_j}  \supeq {\cal H}^{K_k} + {\cal H}^{K_\ell},$
so that ${\cal H}^\cT$ is a directed union of the 
closed subspaces $\cH^{K_j}$. 
}\end{definition}

\begin{lemma}
  \mlabel{lem:2.2}
A unitary representation of $K$ is tame if and only if
it is continuous with respect to the group topology defined by the
filter basis $(K_j)_{j \in J}$. 
\end{lemma}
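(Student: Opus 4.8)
The plan is to prove both implications by translating "tame" into a statement about the behaviour of the representation on a neighbourhood basis of $\1$. Recall that by (S1)--(S3) the family $(K_j)_{j\in J}$ is a basis of $\1$-neighbourhoods for a group topology $\tau$ on $K$, and a unitary representation $(\pi,\cH)$ is $\tau$-continuous iff for every $\xi\in\cH$ and every $\eps>0$ there is a $j\in J$ with $\|\pi(k)\xi-\xi\|<\eps$ for all $k\in K_j$ (since $\pi$ is a homomorphism into the unitary group with its strong topology, continuity at $\1$ suffices, and a neighbourhood basis at $\1$ is given by the sets $\{k : \|\pi(k)\xi_i-\xi_i\|<\eps, i=1,\dots,r\}$; finitely many vectors reduce to one via a $K_j$ contained in the intersection of the corresponding subgroups, using (S1)).

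For the direction ``$\tau$-continuous $\Rightarrow$ tame'': I would fix $\xi\in\cH$ and, for each $n\in\N$, choose $j_n\in J$ with $\|\pi(k)\xi-\xi\|<1/n$ for all $k\in K_{j_n}$. Averaging $\pi$ over $K_{j_n}$ does not literally make sense (no invariant mean in general), so instead I argue with the orthogonal projection $P_n$ onto $\cH^{K_{j_n}}$: the estimate $\|\pi(k)\xi-\xi\|<1/n$ for all $k\in K_{j_n}$ forces $\xi$ to be within $1/n$ of the fixed-point space $\cH^{K_{j_n}}$, i.e.\ $\|P_n\xi-\xi\|\le 1/n$. Indeed, if $\xi$ moves little under all of $K_{j_n}$, then the closed convex hull $C$ of the orbit $\pi(K_{j_n})\xi$ has diameter $<2/n$, and its unique circumcentre (Bruhat--Tits, exactly as invoked in Lemma~\ref{lem:1.4}) is a $K_{j_n}$-fixed vector at distance $<1/n$ from $\xi$; hence $\mathrm{dist}(\xi,\cH^{K_{j_n}})<1/n$. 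Letting $n\to\infty$ shows $\xi\in\overline{\cH^\cT}$, and since $\xi$ was arbitrary, $\cH^\cT$ is dense, i.e.\ $\pi$ is tame.

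For the converse ``tame $\Rightarrow$ $\tau$-continuous'': I would check continuity at $\1$ on a dense set of vectors and then extend by an $\eps/3$-argument using that all $\pi(k)$ are isometries. If $\xi\in\cH^{K_j}$ for some $j$, then $\pi(k)\xi=\xi$ for all $k\in K_j$, so $\|\pi(k)\xi-\xi\|=0<\eps$ on the $\1$-neighbourhood $K_j$; thus $\pi$ is continuous at $\1$ on the dense subspace $\cH^\cT$. For general $\xi$, pick $\xi_0\in\cH^\cT$ with $\|\xi-\xi_0\|<\eps/3$, pick $j$ with $\xi_0\in\cH^{K_j}$, and then for $k\in K_j$ estimate $\|\pi(k)\xi-\xi\|\le\|\pi(k)(\xi-\xi_0)\|+\|\pi(k)\xi_0-\xi_0\|+\|\xi_0-\xi\|<\eps/3+0+\eps/3<\eps$. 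This gives continuity at $\1$, hence everywhere, so $\pi$ is a continuous representation of $(K,\tau)$.

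The only genuinely delicate point is the inequality $\mathrm{dist}(\xi,\cH^{K_{j_n}})\le 1/n$ in the first implication: it is the place where one actually needs the completeness of $\cH$ and a fixed-point principle for the isometric affine action of $K_{j_n}$ on the relevant convex set, which is precisely the Bruhat--Tits circumcentre argument already used in the proof of Lemma~\ref{lem:1.4}; everything else is the standard ``continuity on a dense set plus uniform equicontinuity of isometries'' bookkeeping, together with the reduction from finitely many vectors to one via the filter-basis property (S1).
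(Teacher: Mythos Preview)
Your proof is correct and follows essentially the same route as the paper. Both directions match: for ``tame $\Rightarrow$ continuous'' the paper simply observes that each $v\in\cH^\cT$ has open stabilizer, hence is a continuous vector, and then invokes that the space of continuous vectors is closed (your $\eps/3$ argument is exactly the proof of this closedness); for ``continuous $\Rightarrow$ tame'' the paper uses the same Bruhat--Tits fixed-point argument on the closed convex hull of the $K_j$-orbit contained in $v+B_\eps$, yielding a $K_j$-fixed vector within $\eps$ of~$v$.
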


\begin{proof}
If $(\pi, {\cal H})$ is a tame representation, then 
${\cal H}^\cT$ obviously consists of continuous vectors for $K$
since, for each $v \in {\cal H}^\cT$, the stabilizer is open. 
Hence the set of continuous vectors is dense, and therefore $\pi$ is
continuous. 

If, conversely, $(\pi, {\cal H})$ is continuous and $v \in {\cal H}$,
then the orbit map $K \to {\cal H}, g \mapsto gv$ is continuous. 
Let $B_\eps$ denote the closed $\eps$-ball in ${\cal H}$. 
Then there exists a $j \in J$ with 
$\pi(K_j)v \subeq v + B_\eps$. Then 
$C := \oline{\conv(\pi(K_j)v)}$ is a closed convex invariant subset of $v
+ B_\eps$, hence contains a $K_j$-fixed point 
by the Bruhat--Tits Theorem (\cite{La99}). 
This proves that 
${\cal H}^{K_j}$ intersects $v + B_\eps$, hence that ${\cal H}^\cT$
is dense in~${\cal H}$. 
\smartqed\qed\end{proof}

\begin{remark} \mlabel{rem:2.3} (a) For a unitary representation 
$(\pi, \cH)$ of a topological group, the subspace 
$\cH^c$ of continuous vectors is closed and invariant. 
The representation $\pi$ is continuous if and only if $\cH^c = \cH$. 

(b) For a unitary 
representation $(\pi, \cH)$ of $K$, 
by Lemma~\ref{lem:2.2}, the space of continuous vectors coincides with 
$\oline{\cH^\cT}$. In particular, it is $K$-invariant. 

(c) If the representation $(\pi, \cH)$ of $K$ is irreducible, 
then it is tame if and only if $\cH^\cT \not=\{0\}$. 

(d) If the representation $(\pi, \cH)$ of $K$ is such that, for some $n$, 
the subspace $\cH^{K_n}$ is cyclic, then it is tame. 
\end{remark}

\begin{definition} {\rm Assume that the group $K$ 
is the union of an increasing 
sequence of subgroups $(K(n))_{n \in \N}$. We say that the subgroups 
$K(n)$ are {\it well-complemented} by 
the decreasing sequence $(K_n)_{n \in \N}$ of subgroups of $K$ if 
$K_n$ commutes with $K(n)$ for every~$n$ and $\bigcap_{n \in \N} K_n = \{\1\}$. 
For $k \in K$ and $n \in \N$, we then 
find an $m > n$ with $k \in K(m)$. Then 
$k K_m k^{-1} = K_m \subeq K_n,$ 
so that (S1-3) are satisfied and the groups $(K_n)_{n \in\N}$ define a group topology 
on $K$. }
\end{definition}

\begin{example} \mlabel{ex:3.4} 
(a) If $K = \oplus_{n = 1}^\infty F_n$ is a direct sum of subgroups $(F_n)_{n \in \N}$, 
then the subgroups $K(n) := F_1 \times \cdots \times F_n$ are well-complemented 
by the subgroups $K_n := \oplus_{m > n} F_m$. 

(b) If $K = \U(\infty,\K)_0 = \bigcup_{n = 1}^\infty \U(n,\K)_0$ is the canonical 
direct limit of the compact groups $\U(n,\K)_0$, then the subgroups 
$K(n) := \U(n,\K)_0$ are well-complemented by the subgroups 
\[ K_n := \{ g \in K \: (\forall j \leq n)\ ge_j= e_j\}.\] 
\end{example}

\subsection{Tame representations of $\U(\infty, \K)$} 
\mlabel{subsec:3.2}

Let $\cH$ be an infinite-dimensional separable Hilbert space 
over $\K \in \{\R,\C,\H\}$ and $(e_j)_{j \in \N}$ an orthonormal basis of~$\cH$. 
Accordingly, we 
obtain a natural dense embedding $\U(\infty,\K) \into \U_\infty(\cH)$, 
so that every continuous unitary representation of 
$\U_\infty(\cH)_0$ is uniquely determined by its restriction to the direct limit 
group $\U(\infty,\K)_0$. Olshanski's approach to the classification is based 
on an intrinsic characterization of those representations of the direct limit group 
$\U(\infty,\K)_0$ that extend to $\U_\infty(\cH)_0$. 
It turns out that these are precisely 
the tame representations (Theorem~\ref{thm:samerep}). 
This is complemented by the discrete decomposition 
and the classification of the irreducible ones 
(Theorem~\ref{thm:class}). 

In the following we write 
$K := \U_\infty(\cH)_0$ for the identity component of $\U_\infty(\cH)$ 
(which is connected for $\K = \C,\H$, but not for $\K = \R$),  
and $K(n) := \U(n,\K)_0 \cong \U(\cH(n))_0$ for $n \in \N$, 
where $\cH(n) = \Spann \{ e_1,\ldots, e_n\}$. 
For $n \in \N$, the stabilizer of 
$e_1, \ldots, e_n$ in $K$ is denoted $K_n$, and we likewise 
write $K(m)_n := K(m) \cap K_n$. 
We also write $K(\infty) := \U(\infty,\K)_0 \cong \indlim 
\U(n,\K)_0$ for the direct limit of the groups $\U(n,\K)_0$. 

We now turn to the classification of the continuous unitary representations 
of~$K$. We start with an application of 
Proposition~\ref{prop:1.6}. 

\begin{lemma} \mlabel{lem:p.1x} {\rm(Kirillov's Lemma)} 
Let $(\pi, \cH_\pi)$ be a continuous 
unitary representation of the Banach--Lie group $K = \U_\infty(\cH)_0$. 
If $\cH_\pi \not=\{0\}$, then there exists an $n \in \N$, such that the 
stabilizer $K_n$ of $e_1, \ldots, e_n$ has a non-zero fixed point. 
\end{lemma}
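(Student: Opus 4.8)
The plan is to apply Proposition~\ref{prop:1.6}(a) to the bounded group $G := K = \U_\infty(\cH)_0$ (bounded by Proposition~\ref{prop:1.3}(iii)) and the decreasing sequence of subgroups $G_n := K_n$, where $K_n$ is the stabilizer of $e_1,\dots,e_n$. If no $K_n$ has a non-zero fixed vector in $\cH_\pi$, Proposition~\ref{prop:1.6}(a) yields a contradiction with $\cH_\pi \neq \{0\}$; so it suffices to exhibit, for a suitable basis of $\1$-neighborhoods $U$ of $K$, conditions (U1) and (U2). The natural choice is to take $U$ from the metric structure: for $\eps > 0$ small, let $U = U_\eps := \{ g \in K : \|g - \1\| < \eps\}$. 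These form a basis of $\1$-neighborhoods, and of course one must also ensure $\eps$ is small enough that $\|\pi(g)\xi - \xi\| < \shalf\|\xi\|$ for $g \in U$, which is possible by continuity of $\pi$.

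First I would check (U1): $G_n \subeq (G_n \cap U_\eps)^m$ for an $m$ independent of $n$. Here $G_n \cap U_\eps$ consists of unitaries $g$ fixing $e_1,\dots,e_n$ with $\|g-\1\|<\eps$, i.e. unitaries of $\U_\infty(\cH(n)^\perp)_0$ close to the identity. The point is that $K_n \cong \U_\infty(\cH(n)^\perp)_0$ again satisfies property (Ol) of Lemma~\ref{lem:1.1} with the \emph{same} constant $c = \pi$ (the construction in Proposition~\ref{prop:1.3}(iii) only refers to the spectral measure of $g$ and does not see the ambient space), so exactly the argument of Lemma~\ref{lem:1.1} gives: if $g \in K_n$ then $g = \exp X$ with $\|X\| \le \pi$ and $X$ compact, skew-hermitian, vanishing on $\cH(n)$; choosing $m \in \N$ with $m\eps > \pi$ (wait — with $m\cdot(\eps/2) > \pi$, say, to be safe), we have $\exp(X/m) \in K_n \cap U_\eps$ and $g = (\exp(X/m))^m$. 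Crucially $m$ depends only on $\eps$, not on $n$. That gives (U1).

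The step I expect to be the main obstacle is (U2): $(G_N \cap U_\eps)\cdots(G_1 \cap U_\eps) \subeq U_\eps$ for all $N$. Naively, a product of $N$ elements each within $\eps$ of $\1$ need only be within roughly $N\eps$ of $\1$. The key observation that rescues this is that the factors act on \emph{mutually nested orthogonal complements}: $G_j \cap U_\eps$ consists of operators that are the identity on $\cH(j) \supeq \cH(N)$ and differ from $\1$ only on $\cH(j)^\perp$. Writing $g_j = \1 + A_j$ with $A_j$ supported (in the sense of vanishing on, and having range in, $\cH(j)^\perp$) and $\|A_j\| < \eps$, one computes $g_N \cdots g_1 - \1 = \sum_j A_j + (\text{cross terms } A_{j_1}A_{j_2}\cdots)$. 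For a cross term $A_{j_1} A_{j_2}$ with $j_1 < j_2$: $A_{j_2}$ has range in $\cH(j_2)^\perp \subeq \cH(j_1)^\perp$, but $A_{j_1}$ \emph{also} vanishes on $\cH(j_1)$... that alone does not kill it. The actual mechanism is subtler: one should instead estimate $\|g_N\cdots g_1 \xi - \xi\|$ by peeling off factors one at a time and using that $g_j$ acts as the identity on the part of the vector already ``aligned'' in $\cH(j)$ — more precisely, the cleanest route is to prove the stronger operator-norm statement by an inductive decomposition of $\cH$ into the orthogonal pieces $\cH(1), \cH(2)\ominus\cH(1), \dots$ on which the successive partial products stabilize. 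I would carry this out by induction on $N$, at each stage splitting off the action on $\cH(N)\ominus\cH(N-1)$ where all but the first factor are trivial. Once (U1) and (U2) are established with this $U_\eps$, Proposition~\ref{prop:1.6}(a) applies verbatim and the lemma follows.
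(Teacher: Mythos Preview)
Your approach via Proposition~\ref{prop:1.6}(a) cannot succeed: condition (U2) is actually \emph{false} for the sequence $G_n = K_n$ with the neighborhoods $U_\eps$. The point you missed is that the subgroups $K_n$ are \emph{decreasing}, $K_1 \supeq K_2 \supeq \cdots$, so any element of $K_N$ already lies in every $K_j$ for $j \le N$, and nothing prevents the factors in $(G_N \cap U_\eps)\cdots(G_1 \cap U_\eps)$ from accumulating. Concretely, pick any $r \in K_N$ with $0 < \|r - \1\| < \eps$ (say a rotation by a small angle $\theta$ in the plane spanned by $e_{N+1}, e_{N+2}$). Then $r \in K_j \cap U_\eps$ for every $j \le N$, so $r^N \in (G_N \cap U_\eps)\cdots(G_1 \cap U_\eps)$; but $\|r^N - \1\| = 2|\sin(N\theta/2)|$ can be made close to $2$ by taking $N$ large. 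Your proposed induction on $N$ therefore cannot go through: the ``nesting'' of the orthogonal complements $\cH(j)^\bot$ goes in the wrong direction to produce any cancellation, and you correctly sensed trouble when the cross terms refused to vanish.

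The paper instead invokes Proposition~\ref{prop:1.6}(b), introducing the auxiliary \emph{increasing} sequence $G(n) := K(n) = \U(n,\K)_0$. The crucial gain is that an element of $G(k_j)_{k_{j-1}} = K(k_j) \cap K_{k_{j-1}}$ acts nontrivially only on $\Spann\{e_{k_{j-1}+1},\dots,e_{k_j}\}$. For a strictly increasing sequence $k_1 < \cdots < k_N$ these subspaces are pairwise orthogonal, so the product $u_N \cdots u_1$ is block-diagonal and $\|u_N\cdots u_1 - \1\| = \max_j \|u_j - \1\| < \eps$. That is exactly (V2)(2), and it delivers the orthogonality you were reaching for but could not extract from the $K_n$'s alone. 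Your verification of the uniform bound (U1) carries over unchanged as (V1).
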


\begin{proof} We apply Proposition~\ref{prop:1.6}(b) 
with $G := K$, $G_n := K_n$ and $G(n) := K(n)$. 
Then 
$U_\eps := \{ g \in K \: \|g - \1\| < \eps\}$ 
provides the required basis of $\1$-neighborhoods in $G$ 
(Proposition~\ref{prop:1.3}(iii)). 
Condition (V1) follows from Proposition~\ref{prop:1.3}(iii), 
(V2)(1) is clear, 
and (V2)(2) follows from the fact that, 
for $1 < k_1 < \ldots < k_N$, 
elements $u_j \in K(k_j)_{k_{j-1}}$ act on pairwise orthogonal subspaces. 
\smartqed\qed\end{proof}

\begin{proposition} \mlabel{prop:3.8} 
Any continuous unitary representation $(\pi, \cH)$ 
of $K = \U_\infty(\cH)_0$ restricts to a tame 
representation of the subgroup $K(\infty) = \U(\infty, \K)_0$. 
\end{proposition}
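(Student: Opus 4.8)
The plan is to reduce the tameness of $\pi|_{K(\infty)}$ to Kirillov's Lemma (Lemma~\ref{lem:p.1x}) by a direct-sum / Zorn argument. First I would observe that any continuous unitary representation of $K = \U_\infty(\cH)_0$ decomposes as an orthogonal direct sum of cyclic subrepresentations, so since tameness is inherited by direct sums (the fixed-point spaces add up), it suffices to treat the case where $\cH_\pi$ is cyclic, or even just to produce \emph{some} nonzero tame subrepresentation and then iterate with Zorn's Lemma on the family of closed invariant subspaces on which the restriction to $K(\infty)$ is tame.

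The key step is then the following: given any nonzero continuous representation $(\pi,\cH_\pi)$ of $K$, Kirillov's Lemma provides an $n\in\N$ and a nonzero vector $\xi\in\cH_\pi^{K_n}$, where $K_n$ is the stabilizer of $e_1,\dots,e_n$ in $K$. I would then look at the closed $K(\infty)$-invariant (indeed $K$-invariant) subspace $\cH_1$ generated by $\xi$. Since $\xi$ is fixed by $K_n$ and $K_n$ commutes with $K(n) = \U(n,\K)_0 \subeq K(\infty)$, for each $m\ge n$ the subspace $\cH_\pi^{K_m}$ contains $\pi(K(m))\xi$; more importantly, because $K(\infty) = \bigcup_m K(m)$ and any $g\in K(m)$ normalizes $K_m$ (in fact centralizes it, as $K_m$ acts on the orthogonal complement of $\Spann\{e_1,\dots,e_m\}$), the vector $\pi(g)\xi$ lies in $\cH_\pi^{gK_ng^{-1}}$, and one checks $gK_ng^{-1}\supseteq K_{m}$ for $g\in K(m)$, $m\ge n$. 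Hence $\pi(K(\infty))\xi \subeq \bigcup_m \cH_\pi^{K_m} = \cH_\pi^{\cT}$, so the cyclic subspace $\overline{\spann\,\pi(K(\infty))\xi}$ is spanned (densely) by vectors in $\cH_\pi^{\cT}$, i.e. the restriction of $\pi$ to $K(\infty)$ on this subspace is tame (Remark~\ref{rem:2.3}(d)).

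To finish, I would run Zorn's Lemma: let $\mathcal{F}$ be the set of closed $K$-invariant subspaces $\cM\subeq\cH_\pi$ such that $\pi|_{K(\infty)}$ restricted to $\cM$ is tame; this is closed under closures of increasing unions (since a directed union of dense-tame subspaces is again dense-tame, the fixed-point spaces $\cM^{K_j}$ being monotone in $\cM$), so it has a maximal element $\cM_0$. If $\cM_0\ne\cH_\pi$, apply the previous paragraph to the nonzero representation of $K$ on $\cM_0^{\perp}$ to obtain a nonzero tame $K$-invariant subspace of $\cM_0^\perp$; adding it to $\cM_0$ contradicts maximality. Hence $\cM_0 = \cH_\pi$ and $\pi|_{K(\infty)}$ is tame.

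The main obstacle is the bookkeeping in the middle step: verifying carefully that the $K(\infty)$-orbit of the $K_n$-fixed vector $\xi$ actually lands inside $\cH_\pi^{\cT} = \bigcup_j \cH_\pi^{K_j}$, which hinges on the commutation relations $[K(m),K_m]=\{\1\}$ and the inclusion $gK_ng^{-1}\supseteq K_m$ for $g\in K(m)$. Once that is in place, the direct-sum structure of representations of the bounded group $K$ together with Remark~\ref{rem:2.3}(d) and the Bruhat--Tits argument already encoded in Lemma~\ref{lem:2.2} make the rest routine.
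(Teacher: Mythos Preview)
Your argument is correct and follows essentially the same route as the paper: both hinge on Kirillov's Lemma to produce $K_n$-fixed vectors and then conclude that the whole representation is tame. The paper's version is simply more economical: instead of building up tame $K$-invariant pieces via Zorn, it directly takes $\cH_0 := \overline{\cH^\cT}$ (the maximal tame subspace, cf.\ Remark~\ref{rem:2.3}(b)), notes that this is $K$-invariant because $K(\infty)$ is norm-dense in $K$, and applies Kirillov's Lemma once to $\cH_0^\perp$ to get a $K_n$-fixed vector there---contradicting maximality. Your middle step, verifying that $\pi(g)\xi \in \cH^{K_m}$ for $g \in K(m)$ via $K_m \subeq gK_ng^{-1}$, is exactly the computation underlying the $K$-invariance of $\cH_0$ that the paper leaves implicit; so the content is the same, only the packaging differs.
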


\begin{proof} Let $\cH_0 \subeq \cH$ denote the maximal subspace on which the representation 
of $K(\infty)$ is tame, i.e., the  space of continuous vectors for the topology defined by 
the subgroups $K(\infty)_n$ (Remark~\ref{rem:2.3}). 
Lemma~\ref{lem:p.1x} implies that $\cH_0 \not=\{0\}$. 
If $\cH_0 \not=\cH$, then Lemma~\ref{lem:p.1x} implies the existence of non-zero continuous vectors 
in  $\cH_0^\bot$, which is a contradiction. 
\smartqed\qed\end{proof}

\begin{example} The preceding proposition does not extend to the 
non-connected group $\OO_\infty(\cH)$ which has $2$-connected components. 
The corresponding homomorphism 
\[ D \: \OO_\infty(\cH) \to \{ \pm 1 \} \] 
is non-trivial on all subgroups $\OO_\infty(\cH)_n$, $n \in \N$. 
\end{example}

\begin{lemma} \mlabel{lem:theta} Let $\cH$ be a $\K$-Hilbert space and 
$\cF \subeq \cH$ be a finite-dimensional subspace with 
$2 \dim \cF < \dim \cH$.\begin{footnote}{For $\K = \C,\H$, the 
condition $2\dim \cF \leq \dim \cH$ is sufficient.}   
\end{footnote}

We write $P_\cF \: \cH \to \cF$ for the orthogonal projection and
\[  C(\cF) := \{ A \in B(\cF) \: \|A\| \leq 1 \} \] 
for the semigroup of contractions on $\cF$. 
Then the map 
\[ \theta \: K = \U_\infty(\cH)_0 \to C(\cF), \quad 
\theta(g) = P_\cF g P_\cF^* \] 
is continuous, surjective and open. Its 
fibers are the double cosets of the pointwise stabilizer 
$K_\cF$ of~$\cF$. 

In particular, we obtain for $\cF = \Spann \{ e_1, \ldots, e_n\}$ a map 
\[ \theta \: K \to C(n,\K) := \{ X \in M(n,\K) \: \|X\| \leq 1\}, \quad 
\theta(k)_{ij} := \la k e_j, e_i \ra, \]  
which is continuous, surjective and open, and whose 
fibers are the double cosets $K_n k K_n$ for $k \in K$.
\end{lemma}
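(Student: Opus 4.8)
The plan is to establish the four assertions — continuity, surjectivity, the description of the fibres, and openness — in turn, the only genuinely delicate point being the bookkeeping of connected components in the real case (which is exactly what the strict inequality $2\dim\cF < \dim\cH$ is there for; for $\K = \C,\H$ the group $\U_\infty(\cH)$ is connected by Proposition~\ref{prop:1.3}(iii), and $2\dim\cF \le \dim\cH$ would suffice). \emph{Continuity} is immediate: on $K = \U_\infty(\cH)_0$ the norm topology is in force, and $g \mapsto P_\cF g P_\cF^*$ is the composition of $g \mapsto g$ with multiplication by the fixed bounded operators $P_\cF$ and $P_\cF^*$.

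For \emph{surjectivity} I would use the classical unitary dilation of a contraction. Given $A \in C(\cF)$, put $D_A := (\1_\cF - A^*A)^{1/2}$ and $D_{A^*} := (\1_\cF - AA^*)^{1/2}$; then $V := \pmat{A & D_{A^*} \\ D_A & -A^*}$ is unitary on $\cF \oplus \cF$ (the verification uses only $AD_A = D_{A^*}A$). Since $2\dim\cF \le \dim\cH$, identify the second summand with a subspace $\cF_1 \subeq \cF^\bot$ with $\dim\cF_1 = \dim\cF$ and extend $V$ by $\1$ on $(\cF \oplus \cF_1)^\bot$; this gives a unitary $g$ with $g - \1$ of finite rank and $\theta(g) = P_\cF V P_\cF^* = A$. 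For $\K = \C,\H$ we are done. For $\K = \R$ the element $g$ lies in $\OO_\infty(\cH)_0$ if and only if $\dim\ker(g + \1) = \dim\ker(V + \1)$ is even; here the strict inequality supplies a unit vector $f \in \cF^\bot \ominus \cF_1$, and replacing $g$ by its product with the reflection fixing $f^\bot$ flips this parity while leaving $\theta(g)$ unchanged.

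For the \emph{fibres}, one inclusion is a short computation: if $k_1, k_2 \in K_\cF$, each $k_i$ fixes $\cF$ pointwise, hence preserves $\cF^\bot$, so $P_\cF k_1 = P_\cF$ and $k_2 P_\cF^* = P_\cF^*$, giving $\theta(k_1 g k_2) = \theta(g)$. Conversely, assume $\theta(g_1) = \theta(g_2) = A$. From $\|g_i v\| = \|v\|$ for $v \in \cF$, write $g_i v = Av + W_i D_A v$ with $W_i$ a partial isometry from $\cF_0 := (\ker D_A)^\bot$ into $\cF^\bot$. I would then exhibit a unitary $u$ of $\cF^\bot$, equal to $\1$ off a finite-dimensional subspace, with $u W_1 = W_2$ on $\cF_0$; the room $\dim\cF^\bot \ge \dim\cF$ makes this possible, and for $\K = \R$ the spare dimension furnished by $2\dim\cF < \dim\cH$ lets one arrange that $k_1 := \1_\cF \oplus u$ lies in the identity component, hence in $K_\cF$. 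Then $k_1 g_1$ agrees with $g_2$ on $\cF$, so $k_2 := g_2^{-1} k_1 g_1$ fixes $\cF$ pointwise, i.e.\ $k_2 \in K_\cF$, and $g_2 = k_1 g_1 k_2^{-1} \in K_\cF g_1 K_\cF$.

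For \emph{openness} I would construct, around an arbitrary $g_0 \in K$ with $A_0 := \theta(g_0)$, a continuous local section of $\theta$ through $g_0$. Writing $g_0 v = A_0 v + W_0 D_{A_0} v$ for $v \in \cF$ with $W_0$ a partial isometry from $(\ker D_{A_0})^\bot$ into $\cF^\bot$, extend $W_0$ to an isometry $\widehat W_0 \: \cF \to \cF^\bot$ (again using $\dim\cF^\bot \ge \dim\cF$). For $A \in C(\cF)$ the map $\Phi_A \: v \mapsto Av + \widehat W_0 D_A v$ is an isometry of $\cF$ onto a subspace $M_A$ of the fixed finite-dimensional space $\cF + \widehat W_0(\cF)$, depending norm-continuously on $A$ with $\Phi_{A_0} = g_0|_\cF$. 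On a convex neighbourhood $W$ of $A_0$ one has $\|P_{M_A} - P_{M_{A_0}}\| < 1$, yielding a norm-continuous family of unitaries $\kappa_A \: M_{A_0}^\bot \to M_A^\bot$ with $\kappa_{A_0} = \1$ that are trivial off $\cF + \widehat W_0(\cF)$; then $\sigma(A) := \Phi_A \oplus (\kappa_A \circ g_0|_{\cF^\bot})$ is unitary with $\sigma(A) - g_0$ of finite rank and $\theta(\sigma(A)) = A$, and since $C(\cF)$ is convex the path $t \mapsto \sigma((1-t)A_0 + tA)$ joins $g_0$ to $\sigma(A)$ inside $\U_\infty(\cH)$, so $\sigma(A) \in K$. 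For any neighbourhood $N$ of $g_0$ the set $\sigma^{-1}(N)$ is a neighbourhood of $A_0$ contained in $\theta(N)$, so $\theta$ is open at $g_0$, and hence open. The final assertion of the lemma is the special case $\cF = \Spann\{e_1, \ldots, e_n\}$, with $C(\cF)$ identified with $C(n,\K)$ via $\theta(k)_{ij} = \la k e_j, e_i \ra$ and $K_\cF = K_n$. The step I expect to be the main obstacle is precisely the component bookkeeping for $\K = \R$: keeping every dilating and adjusting unitary inside $\U_\infty(\cH)_0 = K$ rather than merely in $\U_\infty(\cH)$, which is where the strict hypothesis $2\dim\cF < \dim\cH$ is genuinely used.
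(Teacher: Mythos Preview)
Your argument is correct. For surjectivity and the fibre description you follow essentially the same line as the paper: a unitary dilation of the contraction for surjectivity, and transitivity of $K_\cF$ on orthonormal frames in $\cF^\bot$ for the fibres. The only cosmetic difference is that the paper places the signs in its dilation $U_C = \pmat{C & \sqrt{\1-CC^*}\\ -\sqrt{\1-C^*C} & C^*}$ so that $\det U_C = 1$ holds automatically in the real case (checked by reducing to diagonal $C$), whereas your $V$ may land in the wrong component and you repair this with an extra reflection in the spare direction --- both are fine. The paper also organizes the fibre argument basis-vector-by-basis-vector rather than via your partial isometries $W_i$, but the underlying transitivity statement is identical.

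The genuine difference is in openness. The paper does not build a local section: instead it observes that $\theta(O) = \theta(K_n O K_n)$ and that every $K_n$-double coset meets the compact subgroup $K(2n+1)$, so it suffices to see that $\theta\res_{K(2n+1)}$ is open; this follows because $\theta\res_{K(2n+1)}$ is the quotient map for the $(K(2n+1)_n \times K(2n+1)_n)$-action of a compact group. This is shorter and avoids the functional-analytic bookkeeping of your $\kappa_A$-construction. Your local-section argument, on the other hand, is more self-contained --- it makes no appeal to a finite-dimensional subgroup and would work verbatim in settings where no such compact exhaustion is available.
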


\begin{proof} (i) Surjectivity: For $C \in C(\cF)$, the operator 
\[ U_C := \pmat{ 
C &   \sqrt{\1- CC^*} \\ 
-\sqrt{\1 - C^*C} &  C^*} \in B(\cF \oplus \cF) \] 
is unitary. In view of $2\dim \cF \subeq \cH$, we have an isometric 
embedding $\cF \oplus \cF \into \cH$, and each unitary operator on 
$\cF \oplus \cF$ extends to $\cH$ by the identity on the orthogonal complement. 
To see that the resulting operator in contained in $K$, it remains to 
see that $\det U_C = 1$ if $\K = \R$. To verify this claim, we first observe 
that, for $U_1, U_2 \in \UU_n(\K)$, we have 
\[ U_{U_1 C U_2} 
= \pmat{U_1 & 0 \\ 0 & U_2^*} U_C\pmat{U_2 & 0 \\ 0 & U_1^*},\] 
which implies in particular that $\det U_C = \det U_{U_1 C U_2}$. We may 
therefore assume that $C$ is diagonal, and in this case the assertion 
follows from the trivial case where $\dim \cF = 1$.  
This implies that $\theta$ is surjective. 

(ii) $\theta$ separates the double cosets of $K_\cF$: 
We may w.l.o.g.\ assume that $e_1, \ldots, e_n$ span~$\cF$. 
First we observe that, for $m < 2n$, the subgroup 
$K_m$ acts transitively on spheres in 
$\cH(m)^\bot$.\begin{footnote}
{Our assumption implies that $\dim \cH \geq 2$. This claim 
follows from the case where $\cH = \K^2$. 
Using the diagonal inclusion $\U(1,\K)^2 \into \U(2,\K)$, it suffices to 
consider vectors with real entries, which reduces the problem 
to the transitivity of the action of $\SO(2,\R)$ on the unit circle. 
Since the trivial group $\SO(1,\R)$ does not act trivially on 
$\bS^0 = \{\pm 1\}$, it is here where we need that 
$2\dim \cF < \dim \cH$.}
\end{footnote}

Suppose that $\theta(k) = \theta(k')$, i.e., that the first 
$n$ components of the vectors $ke_j$ and $k' e_j$, $j =1,\ldots. n$, 
coincide. Let $P \: \cH \to \cF^\bot$ 
denote the orthogonal projection. 
Then $\|P k e_1\| = \|Pk' e_1\|$, so that the argument in the preceding 
paragraph shows that there exists a $k_1 \in K_n$ 
with $k_1 Pke_1 = Pk' e_1$. This implies that $k_1 k e_1 = k' e_1$. 
Replacing $k$ by $k_1 k$, we may now assume that $k e_1 = k' e_1$. 
Then $\|P ke_2\| = \|P k' e_2\|$ and the scalar products of 
$P k e_2$ and $Pk' e_2$  with $P k e_1$ coincide. 
We therefore find an element $k_2 \in K_n$ fixing $P k e_1$, hence also 
$ke_1$,  and satisfying $k_2 P ke_2 = P k' e_2$, i.e., 
$k_2 k e_2 = k' e_2$. Inductively, we thus obtain 
$k_1,\ldots, k_n \in K_n$ with 
$k_n \cdots k_1 k e_j = k' e_j$ for $j = 1,\ldots, n$, and this 
implies that $k' \in k_n \cdots k_1 k K_n \subeq K_n k K_n$. 

(iii) It is clear that $\theta$ is continuous. To see that it is open, 
let $O \subeq K$ be an open subset. Then $\theta(O) = \theta(K_n O K_n)$, so that we 
may w.l.o.g.\ assume that $O = K_n O K_n$. From (i) and (ii) it follows that every 
$K_n$-double coset intersects $K(2n+1)$, so that 
$\theta(O) = \theta(O \cap K(2n+1))$. Therefore it is enough to observe that the 
restriction of $\theta$ to $K(2n+1)$ is open, which follows from the 
compactness of $K(2n+1)$ and the fact that 
$\theta\res_{K(2n+1)} \: K(2n+1) \to C(n,\K)$ is a quotient map. 
\smartqed\qed\end{proof}

For a continuous unitary representation 
$(\pi, \cH_\pi)$ of $K$, let $V := \cH_\pi^{K_n}$ denote the subspace of $K_n$-fixed vectors, 
$P \: \cH_\pi \to V$ be the orthogonal projection and 
$\pi_V(g) := P^* \pi(g)P$. Then $\pi_V$ is a $B(V)$-valued continuous positive definite 
function and Lemma~\ref{lem:theta} implies that we obtain a well-defined continuous map 
\[ \rho \: C(n,\K) \to B(V), \quad 
\rho(\theta(k)) := \pi_V(k) \quad \mbox{ for } \quad k \in K.\] 
The operator adjoint $*$ turns $C(n,\K)$ into an involutive semigroup, and we 
obviously have $\pi_V(k)^* = \pi_V(k^*)$. 

Olshanski's proof of the following lemma is based on the fact 
that the projection of the invariant probability 
measure on $\bS^n$ to an axis for $n \to \infty$ to the 
Dirac measure in $0$. 

\begin{lemma} {\rm(\cite[Lemma~1.7]{Ol78})} \mlabel{lem:3.10} 
The map $\rho$ is a continuous $*$-rep\-re\-sen\-ta\-tion of the involutive 
semigroup $C(n,\K)$ by contractions satisfying $\rho(\1) = \1$. 
\end{lemma}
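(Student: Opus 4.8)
The plan is to establish three things about $\rho$: that it is well-defined as a continuous map (this is already given by Lemma~\ref{lem:theta} and the remarks preceding the statement), that it satisfies $\rho(\1)=\1$ and $\rho(A^*)=\rho(A)^*$, and — the substantive part — that it is multiplicative, i.e. $\rho(AB)=\rho(A)\rho(B)$ for all $A,B\in C(n,\K)$. The identity $\rho(\1)=\1$ is immediate since $\1=\theta(\1)$ and $\pi_V(\1)=P^*\pi(\1)P=P^*P=\id_V$. The $*$-compatibility follows because $\theta(k^*)=\theta(k)^*$ and $\pi_V(k^*)=P^*\pi(k)^*P=\pi_V(k)^*$. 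So everything reduces to multiplicativity, and since $\theta$ is surjective it suffices to prove $\pi_V(k\ell)=\pi_V(k)\pi_V(\ell)$ whenever we can choose representatives $k,\ell$ with $\theta(k\ell)=\theta(k)\theta(\ell)$; but $\theta$ is not a homomorphism, so the real content is that for given $A,B$ one can find suitable $k,\ell$ making the composition work out, using the extra room in $\cH$.

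First I would unpack what multiplicativity means in terms of operators. Writing $A=\theta(k)=P_\cF k P_\cF^*$ and $B=\theta(\ell)=P_\cF \ell P_\cF^*$, one has $\theta(k)\theta(\ell)=P_\cF k P_\cF^* P_\cF \ell P_\cF^* = P_\cF k\, P_\cF\, \ell\, P_\cF^*$, where $P_\cF$ is viewed as the orthogonal projection $\cH\to\cF\subeq\cH$. So $\rho(AB)=\pi_V(k\ell)$ while $\rho(A)\rho(B)=\pi_V(k)\pi_V(\ell)$, and these need not agree for arbitrary $k,\ell$; the claim is that for \emph{every} pair $A,B\in C(\cF)$ there \emph{exist} $k,\ell\in K$ with $\theta(k)=A$, $\theta(\ell)=B$ and $\pi_V(k\ell)=\pi_V(k)\pi_V(\ell)$, which is enough since $\rho(A)$ depends only on $A$. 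The mechanism: enlarge the finite-dimensional subspace. Choose an auxiliary finite-dimensional subspace $\cG\perp\cF$ with $\dim\cG=\dim\cF$ (possible since $\dim\cH=\infty$), and use the explicit dilation from Lemma~\ref{lem:theta}(i): for $A\in C(\cF)$ one has the unitary $U_A$ on $\cF\oplus\cG\cong\cF\oplus\cF$ with $P_\cF U_A P_\cF^*=A$. If we realize $k:=U_A$ on $\cF\oplus\cG$ (identity elsewhere) and $\ell:=U_B$ likewise, then $k\ell$ is a unitary supported on $\cF\oplus\cG$; but $P_\cF(k\ell)P_\cF^*$ need not be $AB$. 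The fix is the standard trick of using a third copy: work with the dilation of $A$ acting on $\cF$ into a \emph{fresh} block for each factor, so that the composition of dilations, when compressed back to $\cF$, gives the product $AB$ — this is exactly Naimark/Sz.-Nagy dilation multiplicativity for contractions, and it is where the passage from ``unitary operators'' to ``contractions'' is encoded.

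Here is where I expect the main obstacle, and where Olshanski's original argument (via the projection of the uniform measure on $\bS^N$ collapsing to the Dirac mass at $0$) does its work. The point is that compressing a product of dilations to $\cF$ does \emph{not} literally give $AB$ on the nose; it gives $AB$ plus a correction term coming from the ``off-diagonal'' parts of the first dilation feeding into the second. To kill this correction one averages: one chooses the intermediate block to be a high-dimensional sphere and uses that the averaged contribution of the cross terms vanishes as the dimension grows, by the concentration statement quoted before the lemma (the projection of the $\U(m)$-invariant measure on the unit sphere of $\cH(m)^\perp$ onto a fixed axis converges weakly to $\delta_0$). Concretely, one writes $\pi_V(k\ell)$ as an average over $K_n$ (or over a large unitary subgroup acting on the auxiliary block and commuting appropriately with the relevant projections) of expressions $\pi_V(k)\pi_V(u)\pi_V(\ell)$, and observes that $\pi_V(u)$ is a contraction-valued positive definite function whose average against that measure tends to $\rho(0)=\pi_V$ of an element with vanishing compression, i.e. to an operator that drops out, leaving precisely $\pi_V(k)\pi_V(\ell)$. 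The continuity of $\rho$ (already part of the hypothesis, from Lemma~\ref{lem:theta}) lets us pass to the limit. Assembling: $\rho(\1)=\1$, $\rho(A^*)=\rho(A)^*$ trivially; $\|\rho(A)\|=\|\pi_V(k)\|\leq\|P^*\|\,\|\pi(k)\|\,\|P\|\leq 1$ gives the contraction bound; and the averaging argument gives $\rho(AB)=\rho(A)\rho(B)$. The only genuinely delicate estimate is the weak convergence of the sphere averages, which I would cite from the measure-theoretic fact stated just above Lemma~\ref{lem:3.10} rather than reprove.
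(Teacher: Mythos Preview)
Your identification of the key tool—concentration of Haar measure on large compact subgroups of $K_n$ under compression to a fixed finite block, i.e.\ the weak convergence of projected sphere measures to $\delta_0$—is exactly what the paper cites as Olshanski's method, and the easy parts ($\rho(\1)=\1$, $\rho(A^*)=\rho(A)^*$, $\|\rho(A)\|\le 1$) are fine as you state them.

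However, the mechanism you describe for multiplicativity is inverted and, as written, proves nothing. You propose to write $\pi_V(k\ell)$ as an average of $\pi_V(k)\pi_V(u)\pi_V(\ell)$ over $u$ in a large compact subgroup of $K_n$. But for $u\in K_n$ one has $\theta(u)=\1$, hence $\pi_V(u)=\rho(\1)=\1$; the average is then trivially $\pi_V(k)\pi_V(\ell)$ with no concentration involved, and you have established no relation between this and $\pi_V(k\ell)=\rho(\theta(k\ell))$, which in general is \emph{not} $\rho(AB)$. The argument runs the other way. Since $V=\cH_\pi^{K_n}$ and $\bigcup_m K(m)_n$ is dense in $K_n$, the projection $P$ onto $V$ is the strong limit of the Haar averages $P_m:=\int_{K(m)_n}\pi(u)\,du$. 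Choosing $k,\ell\in K(2n+1)$ with $\theta(k)=A$, $\theta(\ell)=B$, one replaces the \emph{middle} $P$ in $\rho(A)\rho(B)=P\pi(k)P\pi(\ell)P$ by $P_m$ to get
\[
\rho(A)\rho(B)=\lim_{m\to\infty}\int_{K(m)_n} P\,\pi(ku\ell)\,P\,du=\lim_{m\to\infty}\int_{K(m)_n}\rho\big(\theta(ku\ell)\big)\,du,
\]
and a direct computation gives $\theta(ku\ell)=AB + P_\cF k\,\big(P_{\cH(2n+1)\ominus\cF}\,u\,P_{\cH(2n+1)\ominus\cF}\big)\,\ell\,P_\cF^*$. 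The concentration fact now says that the distribution of the compressed block $P_{\cH(2n+1)\ominus\cF}\,u\,P_{\cH(2n+1)\ominus\cF}$ under Haar measure on $K(m)_n$ tends weakly to $\delta_0$; hence $\theta(ku\ell)\to AB$ in distribution, and continuity plus boundedness of $\rho$ give $\rho(A)\rho(B)=\rho(AB)$. The point you missed is that the average must be inserted \emph{inside} the product $\pi(k)\,\pi(u)\,\pi(\ell)=\pi(ku\ell)$, not between already-compressed factors $\pi_V(\cdot)$; it is $\theta(ku\ell)$, not $\pi_V(u)$, whose limiting behaviour is governed by the sphere phenomenon.
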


Using Zorn's Lemma, we conclude that $\pi$ is a direct sum of subrepresentations 
for which the subspace of $K_n$-fixed vectors is cyclic for some $n \in \N$. 
We may therefore 
assume that $V = (\cH_\pi)^{K_n}$ is cyclic in $\cH_\pi$. Then 
the representation $\pi$ is equivalent to the 
GNS-representation of $K$, defined by the positive definite function $\pi_V$ 
(Remark~\ref{rem:gns}). 
Since the subspace $V = (\cH_\pi)^{K_n}$ is obviously invariant under the commutant 
$\pi(K)'$ of $\pi(K)$, the cyclicity of $V$ implies that we have an injective map 
\[ \pi(K)' \to \rho(C(n,\K))' \subeq B(V)\] 
which actually is an isomorphism because $\pi_V(K)= \rho(C(n,\K))$ 
is a semigroup 
(Proposition~\ref{prop:b.1}).

Therefore the structure of $\pi$ is completely encoded in the representation $\rho$ 
of the semigroup $C(n,\K)$. We therefore have to understand the 
$*$-rep\-re\-sen\-tations $(\rho,V)$ of $C(n,\K)$ for which 
the $B(V)$-valued function $\rho\circ\theta \: K \to B(V)$ 
is positive definite. 

\begin{definition}
{\rm We call a $*$-representation $(\rho,V)$ of $C(n,\K)$ 
{\it $\theta$-positive} if the corresponding function 
$\rho \circ \theta \: K \to B(V)$ is positive definite. }
\end{definition}

If $\rho$ is $\theta$-positive, then we obtain a 
continuous unitary GNS-representation 
$(\pi_\rho, \cH_\rho)$ of $K$ containing a $K$-cyclic 
subspace $V$ such that the orthogonal projection 
$P \: \cH_\rho \to V$ satisfies 
$P\pi_\rho(g) P^* = \rho(\theta(g))$ for $g \in K$ (cf.\ Remark~\ref{rem:gns}). 
The following lemma shows that 
we can recover $V$ as the space of $K_n$-fixed vectors in $\cH_\rho$. 

\begin{lemma} \mlabel{lem:recover} $(\cH_\rho)^{K_n} = V$. 
\end{lemma}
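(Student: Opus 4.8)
The plan is to show both inclusions. The inclusion $V \subeq (\cH_\rho)^{K_n}$ is immediate: the GNS construction gives $P\pi_\rho(g)P^* = \rho(\theta(g))$ for all $g \in K$, and since $\theta(K_n) = \{\1\}$ (the double coset $K_n \1 K_n$ is the fibre of $\theta$ over $\1$), we get $\rho(\theta(k)) = \rho(\1) = \1_V$ for $k \in K_n$. Thus $P\pi_\rho(k)P^* = \1_V$ for $k \in K_n$, which forces $\pi_\rho(k)$ to fix $V$ pointwise, so $V \subeq (\cH_\rho)^{K_n}$.

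For the reverse inclusion $(\cH_\rho)^{K_n} \subeq V$, first I would recall that $V$ is $K$-cyclic in $\cH_\rho$, so $\cH_\rho = \overline{\spann}\, \pi_\rho(K)V$. Let $\xi \in (\cH_\rho)^{K_n}$; I must show $\xi \in V$, equivalently $P^*P\xi = \xi$, equivalently $(\1 - P^*P)\xi \perp \xi$, or most cleanly that $\langle \xi, \pi_\rho(g)v\rangle = \langle \xi, P^*P\,\pi_\rho(g)v\rangle$ for all $g \in K$, $v \in V$. Since $P\xi \in V$ we have $\langle \xi, \pi_\rho(g)v\rangle = \langle P\xi, P\pi_\rho(g)v\rangle + \langle(\1-P^*P)\xi,\pi_\rho(g)v\rangle$, so it suffices to show the second term vanishes. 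The key averaging trick is to use $K_n$-invariance of $\xi$: for $k \in K_n$, $\langle(\1-P^*P)\xi,\pi_\rho(g)v\rangle = \langle(\1-P^*P)\pi_\rho(k)\xi,\pi_\rho(g)v\rangle = \langle(\1-P^*P)\xi,\pi_\rho(k^{-1}g)v\rangle$ (using that $P^*P$ commutes with $\pi_\rho(K_n)$, since $V$ is $\pi_\rho(K_n)$-fixed). So the pairing depends only on the left coset $K_n g$, hence only on $\theta(g)$ up to a further right $K_n$-coset which I handle the same way — and I can integrate over $K_n$ (compact? no — but one can average over a suitable compact subgroup $\U(m,\K)_0 \cap K_n = K(m)_n$, using Lemma~\ref{lem:theta} surjectivity to reach $\theta(g)$ from elements of $K(2n+1)$, where genuine Haar averaging is available). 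Concretely: replace $g$ by an element $g' \in K(2n+1)$ with $\theta(g') = \theta(g)$, write $P\xi \in V$, and use that $\langle \xi, \pi_\rho(g')v\rangle$ can be computed after averaging $\xi$ — which is fixed — against Haar measure on the compact group $K_n \cap K(2n+1)$ acting on the left, which projects $\pi_\rho(g')v$ into the $K_n\cap K(2n+1)$-fixed vectors; a parallel right-average lands it in $V$ itself, giving $\langle \xi, \pi_\rho(g')v\rangle = \langle P\xi, P\pi_\rho(g')v\rangle$.

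The cleaner formulation I would actually write: since $\theta\res_{K(2n+1)} \: K(2n+1) \to C(n,\K)$ is surjective with fibres the $(K_n \cap K(2n+1))$-double cosets (Lemma~\ref{lem:theta} applied inside the compact group $K(2n+1) = \U(2n+1,\K)_0$), for any $g \in K$ pick $g_0 \in K(2n+1)$ with $\theta(g_0) = \theta(g)$. Let $Q$ denote the orthogonal projection of $\cH_\rho$ onto $(\cH_\rho)^{K_n \cap K(2n+1)}$, realized by Haar-averaging $\pi_\rho$ over the compact group $K_n\cap K(2n+1)$. Then for $\xi \in (\cH_\rho)^{K_n} \subeq (\cH_\rho)^{K_n\cap K(2n+1)}$ and $v \in V$, averaging both sides of $\langle\xi,\pi_\rho(g_0)v\rangle$ on the left and right over $K_n\cap K(2n+1)$ leaves $\langle\xi,\pi_\rho(g_0)v\rangle$ unchanged but replaces $\pi_\rho(g_0)v$ by a vector $w$ satisfying $P^*Pw = w$, i.e.\ $w \in V$; moreover $P w = \rho(\theta(g_0))Pv$ by the GNS identity and the double-coset description, so $\langle\xi,\pi_\rho(g)v\rangle$ depends on $\xi$ only through $P\xi$, for every $g$ and $v$. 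By cyclicity of $V$ this means $P\xi$ and $\xi$ induce the same linear functional on $\cH_\rho$, hence $\xi = P^*P\xi \in V$.

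The main obstacle is the non-compactness of $K_n$: one cannot average over all of $K_n$ directly, so the argument must be localized to the compact subgroups $K(m)_n$ and the reduction of double cosets to $K(2n+1)$ furnished by Lemma~\ref{lem:theta} must be invoked carefully — in particular checking that the GNS identity $P\pi_\rho(g)P^* = \rho(\theta(g))$ together with the double-coset structure of $\theta$ genuinely lets one compute $P\pi_\rho(g_0)v$ as $\rho(\theta(g))Pv$ independently of the choice of representative $g_0$. Once that bookkeeping is in place the argument is a routine invariance/averaging computation.
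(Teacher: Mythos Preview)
Your proof of $V \subseteq (\cH_\rho)^{K_n}$ is fine and matches the paper.

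For the reverse inclusion your strategy is genuinely different from the paper's, and as written it has a gap. The paper does not average at all: it sets $W := (\cH_\rho)^{K_n}$, observes via Lemma~\ref{lem:3.10} that $\nu(\theta(g)) := Q\pi_\rho(g)Q^*$ (with $Q$ the projection onto $W$) is again a $*$-representation of $C(n,\K)$, checks that $V$ is $\nu$-cyclic in $W$ because $Q\pi_\rho(g)V = \nu(\theta(g))V$, and then invokes Remark~\ref{rem:b.6}: since the compression $\rho(s) = P\nu(s)P^*$ is multiplicative and $V$ is cyclic in $W$, one must have $V = W$. Multiplicativity of $\rho$ does all the work; no compactness or averaging is needed.

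In your argument the decisive step is the claim that after averaging $\pi_\rho(g_0)v$ on the left over the compact group $H := K_n \cap K(2n{+}1)$ the resulting vector $w = Q_H\,\pi_\rho(g_0)v$ satisfies $P^*Pw = w$, i.e.\ $w \in V$. But Haar averaging over $H$ only projects onto $(\cH_\rho)^H$, and since $H$ is a small compact subgroup of the non-compact group $K_n$, the space $(\cH_\rho)^H$ is in general strictly larger than $(\cH_\rho)^{K_n}$, let alone $V$. You correctly compute $Pw = \rho(\theta(g_0))v$, but that is only the $V$-component of $w$, not $w$ itself; nothing you have written rules out a nonzero component of $w$ in $(\cH_\rho)^H \ominus V$. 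To close this gap you would need to know that the $H$-fixed part of the $K(2n{+}1)$-cyclic span of $V$ already equals $V$, which is essentially a finite-rank instance of the statement you are proving and requires an independent argument (for instance, exactly the multiplicativity trick the paper uses, applied to $K(2n{+}1)$). Passing to larger $m$ and letting $K(m)_n \nearrow K_n$ does not help either: the limit of the averages is $Q\pi_\rho(g_0)v \in W$, and you are back to asking whether $W = V$.
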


\begin{proof} Since $\rho\circ \theta$ is $K_n$-biinvariant, 
the subspace $V$ consists of $K_n$-fixed vectors because 
$K$ acts in the corresponding subspace $\cH_{\rho \circ \theta} \subeq V^K$ 
by right translations (cf.\ Remark~\ref{rem:gns}). 
Let $W := (\cH_\rho)^{K_n}$ and $Q \: \cH_\rho \to W$ denote 
the corresponding orthogonal projection. 
Then $\nu(\theta(g)) := Q \pi_\rho(g) Q^*$ defines a contraction representation 
$(\nu,W)$ of $C(n,\K)$ (Lemma~\ref{lem:3.10}) and, for $s \in C(n,\K)$, we have
$\rho(s) = P \nu(s)P^*$. 

In $\cH$ the subspace $V$ is $K$-cyclic. Therefore the subspaces 
$Q \pi(g) V$, $g \in K$, span $W$. In view of 
$Q\pi(g)V = \nu(\theta(g)) V$, this means that 
$V \subeq W$ is cyclic for $C(n,\K)$. Now Remark~\ref{rem:b.6} implies that 
$V = W$. 
\smartqed\qed\end{proof}

We subsume the results of this subsection in the following proposition. 

\begin{proposition} 
Let $(\rho,V)$ be a continuous $\theta$-positive 
$*$-representation of $C(n,\K)$ by contractions and 
$\phi := \rho \circ \theta$. Then the corresponding GNS-representation 
$(\pi_\phi, \cH_\phi)$ of $K$ is continuous with cyclic subspace 
$V \cong (\cH_\phi)^{K_n}$ and $(\pi_\phi)_V = \phi = \rho \circ \theta$. 
This establishes a one-to-one correspondence of 
$\theta$-positive continuous $*$-representation of $C(n,\K)$ 
and continuous unitary representations $(\pi, \cH_\pi)$ of $K$ generated by 
the subspace $(\cH_\pi)^{K_n}$ of $K_n$-fixed vectors. This correspondence 
preserves direct sums of representations. 
\end{proposition}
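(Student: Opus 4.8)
The plan is to assemble the one-to-one correspondence from the pieces already in place in this subsection, checking separately that the map $(\rho,V)\mapsto (\pi_\phi,\cH_\phi)$ and its inverse $\pi\mapsto ((\pi)_V, (\cH_\pi)^{K_n})$ are mutually inverse up to the appropriate equivalences, and that both directions respect direct sums. First I would fix a continuous $\theta$-positive $*$-representation $(\rho,V)$ by contractions and form $\phi := \rho\circ\theta$. By hypothesis $\phi$ is a positive definite $B(V)$-valued function on $K$, and it is continuous because $\theta$ is continuous (Lemma~\ref{lem:theta}) and $\rho$ is continuous; moreover it is $K_n$-biinvariant since $\theta$ is constant on the double cosets $K_n k K_n$. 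Invoking the operator-valued GNS construction (Remark~\ref{rem:gns}) produces a continuous unitary representation $(\pi_\phi,\cH_\phi)$ of $K$ together with an isometric embedding $V\hookrightarrow\cH_\phi$ whose orthogonal projection $P$ satisfies $P\pi_\phi(g)P^* = \phi(g) = \rho(\theta(g))$ and for which $V$ is $K$-cyclic. Continuity of $\pi_\phi$ follows from continuity of $\phi$ together with the fact that $V$ is cyclic, so the continuous vectors are dense and hence (Remark~\ref{rem:2.3}(a)) everything.

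The key identification is then $(\cH_\phi)^{K_n} = V$, which is exactly Lemma~\ref{lem:recover} applied to this $\rho$; this is the step I expect to carry the weight, since it is what pins down $V$ intrinsically inside $\cH_\phi$ rather than as auxiliary data. Granting it, the relation $(\pi_\phi)_V(g) = P^*\pi_\phi(g)P$ evaluated on $V = (\cH_\phi)^{K_n}$ reads $(\pi_\phi)_V = \phi = \rho\circ\theta$, which gives back $\rho$ via surjectivity of $\theta$. Conversely, starting from a continuous unitary representation $(\pi,\cH_\pi)$ of $K$ that is generated by $V := (\cH_\pi)^{K_n}$, Lemma~\ref{lem:3.10} shows $\rho(\theta(k)) := P^*\pi(k)P$ is a well-defined continuous $\theta$-positive $*$-representation of $C(n,\K)$ by contractions with $\rho(\1)=\1$; its associated GNS representation is equivalent to $\pi$ because $V$ is cyclic in $\cH_\pi$ and the positive definite function $(\pi)_V = \rho\circ\theta$ determines the GNS representation up to unitary equivalence (Remark~\ref{rem:gns}). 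Unwinding the two composites shows they are the identity up to unitary equivalence, establishing the bijection.

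Finally, for compatibility with direct sums: if $(\rho,V) = \bigoplus_i (\rho_i, V_i)$, then $\phi = \rho\circ\theta = \bigoplus_i \phi_i$ as $B(V)$-valued positive definite functions, and the GNS construction transforms an orthogonal direct sum of positive definite functions into the orthogonal direct sum of the corresponding GNS representations (with the cyclic subspaces adding up accordingly); conversely an orthogonal decomposition $\cH_\pi = \bigoplus_i \cH_i$ into $K$-invariant subspaces, each generated by its own $K_n$-fixed vectors, induces $V = \bigoplus_i (\cH_i)^{K_n}$ and hence $\rho = \bigoplus_i \rho_i$. The only genuine subtlety beyond bookkeeping is ensuring in the backward direction that a subrepresentation of a representation generated by $K_n$-fixed vectors is again of that form — but this is immediate since $(\cH_i)^{K_n} \subseteq (\cH_\pi)^{K_n} = V$ and $\cH_i$ is by assumption generated by $V \cap \cH_i = (\cH_i)^{K_n}$. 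So the correspondence and its inverse are additive, completing the proof.
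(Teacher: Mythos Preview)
Your proposal is correct and follows exactly the route the paper takes: the proposition is presented there as a summary of the preceding discussion, and you have assembled precisely the same ingredients---Lemma~\ref{lem:theta} for the properties of $\theta$, Remark~\ref{rem:gns} for the GNS construction, Lemma~\ref{lem:3.10} for the passage from $\pi$ to $\rho$, and the key Lemma~\ref{lem:recover} for the identification $(\cH_\phi)^{K_n}=V$---into the bijection, with more explicit detail on direct-sum compatibility than the paper itself supplies. One cosmetic point: in two places you write $P^*\pi(g)P$ where you mean $P\pi(g)P^*$ (with $P\colon\cH\to V$ the projection and $P^*$ the inclusion), but this does not affect the argument.
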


\subsection{$\theta$-positive representations of $C(n,\K)$} 
\mlabel{subsec:3.3}

For $\K = \R,\H$, let $Z := ]0,1]\1 \subeq C(n,\K)$ be the central subsemigroup 
of real multiples of~$\1$. Then the continuous bounded characters
of $Z$ are of the form $\chi_s(r) := r^s$, $s \geq 0$. 
Any continuous $*$-representation $(\pi, V)$ of $Z$ by contractions 
determines a spectral measure $P$ on $\hat Z := \R_+$ satisfying 
$\pi(r\1) = \int_0^\infty r^s\, dP(s)$ 
(cf.\ \cite{BCR84}, \cite[VI.2]{Ne00}). 

For $\K = \C$, the subsemigroup $Z := \{ z \in \C^\times \1 \: |z| \leq 1\} 
\cong\,\, ]0,1] \times \T$ is also central in $C(n,\C)$. 
Its continuous bounded characters are of the form 
$\chi_{s,n}(re^{it}) := r^s e^{int}$, $s \geq 0, n \in \Z$. 
Accordingly, continuous $*$-representation of $Z$ by contractions 
correspond to spectral measures on $\hat Z := \R_+ \times \Z$. 

Let $(\rho, V)$ be a continuous (with respect to the weak operator topology on $B(V)$) 
$*$-representation of $C(n,\K)$ by contractions. Since the spectral projections 
for the restriction $\rho_Z := \rho\res_Z$ lie in the commutant of 
$\rho(C(n,\K))$, the representation $\rho$ is a 
direct sum of subrepresentations for which the support of the spectral measure of 
$\rho_Z$ is a compact subset of $\hat Z$. We call these representations 
{\it centrally bounded}. 
Then the operators $\rho(r\1)$ are invertible for $r > 0$, and this 
implies that 
\[ \hat\rho(r M) := \rho(r^{-1} \1)^{-1} \rho(M) \quad \mbox { for } \quad 
r > 1, M \in C(n,\K), \] 
yields a well-defined extension $\hat\rho$ of $\rho$ to a continuous 
$*$-representation of the multiplicative $*$-semigroup 
$(M(n,\K),*)$ on $V$. 
For a more detailed analysis of the decomposition theory, we may therefore restrict 
our attention to centrally bounded representations. Decomposing further as a 
direct sum of cyclic representations, it even suffices to consider separable 
centrally bounded representations. 

The following proposition contains the key new points compared with 
Olshanski's approach in \cite{Ol78}. Note that it is very close to 
the type of reasoning used in \cite{JN13} for the classification 
of the bounded unitary representations of $\SU_2(\cA)$. 

\begin{proposition} \mlabel{prop:new} For every centrally bounded 
contraction representation $(\rho,V)$ of $C(n,\K)$, the following 
assertions hold: 
\begin{description}
\item[\rm(i)] The restriction of $\hat\rho$ to $\GL(n,\K)$ is a 
norm-continuous representation whose differential 
$\dd\hat\rho \: \gl(n,\K) \to B(V)$ is a representation of the Lie algebra 
$\gl(n,\K)$ by bounded operators. 
\item[\rm(ii)] If $\rho$ is $\theta$-positive, then 
$\hat\rho$ is real analytic on $M(n,\K)$ 
and extends to a holomorphic semigroup representation of the complexification 
\[  M(n,\K)_\C \cong 
\begin{cases}
M(n,\C)  & \text{ for } \K = \R\\ 
M(n,\C) \oplus M(n,\C) \cong 
B(\C^n) \oplus B(\oline{\C^n})&  \text{ for } \K = \C \\ 
M(n,M(2,\C)) \cong M(2n,\C)& \text{ for } \K = \H.  \\
\end{cases} \]
\end{description}
\end{proposition}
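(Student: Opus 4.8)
The strategy is to prove (i) first and then bootstrap from a norm-continuous Lie group representation of $\GL(n,\K)$ to real-analyticity and holomorphic extension in (ii), using the earlier extension theorem (Theorem~\ref{thm:extension}) together with positive definiteness of $\phi = \rho\circ\theta$ on $K$.

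For (i), the point is that central boundedness makes the operators $\hat\rho(r\1)$ invertible for $r>0$ with $\|\hat\rho(r\1)\| \le r^{-c}$ on the given compact spectral support, so $\hat\rho$ is a genuine $*$-representation of $(M(n,\K),*)$ by operators whose norms are locally bounded on $\GL(n,\K)$. I would first observe that for any contraction representation, $s \mapsto \rho(s)$ is separately (weakly, hence strongly) continuous in the operator topology; combined with local norm-boundedness of $\hat\rho$ on $\GL(n,\K)$ and the group structure, one upgrades this to norm-continuity of $\hat\rho\vert_{\GL(n,\K)}$ by a standard argument (a weakly continuous bounded representation of a Banach--Lie group that is locally bounded in norm is norm-continuous — this is where one uses that $\GL(n,\K)$ is finite-dimensional, so one can average/smooth). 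Once $\hat\rho\vert_{\GL(n,\K)}$ is norm-continuous it is automatically a morphism of (finite-dimensional) Banach--Lie groups, so it is smooth and its differential $\dd\hat\rho\colon \gl(n,\K)\to B(V)$ is a bounded-operator-valued Lie algebra representation. I expect this step to be essentially soft.

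For (ii), the main idea is that $\theta$-positivity gives, via the GNS construction already set up (Proposition before Subsection~\ref{subsec:3.3}), a continuous unitary representation $\pi_\phi$ of the \emph{bounded} topological group $K = \U_\infty(\cH)_0$ with cyclic $K_n$-fixed space $V$, and the function $g \mapsto \rho(\theta(g))$ is positive definite on $K$. Restricting to the subgroup $K(2n+1) \cong \U(2n+1,\K)_0$, which is a compact Lie group, and using the polar-type decomposition $U_C$ of Lemma~\ref{lem:theta} that realizes each contraction $C\in C(n,\K)$ inside $\U(2n+1,\K)_0$, one sees that $\rho$ restricted along $\theta$ is controlled by a positive definite function on a compact group; but more usefully, the overgroup picture of Section~\ref{sec:2} (Lemma~\ref{lem:k-grp}) identifies the complexification $\GL(n,\K)_\C$ with $K^\sharp$-type groups whose Lie algebras are exactly $\gl(n,\K)^c$, matching the three cases $M(n,\C)$, $M(n,\C)\oplus M(n,\C)$, $M(2n,\C)$ displayed in the statement. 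Concretely, I would apply Theorem~\ref{thm:extension} (the slight generalization of \cite[Thm.~A.7]{Ne12}) to the positive definite function $\phi$ and the $*$-structure $\tau(g)=(g^*)^{-1}$ on $\GL(n,\K)$: $\theta$-positivity says precisely that the cone condition needed to holomorphically extend $\phi$ along the ``imaginary'' directions $i\Herm(n,\K)$ is met, which yields real-analyticity of $\hat\rho$ on $M(n,\K)$ and a holomorphic extension to a contraction semigroup representation of the complexification $M(n,\K)_\C$. The identification of $M(n,\K)_\C$ in the three cases is the computation already recorded in Example~\ref{ex:2.3}: for $\K=\R$ one gets $M(n,\C)$; for $\K=\C$, the splitting $\C^n \oplus \oline{\C^n}$ of Example~\ref{ex:2.3}(C) gives $M(n,\C)\oplus M(n,\C)$; for $\K=\H$, passing to the underlying complex space doubles the size to $M(2n,\C)$.

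\textbf{Main obstacle.} The hard part is making precise the holomorphic extension in (ii) — verifying that $\theta$-positivity of $\rho$ is exactly the hypothesis needed to invoke Theorem~\ref{thm:extension}, i.e.\ that the positive definiteness of $\rho\circ\theta$ on the bounded group $K$ translates into positivity of the appropriate reproducing kernel on the ``tube'' over $\GL(n,\K)$ so that the representation $\dd\hat\rho$ of $\gl(n,\K)$ integrates to a \emph{holomorphic} semigroup representation of $M(n,\K)_\C$ by contractions, rather than merely a real-analytic one. In particular one must check the norm estimates (contractivity of $\hat\rho$ on the image of the polar map, boundedness of the extension off the unitary part) survive complexification; this is where the earlier boundedness machinery (Lemma~\ref{lem:1.4}, Proposition~\ref{prop:1.6}) and the explicit contraction $U_C$ of Lemma~\ref{lem:theta} do the real work, and where one most needs the precise form of Theorem~\ref{thm:extension}.
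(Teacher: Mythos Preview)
Your outline has real gaps in both parts, and in (ii) you are reaching for the wrong tool.

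\textbf{Part (i).} The step ``weakly continuous and locally norm-bounded $\Rightarrow$ norm-continuous'' is not soft for a \emph{non-unitary} representation of the non-compact group $\GL(n,\K)$; averaging with $C^\infty_c$ only gives you a dense space $V^\infty$ of smooth vectors, not a bounded differential. The paper's actual argument uses the \emph{contraction} property in an essential order-theoretic way: since $\rho$ sends contractions to contractions, the derived representation satisfies $\dd\hat\rho(X)\ge 0$ for $X=X^*\ge 0$; combined with $X\le\|X\|\1$ and the boundedness of $\dd\hat\rho(\1)$ (which is exactly central boundedness), one gets $0\le \dd\hat\rho(X)\le \|X\|\,\dd\hat\rho(\1)$ on the positive cone. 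Then the identity $\fu(n,\K)\subeq \fz(\gl(n,\K))+[\Herm(n,\K),\Herm(n,\K)]$ propagates boundedness to all of $\gl(n,\K)$, and only then does $\hat\rho(\exp X)=e^{\dd\hat\rho(X)}$ yield norm-continuity. You are missing this positivity/order step entirely.

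\textbf{Part (ii).} You invoke Theorem~\ref{thm:extension}, but that theorem extends a positive definite function from an open set $UU^{-1}$ to the whole \emph{same} group; it says nothing about complexification, tubes, or cone conditions, and there is no ``cone hypothesis'' to verify. The paper's route is both different and much simpler: near $\1\in K$ the map $\theta$ lands in $\GL(n,\K)$, so by part~(i) the function $\phi=\rho\circ\theta=\hat\rho\circ\theta$ is \emph{analytic} on a neighborhood of $\1$; then Theorem~\ref{thm:locana} (not~\ref{thm:extension}) upgrades this to analyticity of $\phi$ on all of $K$. Next one uses the explicit analytic section $\sigma(C)=U_C$ from Lemma~\ref{lem:theta} on the open ball $\Omega=\{\|C\|<1\}$ to get $\rho(C)=\phi(\sigma(C))$ analytic there, and the scaling identity $\hat\rho(rC)=\hat\rho(r\1)\rho(C)$ spreads analyticity to all of $M(n,\K)$. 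Finally the holomorphic extension to $M(n,\K)_\C$ is obtained by the same elementary scaling trick: real-analyticity gives a holomorphic extension on a small ball $B_\eps\subeq M(n,\K)_\C$, and $F(rC)=\hat\rho(r\1)F(C)$ lets you dilate $B_\eps$ to cover everything; multiplicativity follows by analytic continuation. No overgroup machinery, no tube domains, and no appeal to Lemma~\ref{lem:1.4} or Proposition~\ref{prop:1.6} are needed here.
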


\begin{proof} (i) We have already seen that $\hat\rho \: M(n,\K) \to B(V)$ 
is locally bounded and continuous. Hence it restricts to a locally bounded 
continuous representation of the involutive Lie group 
$(\GL(n,\K),*)$. 
Integrating this representation to the convolution algebra $C^\infty_c(\GL(n,\K))$, 
we see that 
the subspace $V^\infty$ of smooth vectors is dense. 
For the corresponding derived representation 
\[ \dd\hat\rho \:  \gl(n,\K) \to \End(V^\infty) \] 
our construction immediately implies that the operator 
$\dd\hat\rho(\1)$ is bounded and 
$\dd\hat\rho(X) \geq 0$ for $X = X^*\geq 0$ because we started 
with a contraction representation of $C(n,\K)$. 
From $X \leq \|X\| \1$ we also derive 
$\dd\hat\rho(X) \leq \|X\| \dd\hat\rho(\1)$, so that 
$\dd\rho(X)$ is bounded. 
As $\fu(n,\K) \subeq \fz(\gl(n,\K)) + [\Herm(n,\K), \Herm(n,\K)]$, we conclude that 
$\dd\hat\rho$ is a $*$-representation by bounded operators on the 
Hilbert space~$V$. 

For $X \in \gl(n,\K)$, we then have the relation 
\[ \hat\rho(\exp X) = e^{\dd\hat\rho(X)} \quad \mbox{ for } \quad X \in \gl(n,\K).\] 
This implies that $\hat\rho \: \GL(n,\K) \to \GL(V)$ is norm-continuous. 

(ii) Now we assume that $\phi := \rho \circ \theta \: K \to B(V)$ 
is positive definite. Since $\theta(\1) = \1$, there exists an open 
$\1$-neighborhood $U \subeq K$ with $\theta(U) \subeq \GL(n,\K)$. 
For $k \in U$ we then have 
$\phi(k) = \hat\rho(\theta(k))$, and since the representation 
$\hat\rho$ of $\GL(n,\K)$ is norm continuous, hence analytic, 
$\phi$ is analytic on $U$. 
Now Theorem~\ref{thm:locana} implies that $\phi$ is analytic. 

Let $\Omega := \{ C \in C(n,\K) \: \|C\|  < \1\}$ denote the interior 
of $C(n,\K)$. On this domain we have an analytic cross section of $\theta$, 
given by 
\[ \sigma(C) := \pmat{ C &   \sqrt{\1- CC^*} & 0 \\ 
-\sqrt{\1 - C^*C} &  C^* & 0 \\ 
0 & 0 & \1}. \] 
Now $\phi(\sigma(C)) = \rho(\theta(\sigma(C))) = \rho(C)$ for 
$C \in \Omega$ implies that $\rho\res_{\Omega}$ is analytic. 
From $\hat\rho(r C) = \hat\rho(r) \rho(C)$ for 
$r > 0$ it now follows that $\hat\rho \: M(n,\K) \to B(V)$ 
is analytic. 

It remains to show that $\hat\rho$ extends to a holomorphic 
map on $M(n,\K)_\C$. First, the analyticity of $\hat\rho$ implies 
for some $\eps > 0$ the existence of a holomorphic map 
$F$ on $B_\eps := \{ C \in M(n,\K)_\C \: \|C\| < \eps\}$ with 
$F(C) = \hat\rho(C)$ for $C \in M(n,\K) \cap B_\eps$. 
This map also satisfies 
$F(r C) = \hat\rho(r\1) F(C)$ for $r < 1$, which implies that 
$F$ extends to a holomorphic map on $r^{-1} B_\eps = B_{r^{-1}\eps}$ for every 
$r > 0$. This leads to the existence of a holomorphic extension 
of $\hat\rho$ to all of $M(n,\K)_\C$. 
That this extension also is multiplicative follows immediately 
by analytic continuation. 
\smartqed\qed\end{proof}

\begin{theorem} \mlabel{thm:class-contrac} 
{\rm(Classification of irreducible $\theta$-positive representations)} 
Put $\cF := \K^n$. Then all  
irreducible continuous $\theta$-positive representations 
of $C(\cF) \cong C(n,\K)$ are of the form 
\[\begin{cases}
\bS_\lambda(\cF_\C) \subeq (\cF_\C)^{\otimes N}, 
& \text{ for } \K = \R,  \\ 
\bS_\lambda(\cF) \otimes 
\bS_\mu(\oline\cF) \subeq 
\cF^{\otimes N} \otimes \oline\cF^{\otimes M}, 
  & \text{ for } \K = \C,  \\
\bS_\lambda(\cF^\C) \subeq (\cF^\C)^{\otimes N}, 
  & \text { for } \K = \H,  \\
\end{cases} \]
where $\lambda \in \Part(N,n), \mu \in \Part(M,n)$.
\end{theorem}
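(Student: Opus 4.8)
By Proposition~\ref{prop:new}(ii), any centrally bounded $\theta$-positive contraction representation $(\rho,V)$ of $C(n,\K)$ extends analytically to $\hat\rho$ on $M(n,\K)$ and then to a holomorphic semigroup representation of the complexification $M(n,\K)_\C$. If $(\rho,V)$ is moreover irreducible, the plan is to show that the corresponding holomorphic representation of the multiplicative semigroup $M(n,\K)_\C$ (equivalently, by restriction, a holomorphic representation of $\GL(n,\K)_\C$ together with its semigroup closure) is finite-dimensional, and then to invoke classical highest-weight theory to pin it down to a Schur module. First I would restrict $\hat\rho$ to the group $\GL(n,\K)$ and note that, by Proposition~\ref{prop:new}(i), we obtain a norm-continuous, hence analytic, representation whose derived representation $\dd\hat\rho$ is a $*$-representation of $\gl(n,\K)$ by bounded operators on $V$, with $\dd\hat\rho(X)\geq 0$ for $0\leq X=X^*$. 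The key structural input is that $\gl(n,\K)$ decomposes (up to the one-dimensional centre in the cases $\K=\R,\H$, resp.\ two-dimensional centre for $\K=\C$) as $\fu(n,\K)\oplus i\,\Herm(n,\K)$, and the compact group $\UU(n,\K)$ acts on $V$; the crux is to bound the $\UU(n,\K)$-types that can occur.

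The main obstacle — and the heart of the argument — is proving that only finitely many, and in fact very restricted, irreducible $\UU(n,\K)$-types appear, so that $V$ is finite-dimensional in the irreducible case. Here I would exploit positivity: the contraction condition $\rho(C)^*\rho(C)\leq\1$ for $\|C\|\leq 1$, together with multiplicativity, forces the operators $\hat\rho(\diag(t,1,\ldots,1))$ for $t>1$ to grow at a controlled polynomial rate, which translates into an upper bound on the eigenvalues of the weight spaces for the diagonal torus. Concretely, writing $E_{ii}$ for the diagonal idempotents in $M(n,\K)$, one has $\rho(tE_{ii}+(\1-E_{ii}))$ is a contraction for $t\leq 1$, so $\dd\hat\rho(E_{ii})$ has spectrum in $[0,\infty[$, and combining with $\dd\hat\rho(\1)$ bounded we get each torus weight bounded; boundedness of all torus weights of a representation of $\gl(n,\C)$ (after complexification) by a fixed constant forces finite dimensionality via the PBW/weight-lattice argument. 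This is where the analytic-positive-definite technology replaces Olshanski's zonal spherical function computation: the growth estimate comes directly from $\theta$-positivity of $\rho\circ\theta$ on the bounded group $K$ and Lemma~\ref{lem:1.4}-type contraction bounds, not from orthogonal polynomial expansions.

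Once finite-dimensionality of the irreducible $V$ is established, the endgame is classical: $\hat\rho$ is a finite-dimensional holomorphic representation of $\GL(n,\K)_\C$, hence (splitting according to the isomorphisms $M(n,\K)_\C\cong M(n,\C)$, $M(n,\C)\oplus M(n,\C)$, $M(2n,\C)$ recorded in Proposition~\ref{prop:new}(ii)) a finite-dimensional holomorphic representation of $\GL(n,\C)$, resp.\ $\GL(n,\C)\times\GL(n,\C)$, resp.\ $\GL(2n,\C)$. Irreducibility plus the semigroup extension to $M(n,\K)_\C$ (in particular to the singular matrices) forces it to be polynomial — a rational representation with no negative powers of $\det$ — and the polynomial irreducible holomorphic representations of $\GL(m,\C)$ are exactly the Schur modules $\bS_\lambda(\C^m)$ with $\lambda\in\Part(N,m)$, sitting inside $(\C^m)^{\otimes N}$. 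For $\K=\C$ the two $\GL(n,\C)$-factors (coming from $\cH$ and $\oline\cH$) give the tensor product $\bS_\lambda(\cF)\otimes\bS_\mu(\oline\cF)$ inside $\cF^{\otimes N}\otimes\oline\cF^{\otimes M}$; for $\K=\H$ one uses $\cF^\C\cong\C^{2n}$. I would close by remarking conversely that each such Schur module is indeed $\theta$-positive — it occurs in a tensor power of the defining representation, which is visibly realized inside the $K$-representation on $\cH^{\otimes N}$ (resp.\ $\cH^{\otimes N}\otimes\oline\cH^{\otimes M}$) — so the list is exactly attained, completing the classification.
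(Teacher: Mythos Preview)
Your approach is correct but differs from the paper's at the crucial step. After Schur's Lemma gives $\rho(Z)\subeq\C\1$ (so Proposition~\ref{prop:new} applies), the paper observes that the only holomorphic multiplicative maps $\C\to\C$ are $z\mapsto z^N$, whence $\hat\rho(z\1)=z^N\1$ and $\hat\rho$ is \emph{homogeneous of degree $N$}; this forces $\hat\rho$ to factor through a linear representation $\tilde\rho$ of the finite-dimensional algebra $S^N(M(n,\K)_\C)$, which for $\K=\R$ is identified with $B((\C^n)^{\otimes N})^{S_N}$, the commutant of $S_N$ in $M(n,\C)^{\otimes N}$, so that its irreducible modules are exactly the Schur modules by classical Schur--Weyl duality (and analogously for $\K=\H$, while for $\K=\C$ a bidegree $(N,M)$ appears because the centre of $M(n,\C)_\C$ is $\C^2$). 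Finite-dimensionality of $V$ and the classification thus drop out simultaneously from the representation theory of an explicit semisimple algebra, with no highest-weight machinery at all. Your route instead bounds the torus weights using $\dd\hat\rho(E_{ii})\geq 0$ and $\sum_i\dd\hat\rho(E_{ii})=\dd\hat\rho(\1)$ bounded, infers finitely many integral weights, and then must argue---which your phrase ``PBW/weight-lattice argument'' gestures at but does not spell out---that a highest-weight vector generates a finite-dimensional $\gl(n,\C)$-submodule (lowering operators stay in the bounded weight window) which by irreducibility exhausts $V$, before finally invoking the classification of polynomial $\GL(m,\C)$-representations. Both arguments succeed; the paper's homogeneity-to-$S^N$ trick is shorter and purely algebraic, while yours makes the positivity condition $\dd\hat\rho(X)\geq 0$ do visible structural work and is closer in spirit to a highest-weight classification. (Your aside about Lemma~\ref{lem:1.4}-type bounds is a red herring: the positivity you actually use is already recorded in Proposition~\ref{prop:new}(i).)
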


\begin{proof} Let $(\rho,V)$ be an irreducible 
$\theta$-positive representation of $C(n,\K)$. 
Then $\rho(Z) \subeq \C \1$ by Schur's Lemma, 
so that $\rho$ is in particular centrally bounded 
and extends to a holomorphic representation 
$\hat\rho \: M(n,\K)_\C \to B(V)$ (Proposition~\ref{prop:new}). 

For $\K = \R,\H$, the center of $M(n,\K)_\C$ is $\C \1$. 
Since the only holomorphic multiplicative maps 
$\C \to\C$ are of the form $z \mapsto z^N$ for some $N \in \N_0$, 
it follows that $\hat\rho(z\1) = z^N \1$ for $z \in \C$. 
We conclude that the holomorphic map $\hat\rho$ is homogeneous of 
degree $N$. Hence there exists a linear map 
\[ \tilde\rho \: S^N(M(n,\K)_\C) \to B(V) 
\quad \mbox{ with } \quad 
\tilde\rho(A^{\otimes N}) = \hat\rho(A), \quad A \in M(n,\K)_\C.\] 
The multiplicativity of $\hat\rho$ now implies 
that $\tilde\rho$ is multiplicative, hence a 
representation of the finite-dimensional algebra 
$S^N(M(n,\K)_\C)$. 

For $\K = \R$, we have $M(n,\R)_\C = M(n,\C)$, and
\[ S^N(M(n,\C)) =  (M(n,\C)^{\otimes N})^{S_N} 
\cong M(nN,\C)^{S_N} \cong B((\C^n)^{\otimes N})^{S_N}.\] 
We conclude that $S^N(M(n,\C))$ 
is the commutant of $S_N$ in $M(nN,\C)$, and by 
Schur--Weyl theory, this algebra can be identified with the 
image of the group algebra $\C[\GL(n,\C)]$ in $B((\C^n)^{\otimes N})$. 
Therefore its irreducible representations are parametrized by the set 
$\Part(N,n)$ of partitions of $N$ into at most $n$ summands. 
This completes the proof for $\K =\R$. 
For $\K = \H$, we have the same picture because $M(n,\H)_\C \cong M(2n,\C)$. 

For $\K = \C$, $Z(M(n,\C)_\C) \cong \C^2$, and the inclusion 
of $Z(M(n,\C)) = \C \1$ has the form $z \mapsto (z,\oline z)$. 
Hence there exist $N,M \in \N_0$ with $\rho_Z(z\1) = z^N \oline z^M \1$. 
Therefore the restriction of $\hat\rho$ to the first factor 
is homogeneous of degree $N$ and the restriction to the second 
factor of degree~$M$. This leads to a representation 
of the algebra 
\[ S^{N,M}(M(n,\C)) := S^N(M(n,\C)) \otimes S^M(M(n,\C)),\] 
so that the same arguments as in the real case apply. 
\smartqed\qed\end{proof}

Now that we know all irreducible $\theta$-positive representations, 
we ask for the corresponding decomposition theory. 

\begin{theorem} \mlabel{thm:deco} 
Every continuous $\theta$-positive $*$-representation of $C(n,\K)$ by contractions 
is a direct sum of irreducible ones, and these are 
finite-dimensional.  
\end{theorem}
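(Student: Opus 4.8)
The plan is to reduce the decomposition statement to a statement about holomorphic representations of the finite-dimensional algebras appearing in Theorem~\ref{thm:class-contrac}, using the structural results of Proposition~\ref{prop:new}. The first step is to pass to the generic situation: by the spectral decomposition of $\rho_Z$, every continuous $\theta$-positive contraction representation $(\rho,V)$ of $C(n,\K)$ is an (at most countable, after a further decomposition into cyclic pieces) direct sum of \emph{centrally bounded} $\theta$-positive representations, and the property of being $\theta$-positive passes to direct summands since $\rho \circ \theta$ is positive definite precisely when each of its summands is. So it suffices to treat the centrally bounded case, and there, by Proposition~\ref{prop:new}(ii), $\hat\rho$ extends to a holomorphic semigroup representation of $M(n,\K)_\C$ on $V$.

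\textbf{Reduction to a finite-dimensional algebra.} For a centrally bounded $\theta$-positive $\rho$, the spectral measure of $\rho_Z$ has compact support in $\hat Z$, but \emph{a priori} this support need not be a point. The key observation is that $\rho(Z)$ lies in the center of $\rho(C(n,\K))$, hence in the commutant $\rho(C(n,\K))'$, so decomposing further over the joint spectrum of $\rho_Z$ we may assume $\rho_Z$ is supported at a single character, i.e., $\hat\rho(z\1) = z^N\1$ for $\K = \R,\H$ (resp.\ $\hat\rho(z\1) = z^N\oline z^M \1$ for $\K = \C$). As in the proof of Theorem~\ref{thm:class-contrac}, homogeneity of the holomorphic map $\hat\rho$ of degree $N$ (resp.\ bidegree $(N,M)$) forces $\hat\rho$ to factor through a representation $\tilde\rho$ of the finite-dimensional semisimple algebra $S^N(M(n,\K)_\C)$ (resp.\ $S^{N,M}(M(n,\C))$) on $V$. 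Since a $*$-representation of a finite-dimensional $C^*$-algebra on a Hilbert space is a direct sum of irreducible ones, each finite-dimensional, we conclude that $V$ decomposes as a Hilbert-space direct sum of $\hat\rho$-invariant subspaces on which $\rho$ acts irreducibly and finite-dimensionally. It remains only to check that the $*$-structure is respected: the involution on $C(n,\K)$ is the operator adjoint, which corresponds under $\tilde\rho$ to the natural $*$-operation on $S^N(M(n,\K)_\C)$ compatible with its $C^*$-structure, so $\tilde\rho$ is a genuine $*$-representation and the decomposition is orthogonal.

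\textbf{Assembling.} Combining the two reductions, an arbitrary continuous $\theta$-positive contraction representation of $C(n,\K)$ is a direct sum of representations each of which factors through a finite-dimensional semisimple $*$-algebra, hence is a direct sum of finite-dimensional irreducibles; and by Theorem~\ref{thm:class-contrac} these irreducibles are exactly the listed Schur-type representations. Every summand is $\theta$-positive because it is a subrepresentation of a $\theta$-positive representation (a subrepresentation of a positive definite function is positive definite). This proves the theorem.

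\textbf{Main obstacle.} The delicate point is the passage from ``centrally bounded'' to ``$\rho_Z$ supported at a single point'': one must make sure the spectral decomposition of the commuting family $\rho_Z$ can be carried out within the category of $\theta$-positive representations, i.e., that cutting down by a spectral projection of $\rho_Z$ preserves $\theta$-positivity and commutes with the semigroup action. This is exactly where one uses that $\rho(Z)\subseteq \rho(C(n,\K))'$ and that the spectral projections of $\rho_Z$ therefore commute with all of $\rho(C(n,\K))$, so that the corresponding subspaces are $C(n,\K)$-invariant; positive-definiteness of $\rho\circ\theta$ restricted to such a subspace is then automatic. Once this is in place, everything else is a direct appeal to Proposition~\ref{prop:new}, the proof of Theorem~\ref{thm:class-contrac}, and the elementary structure theory of finite-dimensional $C^*$-algebras.
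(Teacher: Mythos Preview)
Your argument has a gap at the reduction step you flag as the ``main obstacle,'' but the actual difficulty is not the one you identify. You correctly note that spectral projections of $\rho_Z$ commute with $\rho(C(n,\K))$ and hence preserve $\theta$-positivity. What you have not justified is that the spectral decomposition of $\rho_Z$ is a direct \emph{sum} rather than a direct \emph{integral}: the spectral measure of $\rho_Z$ lives on $\hat Z \cong \R_+$ (resp.\ $\R_+ \times \Z$), and nothing you have said rules out a continuous part. Without discreteness you cannot reduce to the case $\hat\rho(z\1) = z^N\1$ by a direct sum, and the passage to the finite-dimensional algebra $S^N(M(n,\K)_\C)$ collapses. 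The fix uses the holomorphy you already have from Proposition~\ref{prop:new}(ii): since $z \mapsto \hat\rho(z\1)$ is an entire multiplicative $B(V)$-valued map, its Taylor coefficients $P_N$ at $0$ satisfy $P_N P_M = \delta_{NM} P_N$ and $\sum_N P_N = \hat\rho(\1) = \1$, so the $P_N$ are mutually orthogonal projections and $V = \bigoplus_{N \geq 0} P_N V$ is the desired discrete decomposition into homogeneous pieces (with the obvious bigraded variant for $\K = \C$).

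The paper avoids this issue altogether by taking a shorter route. After reducing to the centrally bounded case and invoking Proposition~\ref{prop:new}(ii), it observes that the unitary group $\U(\cA)$ of the $C^*$-algebra $\cA := M(n,\K)_\C$ is a compact totally real submanifold of maximal real dimension, so a closed subspace $W \subeq V$ is $\rho(C(n,\K))$-invariant if and only if it is $\hat\rho(\U(\cA))$-invariant (any holomorphic function on $\cA$ vanishing on $\U(\cA)$ vanishes identically). The decomposition then follows from the classical representation theory of the compact group $\U(\cA)$, with no need to analyse the centre or pass through the algebras $S^N(M(n,\K)_\C)$. Your route, once the gap is filled, is more explicit about which finite-dimensional algebras govern the pieces; the paper's route is shorter and sidesteps the discreteness question entirely.
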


\begin{proof} We have already seen that $\rho$ decomposes into a direct 
sum of centrally bounded representations. We may therefore 
assume that $\rho$ is centrally bounded, so that 
$\rho$ extends to a holomorphic representation $\hat\rho$ 
of $M(n,\K)_\C$ (Proposition~\ref{prop:new}). 
In the $C^*$-algebra $\cA := M(n,\K)_\C$, every holomorphic function 
is uniquely determined by its restriction 
to the unitary group $\U(\cA)$, which is a totally real submanifold. 
Therefore a closed subspace $W \subeq V$ is invariant 
under $\rho(C(n,\K))$ if and only if it is invariant 
under $\hat\rho(\U(\cA))$. 
Since the group $\U(\cA)$ is compact, the assertion now 
follows from the classical fact that unitary representations 
of compact groups are direct sums of irreducible~ones. 
\smartqed\qed\end{proof}

\subsection{The Classification Theorem} 
\mlabel{subsec:3.4}

We are now ready to prove the 
Kirillov--Olshanski Theorem (\cite{Ki73}, \cite{Ol78}). 

\begin{theorem} \mlabel{thm:class} 
{\rm(Classification of the representations of $\U_\infty(\cH)_0$)} 
Let $\cH$ be an infinite-dimensional separable $\K$-Hilbert space.  \\
{\rm(a)} The irreducible continuous unitary representations of 
$\U_\infty(\cH)_0$ are 
\[ \begin{cases}
\bS_\lambda(\cH_\C) \subeq (\cH_\C)^{\otimes N}, 
 & \text{ for } \K = \R,  \\ 
\bS_\lambda(\cH) \otimes 
\bS_\mu(\oline\cH) \subeq 
\cH^{\otimes N} \otimes \oline\cH^{\otimes M}, 
& \text{ for } \K = \C,  \\
\bS_\lambda(\cH^\C) \subeq (\cH^\C)^{\otimes N}, 
& \text { for } \K = \H,  \\
\end{cases} \]
where $\lambda \in \Part(N), \mu \in \Part(M)$. \\
{\rm(b)} Every continuous unitary representation of $\U_\infty(\cH)_0$ 
is a direct sum of irreducible ones. \\
{\rm(c)} Every continuous unitary representation of $\U_\infty(\cH)_0$ 
extends uniquely to a continuous 
unitary representations of the full unitary group $\U(\cH)_s$, 
endowed with the strong operator topology. 
\end{theorem}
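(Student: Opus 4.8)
\textbf{Proof plan for Theorem~\ref{thm:class}.}
The plan is to combine the semigroup picture developed in Section~\ref{sec:3} with the results just proved about $\theta$-positive representations. First, I would reduce the whole statement to the semigroup side via the correspondence established at the end of Subsection~\ref{subsec:3.3}: by Lemma~\ref{lem:p.1x} (Kirillov's Lemma), every nonzero continuous unitary representation $(\pi,\cH_\pi)$ of $K = \U_\infty(\cH)_0$ has, for some $n$, a nonzero $K_n$-fixed vector; using Zorn's Lemma we write $\pi$ as a direct sum of subrepresentations each generated by its space of $K_n$-fixed vectors (for varying $n$). For each such summand, Lemma~\ref{lem:3.10} gives a continuous $\theta$-positive $*$-representation $(\rho, V)$ of $C(n,\K)$, and the GNS correspondence (the last Proposition of Subsection~\ref{subsec:3.3}) makes this assignment a bijection preserving direct sums. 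Then Theorem~\ref{thm:deco} says $\rho$ is a direct sum of finite-dimensional irreducible $\theta$-positive representations of $C(n,\K)$, and these are classified by Theorem~\ref{thm:class-contrac}. This immediately yields (b), and reduces (a) to checking that the $K$-representation produced by GNS from the irreducible $C(n,\K)$-representation $\bS_\lambda(\K^n)$ (resp.\ $\bS_\lambda(\K^n)\otimes\bS_\mu(\oline{\K^n})$) is exactly the Schur representation $\bS_\lambda(\cH)$ (resp.\ $\bS_\lambda(\cH)\otimes\bS_\mu(\oline\cH)$) listed in the statement, restricted to $K$.

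For part (a) the key step is to identify the GNS representation attached to $\hat\rho$ concretely. The idea is that $\hat\rho$ extends to a holomorphic representation of $M(n,\K)_\C$, and by Theorem~\ref{thm:class-contrac} this extension is, in the real case, the $N$-th Schur functor applied to the tautological action of $M(n,\C) = M(n,\R)_\C$ on $(\C^n)^{\otimes N}$. On the group side, one checks that the Schur representation $\bS_\lambda(\cH_\C)\subeq\cH_\C^{\otimes N}$ of $\U(\cH_\C)$ restricted to $\U_\infty(\cH)_0\subeq \U(\cH_\C)$ has a cyclic $K_n$-fixed subspace, namely $\bS_\lambda(\cH(n)_\C)$ sitting inside $\cH(n)_\C^{\otimes N}$, where $\cH(n) = \Spann\{e_1,\dots,e_n\}$ (the $K_n$-fixed vectors in $\cH_\C^{\otimes N}$ are exactly the tensors built from $e_1,\dots,e_n$, since $K_n$ acts trivially on $\cH(n)$ and without nonzero fixed vectors on $\cH(n)^\bot$). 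Computing $\theta$-pullback, $P\,\pi(k)\,P^* = \bS_\lambda(\theta(k)) = \hat\rho(\theta(k))$ for $k$ near $\1$, so the associated $\theta$-positive representation of $C(n,\K)$ is precisely $\bS_\lambda(\K^n_\C)$. By uniqueness in the GNS correspondence this forces the abstract summand to be $\bS_\lambda(\cH_\C)$, and cyclicity for $\U_\infty$ follows because $\U(n,\K)_0$-orbits of the highest weight vector already span, together with density of $\U(\infty,\K)$ in $K$. The complex and quaternionic cases are identical after substituting the decomposition of $M(n,\C)_\C$ resp.\ $M(n,\H)_\C\cong M(2n,\C)$ from Theorem~\ref{thm:class-contrac}, with the opposite-complex-structure factor $\oline\cH$ accounting for the second tensor slot when $\K = \C$.

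For part (c), the extension to $\U(\cH)_s$, I would argue as follows. Each irreducible summand $\bS_\lambda(\cH)$ (resp.\ the complex/quaternionic analog) is, by construction, the restriction to $K$ of a genuine representation of the larger unitary group $\U(\cH_\C)$, $\U(\cH)\oplus\U(\oline\cH)$, or $\U(\cH^\C)$; since a Schur functor $\cH\mapsto\bS_\lambda(\cH)\subeq\cH^{\otimes N}$ is continuous for the strong operator topology on bounded sets (tensor powers of strongly convergent nets of unitaries converge strongly, and the projection onto the Schur component is a fixed bounded operator), the representation $\bS_\lambda$ of $\U(\cH)$ is continuous for the strong operator topology, hence restricts to a continuous representation of $\U(\cH)_s$. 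Taking the Hilbert-space direct sum over the summands, one gets a strongly continuous representation of $\U(\cH)_s$ extending $\pi$. Uniqueness of the extension is immediate: $K = \U_\infty(\cH)_0$ is dense in $\U(\cH)_s$ (as $\U(\infty,\K)$ already is, by an approximation of a unitary by unitaries differing from $\1$ on a finite-dimensional subspace), and two strongly continuous unitary representations agreeing on a dense subgroup agree.

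\textbf{Main obstacle.}
The step I expect to be the real work is the identification in part (a): showing that the abstract GNS representation coming from $\bS_\lambda(\K^n)$ as a $\theta$-positive $C(n,\K)$-representation coincides with the concrete Schur representation $\bS_\lambda$ of the ambient unitary group, restricted to $K$. This requires (i) pinning down the $K_n$-fixed vectors in $\cH^{\otimes N}$ — which uses that $K_n$ acts on $\cH(n)^\bot$ with no nonzero fixed vectors and in fact that $\U(\cH(n)^\bot)_0$-invariants in any tensor power are trivial — and (ii) matching the $\theta$-pullback of the matrix-coefficient function $k\mapsto P\,\bS_\lambda(k)\,P^*$ with the semigroup representation $\hat\rho$ on all of $C(n,\K)$, not just near $\1$, which uses the holomorphic extension and analytic continuation from Proposition~\ref{prop:new}. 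Everything else is bookkeeping against results already in hand.
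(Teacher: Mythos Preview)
Your proposal is correct and follows the same route as the paper. The one organizational difference: instead of matching each irreducible $\bS_\lambda(\cF_\C)$ to its GNS representation individually, the paper first handles the full tensor-power representation $\pi_N$ of $K$ on $(\cH_\C)^{\otimes N}$ (resp.\ $\pi_{N,M}$ on $\cH^{\otimes N}\otimes\oline\cH^{\otimes M}$ for $\K=\C$), computes its $K_n$-fixed space as $(\cF_\C)^{\otimes N}$ using only the diagonal subgroup, and checks directly that the compressed representation is $\rho_N(C)=C^{\otimes N}$ on $(\cF_\C)^{\otimes N}$. Since the correspondence $\rho\leftrightarrow\pi$ preserves direct sums and commutants, the Schur--Weyl decomposition of $\rho_N$ then transports wholesale to the $K$-invariant subspace of $\pi_N$ generated by the $K_n$-fixed vectors, yielding the $\bS_\lambda(\cH_\C)$ for $\lambda\in\Part(N,n)$; varying $n$ recovers all of $\Part(N)$. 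This makes your step~(i) a weight computation in the full tensor power rather than inside an individual Schur module, and it reduces your step~(ii) to the one-line identity $P_V\,k^{\otimes N}\,P_V^*=\theta(k)^{\otimes N}$, so the holomorphic extension from Proposition~\ref{prop:new} is not actually needed at this stage. Part~(c) is argued exactly as you propose.
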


\begin{proof} (a) In Theorem~\ref{thm:class-contrac} we have 
classified the irreducible $\theta$-positive 
representations of $C(n,\K)$. 
The corresponding representations $(\pi,\cH_\pi)$ of $K = 
\U_\infty(\cH)_0$ can now be determined 
rather easily. Since the passage from $\rho$ to $\pi$ preserves direct sums, 
we consider the representations $\rho_{N}$ of $C(n,\K)$ 
on $\cF_\C^{\otimes N}$ for $\K =\R$, on 
$(\cF^\C)^{\otimes N}$ for $\K = \H$, and the representation $\rho_{N,M}$ on 
$\cF^{\otimes N} \otimes \oline\cF^{\otimes M}$ for $\K = \C$. 

We likewise have unitary representations 
$\pi_{N}$ of $K$ on 
$(\cH_\C)^{\otimes N}$ for $\K = \R$, on $(\cH^\C)^{\otimes N}$ for $\K = \H$, and 
a representation 
$\pi_{N,M}$ on $\cH^{\otimes N} \otimes \oline{\cH}^{\otimes M}$ for $\K = \C$. 
These are bounded continuous representations of $K$. 

For $K = \R, \H$, the space of $K_n$-fixed vectors in $(\cH_\C)^{\otimes N}$ obviously contains 
$(\cF_\C)^{\otimes N}$ and by considering the action of the subgroup of 
diagonal matrices, we obtain 
the equality $(\cF_\C)^{\otimes N} = ((\cH_\C)^{\otimes N})^{K_n}$. 
Therefore the representation $\pi_{N}$ corresponds to the representation 
$\rho_{N}$ of $C(n,\K)$. 
A similar argument shows that, for $\K = \C$, the $K$-representation 
$\pi_{N,M}$ corresponds to $\rho_{N,M}$. 

Since the representations $\rho_N$ and $\rho_{N,M}$ decompose into finitely many 
irreducible pieces, the representations $\pi_N$ and $\pi_{N,M}$ decompose in 
precisely the same way. For $\K =\R,\H$, we thus obtain the Schur modules 
$\bS_\lambda(\cH_\C)$ and 
$\bS_\lambda(\cH^\C)$ with 
$\lambda \in \Part(N)$, respectively. 
For $\K= \C$, we obtain the tensor products 
$\bS_\lambda(\cH) \otimes \bS_\mu(\oline\cH)$ with 
$\lambda \in \Part(N)$ and $\mu \in \Part(M)$. 

Here the restriction to partitions consisting of at most $n$ summands 
corresponds to the $K$-invariant 
subspace generated by the $K_n$-fixed vectors. This subspace is 
proper if $n$ is small. 

(b) From Theorem~\ref{thm:deco} we know 
that $\theta$-positive contraction representations of $C(n,\K)$ 
are direct sums of irreducible ones. 
This implies that all continuous unitary representations of $K$ 
are direct sums of irreducible ones. Since the correspondence between 
$\pi$ and $\rho$ leads to isomorphic commutants, the 
irreducible subrepresentations of $\pi$ and the corresponding subrepresentations 
of $\rho$ have the same multiplicities. 

(c) The assertion is trivial for the irreducible representations of $K$ 
described under (a). 
Since $\U_\infty(\cH)_0$ is dense in 
$\U(\cH)$ with respect to the strong operator topology\begin{footnote}{This follows from the fact that $\U_\infty(\cH)_0$ 
acts transitively on the finite orthonormal systems in $\cH$.}\end{footnote}, 
this extension is unique and generates the same von Neumann algebra. 
\smartqed\qed\end{proof}

\begin{remark} (Representations of $\OO_\infty(\cH)$) 
The above classification can easily be extended to the 
non-connected group $\OO_\infty(\cH)$ (for $\K = \R$). 
Here the existence of a canonical extension $\oline\pi_\lambda$ of 
every irreducible representations $\pi_\lambda$ of 
$\SO_\infty(\cH) := \OO_\infty(\cH)_0$ to $\OO(\cH)$ implies that there exist precisely two extensions 
that differ by a twist with the canonical 
character $D \: \OO_\infty(\cH) \to \{\pm 1\}$ corresponding to the 
determinant. 

For a general continuous unitary representations of 
$\OO_\infty(\cH)$, it follows that all $\SO_\infty(\cH)$-isotypic 
subspaces are invariant under $\OO_\infty(\cH)$, hence of the form 
$\cM_\lambda \otimes \cH_\lambda$, where $\OO_\infty(\cH)$ acts 
by $\eps \otimes \oline\pi_\lambda$ and 
$\eps$ is a unitary representation of the 
$2$-element group $\pi_0(\OO_\infty(\cH))$, i.e., defined by a 
unitary involution. 

In particular, all continuous unitary representations 
of $\OO_\infty(\cH)$ are direct sums of irreducible ones, which are 
of the form $\oline\pi_\lambda$ and $D \otimes \oline\pi_\lambda$. 
Here the first type extends to the full orthogonal group $\OO(\cH)$, 
whereas the second type does not. 
\end{remark}

\begin{remark} (Extension to overgroups) 
\mlabel{rem:3.11} (cf.\ \cite[\S 1.11]{Ol84})
Let $\cH$ be an infinite-dimensional separable $\K$-Hilbert space 
and $K := \U_\K(\cH)$. We put 
\[  \cH^\sharp := 
\begin{cases} 
\cH_\C & \text{ for } \K = \R \\ 
\cH \oplus \oline\cH & \text{ for } \K = \C \\ 
\cH^\C & \text{ for } \K = \H. 
\end{cases}\]
For each $N \in \N$ we obtain a norm 
continuous representation 
\[ \pi_N \:  (B(\cH^\sharp), \cdot) \to B((\cH^\sharp)^{\otimes N}), \quad 
\pi_N(A) := A^{\otimes N}\] 
of the multiplicative semigroup $(B(\cH^\sharp),\cdot)$ whose restriction 
to $\U(\cH^\sharp)$ is unitary. 

We collect some properties of this representation: 

(a) Let $C(\cH^\sharp) = \{ S \in B(\cH^\sharp) \: \|S\| \leq 1\}$ 
denote the closed subsemigroup of contractions. Then $C(\cH^\sharp)$ 
is a $*$-subsemigroup 
of $B(\cH^\sharp)$ and $\pi_N\res_{C(\cH^\sharp)}$ is continuous with respect to the weak operator 
topology. In fact, $\pi_N(C(\cH^\sharp))$ consists of contractions, and for the 
total subset of vectors of the form 
$v := v_1 \otimes \cdots \otimes v_N$, $w := w_1 \otimes \cdots \otimes w_N$, the 
matrix coefficient $S \mapsto \la \pi_N(S)v,w \ra = \prod_{j = 1}^N \la Sv_j, w_j \ra$ 
is continuous. 

(b) $K := \U_\K(\cH)$ 
is dense in $C_\K(\cH)$ with respect to the weak operator topology. 
It suffices to see that, for every contraction $C$ on a finite-dimensional subspace 
$\cF \subeq \cH$,  there exists a unitary operator $U \in \U_\K(\cH)$ with 
$P_\cF U P_\cF^* = C$, where $P_\cF \: \cH \to \cF$ is the orthogonal projection. 
Since $\cF \oplus \cF$ embeds isometrically into $\cH$, this follows from the fact that 
the matrix 
\[ U := \pmat{ 
C &   \sqrt{\1- CC^*} \\ 
-\sqrt{\1 - C^*C} &  C^*} \in M_2(B_\K(\cF)) = B_\K(\cF \oplus \cF) \] 
is unitary and satisfies $P_\cF U P_\cF^* = C$ (Lemma~\ref{lem:theta}). 

(c) Combining (a) and (b) implies that $\pi_N(C_\K(\cH)) \subeq \pi_N(K)''$, and hence that 
$\pi_N(B_\K(\cH)) = \bigcup_{\lambda > 0} \lambda^N \pi_N(C_\K(\cH)) \subeq \pi_N(K)''$. 
For the corresponding Lie algebra representation 
\[ \dd\pi_N \: \gl_\K(\cH) \to B(\cH^{\otimes N}), \quad 
\dd\pi_N(X) := \sum_{j = 1}^N \1^{\otimes (j-1)} \otimes X \otimes 
\1^{\otimes (N-j)},\] 
this implies that $\dd\pi_N(\gl_\K(\cH)) \subeq \pi_N(K)''$, and hence also that 
$\dd\pi_N(\gl_\K(\cH))_\C \subeq \pi_N(K)''$. The connectedness of the group 
$K^\sharp$ (Lemma~\ref{lem:k-grp}) 
now implies that $\pi_N(K^\sharp) \subeq \pi_N(K)''$. 
Since the subgroup $K^\sharp_\infty$, consisting of those elements $g$ for which 
$g - \1$ is compact, is strongly dense in $K^\sharp$ and the 
representation of $K^\sharp$ is continuous with respect to the strong 
operator topology, the representations $\pi_N$ of $K^\sharp$ thus decomposes 
into Schur modules, as described in Theorem~\ref{thm:class}(a). 

(d) The preceding discussion shows in particular that the 
representation $\pi_N$ of $K$ extends to the overgroup $K^\sharp$ without 
enlarging the corresponding von Neumann algebra. 
If $\rho$ is the corresponding representation of 
$C(n,\K) = C(\cF)$ on $V := (\cF^\sharp)^{\otimes N}$, where 
$\cF = \K^n$, then $\rho$ extends to a holomorphic 
representation $\hat\rho$ of $M(n,\K)_\C$ and 
the map $\theta \: K \to C(n,\K)$ likewise extends to a holomorphic map 
$\hat\theta \: B(\cH)_\C \to M(n,\K)_\C$. Now 
$\hat\rho \circ \hat\theta \: B(\cH)_\C \to B(V)$ is a holomorphic 
positive definite function corresponding to the representation 
of $(B(\cH)_\C, \cdot)$ on $(\cH^\sharp)^{\otimes N}$ whose restriction 
yields a unitary representation of the unitary group 
$\U(\cH)^\sharp$ of $B(\cH)_\C$. 
\end{remark}

We conclude this subsection with the following converse to 
Proposition~\ref{prop:3.8}. 
\begin{theorem} \mlabel{thm:samerep}
A unitary representation of $\U(\infty,\K)_0$ is tame if and only 
if it extends to a continuous unitary representation of $\U_\infty(\cH)_0$ for 
$\cH = \ell^2(\N,\K)$. 
\end{theorem}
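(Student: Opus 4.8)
The plan is to prove the two implications separately, using the machinery already assembled. For the forward direction, suppose $(\pi,\cH_\pi)$ is a tame representation of $K(\infty) := \U(\infty,\K)_0$. By definition $\cH_\pi^\cT = \sum_n \cH_\pi^{K(\infty)_n}$ is dense, and since this is a directed union of closed subspaces, Zorn's Lemma lets me decompose $\pi$ into an orthogonal direct sum of subrepresentations each generated by $\cH_\pi^{K(\infty)_n}$ for a single $n$. So I may assume $V := \cH_\pi^{K(\infty)_n}$ is cyclic. Now the key point is that the positive definite function $\phi := \pi_V = P^*\pi(\cdot)P \: K(\infty) \to B(V)$ is $K(\infty)_n$-biinvariant, hence factors through the double coset space $K(\infty)_n \backslash K(\infty) / K(\infty)_n$, which Lemma~\ref{lem:theta} (applied at the level of $\U(2n+1,\K)$, where the relevant double cosets already intersect) identifies with $C(n,\K)$ via $\theta$. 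This gives a continuous $*$-representation $\rho$ of $C(n,\K)$ with $\phi = \rho\circ\theta$, i.e.\ $\rho$ is $\theta$-positive. By Theorem~\ref{thm:deco}, $\rho$ is a direct sum of finite-dimensional irreducibles, and by Theorem~\ref{thm:class-contrac} each of these is a Schur-type summand of a finite tensor power, which manifestly extends to a continuous (indeed bounded) representation of $\U_\infty(\cH)_0$ as in Theorem~\ref{thm:class}(a). Reassembling the direct sum yields a continuous extension of $\pi$ to $\U_\infty(\cH)_0$.

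For the converse, suppose $\pi$ is a unitary representation of $\U(\infty,\K)_0$ that extends to a continuous unitary representation $\tilde\pi$ of $\U_\infty(\cH)_0$. Then Proposition~\ref{prop:3.8} says precisely that the restriction of $\tilde\pi$ to $\U(\infty,\K)_0$ is tame, i.e.\ $\pi$ is tame. This direction is essentially immediate from what has already been proved.

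The main obstacle is making the factorization $\phi = \rho\circ\theta$ in the forward direction fully rigorous at the level of the discrete group $\U(\infty,\K)_0$ rather than the Banach--Lie group $\U_\infty(\cH)_0$: Lemma~\ref{lem:theta} and Lemma~\ref{lem:3.10} are stated for $K = \U_\infty(\cH)_0$, so I must check that the double coset structure and the contraction-representation property of $\rho$ survive when the ambient group is only the dense direct limit subgroup. The point is that $\theta$ restricted to $\U(2n+1,\K)$ is already surjective onto $C(n,\K)$ and a quotient map (compactness), and every $K(\infty)_n$-double coset meets $\U(2n+1,\K)$; so $\phi$ on $\U(\infty,\K)_0$ determines and is determined by a function on $C(n,\K)$, and the semigroup/$*$-structure is inherited verbatim. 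A secondary point is verifying that the extension produced by reassembling the Schur summands is genuinely an extension of the original $\pi$ and not merely of its restriction to some dense subgroup --- but since $\U(\infty,\K)_0$ is the full abstract group here (no topology issues on the domain side), uniqueness of the GNS construction from $\phi$ settles this. Once these compatibility checks are in place, the theorem follows by combining Theorems~\ref{thm:deco}, \ref{thm:class-contrac}, \ref{thm:class} and Proposition~\ref{prop:3.8}.
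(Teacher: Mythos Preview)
Your proposal is correct, but it takes a longer route than the paper. Both directions agree on the converse via Proposition~\ref{prop:3.8}. For the forward direction, both reduce to the case where $V = \cH_\pi^{K(\infty)_n}$ is cyclic and obtain the $B(V)$-valued positive definite function $\phi = \rho\circ\theta$ on $K(\infty)$. At this point the paper simply observes that $\rho\circ\theta$, regarded as a function on the Banach--Lie group $K = \U_\infty(\cH)_0$, is continuous (since $\rho$ and $\theta$ are) and positive definite on the dense subgroup $K(\infty)$; a limit argument then shows it is positive definite on all of $K$, and the GNS construction for $\rho\circ\theta$ on $K$ immediately furnishes the continuous extension of $\pi$. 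You instead invoke the full classification machinery (Theorems~\ref{thm:deco}, \ref{thm:class-contrac}, \ref{thm:class}) to decompose $\rho$ into explicit Schur-type summands and extend each one by hand. This works, but note that to apply those theorems as stated you need $\rho$ to be $\theta$-positive with respect to $K$, not merely $K(\infty)$---and establishing that is precisely the density step the paper uses, after which the detour through classification becomes unnecessary. Your approach has the merit of exhibiting the extension concretely; the paper's is more economical and, importantly, logically independent of the classification theorems.
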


\begin{proof} We have already seen in Proposition~\ref{prop:3.8} 
that every continuous unitary representation of $K= \U_\infty(\cH)_0$ restricts 
to  a tame representation of $K(\infty) = \U(\infty,\K)_0$. 

Suppose, conversely, that $(\pi, \cH_\pi)$ is a tame unitary representation 
of $K(\infty)$. Then the same arguments as for $K$ 
imply that it is a direct sum of representations generated by the subspace 
$V = (\cH_\pi)^{K(\infty)_n}$ and we obtain a representation 
$(\rho,V)$ of $C(n,\K)$ for which 
$\rho \circ \theta$ is positive definite on $K(\infty)$. Since 
it is continuous and $K(\infty)$ is dense in $K$, it is also 
positive definite on $K$. Now the GNS construction, applied to 
$\rho \circ \theta$, yields the continuous extension of $\pi$ to $K$.
\smartqed\qed\end{proof}

\subsection{The inseparable case} 

In this subsection we show that Theorem~\ref{thm:class} 
extends to the case where $\cH$ is not separable.

\begin{theorem} \mlabel{thm:insep} Theorem~\ref{thm:class} also holds if $\cH$ is inseparable.
\end{theorem}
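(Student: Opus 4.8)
The plan is to reduce to the separable case (Theorem~\ref{thm:class}), after observing that almost the whole argument of Section~\ref{sec:3} never used separability of $\cH$ and isolating the one place that did. Fix a countable orthonormal system $e_1,e_2,\dots$ in $\cH$ (no longer a basis) and put $\cF_n:=\Spann\{e_1,\dots,e_n\}$ and $K_n:=\{g\in K\:ge_j=e_j,\ j\le n\}=\U_\infty(\cF_n^\perp)_0$, where $K:=\U_\infty(\cH)_0$. Lemma~\ref{lem:theta} (whose proof only uses $2\dim\cF_n<\dim\cH$ and the compactness of $\U(2n+1,\K)_0$), the reduction of a continuous unitary representation of $K$ generated by its $K_n$-fixed vectors to a $\theta$-positive $*$-representation $(\rho,V)$ of the \emph{fixed} finite-dimensional involutive semigroup $C(n,\K)$, and Theorems~\ref{thm:class-contrac} and~\ref{thm:deco} classifying the latter, all carry over word for word. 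The same is true of the Schur--Weyl decomposition~\eqref{eq:schur-weyl}, the norm-continuous tensor representations $\pi_N$ of Remark~\ref{rem:3.11}, and the identity $\bigl((\cH^\sharp)^{\otimes N}\bigr)^{K_n}=(\cF_n^\sharp)^{\otimes N}$ used in the proof of Theorem~\ref{thm:class}(a) --- the last because $\U_\infty(\cF_n^\perp)_0$ has no nonzero fixed vector in any positive tensor power of $\cF_n^\perp$, which follows by averaging over the finite-rank unitaries $\omega\1_{\mathcal E}\oplus\1$ ($|\omega|=1$, $\mathcal E\subeq\cF_n^\perp$ finite-dimensional). Thus the only ingredient that genuinely requires a new argument is the analogue of Proposition~\ref{prop:3.8}: that $\cH_\pi$ is generated as a $K$-module by $\bigcup_n(\cH_\pi)^{K_n}$.

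I would prove this in the sharper form
\[
\cH_\pi^{\mathrm{fin}}:=\overline{\textstyle\sum_{\cF}(\cH_\pi)^{K_\cF}}=\cH_\pi,\qquad K_\cF:=\U_\infty(\cF^\perp)_0 ,
\]
with $\cF$ running over the finite-dimensional subspaces of $\cH$. Because $\pi(g)(\cH_\pi)^{K_\cF}=(\cH_\pi)^{K_{g\cF}}$ and $K$ acts transitively on the finite-dimensional subspaces of a given dimension, $\cH_\pi^{\mathrm{fin}}$ is $K$-invariant and each $(\cH_\pi)^{K_\cF}$ lies, after translation by $K$, in some $(\cH_\pi)^{K_n}$; so $\cH_\pi^{\mathrm{fin}}=\cH_\pi$ is precisely the assertion above. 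Granting it, Zorn's Lemma splits $\pi$ into a direct sum of representations in which some $(\cH_\pi)^{K_n}$ is cyclic, and the machinery recalled above classifies the pieces; this gives parts (a) and (b), now with $\lambda,\mu$ ranging over \emph{all} partitions since $n$ is unrestricted, and part (c) follows exactly as in the proof of Theorem~\ref{thm:class}(c), using that $\U_\infty(\cH)_0$ is strongly dense in $\U(\cH)_s$ and that the Schur modules carry strongly continuous representations of $\U(\cH)$.

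For the equality $\cH_\pi^{\mathrm{fin}}=\cH_\pi$ I would argue by contradiction, in the style of Lemma~\ref{lem:1.4} and Proposition~\ref{prop:1.6}(b) but choosing the relevant subspaces adaptively. Assume $V:=(\cH_\pi^{\mathrm{fin}})^\perp\ne\{0\}$; it is a continuous subrepresentation with $V^{K_\cF}=\{0\}$ for every finite-dimensional $\cF$. Fix a unit vector $v_0\in V$ and $\delta>0$ with $\|\pi(g)v_0-v_0\|<\shalf$ for $\|g-\1\|<\delta$. By Proposition~\ref{prop:1.3}(iii), condition~(Ol) holds for \emph{every} $\U_\infty(\cF^\perp)_0$ with the same constant $c=\pi$, so there is an $m\in\N$ with $\U_\infty(\cF^\perp)_0\subeq U_\cF^m$ for $U_\cF:=\{g\in\U_\infty(\cF^\perp)_0\:\|g-\1\|<\delta\}$ and all finite-dimensional $\cF$; set $\eta:=\sqrt{1-1/(4(m+1)^2)}\in\,]0,1[\,$. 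Now build inductively finite-dimensional subspaces $\cF_1\subeq\cF_2\subeq\cdots$, finite-rank unitaries $u_j\in\U_\infty(\cF_j^\perp)_0$ with $\|u_j-\1\|<\delta$, and vectors $v_j:=\shalf(v_{j-1}+\pi(u_j)v_{j-1})$: given $\cF_j$, apply Lemma~\ref{lem:1.4} to the action of $\U_\infty(\cF_j^\perp)_0$ on $V$ to get $u_j\in U_{\cF_j}$ with $\|v_j\|<\eta\|v_{j-1}\|$, perturb it slightly to finite rank (finite-rank unitaries being norm-dense in $\U_\infty(\cF_j^\perp)_0$), put $\mathcal E_j:=\im(u_j-\1)+\im(u_j^*-\1)$ and $\cF_{j+1}:=\cF_j+\mathcal E_j$. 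By construction the $\mathcal E_j$ are pairwise orthogonal, so any product of distinct $u_j$'s is block-diagonal and stays within $\delta$ of $\1$; hence, with $\mu_j:=\shalf(\delta_\1+\delta_{u_j})$, the (finite) support of $\mu_n*\cdots*\mu_1$ lies in $\{g\:\|g-\1\|<\delta\}$, so $\|v_n-v_0\|=\|\pi(\mu_n*\cdots*\mu_1)v_0-v_0\|<\shalf$ and therefore $\|v_n\|>\shalf$, contradicting $\|v_n\|<\eta^n\to 0$. (If some $v_j=0$ the same estimate gives $\|v_0\|<\shalf$ directly.)

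The main obstacle is exactly this last step: without a countable dense subgroup one cannot invoke Proposition~\ref{prop:1.6}, because the natural decreasing family $\bigl(\U_\infty(\cF^\perp)_0\bigr)_\cF$ is totally ordered by inclusion rather than supported on mutually orthogonal subspaces, so the convolution estimate that lets Olshanski's Lemma iterate must be recovered by interleaving the choice of the perturbing elements $u_j$ with the growth of the subspaces $\cF_j$. Once that is in place, the rest is a transcription of the separable proof, since the semigroup $C(n,\K)$ and its $\theta$-positive representations are insensitive to the size of $\cH$.
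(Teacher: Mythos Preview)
Your proof is correct and in fact supplies more detail than the paper does at the crucial point. The paper's argument also reduces to the separable case, but organizes the reduction differently. It fixes an orthonormal \emph{basis} $(e_j)_{j\in J}$ with $\N\subeq J$ and simply asserts that Kirillov's Lemma~\ref{lem:p.1x} ``is still valid in the inseparable case''; you are right that the literal hypothesis (V2)(1) of Proposition~\ref{prop:1.6}(b) fails here, since $\bigcup_m K(m)_n$ is no longer dense in $K_n$, and your adaptive construction of the finite-dimensional subspaces $\cF_j$ (choosing each $u_j$ of finite rank so that the blocks $\mathcal E_j$ are mutually orthogonal) is exactly the modification needed to make Olshanski's iteration go through. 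After Kirillov's Lemma, the paper takes a shortcut you do not: rather than rechecking that Proposition~\ref{prop:new} and Theorems~\ref{thm:class-contrac}, \ref{thm:deco} are insensitive to $\dim\cH$, it observes that any finite subset of $K=\U_\infty(\cH)_0$ already lies in $K(M):=\U_\infty(\cH_M)_0$ for some countable $M\supeq\N$, and that a unitary $\cH(\N)\cong\cH_M$ fixing $e_1,\dots,e_n$ shows $\rho\circ\theta$ is positive definite on $K$ iff it is on $K(\N)$; hence the $\theta$-positive representations of $C(n,\K)$ relative to the inseparable $K$ coincide with those for the separable $K(\N)$, and Theorem~\ref{thm:class} applies verbatim. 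Your route is more self-contained, the paper's more economical once Kirillov's Lemma is in hand. One small caveat: your averaging argument for $\bigl((\cH^\sharp)^{\otimes N}\bigr)^{K_n}=(\cF_n^\sharp)^{\otimes N}$ via the elements $\omega\1_{\mathcal E}\oplus\1$ is fine for $\K=\C,\H$ but needs a tweak for $\K=\R$ (where only $\omega=\pm1$ are available and $-\1_{\mathcal E}\in\SO_\infty$ requires $\dim\mathcal E$ even); the cleanest fix is to restrict to a separable infinite-dimensional $\cK_0\subeq\cF_n^\perp$, use the separable case to see that the $\U_\infty(\cK_0)_0$-fixed space in $((\cF_n^\perp)^\sharp)^{\otimes M}$ lies in $((\cK_0^\perp)^\sharp)^{\otimes M}$, and then vary $\cK_0$.
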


\begin{proof} (a) First we note that the Schur--Weyl decomposition 
\[ \cH^{\otimes N} \cong \bigoplus_{\lambda \in \Part(N)} \bS_\lambda(\cH) \otimes \cM_\lambda\]  
holds for any infinite-dimensional complex Hilbert space (\cite{BN12}) and that 
the spaces $\bS_\lambda(\cH)$ carry irreducible representations of $\U(\cH)$ 
which are continuous with respect to the norm topology and the 
strong operator topology on $\U(\cH)$. 

(b) To obtain the irreducible representations of 
$K := \U_\infty(\cH)_0$, we choose an orthonormal basis $(e_j)_{j \in J}$ of $\cH$ 
and assume that $\N = \{ 1,2,\ldots \}$ is a subset of $J$. Accordingly, we 
obtain an embedding $K(\infty) := \U(\infty,\K) \into \U_\infty(\cH)$ 
and define $K_n := \{ k \in K \: ke_j =e_j, j =1,\ldots,n\}$.
For a subset $M \subeq J$, we put 
$K(M) := \U_\infty(\cH_M)_0$, where $\cH_M \subeq \cH$ is the closed subspace 
generated by $(e_j)_{j\in M}$. 

(c) Kirillov's Lemma~\ref{lem:p.1x} is still valid in the inseparable case and 
Lemma~\ref{lem:3.10} follows from the separable case because 
$\theta(K) = \theta(K(\infty))$. 

(d) With the same argument as in Subsection~\ref{subsec:3.2}  it follows 
that $\pi$ is a direct sum of subrepresentations 
for which $(\cH_\pi)^{K_n}$ is cyclic. These in turn correspond to 
$\theta$-positive representations $(\rho,V)$ of $C(n,\K)$. 
We claim that $\rho \circ \theta$ is positive definite on $K(\N)$ 
if and only if it is positive definite on $K(M)$ for any countable 
subset with $\N \subeq M \subeq J$. In fact, there exists a unitary 
isomorphism $U_M \: \cH(\N) \to \cH(M)$ fixing $e_1,\ldots, e_n$. For 
$k \in K(M)$ we then have $\rho(\theta(k))=\rho(\theta(U_M^* k U_M))$, 
so that $\rho \circ \theta$ is positive definite on $K(M)$ if it is on~$K(\N)$. 

For every finite subset $F \subeq K$, there exists a 
countable subset $J_c \subeq J$ containing $\N$ such that 
$F$ fixes all basis elements $e_j$, $j \not\in J_c$. Therefore 
$\rho \circ \theta$ is positive definite on $K$ if and only if 
this is the case on $K(M)$ for every countable subset and this in turn 
follows from the positive definiteness on the subgroup $K(\N)$. 
We conclude that the classification of the unitary representations 
of $K$ is the same as for $K(\N)$. 
\smartqed\qed\end{proof}

\begin{remark} \mlabel{rem:insep} If $\cH$ is inseparable, then the classification implies that 
all irreducible unitary representations of 
$\U_\infty(\cH)_0$ are inseparable. In particular, all separable 
unitary representations of $\U_\infty(\cH)_0$ are trivial because they 
are direct sums of irreducible ones. 
\end{remark}

\begin{problem} It seems that the classification problem we dealt with 
in this section can be formulated in a more general context as follows. 
Let $\cA$ be a real involutive Banach algebra and 
$P \in \cA$ be a hermitian projection, so that we obtain a closed 
subalgebra $\cA_P := P \cA P$. On the unitary 
group $\U(\cA) := \{ A \in \cA \: A^* A = AA^* = \1\}$ we consider the map 
\[ \theta \: \U(\cA) \to C(\cA_P) := \{ A \in \cA_P \: \|A\| \leq 1\}, \quad 
\theta(g) := P g P.\] 
For which $*$-representations $(\rho,V)$ of the semigroup $C(\cA_P)$ 
is the function \break $\rho \circ \theta \: \U(\cA)_0 \to B(V)$ positive definite?  

For $\cA = B_\infty(\cH)$, the compact operators on the $\K$-Hilbert space 
$\cH$ and a finite rank projection $P$, this problem specializes to the determination 
of the $\theta$-positive representations of $C(n,\K)$. 

If $P$ is central, then $\theta$ is a $*$-homomorphism, so that 
$\rho \circ \theta$ is positive definite for any representation~$\rho$. 
\end{problem}

\section{Separable representations of $\U(\cH)$} \mlabel{sec:4}

In this section we show that, for the unitary group $\U(\cH)$ of a separable 
Hilbert space $\cH$, endowed with the norm topology, all separable representations are 
uniquely determined by their restrictions to the normal subgroup $\U_\infty(\cH)_0$. 
This result of Pickrell \cite{Pi88} 
extends the Kirillov--Olshanski classification to separable representations of~$\U(\cH)$. 

\subsection{Triviality of separable representations modulo compacts} 

Before we turn to the proof of Theorem~\ref{thm:p.1b}, we need a few preparatory 
lemmas. 

\begin{lemma} \mlabel{lem:p.1} Let $\cH$ be an infinite-dimensional Hilbert space, 
$(\pi, \cH_\pi)$ be a continuous unitary representation of $\U(\cH)$,  and 
$\cH = V \oplus V^\bot$ with $V \cong V^\bot$. 
Then $\cH_\pi^{\U(V)} \not=\{0\}$. 
\end{lemma}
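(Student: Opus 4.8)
The plan is to apply Proposition~\ref{prop:1.6}(a) with $G := \U(\cH)$, which is a bounded topological group by Proposition~\ref{prop:1.3}(i). The role of the subgroups $G_n$ will be played by a decreasing chain of unitary groups $\U(V_n)$, where we exploit the hypothesis $V \cong V^\bot$ to write $\cH$ as an infinite orthogonal sum of copies of a single infinite-dimensional subspace. Concretely, since $V \cong V^\bot$ and both are infinite-dimensional, I would first fix an orthogonal decomposition $\cH = \bigoplus_{k \in \N} W_k$ with all $W_k$ infinite-dimensional and with $V = W_1$ (this is possible: split $V^\bot \cong \cH$ again into two infinite-dimensional halves, and iterate, or simply pick any such decomposition and note that $V$ is unitarily conjugate into it). Then set $V_n := \bigoplus_{k \geq n} W_k$, so that $G_n := \U(V_n) \hookrightarrow \U(\cH)$ (acting by the identity on $W_1 \oplus \cdots \oplus W_{n-1}$) is a decreasing sequence of subgroups with $G_1 = \U(V_1) \supeq \U(V)$, and it suffices to produce a nonzero $G_n$-fixed vector for some $n$, since $\cH_\pi^{\U(V)} \supeq \cH_\pi^{G_1} \supeq \cH_\pi^{G_n}$.

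Next I would verify the two hypotheses (U1) and (U2) of Proposition~\ref{prop:1.6}(a) for a suitable neighborhood basis. Take $U_\eps := \{ g \in \U(\cH) : \|g - \1\| < \eps\}$. For (U1): each $\U(V_n)$ is itself a full unitary group of an infinite-dimensional Hilbert space, hence satisfies (Ol) by Proposition~\ref{prop:1.3}(i) applied to $V_n$ — more precisely, every element of $\U(V_n)$ is $\exp X$ with $\|X\| \leq \pi$, so with $m := \lceil \pi/\eps \rceil + 1$ we get $G_n \subeq (G_n \cap U_\eps)^m$ exactly as in the proof of Lemma~\ref{lem:1.1}, and crucially the same $m$ works for all $n$. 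For (U2): given $N$ and elements $g_k \in \U(V_k) \cap U_\eps$ for $k = 1, \ldots, N$, the operators $g_1, \ldots, g_N$ act on nested subspaces but, writing $g_k = \1 \oplus h_k$ with $h_k \in \U(V_k)$, the product $g_N \cdots g_1$ differs from $\1$ only on $W_1 \oplus \cdots$; the key point is that since each $g_k - \1$ is supported on $V_k$ and these supports are decreasing, one estimates $\|g_N\cdots g_1 - \1\|$ by a telescoping argument. Here I would be careful: the naive triangle inequality only gives $\|g_N \cdots g_1 - \1\| \leq N\eps$, which is not enough for (U2).

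\textbf{The main obstacle.} The genuine difficulty — and the place where the decomposition must be chosen cleverly — is precisely condition (U2): we need the product $(G_N \cap U)\cdots(G_1 \cap U) \subeq U$ with no factor of $N$. The resolution is that the $G_k$ should act on \emph{pairwise orthogonal} subspaces after a harmless reindexing, exactly as in the proof of Kirillov's Lemma~\ref{lem:p.1x}, rather than on a nested chain; so I would instead define $G_n$ using the $W_k$ grouped so that consecutive $G_n$'s live on orthogonal complements — e.g. arrange that for the purposes of the iteration in Proposition~\ref{prop:1.6}(a), the element $u_k$ extracted at stage $k$ lies in $\U(W_k)$, and since $W_1, W_2, \ldots$ are mutually orthogonal, any product $u_N \cdots u_1$ with $u_k \in \U(W_k) \cap U_\eps$ satisfies $\|u_N\cdots u_1 - \1\| = \max_k \|u_k - \1\| < \eps$, giving (U2) on the nose. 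This forces a slight restructuring: rather than $G_n = \U(V_n)$ I take $G_n$ to be a product of $\U(W_k)$'s over a tail, and I must check $\cH_\pi^{\U(V)}$ still dominates some $\cH_\pi^{G_n}$, which holds because $V$ is infinite-dimensional so $\U(V)$ contains a conjugate of $\U(W_k)$ for large $k$ — or more simply, one arranges $W_1 = V$ and argues for the chain directly after observing that membership in $U_\eps$ for operators supported on mutually orthogonal spaces is stable under products. Granting (U1) and (U2), Proposition~\ref{prop:1.6}(a) yields $n$ with $\cH_\pi^{G_n} \neq \{0\}$, and since $G_n$ is (conjugate to) a subgroup of $\U(V)$, or $\U(V) \supeq G_n$ after the reindexing, we conclude $\cH_\pi^{\U(V)} \neq \{0\}$, completing the proof.
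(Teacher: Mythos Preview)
After your self-correction you land on essentially the paper's argument: decompose
$\cH = \hat\bigoplus_{k\in\N} W_k$ with $W_1 = V$ and all $W_k$ mutually orthogonal and
isomorphic to $V$, set $G_k := \U(W_k)$, and apply Proposition~\ref{prop:1.6}(a) with the
norm balls $U_\eps$. Condition (U1) comes from Proposition~\ref{prop:1.3}(i) (uniformly in
$k$, since each $W_k$ is infinite-dimensional), and (U2) holds because operators
supported on pairwise orthogonal summands satisfy
$\|u_N\cdots u_1 - \1\| = \max_k \|u_k - \1\|$. This is exactly what the paper does.

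There is, however, a genuine error in your final step. You write that ``$G_n$ is
(conjugate to) a subgroup of $\U(V)$, or $\U(V)\supeq G_n$'', and deduce
$\cH_\pi^{\U(V)}\neq\{0\}$ from $\cH_\pi^{G_n}\neq\{0\}$. Both formulations point the
wrong way: if $H\subeq \U(V)$ then $\cH_\pi^{\U(V)}\subeq\cH_\pi^{H}$, so knowing the
larger space is nonzero says nothing about the smaller one. (The same reversal already
appears in your first paragraph, where $G_1\supeq G_n$ gives
$\cH_\pi^{G_1}\subeq\cH_\pi^{G_n}$, not the inclusion you wrote.) What actually works
--- and what the paper does --- is the \emph{conjugation} argument in its correct form:
since $W_n\cong V$, there exists $g\in\U(\cH)$ with $gV = W_n$, hence
$g\,\U(V)\,g^{-1} = \U(W_n) = G_n$ and therefore
\[
\pi(g)\,\cH_\pi^{\U(V)} \;=\; \cH_\pi^{G_n}\;\neq\;\{0\},
\]
which gives $\cH_\pi^{\U(V)}\neq\{0\}$. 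So the missing idea is not a containment but an
equality of fixed-point spaces under an inner automorphism; once you state it this way
the proof is complete.
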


\begin{proof} Put $\cH_1 := V$ and write $V^\bot$ as a Hilbert space direct 
sum $\hat\oplus_{j = 2}^\infty \cH_j$, where each $\cH_j$ is is isomorphic to $V$ or 
$\cH$. 
This is possible because  $|J| = |\N \times J|$ for every infinite set $J$. 
We claim that some $\U(\cH_j)$ has non-zero 
fixed points in $\cH_\pi$. Once this claim is proved, we choose 
$g \in \U(\cH)$ with $gV = \cH_j$. 
Then $\pi(g) \cH_\pi^{\U(V)} = \cH_\pi^{\U(\cH_j)} \not=\{0\}$ 
implies the assertion. 

For the proof we want to use Proposition~\ref{prop:1.6}. 
In $G := \U(\cH)$ we consider the basis of $\1$-neighborhoods given by 
$U_\eps := \{ g \in \U(\cH) \: \|g - \1\| < \eps\}$ and the subgroups 
$G_j := \U(\cH_j)$. 
Then the proof of Proposition~\ref{prop:1.3}(i) shows that 
there exists an $m \in \N$ with 
$G_j \subeq (U_\eps \cap G_j)^m$ for every $j$, which is (U1). 
It is also clear that (U2) is satisfied. Therefore the assertion follows 
from Proposition~\ref{prop:1.6}. 
\smartqed\qed\end{proof}

\begin{lemma} \mlabel{lem:p.2a} 
Let  $\cF \subeq \cH$ be a closed subspace of finite codimension. Then 
the natural morphism $\U(\cF) \to \U(\cH)/\U_\infty(\cH)$ is surjective, 
i.e., $\U(\cH) = \U_\infty(\cH) \U(\cF)$. 
\end{lemma}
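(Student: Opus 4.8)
The plan is to show directly that every unitary $g$ of $\cH$ can be written as a product $g = u\,v$ with $u \in \U_\infty(\cH)$ and $v \in \U(\cF)$. Since $\cF$ has finite codimension, write $\cH = \cF^\bot \oplus \cF$ with $\dim \cF^\bot = k < \infty$. The key geometric fact is that two $k$-dimensional subspaces of an infinite-dimensional Hilbert space are always ``close enough'' to be matched up by a unitary differing from the identity only on a finite-dimensional piece; more precisely, given $g$, the subspace $g\cF$ again has codimension $k$, and I want to find $u \in \U_\infty(\cH)$ with $u(g\cF) = \cF$, equivalently $u g \in \U(\cF) \cdot \U(\cF^\bot)$ modulo nothing, i.e.\ $ug$ preserves the decomposition $\cF^\bot \oplus \cF$. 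Then $ug = w \oplus v$ with $w \in \U(\cF^\bot)$ finite rank (hence in $\U_\infty$) and $v \in \U(\cF)$, giving $g = u^{-1}(w\oplus v) = (u^{-1}(w\oplus \1_\cF))\, v$ with the first factor in $\U_\infty(\cH)$.

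So the real content is: \emph{for any closed subspace $\cE \subeq \cH$ of finite codimension $k$, there is $u \in \U_\infty(\cH)$ with $u\cE = \cF$.} First I would reduce to the case where $\cE$ and $\cF$ are in ``general position'', but actually a cleaner route is to build $u$ as a finite product of reflections or rotations. Pick orthonormal bases $e_1, \dots, e_k$ of $\cE^\bot$ and $f_1, \dots, f_k$ of $\cF^\bot$. I claim there is a finite-dimensional subspace $M \subeq \cH$ containing all the $e_i$ and $f_i$ together with enough ``room'': concretely, let $M$ be the span of $\{e_1,\dots,e_k,f_1,\dots,f_k\}$, a space of dimension at most $2k$. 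The unitary group $\U(M)$ acts transitively on the set of $k$-dimensional subspaces of $M$ provided $\dim M \geq k$ — which holds — so there is $u_0 \in \U(M)$ with $u_0(\cE^\bot \cap M) $... but one must be careful that $\cE^\bot$ need not lie inside $M$. Instead: $\cE^\bot = \Spann\{e_i\} \subeq M$ by construction, and $\cF^\bot = \Spann\{f_i\} \subeq M$, so both are genuinely $k$-dimensional subspaces of the finite-dimensional space $M$; transitivity of $\U(M)$ on $k$-planes gives $u_0 \in \U(M)$ with $u_0(\cE^\bot) = \cF^\bot$. Extending $u_0$ by the identity on $M^\bot$ yields $u \in \U(\cH)$ with $u - \1$ of finite rank, hence $u \in \U_\infty(\cH)$, and $u\cE^\bot = \cF^\bot$ forces $u\cE = \cF$ since $u$ is unitary.

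Assembling: given $g \in \U(\cH)$, apply the claim to $\cE := g\cF$ (codimension $k$) to obtain $u \in \U_\infty(\cH)$ with $u g \cF = \cF$. A unitary mapping $\cF$ onto $\cF$ also maps $\cF^\bot$ onto $\cF^\bot$, so $ug = w \oplus v$ with $w \in \U(\cF^\bot)$, $v \in \U(\cF)$; since $\cF^\bot$ is finite-dimensional, $w \oplus \1_\cF \in \U_\infty(\cH)$, and $g = u^{-1}(w\oplus\1_\cF)\,v \in \U_\infty(\cH)\,\U(\cF)$. As this holds for every $g$, we get $\U(\cH) = \U_\infty(\cH)\,\U(\cF)$, which is exactly the assertion; surjectivity of $\U(\cF) \to \U(\cH)/\U_\infty(\cH)$ follows. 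The one point requiring a little care — and the main (minor) obstacle — is the transitivity of $\U(M)$ on $k$-dimensional subspaces of a finite-dimensional Hilbert space $M$ with $\dim M \geq k$: this is standard (extend an orthonormal basis of one $k$-plane and of another to orthonormal bases of $M$ and map one to the other), but one should make sure the spans $\Spann\{e_i\}$ and $\Spann\{f_i\}$ are handled as honest subspaces of the common finite-dimensional space $M$, rather than trying to rotate inside all of $\cH$ at once.
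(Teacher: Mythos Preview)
Your proof is correct and takes a genuinely different route from the paper's. The paper argues at the Lie algebra level: since $\U(\cH)$ and $\U(\cF)$ are connected (Proposition~\ref{prop:1.3}(i), Example~\ref{ex:1.2}(b)), it suffices to show $\fu(\cH) = \fu(\cF) + \fu_\infty(\cH)$, which follows by writing $X = PXP + (\1-P)XP + X(\1-P)$ for the projection $P$ onto $\cF$ and noting that the last two summands have finite rank. Your argument instead produces the factorization $g = (u^{-1}(w\oplus\1))\,v$ by hand, via a finite-rank rotation carrying $g\cF^\bot$ onto $\cF^\bot$. Your approach is more elementary---no Lie theory, no connectedness, no implicit inverse-function-theorem step from $\fg = \fh_1 + \fh_2$ to $G = H_1 H_2$---and it works uniformly over $\K \in \{\R,\C,\H\}$ without invoking the connectedness of $\OO(\cH)$. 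The paper's argument is shorter to write down but leans on more infrastructure; yours exposes the underlying geometric fact (any two finite-codimensional subspaces are related by a finite-rank unitary perturbation of the identity) directly.
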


\begin{proof} Since the groups $\U(\cH)$ and $\U(\cF)$ are connected 
(Proposition~\ref{prop:1.3}(i)), 
it suffices to show that their Lie algebras satisfy 
\[ \fu(\cH) = \fu(\cF) + \fu_\infty(\cH).\] 
Let $P \: \cH \to \cH$ be the orthogonal projection onto $\cF$. 
Then every $X \in \fu(\cH)$ can be written as 
\[ X = PXP + (\1-P)XP + X(\1-P),\] 
where $PXP \in \fu(\cF)$ and the other two summands are compact 
because $\1 - P$ has finite range. 
\smartqed\qed\end{proof}

\begin{lemma} \mlabel{lem:p.2} Let $(\pi, \cH_\pi)$ be a continuous 
unitary representation of $\U(\cH)$ with $\U_\infty(\cH)\subeq \ker \pi$ and 
$\cH = \hat\oplus_{j \in J} \cH_j$ with $\cH_j$ infinite-dimensional separable and 
$J$ infinite. Then $\bigcap_{j\in J} \cH_\pi^{\U(\cH_j)} \not=\{0\}$. 
\end{lemma}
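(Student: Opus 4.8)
The plan is to apply Proposition~\ref{prop:1.6}(a) to the bounded group $G := \U(\cH)/\U_\infty(\cH)$, using the subgroups $G_j$ that are the images of $\U(\cH_j)$ under the quotient map $q \: \U(\cH) \to \U(\cH)/\U_\infty(\cH)$. Note that $G$ is bounded: it is a quotient of the bounded group $\U(\cH)$ (Proposition~\ref{prop:1.3}(i)), and a quotient of a bounded group is bounded since $q(U^m) \supeq q(U)^m$ covers $G$ whenever $U^m$ covers $\U(\cH)$. Because $\U_\infty(\cH) \subeq \ker\pi$, the representation $\pi$ factors through $G$, and a non-zero $G_j$-fixed vector is the same thing as a non-zero $\U(\cH_j)$-fixed vector. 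So it suffices to produce, via Proposition~\ref{prop:1.6}(a), an index $j$ with $\cH_\pi^{G_j} \not= \{0\}$; since we want fixed vectors for \emph{all} $j$ simultaneously, we then need a second, elementary argument that promotes ``fixed by some $\U(\cH_j)$'' to ``fixed by every $\U(\cH_j)$''.

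For the intersection, here is the promotion argument. Fix $j_0$ with $\cH_\pi^{\U(\cH_{j_0})} \not=\{0\}$, and let $W := \cH_\pi^{\U(\cH_{j_0})}$, a closed $\U(\cH_{j_0})'$-invariant subspace which is in particular invariant under $\U(\cH_j)$ for every $j \not= j_0$, since $\U(\cH_j)$ commutes with $\U(\cH_{j_0})$. For any other index $j$, pick $g \in \U(\cH)$ with $g\cH_{j_0} = \cH_j$ and $g\cH_j = \cH_{j_0}$ (swapping two isomorphic summands, identity elsewhere); since $g$ normalizes nothing is needed beyond $g\U(\cH_{j_0})g^{-1} = \U(\cH_j)$, we get $\pi(g)W = \cH_\pi^{\U(\cH_j)} \not=\{0\}$. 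This shows each $\cH_\pi^{\U(\cH_j)}$ is non-zero, but not yet that their intersection is non-zero. To get the intersection, iterate Lemma~\ref{lem:p.1}-style reasoning inside $W$: restrict $\pi$ to the subgroup $\U(\hat\oplus_{j \not= j_0}\cH_j)$ acting on $W$, regroup $\hat\oplus_{j\not=j_0}\cH_j$ as a countable direct sum of infinite-dimensional pieces, and repeat, peeling off one more fixed summand at each stage; a Zorn/compactness argument on the resulting decreasing chain of closed subspaces, each still carrying a non-zero representation space to which the previous step applies, yields a non-zero vector fixed by all $\U(\cH_j)$ at once.

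Checking the hypotheses of Proposition~\ref{prop:1.6}(a) for the first step: with $U_\eps := \{g \in \U(\cH) \: \|g-\1\| < \eps\}$ we take the $\1$-neighborhood basis $q(U_\eps)$ in $G$. Condition (U1) follows because the argument in the proof of Proposition~\ref{prop:1.3}(i) gives a single $m \in \N$ with $\U(\cH_j) \subeq (U_\eps \cap \U(\cH_j))^m$ for every $j$ (each $\cH_j$ is infinite-dimensional, so $\|X\|\le\pi$ works uniformly), and this descends to $G_j \subeq (q(U_\eps) \cap G_j)^m$. Condition (U2), namely $(G_N \cap q(U_\eps))\cdots(G_1 \cap q(U_\eps)) \subeq q(U_\eps)$, holds because elements of $\U(\cH_1),\ldots,\U(\cH_N)$ act on pairwise orthogonal subspaces, so a product of them lies within distance $\eps$ of $\1$ as soon as each factor does (one can lift each factor of $G_i \cap q(U_\eps)$ to an element of $\U(\cH_i)$ within $\eps$ of $\1$, their product being block-diagonal with blocks within $\eps$ of the identity). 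Then $\cH_\pi^G = \{0\}$ would contradict Proposition~\ref{prop:1.6}(a), so some $\cH_\pi^{G_j} \not= \{0\}$.

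The main obstacle I expect is the promotion from ``some summand has fixed vectors'' to ``the intersection over all summands is non-zero''. The single application of Proposition~\ref{prop:1.6} only yields one good index, and the swapping trick spreads this to each index individually but not to the intersection. The clean fix is to set up the iteration so that at stage $k$ one works inside the space of vectors already fixed by $\U(\cH_{j_1}),\ldots,\U(\cH_{j_{k-1}})$, which is still a non-zero representation of $\U(\hat\oplus_{j\notin\{j_1,\ldots,j_{k-1}\}}\cH_j)$ after suitable regrouping, apply the first step again, and finally invoke a Zorn's Lemma argument over maximal subsets $S \subeq J$ with $\bigcap_{j\in S}\cH_\pi^{\U(\cH_j)} \not=\{0\}$: maximality plus the regrouping step forces $S = J$, since if $j_0 \notin S$ one could regroup $\cH_{j_0}$ together with a tail of $S^c$-summands and extract one more fixed direction, contradicting maximality. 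This is the step that requires care to state precisely, but it is conceptually routine once the regrouping is arranged so that each residual representation still satisfies the hypotheses of the first step.
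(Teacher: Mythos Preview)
Your first step---extracting a non-zero $\U(\cH_{j_0})$-fixed vector via Proposition~\ref{prop:1.6}(a)---is fine (and is essentially Lemma~\ref{lem:p.1}; you could also invoke that lemma directly, since any single $\cH_{j_0}$ satisfies $\cH_{j_0}\cong\cH_{j_0}^\bot$). The problem is the promotion from ``some $j$'' to ``all $j$ simultaneously''. Your Zorn's Lemma argument does not go through: for a chain $(S_\alpha)$ of subsets with $W_\alpha:=\bigcap_{j\in S_\alpha}\cH_\pi^{\U(\cH_j)}\neq\{0\}$, the intersection $\bigcap_\alpha W_\alpha$ of a decreasing family of non-zero closed subspaces of a Hilbert space can perfectly well be~$\{0\}$ (take $W_n=\overline{\spann}\{e_k:k\geq n\}$ in $\ell^2$). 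There is no weak-compactness rescue here, because the subspaces are not nested inside a fixed ball with a single limit point forced. So the union of the chain need not give a non-zero intersection, Zorn's hypothesis fails, and you cannot conclude maximal $S=J$. The same obstruction kills the ``peel off one summand at a time'' iteration: you get $\bigcap_{k\leq n}\cH_\pi^{\U(\cH_{j_k})}\neq\{0\}$ for each finite $n$, but nothing prevents the infinite intersection from collapsing.

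The paper avoids iteration entirely by producing, in one stroke, a subgroup whose fixed vectors are automatically fixed by \emph{every} $\U(\cH_j)$. Choose in each $\cH_j$ a closed subspace $V_j$ of codimension~$1$ and set $V:=\overline{\sum_j V_j}$. Then $V^\bot\cong\ell^2(J,\K)\cong\cH\cong V$ (using $|J\times\N|=|J|$), so Lemma~\ref{lem:p.1} gives $\cH_\pi^{\U(V)}\neq\{0\}$. On the other hand, Lemma~\ref{lem:p.2a} applied inside each $\cH_j$ yields $\U(\cH_j)\subeq\U(V_j)\U_\infty(\cH_j)\subeq\U(V)\U_\infty(\cH)$. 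Since $\U_\infty(\cH)\subeq\ker\pi$, any $\U(V)$-fixed vector is fixed by every $\U(\cH_j)$, and you are done. The missing idea in your approach is precisely this: rather than intersecting fixed spaces, enlarge to a single subgroup $\U(V)$ that dominates all the $\U(\cH_j)$ modulo $\U_\infty(\cH)$.
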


\begin{proof} Let $V \subeq \cH$ be a closed subspace of the form 
$V = \oline{\sum_j V_j}$, where each $V_j \subeq \cH_j$ is a closed subspace 
of codimension $1$. 
Then $V^\bot \cong \ell^2(J,\C) \cong \cH \cong V$ because
$|J \times \N| = |J|$. According to Lemma~\ref{lem:p.2a}, we then have 
\[ \U(\cH_j) 
\subeq \U(V_j)\U_\infty(\cH_j)
\subeq \U(V)\U_\infty(\cH).\] 
In view of Lemma~\ref{lem:p.1}, $\U(V)$ has non-zero fixed points in 
$\cH_\pi$, 
and since $\U_\infty(\cH) \subeq \ker \pi$, any such fixed point is fixed 
by all the subgroups $\U(\cH_j)$. 
\smartqed\qed\end{proof}

From now on we assume that $\cH$ is separable. 

\begin{lemma} \mlabel{lem:p.3} Let $(\pi, \cH_\pi)$ be a continuous 
unitary representation of $\U(\cH)$ with $\U_\infty(\cH)\subeq \ker \pi$ 
and $g \in \U(\cH)$. 
If $1$ is contained in the essential spectrum of~$g$, i.e., 
the image of $g - \1$ in the Calkin algebra $B(\cH)/K(\cH)$ is not invertible, then 
$1$ is an eigenvalue of $\pi(g)$. 
\end{lemma}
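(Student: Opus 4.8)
The plan is to exploit the hypothesis that $1$ lies in the essential spectrum of $g$ in order to produce, inside $\U(\cH)$, a copy of a countable direct sum of infinite-dimensional unitary groups on which $\pi$ is trivial, and then deduce that $\pi(g)$ fixes a vector. First I would use spectral theory for the unitary operator $g$: since the image of $g - \1$ in the Calkin algebra is not invertible, $0$ lies in the essential spectrum of $g - \1$, equivalently $1$ lies in the essential spectrum of $g$. Hence for every $\eps > 0$ the spectral projection $P_\eps := P(\{z \in \T \: |z - 1| < \eps\})$ has infinite rank. Choosing a sequence $\eps_k \to 0$ with the annuli $\{ \eps_{k+1} \le |z-1| < \eps_k\}$ each carrying an infinite-rank spectral projection (discarding finitely many values where the spectral measure is concentrated on a point, or splitting such an atom using the infinite multiplicity), I obtain an orthogonal decomposition $\cH = \hat\oplus_{k} \cH_k$ into infinite-dimensional $g$-invariant separable subspaces such that $g\res_{\cH_k}$ is within $\eps_k$ of $\1_{\cH_k}$ in operator norm.

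Next I would observe that $g$ commutes with each $\U(\cH_k)$ (acting by the identity off $\cH_k$), since these subspaces are $g$-invariant. By Lemma~\ref{lem:p.2}, applied to the decomposition $\cH = \hat\oplus_k \cH_k$, the subspace $W := \bigcap_k \cH_\pi^{\U(\cH_k)}$ is non-zero. The point of the norm estimate $\|g\res_{\cH_k} - \1_{\cH_k}\| < \eps_k$ is the following: write $g = \prod_k' g_k$ in the strong sense, where $g_k := g\res_{\cH_k} \oplus \1$; then the tail $h_N := \prod_{k > N}' g_k$ satisfies $\|h_N - \1\| \le \sup_{k > N}\eps_k \to 0$, while the head $g_{\le N} := g_1 \cdots g_N$ lies in $\U(\cH_1 \oplus \cdots \oplus \cH_N)$. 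Thus $g = g_{\le N} h_N$ with $g_{\le N}$ fixing $W$ pointwise (as it is a finite product of elements of the $\U(\cH_k)$) and $h_N \to \1$ in norm. Applying $\pi$ and using norm continuity of $\pi$: for $\xi \in W$ we get $\pi(g)\xi = \pi(g_{\le N})\pi(h_N)\xi = \pi(h_N)\xi \to \xi$ as $N \to \infty$, because $\pi(g_{\le N})\xi = \xi$ and $\pi(h_N)\xi \to \pi(\1)\xi = \xi$. Hence $\pi(g)\xi = \xi$ for every $\xi \in W$, so any non-zero $\xi \in W$ is an eigenvector of $\pi(g)$ with eigenvalue $1$.

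The main obstacle is the bookkeeping in the first step: producing a decomposition of $\cH$ into infinitely many infinite-dimensional $g$-invariant pieces on each of which $g$ is uniformly close to the identity, while staying honest about atoms of the spectral measure at $1$ (if the spectral measure has an atom at $1$ of infinite multiplicity, that eigenspace must itself be split into infinitely many infinite-dimensional pieces; if there is no atom, the non-atomic part near $1$ can be sliced by annuli of shrinking radius, each of infinite rank by essentiality). In all cases the infinitude of the rank of $P_\eps$ for every $\eps > 0$, which is exactly what membership of $1$ in the essential spectrum gives, is enough to arrange the decomposition; I would phrase this cleanly by first passing to $P_{\eps_1}\cH$ (infinite-dimensional, $g$-invariant, with $\|g - \1\| < \eps_1$ there need not hold, so instead) — more carefully, I set $A_k := \{ z \in \T \: \eps_{k} \le |z - 1| < \eps_{k-1}\}$ with $\eps_0 = 2$, keep only those $k$ with $\rk P(A_k) = \infty$, and regroup the finitely-supported leftover mass into these infinite-rank blocks; this yields finitely or countably many infinite-dimensional blocks, and if only finitely many survive one further subdivides the block nearest $1$ using its infinite dimension. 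Once the geometric decomposition is in hand, the representation-theoretic conclusion via Lemma~\ref{lem:p.2} and norm continuity is immediate.
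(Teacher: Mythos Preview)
Your approach is essentially the same as the paper's: decompose $\cH$ into infinitely many infinite-dimensional $g$-invariant blocks, invoke Lemma~\ref{lem:p.2} to obtain a simultaneous fixed vector for the groups $\U(\cH_k)$, and then approximate $g$ by operators that are trivial on all but finitely many blocks. The paper phrases the last step via $g_\eps := P_\eps \oplus (\1-P_\eps)g$ with $\|g-g_\eps\|\leq\eps$, which is exactly your factorisation $g = g_{\le N} h_N$ with $g_{\le N} = g_{\eps_N}$ and $h_N = g_{\eps_N}^{-1}g$.

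Two small slips to clean up. First, the remark that ``$g$ commutes with each $\U(\cH_k)$'' is false in general (only the \emph{restriction} $g_k$ lies in $\U(\cH_k)$; it need not be central there), but you never use this, so just delete it. Second, in the line $\pi(g)\xi = \pi(g_{\le N})\pi(h_N)\xi = \pi(h_N)\xi$ you are applying $\pi(g_{\le N})$ to $\pi(h_N)\xi$, not to $\xi$; fix this by writing $g = h_N g_{\le N}$ (they commute, acting on complementary blocks), so that $\pi(g)\xi = \pi(h_N)\pi(g_{\le N})\xi = \pi(h_N)\xi \to \xi$.

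On the spectral bookkeeping: you are right that this is the delicate point, and the paper handles it by the same two-case split you sketch (infinite-multiplicity eigenvalue at $1$ versus not). Your fallback ``if only finitely many annuli are infinite-rank, subdivide the innermost block'' is the correct idea; note that in that situation one only needs the single approximant $g_{\eps_1}$ (acting as $g$ on $\cH_1 = (P_{\eps_1}\cH)^\bot$ and as $\1$ on the rest) together with \emph{any} decomposition of $P_{\eps_1}\cH$ into infinitely many infinite-dimensional $g$-invariant pieces, so there is no need to keep the tail pieces annular.
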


\begin{proof} We choose an orthogonal decomposition $\cH = \hat\oplus_{n = 1}^\infty \cH_n$ 
into infinite-di\-men\-sio\-nal $g$-invariant subspaces of $\cH$ as follows. 

{\bf Case 1:} 
If $1$ is an eigenvalue of $g$ of infinite multiplicity, 
then we put $\cH_0 := \ker(\1 - g)^\bot$. If this space is infinite-dimensional, 
then we put $\cH_1 := \cH_0$, and if this is not the case, then we 
pick a subspace $\cH_0' \subeq \cH_0^\bot$ of 
infinite dimension and codimension and put $\cH_1 := \cH_0 \oplus \cH_0'$. 
We choose all other $\cH_n$, $n > 1$, such that 
$\cH_1^\bot = \hat\oplus_{n = 2}^\infty \cH_n$ and note that $\cH_1^\bot \subeq \ker(\1 -g)$. 

{\bf Case 2:} If $\ker(\1 - g)$ is finite-dimensional, then 
let $P_\eps \in B(\cH)$ 
be the spectral projection for $g$ corresponding 
to the closed disc of radius $\eps > 0$ about $1$. 
Then 
\[ g_\eps := P_\eps \oplus (\1-P_\eps)g\]  
satisfies $\|g_\eps - g\| \leq \eps$. 
The non-compactness of $g-\1$ implies that, if $\eps$ is small enough, 
then \[ g_\eps - \1 = 0 \oplus (\1 - P_\eps)(g - \1) \]  is non-compact, and hence that 
$P_\eps\cH$ has infinite codimension. 
Further $P_\eps\cH$ is infinite-dimensional because $1$ is an essential 
spectral value of $g$. 
Hence there exists a sequence 
$\eps_1 > \eps_2 > \ldots$ converging to $0$,  
for which the $g$-invariant subspaces 
\[ \cH_1 := (P_{\eps_1}\cH)^\bot 
\quad \mbox{ and } \quad 
\cH_j := P_{\eps_{j-1}}\cH \cap (P_{\eps_{j}}\cH)^\bot\] 
are infinite-dimensional.

In both cases, we consider $g_\eps$ as an element $(g_{\eps,n})$ of 
the product group \break $\prod_{n=1}^\infty \U(\cH_n) \subeq \U(\cH)$ satisfying 
$g_{\eps,n} = \1$ for $n$ sufficiently large. 
If $v \in \cH_\pi$ is a non-zero simultaneous fixed vector for 
the subgroups $\U(\cH_n)$ (Lemma~\ref{lem:p.2}), 
we obtain $\pi(g_\eps)v= v$ for every $\eps > 0$, and now 
$v = \pi(g_\eps)v \to \pi(g)v$ implies that $\pi(g)v = v$. 
\smartqed\qed\end{proof}

As an immediate consequence, we obtain: 

\begin{lemma} \mlabel{lem:p.4} Let $(\pi, \cH_\pi)$ be a continuous 
unitary representation of $\U(\cH)$ with $\U_\infty(\cH)\subeq \ker \pi$ 
and $j \in \Z$ with 
$\pi(\zeta \1) = \zeta^j \1$ for $\zeta \in \T$. 
If $\lambda$ is contained in the essential spectrum of $g$, 
then $\lambda^j$ is an eigenvalue of $\pi(g)$. 
\end{lemma}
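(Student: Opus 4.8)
The plan is to deduce this directly from Lemma~\ref{lem:p.3} by twisting $g$ with a central scalar, so no genuinely new work is needed. Since $g$ is unitary, its spectrum — and hence its essential spectrum — lies in $\T$; in particular $\lambda \in \T$ and $\overline\lambda = \lambda^{-1} \in \T$. The first step is to set $g' := \overline\lambda\, g \in \U(\cH)$ and observe
\[ g' - \1 = \overline\lambda\, g - \1 = \overline\lambda\,(g - \lambda \1). \]
Multiplication by the invertible scalar $\overline\lambda$ is an automorphism of the Calkin algebra $B(\cH)/K(\cH)$, so the image of $g' - \1$ there is invertible if and only if the image of $g - \lambda\1$ is. By hypothesis the latter is not invertible, hence $1$ lies in the essential spectrum of $g'$.

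Next I would apply Lemma~\ref{lem:p.3} to $g'$ (note that $\U_\infty(\cH) \subeq \ker\pi$ is assumed throughout): there exists a non-zero $v \in \cH_\pi$ with $\pi(g')v = v$. Finally I translate this back to $g$. Since $\overline\lambda \1$ is central and $\pi(\zeta\1) = \zeta^j\1$ for all $\zeta \in \T$, we have
\[ \pi(g') = \pi\big((\overline\lambda\1)\, g\big) = \pi(\overline\lambda\1)\,\pi(g) = \overline\lambda^{\,j}\,\pi(g) = \lambda^{-j}\,\pi(g), \]
so that $\pi(g')v = v$ yields $\pi(g)v = \lambda^j v$; thus $\lambda^j$ is an eigenvalue of $\pi(g)$, as claimed.

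I do not expect a real obstacle here: the only points to verify are that the essential-spectrum condition is preserved under multiplication by a unit scalar (immediate, as above) and that $j$ may be negative, in which case $\overline\lambda^{\,j}$ is to be read as $(\overline\lambda)^{j} = \lambda^{-j}$ — consistent since $\lambda \in \T$. All the substance is in Lemma~\ref{lem:p.3}; the present statement merely records its rotated version, which is why it is labelled an immediate consequence.
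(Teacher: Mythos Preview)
Your proposal is correct and matches the paper's own proof essentially verbatim: the paper also applies Lemma~\ref{lem:p.3} to $\lambda^{-1}g$ to obtain a fixed vector $v$, and then reads off $\pi(g)v = \lambda^j v$ from the central-character hypothesis. Your write-up simply spells out the Calkin-algebra step and the computation $\pi(\overline\lambda\,g) = \lambda^{-j}\pi(g)$ in more detail than the paper does.
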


\begin{proof} Lemma~\ref{lem:p.3} implies that 
$\pi(\lambda^{-1}g)$ has a non-zero fixed vector $v$, 
and this means that $\pi(g)v = \lambda^j v$. 
\smartqed\qed\end{proof}

\begin{theorem} \mlabel{thm:p.1b} If $\cH$ is a separable Hilbert space over 
$\K \in \{\R, \C, \H\}$, then every continuous unitary representation of 
$\U(\cH)/\U_{\infty}(\cH)_0$ on a separable Hilbert space is trivial. 
\end{theorem}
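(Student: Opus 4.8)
The plan is to reduce everything to $\K=\C$ and there to combine Lemmas~\ref{lem:p.1}, \ref{lem:p.2a}, \ref{lem:p.3}, \ref{lem:p.4} with the Olshanski Lemma~\ref{lem:1.4}. For $\K=\C$ (and, by the same arguments, for $\K=\H$) the group $\U_\infty(\cH)$ is connected, so $\U_\infty(\cH)_0=\U_\infty(\cH)$, and a continuous unitary representation of $\U(\cH)/\U_\infty(\cH)_0$ on a separable space $\cH_\pi$ is the same thing as a continuous unitary representation $(\pi,\cH_\pi)$ of $\U(\cH)$ with $\U_\infty(\cH)\subseteq\ker\pi$; I must show such a $\pi$ is trivial. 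The case $\K=\R$ I would deduce from the complex case at the end, via $\OO(\cH)\hookrightarrow\U(\cH_\C)$, the identity $\OO_\infty(\cH)=\OO(\cH)\cap\U_\infty(\cH_\C)$, and the $2$-component structure of $\OO_\infty(\cH)$; I regard this reduction as the routine part. So fix $\K=\C$ and $\pi$ as above.

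\emph{Killing the central character.} First I would split $\cH_\pi=\hat\oplus_{j\in\Z}\cH_\pi(j)$ according to the action of the centre $\T\1$, where $\pi(\zeta\1)=\zeta^j\1$ on the $\U(\cH)$-invariant subspace $\cH_\pi(j)$. Choose $g\in\U(\cH)$ whose essential spectrum is all of $\T$, e.g.\ $g=\diag(\zeta_n)$ with $\{\zeta_n\}$ dense in $\T$. By Lemma~\ref{lem:p.4}, every $\lambda^{j}$, $\lambda\in\T$, is an eigenvalue of $\pi(g)|_{\cH_\pi(j)}$. For $j\neq 0$ this yields uncountably many eigenvalues of a unitary operator on the separable space $\cH_\pi(j)$, which is impossible; hence $\cH_\pi(j)=\{0\}$ for $j\neq0$, i.e.\ $\pi(\T\1)=\{\1\}$. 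Thus $\U_\infty(\cH)\cdot\T\1\subseteq\ker\pi$ (and, incidentally, Lemma~\ref{lem:p.3} applied to $\lambda^{-1}g$ then shows $1$ is an eigenvalue of $\pi(g)$ for \emph{every} $g$).

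\emph{From fixed vectors of half-spaces to triviality.} Assume $\pi$ is non-trivial; replacing it by its restriction to $(\cH_\pi^{\U(\cH)})^\bot$ I may assume $\pi\neq 0$ and $\cH_\pi^{\U(\cH)}=\{0\}$. Call a closed subspace $V\subseteq\cH$ a \emph{half-space} if $\dim V=\dim V^\bot=\infty$, and set $F(V):=\cH_\pi^{\U(V)}$. By Lemma~\ref{lem:p.1}, $F(V)\neq\{0\}$ for every half-space $V$; moreover $\overline{\sum_{V}F(V)}=\cH_\pi$, because this span is $\U(\cH)$-invariant and its orthogonal complement, a non-zero invariant subspace with no $\U(V)$-fixed vectors for any half-space $V$, would violate Lemma~\ref{lem:p.1}. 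Next, if $V\subseteq V'$ are half-spaces with $\dim(V'\ominus V)<\infty$, then $\U(V')=\U_\infty(V')\,\U(V)$ by Lemma~\ref{lem:p.2a} (applied inside $V'$), and since $\U_\infty(V')\subseteq\U_\infty(\cH)\subseteq\ker\pi$ this forces $F(V)=F(V')$; equivalently, as $\U_\infty(\cH)\subseteq\ker\pi$ moves any half-space onto any other one in the same class, $F(V)$ depends only on the class $[V]$ of $V$ for the relation ``$P_V-P_{V'}$ has finite rank''. There are uncountably many such classes, and $F$ is non-zero on each of them.

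\emph{The main obstacle.} The heart of the matter is to play the \emph{separability} of $\cH_\pi$ off against this uncountable family of non-zero subspaces. Concretely, I would pick a unit vector $\xi_{[V]}\in F([V])$ for each class; by separability, for every $\delta>0$ and every $N$ there are pairwise inequivalent half-spaces $V_1,\dots,V_N$ with $\|\xi_{[V_i]}-\xi_{[V_j]}\|<\delta$ for all $i,j$. The auxiliary fact I would have to establish is that the representatives can be arranged so that, inside any such cluster, boundedly many of the $V_i$ are ``in general position'' and then \emph{boundedly generate} $\U(\cH)$, i.e.\ $\U(\cH)=\bigl(\U(V_1)\cup\cdots\cup\U(V_N)\bigr)^{L}$ for a fixed $L$ (a single half-space, or two complementary ones, never suffice, but a few transverse ones do). Granting this, $\xi_{[V_1]}$ is fixed by $\U(V_1)$ and $2\delta$-almost fixed by the remaining $\U(V_i)$, hence $2L\delta$-almost fixed by all of $\U(\cH)$; since $\U(\cH)$ is a \emph{bounded} topological group (Proposition~\ref{prop:1.3}(i)) with $\cH_\pi^{\U(\cH)}=\{0\}$, choosing $\delta$ small enough contradicts the Olshanski Lemma~\ref{lem:1.4} (used with $U=\U(\cH)$, $m=1$: in a bounded group without non-zero fixed vectors no unit vector can be $\tfrac12$-almost fixed by $U$). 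Isolating the precise ``general position / bounded generation'' statement for $\U(\cH)$ — and afterwards carrying out the real and quaternionic reductions from the complex case — is exactly where I expect the real work to lie; the remaining steps are soft.
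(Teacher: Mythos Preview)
Your Step~1 (killing the central character via Lemma~\ref{lem:p.4} and separability) is correct and matches the paper exactly. Your Step~2, however, diverges entirely and contains a genuine gap: the ``bounded generation'' statement $\U(\cH)=\bigl(\U(V_1)\cup\cdots\cup\U(V_N)\bigr)^{L}$ for half-spaces in general position is neither proved nor obviously true, and you yourself flag it as the place ``where the real work lies.'' Worse, you need these $V_i$ to be selectable from within an \emph{arbitrary} separability-cluster of equivalence classes, not merely to exist somewhere; that is an additional, unaddressed constraint. As written this is an outline of a strategy, not a proof.

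The paper's route after Step~1 is far shorter and avoids all of this. The idea you missed is that \emph{Step~1 applies recursively to subgroups}. Once $\T\1\subseteq\ker\pi$, take any orthogonal projection $P$ of infinite rank. Then $\U(P\cH)\cong\U(\cH')$ for a separable infinite-dimensional $\cH'$, and $\U_\infty(P\cH)\subseteq\U_\infty(\cH)\subseteq\ker\pi$; so Step~1 applied to $\pi|_{\U(P\cH)}$ yields $\T P+(\1-P)\subseteq\ker\pi$. For finite-rank $P$ this already lies in $\U_\infty(\cH)$. Hence $\ker\pi$ contains every unitary with two-point spectrum $\{1,\zeta\}$, hence (taking products) every unitary with finite spectrum, and these are norm-dense by the spectral theorem. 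No half-space combinatorics, no bounded generation.

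Your proposed reductions for $\K=\R,\H$ are also not right as stated. Embedding $\OO(\cH)\hookrightarrow\U(\cH_\C)$ does not help: a representation of $\OO(\cH)/\SO_\infty(\cH)$ has no reason to extend to $\U(\cH_\C)$, so the complex case cannot be invoked as a black box along that inclusion. The paper instead fixes a complex structure $I$ \emph{on $\cH$ itself}, so that $\U(\cH,I)\subseteq\OO(\cH)$; the complex case then forces $\U(\cH,I)\subseteq\ker\pi$, in particular $I\in\ker\pi$, and the commutator identity $I\exp(X)I^{-1}\exp(-X)=\exp(-2X)$ for $X$ in the $(-1)$-eigenspace of $\tau(g)=IgI^{-1}$ shows $\L(\ker\pi)=\fo(\cH)$. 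The quaternionic case is handled the same way, via a suitable involution producing a complex unitary subgroup inside $\U_\H(\cH)$; note that Lemmas~\ref{lem:p.3}--\ref{lem:p.4} rely on the centre $\T\1$ and do not transfer directly to $\K=\H$, so ``the same arguments'' will not do.
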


\begin{proof} (a) We start with the case $\K = \C$. 
Let $(\pi, \cH_\pi)$ be a separable continuous unitary representation 
of the Banach--Lie group $\U(\cH)$ with $\U_\infty(\cH) \subeq \ker \pi$. 

{\bf Step $1$:} $\T \1 \subeq \ker \pi$: Decomposing 
the representation of the compact central subgroup 
$\T\1$, we may  w.l.o.g.\ assume that $\pi(\zeta\1) = \zeta^j\1$ 
for some $j \in \Z$. Let $g \in \U(\cH)$ be an element with 
uncountable essential spectrum. 
If $j \not=0$, then Lemma~\ref{lem:p.4} implies that 
$\pi(g)$ has uncountably many eigenvalues, which is impossible 
if $\cH_\pi$ is separable. Therefore $j = 0$, and this means that 
$\T \1\subeq \ker \pi$. 

{\bf Step $2$:} Let $P \in B(\cH)$ be an orthogonal projection 
with infinite rank. Then $\U(P\cH) \cong \U(\cH)$,
so that Step $1$ implies that 
$\T P + (\1 -P) \subeq \ker \pi.$ 
If $P$ has finite rank, then 
\[ \T P + (\1 -P) \subeq \U_\infty(\cH) \subeq \ker \pi.\] 
This implies that $\ker \pi$ contains all elements $g$ 
with $\Spec(g) \subeq \{ 1,\zeta\}$ for some $\zeta \in \T$. 
Since every element with finite spectrum is a finite product 
of such elements, it is also contained in $\ker \pi$. 
Finally we derive from the Spectral Theorem that the subset 
of elements with finite spectrum is dense in $\U(\cH)$, 
so that $\pi$ is trivial.\begin{footnote}{This argument simplifies 
Pickrell's argument that was based on the simplicity of the topological 
group $\U(\cH)/\T \U_\infty(\cH)$ (\cite{Ka52}).}\end{footnote}

(b) Next we consider the orthogonal group 
$\U_\R(\cH) = \OO(\cH)$ of a real Hilbert space~$\cH$. 
Since $\cH$ is infinite-dimensional, there exists an orthogonal 
complex structure $I \in \OO(\cH)$. Then 
$\tau(g) := IgI^{-1}$ defines an involution on 
$\OO(\cH)$ whose fixed point set is the unitary group 
$\U(\cH,I)$ of the complex Hilbert space $(\cH,I)$. 

Let $\pi \: \OO(\cH) \to \U(\cH_\pi)$ be a continuous separable 
unitary representation with 
$\SO_\infty(\cH) := \OO_\infty(\cH)_0 \subeq N := \ker \pi$. 
Applying (a) to $\pi\res_{\U(\cH,I)}$, 
it follows that $\U(\cH,I) \subeq N$ and hence in particular 
that $I \in N$. 

For $X^\top = -X$ and $\tau(X) = -X$ we then obtain 
\[ N \ni  I\exp(X) I^{-1} \exp(-X) 
= \exp(-X)\exp(-X) = \exp(-2X).\] 
This implies that $\L(N) = \{ X \in \fo(\cH) \: \exp(\R X) \subeq N\}= \fo(\cH)$, and since $\OO(\cH)$ 
is connected by Example~\ref{ex:1.2}, it follows that $N = \OO(\cH)$, i.e., 
that $\pi$ is trivial. 

(c) Now let $\cH$ be a quaternionic Hilbert space, considered as a 
right $\bH$-module. Realizing $\cH$ as $\ell^2(S,\bH)$ for some set $S$, 
we see that $\cK := \ell^2(S,\C)$ is a 
complex Hilbert space whose complex structure is 
given by left multiplication $\lambda_\cI$ with the basis element 
$\cI \in \bH$ (this map is $\bH$-linear) and 
we have a direct sum $\cH^\C = \cK \oplus \cK \cJ$ 
of complex Hilbert spaces. 

Let $\sigma \: \ell^2(S,\bH) \to \ell^2(S,\bH)$ be the real linear 
isometry given by $\sigma(v) = \cI v \cI^{-1}$ pointwise on $S$, so that 
$\cH^\sigma = \ell^2(S,\C) = \cK$ and 
$\cK \cJ = \cH^{-\sigma}$. 
Then $\tau(g) := \sigma g\sigma$ defines an involution 
on $\U_\bH(\cH)$ whose group of fixed points is isomorphic 
to the unitary group $\U(\cK)$ of the complex Hilbert space $\cK$, 
on which the complex structure is given by right multiplication 
with~$\cI$, which actually coincides with the left multiplication. 

Let $\pi \: \U_\bH(\cH) \to \U(\cH_\pi)$ be a continuous separable 
unitary representation with 
$\U_{\bH,\infty}(\cH) \subeq N := \ker \pi$. 
Applying (a) to $\pi\res_{\U(\cK)}$, 
it follows that $\U(\cK) \subeq N$ and hence in particular 
that $\lambda_\cI \in N$. 
On the Lie algebra level, $\fu(\cK)$ is complemented 
by
\[ \{ X \in \fu_\bH(\cH) \: \sigma X = -X \sigma\} 
= \{ X \in \fu_\bH(\cH) \: \lambda_\cI X = - X \lambda_\cI\}, \] 
and for any element of this space we have 
\[ N \ni  \lambda_\cI\exp(X) \lambda_\cI^{-1} \exp(-X) 
= \exp(-X)\exp(-X) = \exp(-2X).\] 
This implies that $\L(N) = \fu_\bH(\cH)$, and since $\U_\bH(\cH)$ 
is connected by Proposition~\ref{prop:1.3}(i), $N = \U_\bH(\cH)$, so 
that $\pi$ is trivial. 
\smartqed\qed\end{proof}

\begin{remark} For $\K = \R$, the group 
$\OO(\cH)/\SO_\infty(\cH)$ is the $2$-fold simply connected cover of 
the group $\OO(\cH)/\OO_\infty(\cH)$. Therefore the triviality of all separable 
representations of $\OO(\cH)/\OO_\infty(\cH)$ follows from the triviality 
of all separable representations of 
$\OO(\cH)/\SO_\infty(\cH) = \OO(\cH)/\OO_\infty(\cH)_0$. 
\end{remark}

\begin{problem} If $\cH$ is an inseparable Hilbert space, 
then we think that all separable unitary representations $(\pi, \cH)$ of 
$\U(\cH)$ should be trivial, but we can only show that $\ker \pi$ 
contains all operators for which $(g - \1)\cH$ is separable, i.e., all 
groups $\U(\cH_0)$, where $\cH_0 \subeq \cH$ is a separable subspace. 

The argument works as follows. From Remark~\ref{rem:insep} we know that all irreducible 
representations of $\U_\infty(\cH)$ are inseparable. 
Theorem~\ref{thm:insep} implies that $\U_\infty(\cH) \subeq \ker \pi$. 
Now Theorem~\ref{thm:p.1b} implies that $\ker \pi$ contains all subgroups 
$\U(\cH_0)$, where $\cH_0$ is a separable Hilbert space, and this proves our claim. 
\end{problem}

\subsection{Separable representations of the Lie group $\U(\cH)$} 

Based on Pickrell's Theorem on the triviality of the separable representations 
of the quotient Lie groups $\U(\cH)/\U_\infty(\cH)_0$, we can now 
determined all separable continuous unitary representations 
of the full unitary group $\U(\cH)$. 

\begin{theorem}\mlabel{thm:p.2} Let $\cH$ be a separable $\K$-Hilbert space. 
Then every separable 
continuous unitary representation $(\pi, \cH_\pi)$ of the Banach--Lie group $\U(\cH)$ 
has the following properties: 
\begin{description}
\item[\rm(i)] It is continuous with respect to the strong operator topology 
  on $\U(\cH)$.
\item[\rm(ii)] Its restriction to $\U_\infty(\cH)_0$ has the same commutant. 
\item[\rm(iii)] It is a direct sum of bounded irreducible representations. 
\item[\rm(iv)] Every irreducible separable representation is of the form 
\[ \begin{cases}
\bS_\lambda(\cH_\C) \subeq (\cH_\C)^{\otimes N}, 
\lambda \in \Part(N), & \text{ for } \K = \R,  \\ 
\bS_\lambda(\cH) \otimes 
\bS_\mu(\oline\cH) \subeq 
\cH^{\otimes N} \otimes \oline\cH^{\otimes M}, 
\lambda \in \Part(N), \mu \in \Part(M),  & \text{ for } \K = \C,  \\
\bS_\lambda(\cH^\C) \subeq (\cH^\C)^{\otimes N}, 
\lambda \in \Part(N),  & \text { for } \K = \H.  \\
\end{cases} \]
\item[\rm(v)] $\pi$ extends uniquely to a strongly continuous 
representation of the overgroup $\U(\cH)^\sharp$ with the same commutant. 
\end{description}
\end{theorem}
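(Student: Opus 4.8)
The plan is to reconstruct $\pi$ from its restriction to the normal subgroup $\U_\infty(\cH)_0$ together with Pickrell's triviality result, Theorem~\ref{thm:p.1b}. We may assume $\dim\cH=\infty$, the finite-dimensional case being classical Schur--Weyl theory. First I would restrict $\pi$ to $K:=\U_\infty(\cH)_0$; this restriction is continuous, hence by Theorem~\ref{thm:class} a direct sum of irreducible Schur modules. Grouping the isotypic pieces by isomorphism type gives a decomposition $\cH_\pi\cong\hat\oplus_\lambda\,(\cM_\lambda\otimes\cK_\lambda)$, where $\lambda$ runs over the labels in Theorem~\ref{thm:class}(a), $\cK_\lambda$ carries the corresponding irreducible $\sigma_\lambda$ (one of $\bS_\lambda(\cH_\C)$, $\bS_\lambda(\cH)\otimes\bS_\mu(\oline\cH)$, $\bS_\lambda(\cH^\C)$), $\cM_\lambda$ is the multiplicity space (a closed, hence separable, subspace of $\cH_\pi$), and $K$ acts on the $\lambda$-isotypic subspace $\cH_\pi[\lambda]:=\cM_\lambda\otimes\cK_\lambda$ by $\1\otimes\sigma_\lambda$. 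The decisive point is that each $\sigma_\lambda$ is already the restriction to $K$ of a representation $\pi_\lambda$ of the \emph{full} group $\U(\cH)$ --- namely the Schur module realized inside a finite tensor power of $\cH$ (resp.\ $\cH_\C$, $\cH^\C$) --- and that $\pi_\lambda$ is norm continuous (a subrepresentation of $g\mapsto g^{\otimes N}$, which is Lipschitz for the operator norm) as well as strongly continuous (that tensor power is strongly continuous on elementary tensors).

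Next I would exploit the normality of $K$ in $\U(\cH)$. For $g\in\U(\cH)$ the automorphism $\alpha_g(k)=gkg^{-1}$ of $K$ satisfies $\sigma_\lambda\circ\alpha_g\cong\sigma_\lambda$ via the intertwiner $\pi_\lambda(g)$ (here it is used that $\pi_\lambda$ is defined on all of $\U(\cH)$), so $\pi(g)$ maps each $\cH_\pi[\lambda]$ onto itself, and on $\cH_\pi[\lambda]$ one has, for $k\in K$,
\[
\pi(g)\bigl(\1\otimes\sigma_\lambda(k)\bigr)\pi(g)^{-1}=\1\otimes\sigma_\lambda(gkg^{-1})=\bigl(\1\otimes\pi_\lambda(g)\bigr)\bigl(\1\otimes\sigma_\lambda(k)\bigr)\bigl(\1\otimes\pi_\lambda(g)\bigr)^{-1}.
\]
Hence $\bigl(\1\otimes\pi_\lambda(g)\bigr)^{-1}\pi(g)\res_{\cH_\pi[\lambda]}$ commutes with $\{\1\otimes\sigma_\lambda(k):k\in K\}$, whose commutant by irreducibility of $\sigma_\lambda$ is $B(\cM_\lambda)\otimes\C\1$; thus there is a unique unitary $A_\lambda(g)\in\U(\cM_\lambda)$ with $\pi(g)\res_{\cH_\pi[\lambda]}=A_\lambda(g)\otimes\pi_\lambda(g)$. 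Comparing products shows $A_\lambda$ is multiplicative, comparing with $\pi\res_K=\1\otimes\sigma_\lambda$ shows $A_\lambda\res_K=\1$, and $g\mapsto A_\lambda(g)\otimes\1=\bigl(\1\otimes\pi_\lambda(g)\bigr)^{-1}\pi(g)\res_{\cH_\pi[\lambda]}$ is norm continuous, so $A_\lambda$ is a continuous unitary representation of the Banach--Lie group $\U(\cH)$ on the separable space $\cM_\lambda$ that kills $\U_\infty(\cH)_0$. Theorem~\ref{thm:p.1b} now forces $A_\lambda$ to be trivial, so $\pi\res_{\cH_\pi[\lambda]}=\1_{\cM_\lambda}\otimes\pi_\lambda$, and summing over $\lambda$ yields the normal form $\pi\cong\hat\oplus_\lambda\,\1_{\cM_\lambda}\otimes\pi_\lambda$.

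From this normal form the assertions follow: (iv) identifies the irreducible constituents as exactly the listed Schur modules; (iii) holds since each $\pi_\lambda$ is a bounded (norm continuous) irreducible representation of $\U(\cH)$; (i) holds since each $\pi_\lambda$ is strongly continuous and a Hilbert-space direct sum of strongly continuous unitary representations is strongly continuous (the uniform bound $\|\pi(g)\|=1$ makes the standard $\varepsilon$-argument work); and (ii) holds because the $\pi_\lambda$ are mutually inequivalent irreducibles whose restrictions to $\U_\infty(\cH)_0$ stay mutually inequivalent and irreducible, so that both $\pi(\U(\cH))'$ and $\pi(\U_\infty(\cH)_0)'$ equal $\prod_\lambda B(\cM_\lambda)$. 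For (v) I would apply Remark~\ref{rem:3.11} to each summand: it provides a strongly continuous extension $\pi_\lambda^\sharp$ of $\pi_\lambda$ to the overgroup $\U(\cH)^\sharp$ generating the same von Neumann algebra as $\pi_\lambda$, realized as the restriction of the holomorphic representation $\hat\rho\circ\hat\theta$ of $B(\cH)_\C$; then $\pi^\sharp:=\hat\oplus_\lambda\,\1_{\cM_\lambda}\otimes\pi_\lambda^\sharp$ is a strongly continuous extension of $\pi$ with $\pi^\sharp(\U(\cH)^\sharp)''=\pi(\U(\cH))''$, hence with the same commutant, and uniqueness follows because any such extension has image inside $\pi(\U(\cH))''=\prod_\lambda\C\1\otimes B(\cK_\lambda)$, hence splits along the blocks, reducing to the irreducible case, which is pinned down by analytic continuation from $\U(\cH)$ as in Remark~\ref{rem:3.11}.

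The step I expect to be the crux is the reduction in the second paragraph: one must check that $A_\lambda$ is a \emph{genuine} (not merely projective) unitary representation --- which works because $\pi$ and $\pi_\lambda$ are honest representations, so the cocycle is trivial --- that it is continuous for the Banach--Lie group topology on $\U(\cH)$, and that $\cM_\lambda$ is separable, so that Theorem~\ref{thm:p.1b} applies to it; this is precisely where the separability hypothesis on $\cH_\pi$ enters. A secondary subtlety is the uniqueness claim in (v): the extension must be shown to be determined by its restriction to $\U(\cH)$, and this rests on the holomorphy built into Remark~\ref{rem:3.11} rather than on any Lie-algebraic generation argument, since $\fk$ does not generate $\fk+i\fp$ as a Lie algebra.
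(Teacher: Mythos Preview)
Your proof is correct and follows essentially the same route as the paper: restrict to $\U_\infty(\cH)_0$, decompose into isotypics via Theorem~\ref{thm:class}, observe that each isotypic block is $\U(\cH)$-invariant because the Schur modules $\sigma_\lambda$ extend to the full group, form the ``twist'' $A_\lambda(g)=(\1\otimes\pi_\lambda(g))^{-1}\pi(g)$ (the paper calls this $\beta(g)=\pi(g)\oline\pi(g)^{-1}$), and kill it with Theorem~\ref{thm:p.1b}. Your derivations of (ii) and (v) are somewhat more explicit than the paper's---you compute the commutant blockwise and invoke the holomorphy in Remark~\ref{rem:3.11} for uniqueness, whereas the paper simply cites the strong density of $\U_\infty(\cH)_0$ in $\U(\cH)_s$ for (ii) and the classification plus Lemma~\ref{lem:k-grp} for (v)---but the substance is the same.
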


\begin{proof} (i) From Theorem~\ref{thm:class}(c)   
we know that $\pi_\infty := \pi\res_{\U_\infty(\cH)}$ 
extends to a unique continuous unitary representation 
$\oline\pi$ of $\U(\cH)_s$ on $\cH_\pi$. 
In particular, the action of 
$\U(\cH)$ on the unitary dual of the normal subgroup 
$\U_\infty(\cH)_0$ is trivial. Hence all isotypic subspaces $\cH_{[\lambda]}$ for 
$\pi_\infty$ are invariant under $\pi$. We may therefore assume that 
$\pi_\infty$ is isotypic, i.e., of the form 
$\1 \otimes \rho_\lambda$, where $(\rho_\lambda,V_\lambda)$ is an irreducible 
representation of $\U_\infty(\cH)_0$ (cf.\ Theorem~\ref{thm:class}).
Then $\oline\pi := \1 \otimes \oline\rho_\lambda$ is continuous with respect to the operator 
norm on $\U(\cH)$ because the representations of $\U(\cH)$ on the spaces 
$\cH_\C^{\otimes N}$ are norm-continuous (Theorem~\ref{thm:class}(c)). 

Now 
\[ \beta(g) := \pi(g)\oline\pi(g)^{-1} \in \pi(\U_\infty(\cH))'  = \oline\pi(\U(\cH))'\]
implies that $\beta \:  \U(\cH) \to \U(\cH_\pi)$ defines a separable norm-continuous 
unitary representation vanishing on $\U_\infty(\cH)$. By Theorem~\ref{thm:p.1b} it is trivial, 
so that $\pi = \oline\pi$. 

(ii) follows from (i) and the density of $\U_\infty(\cH)_0$ in $\U(\cH)_s$. 

(iii), (iv) now follow from Theorem~\ref{thm:class}.  

(v) In view of (iii), assertion (v) 
reduces to the case of irreducible representations. 
In this case (v) follows from the concrete classification (iv) and the 
description of the overgroups $\U(\cH)^\sharp$ 
in Lemma~\ref{lem:k-grp}. 
\smartqed\qed\end{proof}

\begin{corollary} \mlabel{cor:e.2} Let $K$ be a quotient of a product 
$K_1 \times \cdots \times K_n$, where each $K_j$ 
is compact, a quotient of 
some group $\U(\cH)$ or $\U_\infty(\cH)_0$, where $\cH$ is a separable $\K$-Hilbert space. 
Then every separable continuous unitary representation 
$\pi$ of $K$ is a direct sum of irreducible representations 
which are bounded. 
\end{corollary}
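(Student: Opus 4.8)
The plan is to isolate the common feature of the building blocks --- that \emph{every separable continuous unitary representation is a direct sum of bounded irreducible representations}, which I call property~(B) --- and to show that it is inherited by finite direct products and by quotients. The three basic types satisfy~(B): for a compact group this is the Peter--Weyl theorem together with complete reducibility, the irreducible representations being finite-dimensional, hence bounded; for $\U_\infty(\cH)_0$ with $\cH$ separable it is Theorem~\ref{thm:class}(b) (every continuous unitary representation decomposes discretely), combined with the observation from Theorem~\ref{thm:class}(a) that the irreducible summands, realized in finite tensor powers, are norm-continuous --- the finite-dimensional case being covered by the compact one; for $\U(\cH)$ with $\cH$ separable it is precisely Theorem~\ref{thm:p.2}(iii)--(iv).

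Next I would reduce to a direct product of basic groups. Since the composite of two quotient homomorphisms of topological groups is again such, I may assume that each $K_j$ is itself compact, $\U(\cH)$ or $\U_\infty(\cH)_0$, and that $K = G/N$ with $G := K_1 \times \cdots \times K_n$ and $q \colon G \to K$ the quotient map. A separable continuous unitary representation $(\pi, \cH_\pi)$ of $K$ pulls back to the separable continuous representation $\pi \circ q$ of $G$. Granting~(B) for $G$, we write $\pi \circ q = \bigoplus_i \tilde\pi_i$ with each $\tilde\pi_i$ bounded and irreducible. Each $\tilde\pi_i$ annihilates $N = \ker q$, because its value at $n \in N$ is the restriction to the $i$-th summand of $(\pi \circ q)(n) = \1$; hence $\tilde\pi_i$ descends to an irreducible unitary representation $\pi_i$ of $K$ with $\tilde\pi_i = \pi_i \circ q$, and since $q$ is a topological quotient map the norm-continuity of $\pi_i \circ q$ forces that of $\pi_i$. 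Thus $\pi$ is a direct sum of bounded irreducibles, and it remains to establish~(B) for $G$.

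I would prove this by induction on $n$, the case $n = 1$ being a basic case. For the inductive step write $G = G_1 \times G_2$ with $G_1 := K_1$ and $G_2 := K_2 \times \cdots \times K_n$, the latter satisfying~(B) by the inductive hypothesis. Let $(\pi, \cH_\pi)$ be a separable continuous unitary representation of $G$. Its restriction $\pi_1$ to $G_1$ is separable, so~(B) lets us group its irreducible summands by isotype and write $\cH_\pi = \bigoplus_{[\lambda]} \cH_\pi[\lambda]$ with $\cH_\pi[\lambda] \cong V_\lambda \otimes M_\lambda$, where $\rho_\lambda$ is a bounded irreducible representation on $V_\lambda$ and $M_\lambda$ is a separable multiplicity space; by Schur's Lemma $\pi_1(G_1)' = \bigoplus_{[\lambda]} \1_{V_\lambda} \otimes B(M_\lambda)$. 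The restriction $\pi_2$ of $\pi$ to $G_2$ commutes with $\pi_1(G_1)$, hence lies in this commutant; so it preserves each $\cH_\pi[\lambda]$ and acts there as $\1_{V_\lambda} \otimes \pi_2^\lambda$ for a separable continuous representation $\pi_2^\lambda$ of $G_2$ on $M_\lambda$. Applying~(B) for $G_2$ to each $\pi_2^\lambda$ yields $M_\lambda \cong \bigoplus_{[\mu]} V_{\sigma_\mu} \otimes N_{\lambda,\mu}$ with $\sigma_\mu$ bounded irreducible on $V_{\sigma_\mu}$, whence
\[ \cH_\pi \cong \bigoplus_{[\lambda],[\mu]} (V_\lambda \otimes V_{\sigma_\mu}) \otimes N_{\lambda,\mu}, \]
with $(g_1,g_2) \in G_1 \times G_2$ acting on the $([\lambda],[\mu])$-block as $\rho_\lambda(g_1) \otimes \sigma_\mu(g_2) \otimes \1_{N_{\lambda,\mu}}$. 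This outer tensor product of $\rho_\lambda$ and $\sigma_\mu$ is norm-continuous, since the operator tensor product is jointly continuous on the unit balls and both factors are norm-continuous and unitary-valued; and it is irreducible, since the von Neumann algebra generated by $\rho_\lambda(G_1) \otimes \1_{V_{\sigma_\mu}}$ and $\1_{V_\lambda} \otimes \sigma_\mu(G_2)$ contains $B(V_\lambda) \otimes \1$ and $\1 \otimes B(V_{\sigma_\mu})$ --- as $\rho_\lambda$, $\sigma_\mu$ are irreducible --- and these already generate all of $B(V_\lambda \otimes V_{\sigma_\mu})$. This completes the induction.

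The step I expect to carry the weight is the claim that $\pi_2$, commuting with the \emph{discretely} decomposed $G_1$-action all of whose isotypic blocks are \emph{bounded}, is carried by the multiplicity spaces $M_\lambda$. This is exactly where boundedness and discreteness are needed: they force $\pi_1(G_1)'$ to be the von Neumann direct sum $\bigoplus_{[\lambda]} \1_{V_\lambda} \otimes B(M_\lambda)$, with no continuous part, so that membership of $\pi_2(G_2)$ in this commutant genuinely decomposes $\pi_2$ along the isotypes. Everything else is bookkeeping with Schur's Lemma and the elementary commutation relations for tensor products of algebras $B(\cH)$.
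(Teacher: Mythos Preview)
Your proposal is correct. The paper gives no proof at all for this corollary---it is stated immediately after Theorem~\ref{thm:p.2} and the text simply continues with a one-sentence remark, so the argument is left entirely to the reader. Your write-up supplies exactly the details one would expect: property~(B) for each of the three basic types via Peter--Weyl, Theorem~\ref{thm:class}, and Theorem~\ref{thm:p.2}; stability under quotients by pulling back and pushing the discrete decomposition forward; and stability under finite products via the isotypic decomposition for one factor, the commutant computation $\pi_1(G_1)' = \bigoplus_{[\lambda]} \1_{V_\lambda} \otimes B(M_\lambda)$, and the irreducibility and norm-continuity of outer tensor products. All of these steps are sound, including the reduction that absorbs the inner quotients of the $K_j$ into the outer quotient $G \to K$.
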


The preceding corollary means that the separable representation 
theory of $K$ is very similar to the  representation theory of a compact 
group.

\subsection{Classification of irreducible representations by highest weights} 

We choose an orthonormal basis $(e_j)_{j \in J}$ in the complex Hilbert space 
$\cH$ and write $T \cong \T^J$ for the corresponding group of diagonal matrices. 
Characters of this group correspond to finitely supported functions 
$\lambda \: J \to \Z$ via $\chi_\lambda(t) = \prod_{j \in J} t_j^{\lambda_j}$. 
For the subgroup $T(\infty)$ of those diagonal matrices $t$ for which 
$t - \1$ has finite rank, any function $\lambda\: J \to \Z$ defines a 
character. Accordingly, each 
$\lambda = (\lambda_j)_{j \in J} \in \Z^J$ defines a uniquely determined 
{\it unitary highest weight representation} $(\pi_\lambda, \cH_\lambda)$ 
of $\U(\infty,\C)$ (\cite{Ne04, Ne98}). This representation is uniquely determined 
by the property that its weight set with respect to the diagonal 
subgroup $T \cong \T^{(J)}$, whose character group is $\Z^J$, coincides with 
\[ \conv(\cW\lambda) \cap (\lambda + \cQ), \quad \mbox{ where } \quad 
\cQ \subeq \hat T \] 
is the root group and $\cW$ is the group of finite permutations of the set~$J$. 

\begin{proposition} A unitary highest weight representation  
$(\pi_\lambda,\cH_\lambda)$ of $\U(\infty,\C)$ is tame 
if and only if $\lambda \: \N \to \Z$ is finitely supported. 
\end{proposition}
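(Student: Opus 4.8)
Since $\pi_\lambda$ is irreducible, Remark~\ref{rem:2.3}(c) tells us that $\pi_\lambda$ is tame if and only if $\cH_\lambda^\cT = \bigcup_{n\in\N}\cH_\lambda^{K(\infty)_n}\ne\{0\}$, where $K(\infty)_n$ is the pointwise stabilizer of $e_1,\dots,e_n$ in $\U(\infty,\C)$. The plan is to decide this by inspecting the $T$-weight spaces of $\cH_\lambda$, using that the weight set of $\pi_\lambda$ is $\conv(\cW\lambda)\cap(\lambda+\cQ)$, that $\cQ$ consists of the finitely supported integer sequences with vanishing sum, and that $\cW$ is the group of finite permutations of $\N$.

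First I would treat the implication ``$\lambda$ not finitely supported $\Rightarrow$ $\pi_\lambda$ not tame''. A vector fixed by $K(\infty)_n$ is in particular fixed by the subgroup of diagonal unitaries which are the identity on $e_1,\dots,e_n$; hence, in the weight-space decomposition, all of its components lie in weight spaces $\cH_\lambda[\mu]$ with $\mu_k=0$ for all $k>n$. But any weight $\mu$ of $\pi_\lambda$ satisfies $\mu\in\lambda+\cQ$, so $\mu_k=\lambda_k$ for all but finitely many $k$; if $\supp\lambda$ is infinite there is no such $\mu$ with $\mu_k=0$ for $k>n$. Thus $\cH_\lambda^{K(\infty)_n}=\{0\}$ for every $n$, so $\cH_\lambda^\cT=\{0\}$ and $\pi_\lambda$ is not tame.

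For the converse, suppose $\lambda$ is finitely supported and put $n_0:=\max(\supp\lambda)$, so $\supp\lambda\subseteq\{1,\dots,n_0\}$ (the case $\lambda=0$ being the trivial representation). As $\lambda\in\conv(\cW\lambda)\cap(\lambda+\cQ)$, it is a weight, so we may choose a nonzero weight vector $v_\lambda$ of weight $\lambda$ in the underlying algebraic module on which $\gl(\infty,\C)$ acts. I claim $v_\lambda$ is fixed by $K(\infty)_{n_0}$. For $i\ne j$ with $i,j>n_0$ we have $\lambda_i=\lambda_j=0$, hence $E_{jj}v_\lambda=\lambda_jv_\lambda=0$, while $E_{ij}v_\lambda$ lies in the weight space of $\mu:=\lambda+e_i-e_j$. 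Now $\mu\in\lambda+\cQ$, but $\mu$ is \emph{not} in $\conv(\cW\lambda)$: writing $p$ for the number of positive entries of $\lambda$, the $p+1$ largest entries of $\mu$ are the $p$ positive entries of $\lambda$ together with one new $+1$, so they sum to $1$ more than the $p+1$ largest entries of $\lambda$, and hence $\mu$ is not majorized by $\lambda$. Therefore $\mu$ is not a weight of $\pi_\lambda$, so $E_{ij}v_\lambda=0$. Consequently the complexified Lie algebra of the unitary group acting on $\Spann\{e_{n_0+1},\dots,e_{n_0+m}\}$ annihilates $v_\lambda$ for every $m$; since these subgroups are connected and their union is $K(\infty)_{n_0}$, the latter fixes $v_\lambda$. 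Hence $\cH_\lambda^{K(\infty)_{n_0}}\ne\{0\}$ and $\pi_\lambda$ is tame.

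The only step beyond bookkeeping is the majorization estimate ruling out $\lambda+e_i-e_j$ as a weight when $\lambda_i=\lambda_j=0$; this is exactly what breaks down once $\lambda$ is allowed a nonzero ``entry at infinity''. An alternative, less computational route to ``finitely supported $\Rightarrow$ tame'' is to identify $\pi_\lambda$, via its weight set, with one of the modules $\bS_\lambda(\cH)\otimes\bS_\mu(\oline\cH)$ of Theorem~\ref{thm:class} and to invoke Theorem~\ref{thm:samerep}.
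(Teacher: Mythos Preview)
Your proof is correct. The forward direction coincides with the paper's: both observe that tameness forces the $T$-weights (in particular $\lambda$ itself) to be finitely supported.

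For the converse the paper takes a different route. It writes $\lambda=\lambda_+-\lambda_-$ with $\lambda_\pm$ non-negative and of finite disjoint support, and then invokes the embedding $\cH_\lambda\hookrightarrow\bS_{\lambda_+}(\cH)\otimes\bS_{\lambda_-}(\oline\cH)$ from \cite{Ne98}; tameness then follows because the right-hand side is one of the modules produced by Theorem~\ref{thm:class} (equivalently, by Theorem~\ref{thm:samerep}). This is exactly the ``alternative, less computational route'' you mention at the end. Your primary argument instead exhibits the highest weight vector $v_\lambda$ directly as a $K(\infty)_{n_0}$-fixed vector by a weight-space calculation, the key step being the majorization estimate excluding $\lambda+e_i-e_j$ from $\conv(\cW\lambda)$ when $\lambda_i=\lambda_j=0$. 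The advantage of your approach is that it is self-contained: it uses only the description of the weight set of $\pi_\lambda$ and does not appeal to the classification theorem or to the external embedding result. The paper's approach, on the other hand, is shorter and ties the statement back into the structure theory developed in Section~\ref{sec:3}. One small remark: your majorization step implicitly uses that the functional $\nu\mapsto\sup_{|S|=p+1}\sum_{k\in S}\nu_k$ is convex and constant on $\cW\lambda$; this is elementary, but worth stating since you are working with the infinite symmetric group rather than a finite one.
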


\begin{proof} If $\pi_\lambda$ is a tame
representation, then its restriction to the diagonal subgroup is
tame. Since this representation is diagonalizable, this means that
each weight has finite support. It follows in particular that
$\lambda$ has finite support. 

If, conversely, $\lambda$ has finite support, then 
we write $\lambda = \lambda_+ - \lambda_-$, where 
$\lambda_\pm$ are non-negative with finite disjoint support. 
Then $\cH_\lambda$ can be embedded into \break 
$\bS_{\lambda_+}(\cH) \otimes \bS_{\lambda_-}(\oline\cH)$ (\cite{Ne98}), hence it is tame. 
\smartqed\qed\end{proof}

\begin{example} {\bf $\K =\R$}:
In the infinite-dimensional 
real Hilbert space $\cH$ we fix a complex structure $I$. 
Then there exists a real orthonormal basis of the form 
${\{ e_j, Ie_j \: j \in J\}}$. 
Then the subgroup $T \subeq \OO(\cH)$ preserving all the planes 
$\R e_j + I \R e_j$ is maximal abelian. 
In $\cH_\C$ the elements $e_j^\pm := \frac{1}{\sqrt 2}(e_j \mp I e_j)$ 
form an orthonormal basis, and we write 
$2J := J \times \{\pm\}$ for the corresponding index set. In 
$\OO(\cH)^\sharp = \U(\cH_\C)$, the corresponding diagonal subgroup 
$T^\sharp \cong \T^{2J}$ is maximal abelian. The corresponding maximal 
torus $T_\C$ of $\OO(\cH)_\C \subeq \GL(\cH_\C)$ corresponds to diagonal 
matrices $d$ acting by $d e_j^\pm = d_j^{\pm 1} e_j^\pm$. 

For a character $\chi_\mu$ of $T^\sharp$ with 
$\mu \: 2J \to \Z$, the corresponding character of 
$T$ is given by the finitely supported function 
$\mu^\flat \: J \to \Z$ with $\mu^\flat_j = \mu_{j,+} - \mu_{j,-}$. 
If $\lambda \: 2J \to \N_0$ has finite support, then the corresponding 
irreducible representation of $\U(\cH_\C)$ occurs as 
some $\bS_\lambda(\cH_\C)$ in $\cH_\C^{\otimes N}$, where $\sum_{j \in 2J} \lambda_j = N$. 
From the Classification Theorem~\ref{thm:class} it follows that the restriction 
of $\pi_\lambda$ to $\OO(\cH)$ is irreducible. 
The corresponding highest weight is $\lambda^\flat$. 
On the level of highest weights, it is clear that,  
for each finitely supported weight $\lambda \: J \to \N_0$, 
we obtain by 
\[ \lambda^\sharp_{j,+} := \lambda_j \quad \mbox{ and } \quad 
\lambda^\sharp_{j,-} := 0\] 
a highest weight $\lambda^\sharp$ with $(\lambda^\sharp)^\flat= \lambda$. 
The irreducible representations of $\OO(\cH)$ are classified by 
orbits of the Weyl group $\cW$ in the set of finitely supported integral weights 
$\lambda \: J \to \Z$ of the root system $D_{2J}$ (cf.\ \cite[Sect.~VII]{Ne98}). 
Each orbit has a non-negative representative, and then 
$\lambda^\sharp$ is the highest weight of the corresponding 
representation $\pi_{\lambda^\sharp}$ of $\U(\cH_\C)$. 
\end{example}

\begin{example} {\bf $\K = \C$}: 
Let $(e_j)_{j\in J}$ be an ONB of $\cH$. 
In $\U(\cH)^\sharp \cong \U(\cH) \times \U(\oline\cH)$ 
we have the maximal abelian subgroup 
$T^\sharp = T \times T$, where $T \cong \T^J$ is the subgroup 
of diagonal matrices in $\U(\cH)$ with respect to the ONB $(e_j)_{j \in J}$. 

Let $2J := J \times \{\pm \}$, so that $T^\sharp \cong \T^{2J}$. 
For a finitely supported function $\mu \: 2J \to \Z$, the corresponding character of 
$T$ is given by 
$\mu^\flat \: J \to \Z$, defined by $\mu^\flat_j = \mu_{j,+} - \mu_{j,-}$. 
If $\lambda \: 2J \to \N_0$ has finite support, 
and $\lambda= \lambda_+ - \lambda_-$ with non-negative summands 
$\lambda_\pm$ supported in $J \times \{\pm\}$, respectively, the corresponding 
irreducible representation $\pi_\lambda$ lives on 
$\bS_{\lambda_+}(\cH) \otimes \bS_{\lambda_-}(\oline\cH) 
\subeq \cH^{\otimes N} \otimes \oline\cH^{\otimes M}$, where 
$N = \sum_{\lambda_j > 0} \lambda_j$ and 
$M = -\sum_{\lambda_j < 0} \lambda_j$. 
From the Classification Theorem~\ref{thm:class} it follows that the restriction 
of $\pi_\lambda$ to $\U(\cH)$ is irreducible. 
The corresponding highest weight is $\lambda^\flat = \lambda_+ - \lambda_-$. 
For each finitely supported weight $\lambda = \lambda_+ - \lambda_-\: J \to \N_0$, 
we obtain by 
$\lambda^\sharp_{j,\pm} := \lambda_{\pm,j}, j \in J$, 
a highest weight $\lambda^\sharp$ with $(\lambda^\sharp)^\flat= \lambda$. 
The irreducible representations of $\U(\cH)$ are classified by 
orbits of the Weyl group $\cW\cong S_{(J)}$ in the set of finitely supported integral weights 
$\lambda \: J \to \Z$ of the root system $A_{J}$ (cf.\ \cite[Sect.~VII]{Ne98}). 
\end{example}

\begin{example} {\bf $\K = \H$}: 
In the quaternionic Hilbert space $\cH$ we consider the complex structure defined by 
multiplication with $\cI$, which leads to the complex Hilbert space $\cH^\C$. 
Then there exists a complex orthonormal basis of the form 
$\{ e_j, \cJ e_j \: j \in J\}$. 
We write $T^\sharp \subeq \U(\cH^\C)$ for the corresponding diagonal subgroup. 
Note that $T^\sharp \cong \T^{2J}$ for $2J := J \times \{\pm\}$. 
The subgroup $T := T^\sharp \cap \U(\cH) = (T^\sharp)^\cJ$ acts on 
the basis elements $e_{j,+} := e_j$ and $e_{j,-} := \cJ e_j$ by 
$d e_{j,\pm} = d_j^{\pm} e_{j,\pm}$. 

The classification of the irreducible representations by Weyl group orbits 
of finitely supported functions $\lambda \: J \to \Z$ (weights for the root 
system $B_J$) and their corresponding 
weights $\lambda^\sharp \: 2J \to \Z$ is completely analogous to the situation 
for $\K = \R$. The irreducible representation of $\U(\cH^\C)$ 
corresponding to $\lambda^\sharp$ is 
$\bS_{\lambda^\sharp}(\cH^\C)$. 
\end{example}

\begin{remark} (Segal's physical representations) 
In \cite{Se57} Segal studied unitary
representations of the full group $\U({\cal H})$,  
called {\it physical representations}. They are characterized by the
condition that their differential maps finite rank hermitian
projections to positive operators. Segal shows that 
physical representations decompose discretely into irreducible
physical representations which are precisely those 
occurring in the decomposition of finite tensor products $\cH^{\otimes N}$, $N \in\N_0$. 
In view of Pickrell's Theorem, this also follows from our classification 
of the separable representations of $\U(\cH)$. Since Segal's arguments 
never use the seprability of $\cH$, the corresponding result remains true 
for inseparable spaces as well. 
\end{remark}

\begin{problem} Theorem~\ref{thm:class} implies in particular 
that all continuous unitary representations of $K = \U_\infty(\cH)_0$ 
have a canonical extension to their overgroups $K^\sharp$ with the same commutant. 
The classification in terms of highest weights further implies that the 
representations of $K^\sharp$ obtained from this extension process are 
precisely those with non-negative weights. 

Conversely, it follows that all unitary representations of 
$K^\sharp$ with non-negative weights remain irreducible when restricted to $K$. 

One may ask a similar question for the smaller group 
$K^\sharp(\infty) \subeq K^\sharp$ or its completion with respect to the 
trace norm. Is it true that, for any unitary representation $\pi$ of 
$K^\sharp(\infty)$ whose weights on the diagonal subgroup are 
non-negative, $\pi(\U(\infty,\K))$ has the same commutant? 
As we explain below, this is not true. 

For the special case $\K = \R$ and $\lambda = \lambda_+ - \lambda_-$ finitely supported, 
the restriction of the representation $\pi_\lambda 
= \pi_{\lambda_+} \otimes \pi_{\lambda_-}^*$ 
of $K^\sharp(\infty)$ on $S_{\lambda_+}(\cH_\C) \otimes S_{\lambda_-}(\oline{\cH_\C})$ 
to the subgroup $K(\infty) = \SO(\infty,\R)$ is equivalent to the representation 
$\pi_{\lambda_+} \otimes \pi_{\lambda_-}$, which decomposes according 
to the standard Schur--Weyl theory. In particular, we obtain non-irreducible 
representations if $\lambda$ takes positive and negative values on $K(\infty)$. 
That this cannot be repaired by the positivity requirement on the 
weights of $K^\sharp(\infty)$ follows from the fact that the 
determinant $\det \: K^\sharp(\infty) \to \T$ restricts to the trivial character 
of $K(\infty)$, but tensoring with a power of $\det$, any bounded weight 
$\lambda$ can be made positive. 

Is it possible to characterize those irreducible highest weight 
representations $\pi_\lambda$ of 
$K^\sharp(\infty)$ whose restriction to $K(\infty)$ is irreducible? 
\end{problem}

\section{Non-existence of separable unitary representations 
for full operator groups} 
\mlabel{sec:5}

In this section we describe some consequences of the main results 
from \cite{Pi90}. We start with the description of $10$ symmetric pairs 
$(G,K)$ of groups of operators, where $G$ does not consist 
of unitary operators and $K \subeq G$ is ``maximal unitary''. They are 
infinite-dimensional analogs of certain non-compact real reductive Lie groups.  
The dual symmetric pairs $(G^c,K)$ have the property that $G^c$ consists of unitary operators, 
hence they are analogs of certain compact matrix groups. 

One of the main result of this section is that all 
separable unitary representations of the groups $G$ 
are trivial, but there are various refinements 
concerning restricted groups. 

\subsection{The $10$ symmetric pairs} 

Below we use the following notational conventions. We write 
$\OO(n) := \OO(n,\R)$, $\U(n) := \U(n,\C)$ and 
$\Sp(n) := \U(n,\H)$ for $n \in \N \cup \{\infty\}$. 
For a group $G$, we write 
$\Delta_G := \{ (g,g) \: g \in G \}$ 
for the diagonal subgroup of $G \times G$. 

If $\cH$ is a complex Hilbert space, 
then we write $I \in B(\cH_\C)$ for the $\C$-linear extension of the complex 
structure $Iv = iv$ on $\cH$. Then $D := -iI$ is a unitary involution that 
leads to the pseudo-unitary group $\U(\cH_\C, D) 
= \{ g \in \GL(\cH_\C) \: Dg^* D^{-1} = g^{-1}\}$ preserving the indefinite 
hermitian form $\la Dv,w\ra$. 
For the isometry group of the indefinite form 
$h((v_1, v_2), (w_1, w_2)) := \la v_1, w_1 \ra - \la v_2, w_2\ra$ on 
$\cH \times \cH$, we write $\U(\cH,\cH)$. 
Now the group 
\[ \OO^*(\cH_\C) := \U(\cH_\C,D) \cap \OO(\cH)_\C \] 
is a Lie group. Its Lie algebra 
$\fo^*(\cH_\C)$ satisfies 
$\fo^*(\cH_\C) \cap \fu(\cH_\C) \cong \fu(\cH)$ and it is 
a real form of $\fo(\cH)_\C$. It is easy to see that 
the symmetric pair $(\OO^*(\cH_\C),\U(\cH))$ 
is dual to $(\OO(\cH^\R),\U(\cH))$. \\

\pagebreak 
{{\bf Non-unitary symmetric pairs} \\[3mm]
\begin{tabular}{|l|l |l |l|l|}
       \hline
& non-unit.\ locally finite\ $(G(\infty), K(\infty))$ 
& operator group $(G,K)$ & $K$ & $\K$ \\
   \hline\hline
1 &  $(\GL(\infty,\C), \U(\infty))$ & $(\GL(\cH), \U(\cH))$ & $\U(\cH)$ & $\C$ \\ 
 \hline
2 & $(\SO(\infty,\C), \SO(\infty))$ & $(\OO(\cH)_\C, \OO(\cH))$ & $\OO(\cH)$ & $\R$ \\ 
 \hline
3 & $(\Sp(\infty,\C), \Sp(\infty))$& $(\U_\H(\cH)_\C, \U_\H(\cH))$ & $\U_\H(\cH)$ & $\H$ \\ 
 \hline
4& $(\U(\infty, \infty), \U(\infty)^2)$& $(\U(\cH,\cH), \U(\cH)^2)$ & $\U(\cH)^2$ 
& $\C$ \\  \hline
5 & $(\SO(\infty,\infty), \SO(\infty)^2)$ & 
$(\OO(\cH,\cH), \OO(\cH)^2)$ & $\OO(\cH)^2$ & $\R$ \\  \hline
6  & $(\Sp(\infty,\infty), \Sp(\infty)^2)$& 
$(\U_\H(\cH,\cH), \U_\H(\cH)^2)$ & $\U_\H(\cH)^2$ & $\H$ \\  \hline
7 & $(\Sp(2\infty,\R), \U(\infty))$& 
$(\Sp(\cH), \U(\cH))$ & $\U(\cH)$ & $\C$ \\  \hline
8 & $(\SO(2\infty), \U(\infty))$& 
$(\OO^*(\cH_\C), \U(\cH))$ & $\U(\cH)$ & $\C$ \\  \hline
9 & $(\GL(\infty,\R), \OO(\infty))$& 
$(\GL(\cH), \OO(\cH))$ & $\OO(\cH)$ & $\R$ \\  \hline
10 & $(\GL(\infty,\H), \Sp(\infty))$& 
$(\GL_\H(\cH), \U_\H(\cH))$ & $\U_\H(\cH)$ & $\H$ \\  \hline
\end{tabular}}\\[3mm] 

{{\bf Unitary symmetric pairs} \\[3mm]
\begin{tabular}{|l|l |l |l|l|}
       \hline
& unitary locally finite\ $(G^c(\infty), K(\infty))$ 
& unitary operator group $(G^c,K)$ & $K$ & $\K$ \\
   \hline\hline
1 & $(\U(\infty)^2, \Delta_{\U(\infty)})$& $(\U(\cH)^2, \Delta_{\U(\cH)})$
 & $\U(\cH)$  & $\C$ \\  \hline
2 & $(\SO(\infty)^2, \Delta_{\SO(\infty)})$& $(\OO(\cH)^2, \Delta_{\OO(\cH)})$
& $\OO(\cH)$  &$\R$ \\  \hline
3 & $(\Sp(\infty)^2, \Delta_{\Sp(\infty)})$& $(\U_\H(\cH)^2, \Delta_{\U_\H(\cH)})$
& $\U_\H(\cH)$ & $\H$ \\  \hline
4& $(\U(2\infty), \U(\infty)^2)$& $(\U(\cH \oplus \cH), \U(\cH)^2)$ 
& $\U(\cH)^2$  & $\C$ \\  \hline
5 & $(\SO(2\infty), \SO(\infty)^2)$ & 
$(\OO(\cH \oplus \cH), \OO(\cH)^2)$ & $\OO(\cH)^2$ & $\R$ \\  \hline
6  & $(\Sp(2\infty), \Sp(\infty)^2)$& 
$(\U_\H(\cH \oplus \cH), \U_\H(\cH)^2)$ & $\U_\H(\cH)^2$ & $\H$\\  \hline
7 & $(\Sp(\infty), \U(\infty))$& 
$(\U_\H(\cH\otimes_\C \H), \U(\cH))$ & $\U(\cH)$ & $\C$ \\  \hline
8 & $(\SO(2\infty), \U(\infty))$& 
$(\OO(\cH^\R), \U(\cH))$ & $\U(\cH)$ & $\C$ \\  \hline
9 & $(\U(\infty), \OO(\infty))$& 
$(\U(\cH_\C), \OO(\cH))$ & $\OO(\cH)$ & $\R$ \\  \hline
10 & $(\U(2\infty), \Sp(\infty))$& 
$(\U(\cH^\C), \U_\H(\cH))$ & $\U_\H(\cH)$ & $\H$\\  \hline
\end{tabular}}\\

\begin{remark} (a) The unitary symmetric pairs $(1)$-$(3)$ are 
of group type and their non-unitary duals are complex groups. 

(b) The non-unitary pairs $(4)$-$(6)$ are the symmetric pairs associated
to pseudo-unitary groups of indefinite hermitian forms $\beta$ 
with the matrix 
$D = \pmat{\1 & 0 \\ 0 & -\1}$ on $\cH^2$. Accordingly, the corresponding 
symmetric spaces can be considered as Gra\ss{}mannians of ``maximal 
positive subspaces'' for $\beta$. 

(c) The symmetric spaces corresponding to (7) and (8) are spaces 
of complex structures on real spaces. The space
$\Sp(\cH)/\U(\cH)$ is the space of positive symplectic complex structures on the 
real symplectic spaces $(\cH,\omega)$, where 
$\omega(v,w) = \Im  \la v,w\ra$. Likewise 
$\OO(\cH^\R)/\U(\cH)$ is the space of orthogonal complex structures 
on the real Hilbert space $\cH^\R$. 

(d) The spaces (4), (7) and (8) are of hermitian type 
(cf.\ \cite{Ne12}). 

(e) The spaces (1), (9) and (10) are those occurring naturally 
for overgroups of unitary groups (cf.\ Example~\ref{ex:2.3}). 
\end{remark}

\subsection{Restricted symmetric pairs} 

For each symmetric pair $(G,K)$ of non-unitary type and 
$1 \leq q \leq \infty$, we obtain a {\it restricted symmetric pair} 
$(G_{(q)},K)$, defined by 
\[ G_{(q)} := \{ g \in G \: \tr(|g^*g - \1|^q) <  \infty \}.\] 
If $\g = \fk \oplus \fp$ with $\fp = \{ X \in \g \: X^* = X\}$, then the 
Lie algebra of $G^c$ is $\g_{(q)} = \fk \oplus \fp_{(q)}$, where 
$\fp_{(q)} = \fp \cap B_q(\cH)$. 
The corresponding dual symmetric pair is $(G^c_{(q)}, K)$ with 
$\g^c_{(q)}= \fk \oplus i \fp_{(q)}$. 
We also write 
\[ G_{\infty, (q)} := G_{(q)} \cap (\1 + K(\cH)) 
= K_\infty \exp(\fp_{(q)})\]  
for the closure of $G(\infty)$ in $G_{ (q)}$.

\begin{proposition} Spherical representations of 
any pair $(G(\infty), K(\infty))$ of unitary 
or non-unitary type are direct integrals of irreducible ones. 
\end{proposition}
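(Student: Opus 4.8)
The plan is to reduce the statement to a standard disintegration theorem for unitary representations of second countable (or more generally, separable) locally compact — or at least Polish — groups, via an appropriate characterization of spherical representations. Recall that a spherical representation of the pair $(G(\infty), K(\infty))$ is a cyclic unitary representation $(\pi, \cH_\pi)$ admitting a $K(\infty)$-fixed cyclic unit vector $\xi$; equivalently, it is the GNS representation attached to a $K(\infty)$-biinvariant positive definite function $\phi(g) = \la \pi(g)\xi, \xi \ra$ on $G(\infty)$. The key structural fact I would use is that for each of the ten pairs, $G(\infty) = \bigcup_n G(n)$ is a countable directed union of finite-dimensional Lie groups $G(n)$ (e.g.\ $\GL(n,\C)$, $\SO(n,\C)$, etc.), with $K(n) = K(\infty) \cap G(n)$ compact, and the pair $(G(n), K(n))$ is a Gelfand pair in each case on the list (this is the classical fact that the finite-dimensional symmetric pairs here are all of rank-type admitting a commutative convolution algebra of $K(n)$-biinvariant functions, or at worst one reduces to it after passing to $K(n)$-finite vectors).

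First I would fix a spherical representation $(\pi, \cH_\pi, \xi)$ and restrict attention to the commutant $\pi(G(\infty))'$, which is a von Neumann algebra on $\cH_\pi$. The claim that $\pi$ is a direct integral of irreducibles is precisely the assertion that $\pi$ is of type I, i.e.\ $\pi(G(\infty))''$ is a type I von Neumann algebra; then the standard central disintegration (Dixmier) gives a direct integral decomposition into irreducibles over the spectrum. So the real content is: \emph{spherical representations of these pairs are type I}. To prove this, I would exploit the commutant of the $K(n)$-fixed-vector projections. Let $P_n \colon \cH_\pi \to \cH_\pi^{K(n)}$ be the orthogonal projection onto the $K(n)$-fixed vectors; since $\xi$ is $K(\infty)$-fixed and cyclic, $\xi \in \cH_\pi^{K(n)}$ for all $n$, and $\cH_\pi^{K(n)}$ is an increasing sequence of closed subspaces whose union $\cH_\pi^{K(\infty)-\mathrm{fin}}$ is dense. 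Each compressed algebra $P_n \pi(G(\infty))'' P_n$ acts on $\cH_\pi^{K(n)}$, and because $(G(n), K(n))$ is a Gelfand pair, the algebra generated by the $K(n)$-biinvariant operators $P_n \pi(g) P_n$ ($g \in G(n)$) is commutative. The point is that $\xi$ is cyclic for $P_n \pi(G(\infty))' P_n$ acting on $\cH_\pi^{K(n)}$ in a way that, together with the Gelfand-pair commutativity at each finite level, forces $P_n \pi(G(\infty))'' P_n$ to be commutative, hence $P_n$ is an abelian projection in $\pi(G(\infty))''$. Since the $P_n$ increase to $\1$ (their union of ranges is dense and they all dominate the cyclic vector's central support), $\pi(G(\infty))''$ has a sequence of abelian projections with central supports increasing to $\1$, which is exactly the criterion for being type I.

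The main obstacle I anticipate is twofold. First, one must genuinely verify that each finite-dimensional pair $(G(n), K(n))$ on the list is a Gelfand pair — for the complex and pseudo-unitary cases ($\GL(n,\C)/\U(n)$, $\U(p,q)/\U(p)\times\U(q)$, $\Sp(2n,\R)/\U(n)$, $\SO^*(2n)/\U(n)$, $\GL(n,\R)/\OO(n)$, $\GL(n,\H)/\Sp(n)$, and the group-type cases) this is classical (these are all Riemannian or pseudo-Riemannian symmetric pairs of the type for which sphericity is known), but it should be cited carefully, e.g.\ via \cite{Ma97} or a standard reference on Gelfand pairs. Second, and more delicate, is the passage to the limit: one needs that the increasing union of the commutative algebras $P_n\pi(G(\infty))''P_n$ genuinely controls $\pi(G(\infty))''$, i.e.\ that abelianness at each finite level together with density of $\bigcup_n \cH_\pi^{K(n)}$ really does yield a type I von Neumann algebra. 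Here the correct technical tool is: a von Neumann algebra $\cM$ on $\cH$ with a cyclic vector $\xi$ such that there is an increasing net of projections $P_n \nearrow \1$ in $\cM$ with each $P_n \cM P_n$ abelian is type I (each $P_n$ is then abelian in $\cM$, and $\sup_n$ of their central supports is $\1$). I would invoke this directly. An alternative route that avoids the von Neumann algebra bookkeeping altogether — and which may be cleaner to write — is to observe that, since $(G(n),K(n))$ is a Gelfand pair, the restriction of the spherical function $\phi$ to each $G(n)$ lies in a weak-$*$ compact convex set whose extreme points are the \emph{elementary} (positive definite, $K(n)$-biinvariant, normalized) spherical functions, and then run a projective-limit/Choquet argument to disintegrate $\phi$ itself over the extreme spherical functions of $G(\infty)$, each of which is the spherical function of an irreducible spherical representation; the direct integral of representations is then assembled from this measure. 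Either way, the Gelfand-pair property at finite levels is the crux, and the limit argument is the part requiring care.
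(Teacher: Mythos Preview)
Your approach via the Gelfand-pair structure is genuinely different from the paper's. The paper simply cites \cite[Prop.~2.4]{Pi90} and notes that the result also follows from the general disintegration Theorem~\ref{thm:dirlim} in Appendix~\ref{app:c} for continuous separable unitary representations of a direct limit of separable locally compact groups: sphericity enters only through cyclicity, which forces separability of $\cH_\pi$, after which decomposition over a maximal abelian subalgebra of the commutant yields irreducible fibres by classical theory. No Gelfand-pair property is invoked at all. If your argument were correct it would actually give more, namely that the commutant is abelian (multiplicity-freeness), not merely type~I.

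Your main argument, however, has a genuine gap. Since $K(n)\subset K(n+1)$, the fixed-point spaces $\cH_\pi^{K(n)}$ are \emph{decreasing}, with intersection $\cH_\pi^{K(\infty)}\ni\xi$; they are not increasing with dense union as you write. (You may be conflating $K(n)$ with the stabiliser subgroups $K_n$ of Section~\ref{sec:3}, which run the other way---but $(G(n),K_n\cap G(n))$ is not the Gelfand pair you need.) More seriously, the Gelfand property of $(G(n),K(n))$ gives you only that $P_n\,\pi(G(n))''\,P_n$ is commutative; it says nothing about $P_n\,\pi(G(\infty))''\,P_n$, which also contains $P_n\pi(g)P_n$ for $g\in G(m)$ with $m>n$, so the claim that $P_n$ is an abelian projection in $\pi(G(\infty))''$ is unjustified. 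A correct version works instead with the \emph{single} projection $P$ onto $\cH_\pi^{K(\infty)}$: one checks that the operators $\pi(f)$, for $f$ a $K(n)$-biinvariant compactly supported function on some $G(n)$, preserve $\cH_\pi^{K(\infty)}$ (using that $K(m)\cap G(n)^\perp$ commutes with $G(n)$), and that on this subspace any two such operators already lie in a common commutative Hecke algebra at a sufficiently high level; together with cyclicity of $\xi$ for this algebra on $\cH_\pi^{K(\infty)}$ one gets that $P$ is abelian with full central support. Your ``alternative route'' via Choquet theory on $K(\infty)$-biinvariant positive definite functions is close to what Olshanski and Pickrell actually do and sidesteps these issues.
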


\begin{proof} This is \cite[Prop.~2.4]{Pi90}, but it also follows from the general 
Theorem~\ref{thm:dirlim}~below.
\smartqed\qed\end{proof}

Combining the preceding proposition with two-sided estimates 
on the behavior of spherical functions near the identity, Pickrell proved: 

\begin{proposition} {\rm(\cite[Prop.~6.11]{Pi90})} Irreducible real 
spherical functions of the 
direct limit pairs $(G(\infty), K(\infty))$ always extend to spherical 
functions of $G_{(2)}$ 
and, for $q > 2$, all spherical functions on $G_{(q)}$ vanish. 
\end{proposition}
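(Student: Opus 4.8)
The plan is to combine the direct integral decomposition of the preceding proposition with the explicit product structure of the extremal spherical functions of the locally finite pairs $(G(\infty),K(\infty))$ and with a sharp two-sided estimate for those functions near the identity. First I would reduce both claims to the extremal spherical functions $\phi_\lambda$ of $(G(\infty),K(\infty))$: these are classified, the direct integral decomposition lets one assemble an arbitrary spherical function of $G_{(q)}$ from extremal ones, and a spherical function of $G_{(q)}$ restricts to a spherical function of the dense subgroup $G(\infty)$ whose GNS representation agrees with the restricted one, so the problem becomes to determine exactly which $\phi_\lambda$ admit a (necessarily continuous) positive definite $K$-biinvariant extension to $G_{(q)}$. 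One uses here that each $\phi_\lambda$ is a product function: in a Cartan decomposition $g = k_1\, a(t)\, k_2$ of an element of $G(\infty)$, with $k_i \in K(\infty)$ and $a(t)$ depending on finitely many radial parameters $t_1 \ge t_2 \ge \cdots \ge 0$ normalised so that $g \in G_{(q)}$ exactly when $\sum_i t_i^q < \infty$, one has $\phi_\lambda(g) = \prod_i \psi_\lambda(t_i)$ for a fixed function $\psi_\lambda$ on $\R_+$ with $\psi_\lambda(0) = 1$ and $|\psi_\lambda| \le 1$; this factorisation comes from the Schur--Weyl analysis of Section~\ref{sec:3} in the overgroup-type families and from Olshanski's spherical function formulas in the remaining ones.

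The analytic core is the estimate that there are $\delta > 0$ and constants $0 < c_\lambda \le C_\lambda$ with $c_\lambda t^2 \le 1 - \psi_\lambda(t) \le C_\lambda t^2$ for $0 \le t \le \delta$, where $c_\lambda > 0$ precisely when $\lambda$ is nontrivial. Granting it, the extension to $G_{(2)}$ is immediate: $g \in G_{(2)}$ iff $\sum_i t_i(g)^2 < \infty$, in which case $\sum_i |1 - \psi_\lambda(t_i(g))| < \infty$, the tail being controlled by $C_\lambda \sum_i t_i^2$, so $\tilde\phi_\lambda(g) := \prod_i \psi_\lambda(t_i(g))$ converges. The radial parameter map is continuous from $G_{(2)}$ with its $B_2$-topology into $\ell^2$ and $(s_i) \mapsto \prod_i \psi_\lambda(s_i)$ is continuous on $\ell^2$, so $\tilde\phi_\lambda$ is continuous on $G_{(2)}$; it is $K$-biinvariant, it restricts to $\phi_\lambda$ on the dense subgroup $G(\infty)$, and positive definiteness — a closed condition — passes from $G(\infty)$ to $G_{(2)}$. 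Hence $\tilde\phi_\lambda$ is a spherical function of $G_{(2)}$ extending $\phi_\lambda$.

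For the vanishing, fix $q > 2$ and a nontrivial $\lambda$, and suppose $\phi_\lambda$ admitted such an extension to $G_{(q)}$; being a spherical function, it is continuous at $\1$. Pick $\alpha$ with $1/q < \alpha \le 1/2$, possible since $q > 2$, and let $g_n \in G(\infty) \subseteq G_{(q)}$ have nonzero radial parameters $t^{(n)}_i = n^{-\alpha}$ for $1 \le i \le n$. Then $\sum_i (t^{(n)}_i)^q = n^{1 - q\alpha} \to 0$, so $g_n \to \1$ in $G_{(q)}$, while $\sum_i (t^{(n)}_i)^2 = n^{1 - 2\alpha} \ge 1$. For $n$ large all $t^{(n)}_i \le \delta$, so $\phi_\lambda(g_n) = \prod_{i \le n} \psi_\lambda(n^{-\alpha}) \le (1 - c_\lambda n^{-2\alpha})^n \le \exp(-c_\lambda n^{1 - 2\alpha}) \le \exp(-c_\lambda) < 1$, contradicting $\phi_\lambda(g_n) \to \phi_\lambda(\1) = 1$. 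Hence no nontrivial $\phi_\lambda$ extends, and by the reduction the only spherical function of $G_{(q)}$ is the constant function $1$, which is the asserted vanishing.

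I expect the two-sided estimate $1 - \psi_\lambda(t) \asymp t^2$ near $0$ to be the main obstacle: it forces one to descend to the explicit irreducible spherical functions of each of the ten families and to read off both the quadratic vanishing of $1 - \psi_\lambda$ at the origin and its non-degeneracy for nontrivial $\lambda$ — from the Schur-module realisations for the overgroup-type families and from Olshanski's formulas (multivariate hypergeometric or zonal-type spherical functions) for the hermitian and pseudo-unitary cases. A lesser point is the routine verification that the product function is continuous and positive definite on the Banach--Lie group $G_{(2)}$ in its $B_2$-topology, which follows from the Lipschitz property of the radial parameter map into $\ell^2$ near $\1$ and the inheritance of positive definiteness from the dense subgroup $G(\infty)$.
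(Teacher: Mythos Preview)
Your proposal is correct and follows precisely the approach the paper attributes to Pickrell: the sentence preceding the proposition says that Pickrell obtained it by ``combining the preceding proposition with two-sided estimates on the behavior of spherical functions near the identity,'' and the paper gives no further proof of its own. Your reduction via direct integral decomposition to extremal spherical functions, the product formula $\phi_\lambda(g)=\prod_i\psi_\lambda(t_i)$ in Cartan coordinates, and the quadratic two-sided bound $c_\lambda t^2\le 1-\psi_\lambda(t)\le C_\lambda t^2$ are exactly the ingredients indicated, and your concrete sequence $t_i^{(n)}=n^{-\alpha}$ with $1/q<\alpha\le 1/2$ cleanly exhibits the failure of continuity at $\1$ for $q>2$; you have also correctly flagged that the case-by-case verification of the two-sided estimate for each of the ten families is where the real work lies.
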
 

If $v$ is a $C^1$-spherical vector for 
the unitary representation $(\pi, \cH_\pi)$ of $(G_{(q)},K)$, then 
$\beta(X,Y) := \la \dd\pi(X)v, \dd\pi(Y)v\ra$ 
defines a continuous $K$-invariant positive semidefinite symmetric 
bilinear form on $\fp_{(q)}$. Therefore one can also show that $v$ is fixed by 
the whole group $G_{(q)}$ by showing that $\beta = 0$ using the 
following lemma. 

\begin{lemma} \mlabel{lem:5.4} The following assertion holds for the $K$-action 
on $\fp_{(q)}$: 
\begin{description}
\item[\rm(i)] $[\fk, \fp] = \fp$. 
\item[\rm(ii)] $\fp_{(2)}$ is an irreducible representation. 
\item[\rm(iii)] For $q > 2$, 
every continuous $K$-invariant symmetric bilinear 
form on $\fp_{(q)}$ vanishes. 
\end{description}
\end{lemma}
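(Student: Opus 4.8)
The plan is to handle the three assertions in the order (i), (ii), (iii), with (iii) as the real target and (ii) as its essential input. For (i) I would go through the ten families case by case, reading off $\fp$ (equivalently $\fp_{(q)}$, $q\ge 2$) from the tables: in each case it is a concrete space of operators (Hermitian, symmetric, quaternion-Hermitian, or an off-diagonal / $\pm$-eigenspace block) on which $\fk$ acts by $\ad$. Since $[\fk,\fp]\subseteq\fp$ always, only the reverse inclusion is at stake, and it follows from a $2\times 2$- (at worst $3\times3$-) block computation, exactly as for the finite-dimensional classical symmetric pairs. The only genuinely infinite-dimensional point is the ``trace direction'' in the families $(\GL(\cH),\U(\cH))$, $(\GL(\cH),\OO(\cH))$, $(\GL_\H(\cH),\U_\H(\cH))$: in finite dimensions $[\fk,\fp]$ is the kernel of the trace, but for $q\ge 2$ there is no continuous trace functional on $B_q(\cH)$, so a Riesz-representation argument (together with the fact that the only operators commuting with all rank-one operators are the scalars, which do not lie in $B_{q'}(\cH)$) shows there is no nonzero $K$-invariant linear functional on $\fp_{(q)}$, whence $\ad(\fk)\fp_{(q)}$ is dense and, allowing the unrestricted $\fk=\fu_\K(\cH)$ in the bracket, all of $\fp_{(q)}$.

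For (ii), the key point is that $\fp_{(2)}$ is, after complexification when $\K\neq\C$ (and directly in the block cases), one of the Schur-type modules of Theorem~\ref{thm:class}: for $(\GL(\cH),\U(\cH))$ and the pseudo-unitary families it is $B_2(\cH)\cong\cH\hat\otimes\oline\cH\cong\bS_{(1)}(\cH)\otimes\bS_{(1)}(\oline\cH)$ (an external tensor product of copies of the standard representation in the pseudo-unitary cases); for the hermitian families it is $\bS_{(2)}(\cH)$ or $\bS_{(1,1)}(\cH)$; and for $(\GL(\cH),\OO(\cH))$, $(\GL_\H(\cH),\U_\H(\cH))$ it is the corresponding $\bS_{(2)}$ or $\bS_{(1,1)}$ of the complexified Hilbert space. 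These are irreducible \emph{here}, although their finite-dimensional analogues are not, precisely because the would-be invariant summand (the multiples of $\1$, i.e.\ the trace direction) does not lie in $B_2(\cH)$; irreducibility is then exactly what Theorem~\ref{thm:class} asserts, supplemented by the elementary irreducibility of external tensor products. Since the conjugation action of $K$ on $\fp_{(2)}$ is strongly continuous, it extends from the dense subgroup $K_\infty$, on which the theorem applies, and irreducibility of the complexification forces irreducibility of the real module $\fp_{(2)}$.

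For (iii), let $\beta$ be a continuous $K$-invariant symmetric bilinear form on $\fp_{(q)}$ with $q>2$. Since $\|\cdot\|_q\le\|\cdot\|_2$, the inclusion $\fp_{(2)}\into\fp_{(q)}$ is continuous, so $\beta$ restricts to a continuous symmetric bilinear form on the real Hilbert space $\fp_{(2)}$, hence is represented by a bounded self-adjoint $T$ with $\beta|_{\fp_{(2)}}(X,Y)=\la TX,Y\ra_2$; invariance gives $T\in\End_K(\fp_{(2)})$, which by (ii) and real Schur is $\R$, $\C$ or $\H$, all of whose self-adjoint elements are scalar, so $\beta|_{\fp_{(2)}}=c\,\la\cdot,\cdot\ra_2$. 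Choosing a sequence $(D_n)$ of finite-rank elements of $\fp$ with $\|D_n\|_q=1$ and $\|D_n\|_2\to\infty$ (possible since $q>2$; e.g.\ suitable diagonal or rank-two-block elements), continuity of $\beta$ on $\fp_{(q)}$ forces $|c|\,\|D_n\|_2^2=|\beta(D_n,D_n)|\le C$, so $c=0$. Finally the finite-rank elements of $\fp$ are $\|\cdot\|_q$-dense in $\fp_{(q)}$ and lie in $\fp_{(2)}$, so $\beta=0$. I expect the main obstacle to be the bookkeeping in (ii): carrying out the identification of $\fp_{(2)}$ with a Schur module uniformly across all ten families and verifying, in each, that strong continuity and the extension to $K_\infty$ transfer irreducibility correctly.
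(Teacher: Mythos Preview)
Your argument for (iii) is essentially the paper's: restrict $\beta$ to $\fp_{(2)}$, use (ii) and Schur to see it is a scalar multiple of the Hilbert--Schmidt form, then observe that this form does not extend continuously to $\fp_{(q)}$ for $q>2$; your sequence $(D_n)$ with $\|D_n\|_q=1$, $\|D_n\|_2\to\infty$ makes this last step explicit. So (iii) matches.

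For (ii) your plan is more uniform than the paper's. You aim to identify $\fp_{(2)}$ (after complexification where needed) with a Schur module and then invoke Theorem~\ref{thm:class}. The paper does this only for families (9) and (10) (the latter via Kaup's classification of real Cartan factors) and instead uses other tools elsewhere: for (1)--(3) the simplicity of the Hilbert--Lie algebra $\fu_2(\cH)$ (Schue), for (4)--(6) a direct argument on $\gl_2(\cH)$, and for (7)--(8) the irreducibility of the isotropy representation of an irreducible hermitian symmetric space. Your approach is perfectly viable and arguably cleaner once the identifications are written out; the paper's mixed approach trades uniformity for quoting ready-made results.

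For (i) there is a gap. First, the assertion is $[\fk,\fp]=\fp$ for the \emph{full} $\fp$, not $\fp_{(q)}$; this is what is used later in Theorem~\ref{thm:b}. More importantly, your Hahn--Banach/Riesz argument in the $\GL$-type cases (1), (9), (10) only yields that $[\fk,\fp]$ is \emph{dense}, and the phrase ``allowing the unrestricted $\fk$'' does not by itself close the gap to equality, since $[\fk,\fp]$ is not a priori closed. The paper avoids this by direct algebraic arguments: for (1)--(3) it uses that $\fu(\cH)$ is perfect (so $[\fk,i\fk]=i\fk=\fp$); for (4)--(6) it uses an invertible element $X_0\in\fu(\cH)$ so that $(X_0,0).Z=X_0Z$ already hits all of $\fp\cong\gl(\cH)$; for (7)--(8) the center $i\1$ of $\fk$ alone gives $[\fz,\fp]=\fp$; and for (9), (10) an explicit complex (resp.\ quaternionic) structure decomposes $\fp$ into pieces each of which is visibly a bracket. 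You should replace the functional-analytic step by one of these direct arguments, or at least cite the perfectness of $\fu(\cH)$ for the $\GL$-type cases.
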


\begin{proof} (i), (ii): 
We check these conditions for all $10$ families: 

{\bf (1)-(3)}  Then $\fp = i\fk$ with $\fk = \fu(\cH)$. 
Since $\fk$ is perfect by \cite[Lemma~I.3]{Ne02}, we obtain 
$[\fk,\fp] = i[\fk,\fk] = i \fk = \fp$. 

Here $\fp_{(2)} = i \fu_2(\cH)$, and since $\fu_2(\cH)$ is a simple 
Hilbert--Lie algebra (\cite{Sch60}), (ii) follows. 

{\bf (4)-(6)} In these cases $\g^c = \fu(\cH \oplus \cH)$, 
$\fk = \fu(\cH) \oplus \fu(\cH)$ and 
$\fp \cong \gl(\cH)$ with the $\fk$-module structure given by 
$(X,Y).Z := XZ-ZY$. Since $\fu(\cH)$ contains invertible elements 
$X_0$, and $(X_0,0).Z = X_0 Z$, it follows that 
$\fp = [\fk,\fp]$. 

Here $\fp_{(2)} \cong \gl_2(\cH)$ is the space of Hilbert--Schmidt operators on $\cH$. 
This immediately implies the irreducibility of the representation of 
$\K = \R, \C$. For $\K = \H$, we have 
$\fp_{(2),\C} \cong \gl_2(\cH^\C)$, and since the representation of 
$\U(\cH)$ on $\cH^\C$ is irreducible (we have 
$\U(\cH)_\C \cong \Sp(\cH^\C)$), (ii) follows. 

{\bf (7)-(8)} In these two cases the center $\fz := i\1$ of $\fk \cong \fu(\cH)$ 
satisfies $\fp = [\fz, \fp]$, which implies (i). 
The Lie algebra $\g_{(2)} = \fk \oplus \fp_{(2)}$ corresponds to the 
automorphism group of an irreducible hermitian symmetric space 
(cf.\ \cite[Thm.~2.6]{Ne12} and the subsequent discussion). 
This implies that the representation of $K$ on the complex 
Hilbert space $\fp_{(2)}$ is irreducible. 

{\bf (9)} Here $\g = \gl(\cH)$, $\fk = \fo(\cH)$ and $\fp = \Sym(\cH)$. 
For any complex structure $I \in\fo(\cH)$ we then obtain 
\[ \fp 
= \Herm(\cH,I) \oplus [I,\fp] 
= [\fu(\cH,I), \Herm(\cH,I)] \oplus [I,\fp] \subeq [\fk,\fp].\] 
This proves (i). 

Next we observe that $\fp_{(2)} = \Sym_2(\cH)$ satisfies 
$\fp_{(2),\C} \cong \Sym_2(\cH_\C) \cong S^2(\oline{\cH_\C})$, 
hence is irreducible by Theorem~\ref{thm:class}. 

{\bf (10)} Here $\g = \gl(\cH)$, $\fk = \fu(\cH)$ and $\fp = \Herm(\cH)$ for $\K =\H$. 
With the aid of an orthonormal basis, we find a real Hilbert space 
$\cK$ with $\cH \cong\cK \otimes_\R \bH$, where $\bH$ acts by right multiplication. 
This leads to an isomorphism $B_\H(\cH) \cong B_\R(\cK) \otimes_\R \H$ as real 
involutive algebras. In particular, 
\[ \Herm(\cH) \cong \Sym(\cK)\otimes \1 \oplus \Asym(\cK) \otimes \Aherm(\H). \]
Therefore (i) follows from $\Aherm(\H) = [\Aherm(\H), \Aherm(\H)]$ and 
from $\Sym(\cK) = [\fo(\cK), \Sym(\cK)]$, which we derive from (9). 

To verify (ii), we observe that $\fp_{(2)} = \Herm_2(\cH)$. 
From Kaup's classification of the real symmetric Cartan domains 
\cite{Ka97} it follows that $\fp_{(2)}$ is a real form of the 
complex $JH^*$-triple $\Skew(\cH^\C)$ of skew symmetric bilinear forms 
on $\cH^\C$, labelled by $II_{2n}^\H$. 
Since the action of $\U(\cH)$ on $\fp_{(2),\C} \cong \Skew(\cH^\C) \cong 
\Lambda^2(\oline{\cH_\C})$ extends 
to the overgroup $\U(\cH^\C)$ with the same commutant, 
the irreducibility of the resulting representation implies 
that the representation of $\U(\cH)$ on the real Hilbert space $\fp_{(2)}$ is 
irreducible as well. 

(iii) can be derived from (i). If $\beta \: \fp_{(q)} \times \fp_{(q)} \to \R$ 
is a continuous invariant symmetric bilinear form, then the same holds 
for its restriction to $\fp_{(2)}$. The simplicity of the representation 
on $\fp_{(2)}$ now implies that it is a multiple of the canonical 
form on $\fp_{(2)}$ given by the trace. But this form does not extend continuously 
to $\fp_{(q)}$ for any $q > 2$. 
\smartqed\qed\end{proof}

\begin{proposition} \mlabel{prop:q-res} 
{\rm(a)} Separable unitary representations of 
$G_{(q)}$, $q \geq 1$, are completely determined by their restrictions to $G(\infty)$.

\nin{\rm(b)} Conversely, every continuous separable unitary representation of 
$G_{\infty, (q)}, q \geq 1$ extends to a continuous unitary representation of $G_{(q)}$.  
\end{proposition}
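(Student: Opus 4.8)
The plan is to use the same machinery that powered Pickrell's Theorem for $\U(\cH)$, namely the density of the locally finite subgroup combined with the boundedness/fixed-point results of Section~\ref{sec:1}, applied now to the polar decomposition $G_{(q)} = K\exp(\fp_{(q)})$. The key structural input is the decomposition at the Lie algebra level: since $\g_{(q)} = \fk \oplus \fp_{(q)}$ and $[\fk,\fp] = \fp$ by Lemma~\ref{lem:5.4}(i), the subalgebra $\fk$ together with the $K$-orbit of $\fp_{(q)}$ generates everything, so it suffices to control the behavior of a representation on $K$ and on finitely supported directions in $\fp_{(q)}$.

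For part (a), I would first observe that $G(\infty)$ is dense in $G_{(q)}$ in the natural topology (this is how $G_{\infty,(q)}$ was defined as the closure of $G(\infty)$, and $G_{\infty,(q)}$ is in turn dense in $G_{(q)}$ since $\fp_{(q)}$ is approximated by finite-rank symmetric operators and $K_\infty$ is dense in $K$ in the strong topology, cf.\ the footnote to Theorem~\ref{thm:class}(c)). Given a separable continuous unitary representation $(\pi,\cH_\pi)$ of $G_{(q)}$, its restriction to the Banach--Lie group $G_{(q)}$ is continuous, hence the matrix coefficients $g \mapsto \la\pi(g)\xi,\eta\ra$ are continuous; since $G(\infty)$ is dense, these coefficients — and therefore $\pi$ itself — are uniquely determined by $\pi\res_{G(\infty)}$. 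The only subtlety is that one must first know $\pi$ restricted to $G(\infty)$ already ``sees'' everything; here one uses that the von Neumann algebra generated by $\pi(G(\infty))$ contains $\pi(G_{(q)})$ by strong continuity and density, so commutants agree as well. This part is essentially formal once density is in hand.

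For part (b), the strategy is the converse GNS-type argument used in Theorem~\ref{thm:samerep}. Starting from a continuous separable unitary representation $(\sigma,\cH_\sigma)$ of $G_{\infty,(q)}$, I would decompose it (using Zorn's Lemma and the tameness/fixed-point results) into cyclic pieces governed by the restriction to $K_\infty$, whose representation theory is controlled by Theorem~\ref{thm:class}, and then show that the relevant positive definite function on $G_{\infty,(q)}$ — continuous and $G(\infty)$-compatible — is already positive definite on the larger group $G_{(q)}$ by continuity and density of $G_{\infty,(q)}$ in $G_{(q)}$. The GNS construction applied to this positive definite function on $G_{(q)}$ then produces the desired extension of $\sigma$.

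The main obstacle I anticipate is ensuring that continuity of the extended positive definite function survives the passage from $G_{\infty,(q)}$ to $G_{(q)}$: the topologies differ (one is defined by the $B_q$-norm, the other is a strong-type topology on $G_{\infty,(q)}$), so one must verify that a positive definite function continuous in the finer setting and agreeing with a continuous one on the dense subgroup is genuinely continuous on $G_{(q)}$ in its own topology. This is where one needs to invoke that separable positive definite functions extend along strongly dense subgroups — the same phenomenon underlying Theorem~\ref{thm:class}(c) — together with the boundedness of $G_{(q)}$ (or at least of its unitary part $K$, via Proposition~\ref{prop:1.3}) to get the requisite fixed-point control on $\fp_{(q)}$-directions. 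Once that continuity is secured, both directions close up without further difficulty.
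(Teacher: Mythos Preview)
Your plan for part (a) has a genuine gap: the density claim it rests on is false. The group $G(\infty)$ is \emph{not} dense in $G_{(q)}$ in the Banach--Lie topology, because $K(\infty)$ (and even $K_\infty = \U_\infty(\cH)_0$) is not norm-dense in $K = \U_\K(\cH)$; its norm closure is the proper closed subgroup $K_\infty$, so the closure of $G(\infty)$ in $G_{(q)}$ is only $G_{\infty,(q)} = K_\infty \exp(\fp_{(q)})$. You notice this yourself when you write ``$K_\infty$ is dense in $K$ in the strong topology,'' but then use it as if it were norm density. Since $\pi$ is only assumed continuous for the norm topology on $G_{(q)}$, matrix-coefficient continuity alone does not let you pass from $G(\infty)$ to $G_{(q)}$.

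What bridges this gap is precisely Theorem~\ref{thm:p.2}(i) applied to $K$: for a separable representation, the restriction $\pi\res_K$ is automatically continuous for the strong operator topology, and \emph{then} $K(\infty)$ is dense in $K$ in the relevant sense. Combined with the (genuine) norm density of $\fp(\infty)$ in $\fp_{(q)}$ and the polar decomposition $G_{(q)} = K\exp(\fp_{(q)})$, this gives (a) in one line. You should invoke Pickrell's theorem as a black box here rather than try to re-run its machinery.

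For part (b) your positive-definite-function approach would hit the same wall: $G_{\infty,(q)}$ is not norm-dense in $G_{(q)}$, so there is no direct continuity argument for extending the function. The paper sidesteps this entirely with a group-theoretic construction: since $K$ acts smoothly on $G_{\infty,(q)}$ by conjugation, one forms the semidirect product $G_{\infty,(q)} \rtimes K$, observes that the multiplication map induces an isomorphism $(G_{\infty,(q)} \rtimes K)/K_\infty \cong G_{(q)}$, and then uses the \emph{uniqueness} of the extension from $K_\infty$ to $K$ (Theorem~\ref{thm:class}(c)) to get a well-defined representation of the semidirect product that descends to $G_{(q)}$. This avoids any appeal to density in $G_{(q)}$ and is considerably shorter than the GNS route you sketch.
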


\begin{proof} (cf.\ \cite[Prop.~5.1]{Pi90}) 
(a) Since $\fp(\infty)$ is dense in $\fp_{(q)}$, this follows from 
Theorem~\ref{thm:p.2}(i), applied to~$K$. 

(b) Since $K$ acts smoothly by conjugation on $G_{\infty,(q)}$, we can form the 
Lie group $G_{\infty,(q)} \rtimes K$ and note that the multiplication map 
to $G_{(q)}$ defines an isomorphism 
$(G_{\infty,(q)} \rtimes K)/K_\infty \to G_{(q)}$. 
Therefore the existence of the extension of $G_{(q)}$ follows 
from the uniqueness of the extension from $K_\infty$ to $K$ 
(Theorem~\ref{thm:class}(c)). 
\smartqed\qed\end{proof}

\begin{theorem} \mlabel{thm:a} 
If $(G,K)$ is one of the $10$ symmetric pairs  of non-unitary 
type, then, for $q > 2$, all separable projective 
unitary representations of 
$G_{(q)}$ and all projective unitary representations of $G_{\infty,(q)}$ are trivial. 
\end{theorem}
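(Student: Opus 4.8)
The plan is to reduce everything to the vanishing of an invariant bilinear form, using the Howe--Moore-type machinery already prepared in Lemma~\ref{lem:5.4} together with the restriction results from Theorem~\ref{thm:class} and Theorem~\ref{thm:p.2}. First I would treat the non-projective case for $G_{\infty,(q)}$. Given a continuous unitary representation $(\pi,\cH_\pi)$ of $G_{\infty,(q)} = K_\infty \exp(\fp_{(q)})$, by Proposition~\ref{prop:q-res}(b) it extends to $G_{(q)}$, and by the same reasoning the restriction to $K_\infty = \U_\infty(\cH)_0$ (or a product of such) extends to $K = \U(\cH)$ by Theorem~\ref{thm:class}(c). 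Then I would argue that, since $G/K$ is of non-compact type, a separable unitary representation of $G_{(q)}$, restricted to $K$, is a direct sum of the bounded Schur-type representations of Theorem~\ref{thm:p.2}; the point is to show the whole thing is $K$-spherical after decomposing, and then to kill the spherical vector.

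The key step is the vanishing of a spherical vector. Suppose $v \in \cH_\pi$ is a $K$-fixed unit vector. Smoothing by convolution over $K$ produces $K$-fixed smooth vectors, so I may assume $v$ is a $C^1$-vector for the one-parameter groups $\exp(tX)$, $X \in \fp_{(q)}$. Then $\beta(X,Y) := \la \dd\pi(X)v, \dd\pi(Y)v\ra$ (real part, symmetrized) is a continuous $K$-invariant symmetric bilinear form on $\fp_{(q)}$, because $v$ is $K$-fixed and $K$ acts on $\fp_{(q)}$ by the adjoint action. By Lemma~\ref{lem:5.4}(iii), for $q > 2$ this form vanishes identically, hence $\dd\pi(X)v = 0$ for all $X \in \fp_{(q)}$; combined with $K$-invariance and $[\fk,\fp_{(q)}] = \fp_{(q)}$ this forces $v$ to be fixed by the connected group $G_{(q)}$ generated by $K$ and $\exp \fp_{(q)}$ (using also that $\fp_{(q)}$ generates $G_{(q)}$ together with $K$ via the polar-type decomposition). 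So the $G_{(q)}$-fixed subspace equals the $K$-fixed subspace, and on the orthogonal complement of that there are no nonzero $K$-fixed vectors at all. The remaining task is to show the complement carries no nonzero $G_{(q)}$-representation, which I would do by showing that \emph{any} separable unitary representation of $G_{(q)}$ must have a $K$-fixed vector: restrict to $K$, apply Theorem~\ref{thm:p.2}(iii),(iv) to see $\pi|_K$ is a direct sum of bounded Schur modules $\bS_\lambda$; among the $G_{(q)}$-subrepresentations generated by such a $K$-isotypic piece, pick any irreducible constituent — the only $K$-type that can be a lowest $K$-type of a non-trivial such representation would force, by the non-compactness (the symmetric space $G/K$ being of non-compact type and hence $\fp_{(q)}$ acting noncompactly), a spherical vector to appear, but I would instead phrase this more robustly via a Howe--Moore vanishing-of-matrix-coefficients argument: for the non-unitary pairs, matrix coefficients of a separable unitary representation of $G_{(q)}$ decay along $\exp \fp_{(q)}$ unless the vectors are $K_\infty$-fixed (indeed $K$-fixed once we pass to the extension), which again reduces to the spherical case and then to $\beta = 0$.

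For the projective statement one passes to a central extension $\widetilde{G_{(q)}}$ by $\T$ (or the universal central extension). The crucial observation is that the second cohomology contributions come from the hermitian cases (the spaces (4), (7), (8) of hermitian type), but the vanishing argument is insensitive to the central extension: a projective representation of $G_{(q)}$ is an honest representation of $\widetilde{G_{(q)}}$, whose restriction to the (possibly extended) $\widetilde K$ is still governed by the classification — any central $\T$ acts by a character, and after twisting we are back to a linear representation to which Lemma~\ref{lem:5.4} applies verbatim since $\fp_{(q)}$ is unaffected by central extensions and the invariant bilinear form $\beta$ on $\fp_{(q)}$ is defined the same way. Hence $\dd\pi(\fp_{(q)}) v = 0$ again, and since $G_{\infty,(q)}$ and $G_{(q)}$ are (in the relevant cases) connected with Lie algebra generated by $\fk$ and $\fp_{(q)}$, the representation is trivial modulo the center, i.e. the projective representation is trivial.

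The main obstacle I anticipate is the bookkeeping in the last reduction: showing cleanly that a separable unitary representation of $G_{(q)}$ with no nonzero $K$-fixed vector must be zero. The slick route is Howe--Moore: establish that for each of the ten non-unitary pairs the only vectors with non-decaying matrix coefficients along a suitable one-parameter subgroup of $\exp \fp_{(q)}$ are the $K_\infty$-invariant ones (this uses that the $K_\infty$-representation is a sum of Schur modules $\bS_\lambda$ on tensor powers, where the explicit action lets one see the decay directly, e.g. via the behavior of $\la g^{\otimes N} w, w'\ra$ for $g = \exp(tX)$ a non-elliptic element), and then invoke Theorem~\ref{thm:insep}/Theorem~\ref{thm:p.2} to identify $K_\infty$-invariants with $K$-invariants, closing the loop with $\beta = 0$. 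Making the Howe--Moore decay estimate uniform and precise across all ten families — in particular handling the quaternionic cases and the groups with $K = \U(\cH)^2$ or $\OO(\cH)^2$ — is where the real work lies; everything else is assembling results already in hand.
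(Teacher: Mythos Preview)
Your proposal has the right ingredients but assembles them in a way that leaves a genuine gap, and the paper's route avoids exactly the obstacle you identify at the end.

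The difficulty you flag---showing that a separable unitary representation of $G_{(q)}$ with no nonzero $K$-fixed vector must be zero---is not ``bookkeeping''; it is essentially the theorem itself. There is no a priori reason for a $K$-fixed vector to exist, and your proposed infinite-dimensional Howe--Moore argument (decay of matrix coefficients along $\exp \fp_{(q)}$ forcing $K_\infty$-invariance) is not available in the literature and would itself require substantial work across all ten families. Moreover, your step ``smoothing by convolution over $K$ produces $K$-fixed smooth vectors'' fails outright: $K$ is an infinite-dimensional Banach--Lie group with no Haar measure, so there is no convolution smoothing to produce $C^1$-vectors.

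The paper sidesteps both issues by working with $K_n$-fixed vectors rather than $K$-fixed vectors. By Theorem~\ref{thm:p.2} every separable representation of $G_{(q)}$ is a direct sum of pieces generated by $\cH^{K_n}$ for some $n$; such vectors exist by Kirillov's Lemma. A vector $v \in \cH^{K_n}$ is then $K_n$-spherical for the subgroup $G_{(q),n}$ (the stabilizer of $e_1,\ldots,e_n$), which is a pair of the same type, so the spherical-function vanishing for $q>2$ (or equivalently the bilinear-form argument via Lemma~\ref{lem:5.4}(iii)) shows $v$ is fixed by $G_{(q),n}$. The remaining step---promoting $G_{(q),n}$-invariance to $G_{(q)}$-invariance---is handled by \emph{finite-dimensional} Howe--Moore: by Proposition~\ref{prop:q-res}(a) it suffices to show $G(m)$ fixes $v$ for each $m>n$, and for the classical reductive group $G(m)$ one invokes the vanishing of matrix coefficients at infinity (as in \cite{Ma97}) to conclude that any $G(m)_n$-fixed vector is $G(m)$-fixed. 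This is where Howe--Moore enters, and only for finite-dimensional groups where it is a known theorem.

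For the projective case the paper also takes a shorter path than your central-extension analysis: compose a projective representation $\pi\colon G_{(q)}\to \PU(\cH_\pi)$ with the conjugation representation on the Hilbert--Schmidt operators $B_2(\cH_\pi)$ to obtain a genuine separable unitary representation; its triviality forces $\pi$ to be trivial.
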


\begin{proof} If $\pi \: G_{(q)} \to \PU(\cH_\pi)$ is a continuous 
separable projective unitary representation, then composing with the 
conjugation representation of $\PU(\cH_\pi)$ on the Hilbert space 
$B_2(\cH_\pi)$ leads to a separable unitary representation of 
$G_{(q)}$ on $B_2(\cH_\pi)$. If we can show that this representation 
is trivial, then $\pi$ is trivial as well. Therefore it suffices 
to consider unitary representations. 

Since the group $G_{\infty,(q)}$ is separable, all its continuous unitary 
representations are direct sums of separable ones. 
Hence, in view of Proposition~\ref{prop:q-res}, the triviality of 
all continuous unitary representations of $G_{\infty, (q)}$ is equivalent to the 
triviality of all separable continuous unitary 
representations of $G_{(q)}$. We may therefore 
restrict our attention to separable representations of $G_{(q)}$.

Let $(\pi, \cH)$ be a continuous separable unitary representation 
of $G_{(q)}$. In view of Theorem~\ref{thm:p.2}, it is a direct sum of representations 
generated by the subspace $\cH^{K_n}$ for some $n \in \N$.  Any 
$v \in \cH^{K_n}$ generates a spherical subrepresentation of the subgroup
\[ G_{(q),n} := \{ g \in G_{(q)} \: g e_j = e_j, j =1,\ldots, n\}.\] 
Now Theorem~\ref{thm:a} implies that $v$ is fixed by $G_{(q),n}$. 

It remains to show that $G_{(q)}$ fixes $v$. 
In view of Proposition~\ref{prop:q-res}, it suffices to show that, for 
every $m > n$, $G(m)$ fixes $v$. The group 
$G(m)$ is reductive with maximal compact subgroup $K(m)$, and 
$G(m)_n$ is a non-compact subgroup. 

{\bf Case 1:} We first assume that the center of $G(m)_n$ is compact, which is the case 
for $G(m) \not= \GL(m,\K)$ (this excludes 1,8 and 9). Then $G(m)$ is minimal 
in the sense that every continuous bijection onto a topological group is 
open, and this property is inherited by all its quotient 
groups (\cite[Lemma~2.2]{Ma97}). In view of \cite[Prop.~3.4]{Ma97}, 
all matrix coefficients of irreducible unitary representations 
$(\rho,\cH_\rho)$ of quotients of $G(m)$ vanish at infinity 
of $G(m)/\ker \rho$. If $G(m)_n \not\subeq\ker \rho$, then the image of 
$G(m)_n$ in the quotient group is non-compact, so that the only vector in 
$\cH_\rho$ fixed by $G(m)_n$ is~$0$. Since every continuous unitary 
representation of $G(m)$ is a direct integral of irreducible ones, 
it follows that every $G(m)_n$-fixed vector in a unitary representation 
is fixed by $G(m)$. 

{\bf Case 2:} If $G(m) = \GL(m,\K)$, then $Z = \R^\times_+ \1$ is a non-compact 
subgroup of the center and the homomorphism 
$\chi \: G \to \R^\times_+, \chi(g) := |\det_\R(g)|$ is surjective. 
Therefore $S(m) := \ker \chi$ has compact center and satisfies 
$G(m) = Z S(m)$. 
The preceding argument now implies that every fixed vector 
for $S(m)_n$ in a unitary representation of $G(m)$ is fixed by $S(m)$. 
Since $\chi\res_{G(m)_n}$ is non-trivial, we conclude that 
every fixed vector for $G(m)_n$ in a unitary representation is fixed by $G(m)$. 

Combining both cases, we see that, in every unitary representation 
of $G(m)$, the subgroup 
$G(m)_n$ and $G(m)$ have the same fixed vectors, and this implies that 
every $G_{(q),n}$ fixed-vector is fixed by $G_{(q)}$. 
\smartqed\qed\end{proof}

\begin{theorem} {\rm(\cite[Prop.~7.1]{Pi90})} 
If $(G,K)$ is one of the $10$ symmetric pairs of unitary type, 
then, for $q > 2$, every separable continuous projective 
unitary representation of $G_{(q)}$ 
extends uniquely to a representation of $G$ that is continuous with respect to the strong 
operator topology on~$G$. In particular, it is a direct sum of irreducible ones 
which are determined by Theorem~\ref{thm:p.2}. 
\end{theorem}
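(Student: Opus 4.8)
The statement to prove is Pickrell's extension result: for one of the $10$ symmetric pairs $(G,K)$ of unitary type and $q>2$, every separable continuous projective unitary representation of the restricted group $G_{(q)}$ extends uniquely to a strongly continuous representation of the full group $G$, and in particular decomposes as in Theorem~\ref{thm:p.2}. The natural strategy is to reduce the projective case to the linear case, then reduce the statement about $G_{(q)}$ to a statement about the maximal unitary subgroup $K$, and finally invoke the results already established for unitary groups, namely Theorem~\ref{thm:class}(c) and Theorem~\ref{thm:p.2}.

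First I would dispose of the projective reduction exactly as in the proof of Theorem~\ref{thm:a}: given $\pi\colon G_{(q)} \to \PU(\cH_\pi)$ separable and continuous, compose with the conjugation action of $\PU(\cH_\pi)$ on the Hilbert space $B_2(\cH_\pi)$ of Hilbert--Schmidt operators to obtain an honest continuous unitary representation. However, unlike in the non-unitary case, here the resulting representation is not trivial, so after the conjugation trick one must still argue with genuine unitary representations and only at the end lift back the cocycle; I would handle the lifting by noting that $G$ is simply connected (its Lie algebra is $\fk + i\fp_{(q)}$ for $q=\infty$, and the relevant groups $\U(\cH\oplus\cH)$, $\OO(\cH\oplus\cH)$, etc.\ are simply connected by Kuiper's theorem), so a continuous strongly-continuous linear representation of $G$ whose restriction to $G_{(q)}$ agrees projectively with $\pi$ determines the extension up to a character, which one pins down on the center.

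The core of the argument is the reduction from $G_{(q)}$ to $K$. By Proposition~\ref{prop:q-res}(a), a separable unitary representation of $G_{(q)}$ is determined by its restriction to the locally finite group $G(\infty)$; but $K$ is the maximal unitary subgroup and $G_{(q)} = K\exp(\fp_{(q)})$ with $\fp_{(q)} \subseteq B_q(\cH) \subseteq B_\infty(\cH) = K(\cH)$ for $q>2$ (indeed for any finite $q$). The key observation is that for the unitary pairs the whole group $G_{(q)}$ is itself a group of operators of the form $\1 + (\text{compact})$, so $G_{(q)} \subseteq \U_\infty(\widehat\cH)$ for the ambient Hilbert space $\widehat\cH$ ($=\cH\oplus\cH$, $\cH_\C$, etc.). More precisely $G_{(q)} \subseteq G_{\infty} := G \cap (\1 + K(\widehat\cH))$, and $G_{\infty}$ lies inside $\U_\infty(\widehat\cH)_0$. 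Hence by Theorem~\ref{thm:class}(c) every continuous unitary representation of $\U_\infty(\widehat\cH)_0$ — and a fortiori its restriction to $G_{(q)}$ — extends to a strongly continuous representation of $\U(\widehat\cH)_s$, which contains $G$ as a subgroup (since $G$ consists of unitary operators on $\widehat\cH$). Thus the plan is: (1) show $\pi|_{G(\infty)}$ extends to $\U_\infty(\widehat\cH)_0$, using that a separable representation of $G_{(q)}$ is built from $K$-finite vectors via Theorem~\ref{thm:p.2} applied to $K$; (2) apply Theorem~\ref{thm:class}(c) to get an extension to $\U(\widehat\cH)_s$; (3) restrict back to $G\subseteq\U(\widehat\cH)_s$ to obtain the strongly continuous extension; (4) uniqueness follows because $G(\infty)$, hence $G_{(q)}$, is strongly dense in $G$; (5) the direct sum decomposition into the Schur modules of Theorem~\ref{thm:p.2} is then automatic.

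The step I expect to be the main obstacle is (1): showing that the restriction of a separable representation of $G_{(q)}$ to $G(\infty)$ is actually a \emph{tame} (equivalently continuous) representation of $G(\infty)$ in the sense that extends to $\U_\infty(\widehat\cH)_0$, rather than merely a representation of $G_{(q)}$ restricted to a dense subgroup. This requires combining Theorem~\ref{thm:p.2}(i) applied to $K$ (every separable representation of $K$ is strongly continuous, hence extends) with a fixed-point / $K_n$-finiteness argument: decompose $\cH_\pi$ into subspaces generated by $\cH_\pi^{K_n}$ exactly as in the $\U_\infty$ case, observe that the conjugation action of $G_{(q)}$ does not mix $K$-isotypic components in a pathological way because the $K$-representation theory is completely reducible with finite-dimensional multiplicity spaces (Corollary~\ref{cor:e.2}), and conclude that the positive-definite function $\rho\circ\theta$ controlling the representation is already defined and positive definite on all of $\U_\infty(\widehat\cH)_0$ by the same GNS/density argument used in Theorem~\ref{thm:samerep}. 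Once this tameness is in hand, the remaining steps are formal consequences of the earlier theorems.
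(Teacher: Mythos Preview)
Your ``key observation'' that $G_{(q)}\subseteq \U_\infty(\widehat\cH)$ is false, and this undermines the middle part of your plan. For the unitary pairs one has $G_{(q)}=K\exp(\fp_{(q)})$, and while $\exp(\fp_{(q)})\subseteq \1+K(\widehat\cH)$, the factor $K$ is a \emph{full} unitary group (for instance $K=\U(\cH)^2$ acting block-diagonally on $\widehat\cH=\cH\oplus\cH$ in type~4); a generic element of $K$ is not a compact perturbation of~$\1$. Thus $G_{(q)}\not\subseteq G_\infty$ and there is no direct embedding of $G_{(q)}$ into $\U_\infty(\widehat\cH)_0$ to which Theorem~\ref{thm:class}(c) could be applied. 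The groups $G_{(q)}$ and $\U_\infty(\widehat\cH)_0$ are in fact incomparable: each contains $G_{\infty,(q)}=K_\infty\exp(\fp_{(q)})$ and $G(\infty)$, but neither contains the other.

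Your final paragraph is closer to what is actually needed and to the paper's argument, namely showing that $\pi\vert_{G(\infty)}$ is tame so that Theorems~\ref{thm:samerep} and~\ref{thm:class} apply. But there is still a genuine gap. Tameness of $\pi\vert_{G(\infty)}$ means density of $\bigcup_n\cH_\pi^{G(\infty)_n}$, whereas applying Theorem~\ref{thm:p.2} to $K$ only gives density of $\bigcup_n\cH_\pi^{K_n}$. Since $K(\infty)_n\subsetneq G(\infty)_n$, the inclusion runs the wrong way: $\cH_\pi^{G(\infty)_n}\subseteq\cH_\pi^{K_n}$. Your remarks about $K$-isotypics not being ``pathologically mixed'' and about $\rho\circ\theta$ do not bridge this; you must actually produce $G_{(q),n}$-fixed (or at least $G(\infty)_n$-fixed) vectors. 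The paper obtains them by arguing ``as in the preceding proof'' that $\pi$ is a direct sum of representations generated by some $\cH_\pi^{G_{(q),n}}$; in Theorem~\ref{thm:a} the corresponding step passed from a $K_n$-fixed vector to a $G_{(q),n}$-fixed vector via the spherical-function argument for $(G_{(q),n},K_n)$, and you need the unitary-pair analogue of that step (or a direct application of the bounded-group machinery of Proposition~\ref{prop:1.6} to $G_{(q)}$ with respect to the subgroups $G_{(q),n}$). Once that is in hand, the paper's conclusion is immediate: tameness of $\pi\vert_{G(\infty)}$ plus Theorems~\ref{thm:samerep} and~\ref{thm:class} give the extension and the decomposition.
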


\begin{proof} With similar arguments as in the preceding proof, we 
see that every separable unitary representation of $G_{(q)}$ is a direct 
sum of representations generated by the fixed point space 
of some subgroup $G_{(q),n}$. Therefore its restriction to 
$G(\infty)$ is tame, so that Theorem~\ref{thm:samerep} 
and Theorem~\ref{thm:class}  apply. 
\smartqed\qed\end{proof}

\begin{theorem} \mlabel{thm:b} If $(G,K)$ is one of the $10$ symmetric pairs of non-unitary 
type, then all separable unitary representations of $G$ are trivial. 
\end{theorem}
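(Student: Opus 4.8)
The plan is to derive Theorem~\ref{thm:b} from Theorem~\ref{thm:a} by observing that $G$ itself is, up to the obstruction coming from the center, a restricted group $G_{(q)}$ for $q > 2$ — more precisely, that $G$ is generated by copies of groups $G_{(q),n}$ together with $K$, and that fixed vectors propagate.

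First I would recall that a separable continuous unitary representation $(\pi,\cH_\pi)$ of $G$ restricts to a separable continuous unitary representation of the restricted group $G_{(q)}$ for any $q > 2$, since $G_{(q)} \subeq G$ is a continuous inclusion. By Theorem~\ref{thm:a}, this restriction is trivial, so $G_{(q)} \subeq \ker\pi$ for all $q > 2$; in particular $\fp_{(q)} \subeq \L(\ker\pi)$ for all such $q$. Since $\bigcup_{q > 2} \fp_{(q)}$ is dense in $\fp$ (it already contains the finite-rank part $\fp(\infty)$), and $\L(\ker\pi)$ is a closed subalgebra of $\g = \fk \oplus \fp$, it follows that $\fp \subeq \L(\ker\pi)$. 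The restriction of $\pi$ to $K = \U_\K(\cH)$ or $\OO(\cH)$ or $\U_\H(\cH)$ is a separable continuous unitary representation; by Theorem~\ref{thm:p.2}(iii)--(iv) (respectively the classification of representations of $\OO(\cH)$, or the quaternionic analog), it is a direct sum of bounded irreducible representations. The key point is then that $\ker\pi$ is a closed normal subgroup of $G$ containing $\exp\fp$, hence containing the normal subgroup generated by $\exp\fp$; since $\L(\ker\pi) \supeq \fp$ and $[\fk,\fp] = \fp$ (Lemma~\ref{lem:5.4}(i)) together with the fact that $\fk = [\fk,\fk]$ (perfectness, cf.\ \cite[Lemma~I.3]{Ne02}) would force $\L(\ker\pi) = \g$ if we also knew $\fk \subeq \L(\ker\pi)$ — but this is exactly where one must argue, because a priori $K$ need not lie in $\ker\pi$.

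The correct route, mirroring the proof of Theorem~\ref{thm:p.1b}, is to use the structure of $K$ together with an involution. Concretely: since $\fp \subeq \L(\ker\pi)$, conjugation by any element $k \in K$ shows $\Ad(k)\fp \subeq \L(\ker\pi)$, so $[\fk,\fp] \subeq \L(\ker\pi)$, whence $\fk = [\fk,\fk]$. Now one uses that for each of the $10$ pairs there is an element $w \in G$ (arising from a complex structure or, for the group-type cases, a diagonal flip) with $\Ad(w)$ acting as $-\id$ on a large $K$-submodule complementing a proper subalgebra of $\fk$; combining $w \in \ker\pi$ (which follows because $w$ can be chosen inside some $\exp\fp$ or conjugate thereof) with the identity $w\exp(X)w^{-1}\exp(-X) = \exp(-2X)$ for $X$ in that submodule forces that submodule into $\L(\ker\pi)$ as well. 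Iterating over the finitely many building blocks of $\fk$ yields $\fk \subeq \L(\ker\pi)$, hence $\L(\ker\pi) = \g$. Since each $G$ in the table is connected (for $\K = \C,\H$ by Proposition~\ref{prop:1.3}(i) applied to the polar decomposition $G = K\exp\fp$ with $K$ connected; for $\K = \R$ the orthogonal factor $\OO(\cH)$ is connected by Example~\ref{ex:1.2}(b), and $\GL(\cH,\R) = \OO(\cH)\exp\Sym(\cH)$ is connected as well), it follows that $\ker\pi = G$, i.e.\ $\pi$ is trivial.

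The main obstacle is the passage $\fk \subeq \L(\ker\pi)$: unlike in Theorem~\ref{thm:p.1b}, where $K$ had a single complex structure realizing a single involution, here the ten cases have varying $K$ (products, $\OO$, $\U_\H$) and one must check case by case that enough involutions $w$ lie in $\ker\pi$ and that their $(-1)$-eigenspaces on $\fk$ span $\fk$ modulo $[\fk,\fk]$ — but since $\fk$ is perfect in every case, $[\fk,\fk] = \fk$ already, so once $[\fk,\fp] = \fp \subeq \L(\ker\pi)$ is known one only needs $\Ad(w)$-arguments to reach a \emph{single} additional generator, and perfectness closes the gap. An alternative and cleaner finish, avoiding the case analysis, is to invoke the Howe--Moore-type vanishing already used in the proof of Theorem~\ref{thm:a}: any $G(m)_n$-fixed vector in a unitary representation of $G(m)$ is $G(m)$-fixed, so a vector fixed by all $\exp\fp \cap G(m)$ is fixed by $G(m)$, and letting $m \to \infty$ together with $\bigcup_m G(m)$ dense in $G(\infty)$ and Proposition~\ref{prop:q-res}(a) gives the triviality on all of $G$ directly. \smartqed\qed
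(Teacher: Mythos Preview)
You have the right opening move but then miss the one-line observation that finishes the argument, and the workarounds you propose in its place are either vague or incomplete.

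The point you overlook is that $K \subeq G_{(q)}$ for every $q$: by definition $G_{(q)} = \{ g \in G : \tr(|g^*g-\1|^q) < \infty\}$, and for $k \in K$ we have $k^*k = \1$, so trivially $k \in G_{(q)}$. Hence once Theorem~\ref{thm:a} gives $G_{(q)} \subeq N := \ker\pi$, you already have $K \subeq N$, and there is nothing left to argue about $\fk$. This is exactly what the paper does: $N$ is a closed normal subgroup containing $K$, so $\L(N)$ is an ideal containing $\fk$; then $[\fk,\fp] = \fp$ (Lemma~\ref{lem:5.4}(i)) forces $\fp \subeq \L(N)$, whence $\L(N) = \g$, and the polar decomposition $G = K\exp\fp$ gives $N = G$.

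Your detours to recover $\fk \subeq \L(\ker\pi)$ are therefore unnecessary, and as written they also do not stand on their own. The involution argument (``there is an element $w \in G$ \ldots'') is only a sketch and would require a case-by-case verification you do not supply. The ``alternative cleaner finish'' via Howe--Moore asserts that a vector fixed by $\exp\fp \cap G(m)$ is fixed by $G(m)$, but this is not what the argument in Theorem~\ref{thm:a} establishes (there the fixed-point propagation is from the \emph{subgroup} $G(m)_n$, not from the set $\exp\fp(m)$), and in the reductive cases $G(m) = \GL(m,\K)$ the subgroup generated by $\exp\fp(m)$ is proper (for $\K=\C$ it misses the center $\T\1$), so the claim needs more care. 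None of this is needed once you notice $K \subeq G_{(q)}$.
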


\begin{proof} Let $(\pi, \cH)$ be a continuous separable unitary 
representation of $G$. We know already from Theorem~\ref{thm:a} that 
$G_{(q)} \subeq N := \ker \pi$ holds for $q > 2$. 
Now $N$ is a closed normal subgroup containing 
$K$ and its Lie algebra therefore contains 
$[\fk,\fp] = \fp$ as well (Lemma~\ref{lem:5.4}). This proves that~$N = G$. 
\smartqed\qed\end{proof}

\appendix

\section{Positive definite functions} 
\mlabel{app:b}

In this appendix we recall some results and definitions concerning 
operator-valued positive definite functions. 

\begin{definition} {\rm Let $\cA$ be a $C^*$-algebra and $X$ be a set. 
A map $Q \: X \times X \to \cA$ is called a {\it positive definite kernel} 
if, for any finite sequence $(x_1,\ldots, x_n) \in X^n,$ the matrix 
$Q(x_i,x_j)_{i,j=1,\ldots, n} \in M(n,\cA)$ is a positive element.

For $\cA = B(V)$, $V$ a complex Hilbert space, this means that, for 
$v_1, \ldots, v_n \in V$, we always have 
$\sum_{i,j=1}^n \la Q(x_i,x_j)v_j, v_i \ra \geq 0$. }
\end{definition}

\begin{definition} {\rm Let $\cK$ be a Hilbert space, $G$ be a group, and 
$U \subeq G$ be a subset. A function $\phi \colon UU^{-1} \to B(\cK)$ 
is said to be {\it positive definite} if the kernel 
\[ Q_\phi \colon U \times U \to B(\cK), \quad (x,y) \mapsto \phi(xy^{-1})\] 
is positive definite. For $U = G$ we obtain the usual concept of a 
positive definite function on $G$.}
\end{definition}

\begin{remark} (Vector-valued GNS-construction) \mlabel{rem:gns} 
We briefly recall the bridge between positive definite 
functions and unitary representations. 

(a) If $(\pi, \cH)$ is a unitary representation of $G$, 
$V \subeq \cH$ a closed subspace and $P_V \: \cH \to V$ the 
orthogonal projection on $V$, then $\pi_V(g) := P_V \pi(g) P_V^*$ 
is a $B(V)$-valued positive definite function with 
$\pi_V(\1) = \1$. 

(b) If, conversely, $\phi \: G \to B(V)$ is positive definite with 
$\phi(\1) = \1$, then 
there exists a unique Hilbert subspace $\cH_\phi$ of the space $V^G$ 
of $V$-valued function on $G$ for which the evaluation maps 
$K_g \: \cH_\phi \to V, f \mapsto f(g)$ are continuous and satisfy 
$K_g K_h^* = \phi(gh^{-1})$ for $g,h \in G$ (\cite[Thm.~I.1.4]{Ne00}). 
Then right translation by elements of $G$ defines a unitary representation 
$(\pi_\phi(g)f)(x) = f(xg)$ on this space with $K_{xg}= K_x \circ \pi(g)$. 
It is called the {\it GNS-representation associated to $\rho$}.  
Now $K_1^* \: V \to \cH_\phi$ is an isometric embedding, so that 
we may identify $V$ with a closed subspace of $\cH_\phi$ 
and $K_\1$ with the orthogonal projection to~$V$. This leads to 
$\phi(g) = K_g K_1^* = K_\1 \pi(g) K_\1^*$, so that every 
positive definite function is of the form $\pi_V$. 
The construction also implies that $V \cong K_1^*(V)$ 
is $G$-cyclic in $\cH_\phi$. 
\end{remark}

For the following theorem, we simply note that all Banach--Lie groups 
are in particular Fr\'echet--BCH--Lie groups. 

\begin{theorem}  \mlabel{thm:extension} 
Let $G$ be a connected Fr\'echet--BCH--Lie group and 
$U \subeq G$ an open connected $\1$-neighborhood for which 
the natural homomorphism $\pi_1(U,\1) \to \pi_1(G)$ is surjective. 
If $\cK$ is  Hilbert space and 
$\phi \colon UU^{-1} \to B(\cK)$ an analytic positive definite function, 
then there exists a unique analytic positive definite 
function $\tilde\phi \colon G \to B(\cK)$ extending~$\phi$. 
\end{theorem}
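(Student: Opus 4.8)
The plan is to prove Theorem~\ref{thm:extension} by combining a local-to-global monodromy argument with the reproducing-kernel picture from Remark~\ref{rem:gns}. The statement asserts that an analytic $B(\cK)$-valued positive definite function on the ``slit'' domain $UU^{-1}$ extends uniquely to an analytic positive definite function on all of $G$. Uniqueness is immediate: two analytic extensions agree on the connected open set $UU^{-1}$, hence everywhere by the identity theorem for analytic maps on connected Fr\'echet--BCH--Lie groups. So the real content is existence, and the main obstacle will be that analytic continuation along paths is a priori only \emph{locally} consistent, so one has to check that the resulting germ is genuinely single-valued on $G$ --- this is exactly where the hypothesis $\pi_1(U,\1) \onto \pi_1(G)$ is needed.

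First I would pass to the reproducing-kernel Hilbert space $\cH_\phi \subeq \cK^U$ attached to the kernel $Q_\phi(x,y) = \phi(xy^{-1})$ on $U \times U$ (the local GNS construction; this works verbatim for a positive definite kernel on a set, using \cite[Thm.~I.1.4]{Ne00}), and produce from it a \emph{local} representation: the evaluation operators $K_x \: \cH_\phi \to \cK$ satisfy $K_x K_y^* = \phi(xy^{-1})$, and right translation gives a local action of $U$ compatible with $\phi$. The analyticity hypothesis on $\phi$ then upgrades this to an analytic local representation $\pi^{\rm loc}$ of $U$ on $\cH_\phi$, with an analytic local homomorphism into $\U(\cH_\phi)$ (or into $\GL(\cH_\phi)$ followed by a polar/unitarity check). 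Concretely, differentiating $\phi$ at $\1$ yields a representation $\dd\pi$ of the Lie algebra $\g$ by (possibly unbounded, but on a common analytic core) operators, and $\phi$ is recovered as $\phi(g) = \langle \pi(g)\xi, \xi\rangle$-type matrix elements on the cyclic subspace $\cK \hookrightarrow \cH_\phi$; the key point is that the \emph{same} Lie algebra data govern the extension.

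The heart of the argument is then the integration/monodromy step. The local homomorphism $U \to \U(\cH_\phi)$ is analytic, so by the standard Fr\'echet--BCH integration machinery it extends along every path in $G$ starting at $\1$; this gives a homomorphism $\tilde\pi$ from the universal cover $\tilde G$ (or from the ``monodromy principle'' groupoid) to $\U(\cH_\phi)$. To descend to $G$ itself one must show $\tilde\pi$ kills $\pi_1(G)$. Here the hypothesis $\pi_1(U,\1) \to \pi_1(G)$ surjective is decisive: every loop in $G$ is homotopic to a loop inside $U$, and on a loop inside $U$ the continuation of $\pi^{\rm loc}$ is just $\pi^{\rm loc}$ restricted to that loop --- which is trivial since $\pi^{\rm loc}$ is a genuine (single-valued) analytic map on the \emph{open set} $U$. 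Hence the monodromy is trivial and $\tilde\pi$ factors through a strongly continuous (indeed analytic) unitary representation $\pi$ of $G$. Setting $\tilde\phi(g) := P_\cK\, \pi(g)\, P_\cK^*$ produces a positive definite analytic function on $G$; on $UU^{-1}$ it coincides with $\phi$ by construction of $\pi^{\rm loc}$, and it is analytic because $\pi$ is. This step --- verifying triviality of the monodromy, i.e.\ that the path-continuation is well defined on $G$ rather than only on $\tilde G$ --- is the main obstacle, and it is precisely the point where the connectedness of $U$, the surjectivity $\pi_1(U)\onto\pi_1(G)$, and the analyticity (so that continuation is unique) all enter together. Everything else --- the local GNS construction, the analyticity bootstrap, and the closing identification $\tilde\phi|_{UU^{-1}}=\phi$ --- is routine once the monodromy has been controlled.
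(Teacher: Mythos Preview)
Your outline is correct in spirit, but the paper organizes the argument quite differently and more economically. Rather than building a local GNS representation on $U$, integrating the derived Lie algebra representation along paths, and then arguing directly with monodromy, the paper simply lifts everything to the universal cover $q_G \: \tilde G \to G$. The key topological observation is that the surjectivity of $\pi_1(U,\1)\to\pi_1(G)$ is exactly what makes the preimage $\tilde U := q_G^{-1}(U)$ \emph{connected}; one can then apply the already-known simply connected case \cite[Thm.~A.7]{Ne12} to the pulled-back function $\phi\circ q_G$ on $\tilde U\tilde U^{-1}$, obtaining an analytic positive definite extension to all of $\tilde G$. Descent to $G$ is then a one-line analyticity argument: the extension is constant on the $\ker q_G$-cosets inside $\tilde U$, hence (by the identity principle) everywhere. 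So the paper black-boxes the integration/monodromy step inside the cited theorem and uses the $\pi_1$-hypothesis purely to guarantee connectedness of $\tilde U$, whereas you use it at the very end to kill the monodromy of a path-continued local homomorphism.

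Your route is workable, but a couple of spots would need tightening if you pursued it. First, ``right translation gives a local action of $U$ on $\cH_\phi$'' is imprecise: $U$ is not a subgroup, so $(\pi(g)f)(x)=f(xg)$ is not defined for all $x\in U$, and you really only get a germ of a homomorphism near $\1$ rather than an action of $U$. Second, your monodromy step conflates two objects---the local homomorphism near $\1$ and the analytic map $x\mapsto K_x^*$ on $U$---and the sentence ``trivial since $\pi^{\rm loc}$ is a genuine single-valued analytic map on $U$'' is really an assertion about the latter, not the former. The clean way to say it is that the analytic map $U\to B(\cK,\cH_\phi)$, $x\mapsto K_x^*$, provides a single-valued analytic continuation along every path in $U$; combined with $\pi_1(U)\onto\pi_1(G)$ this forces the continuation on $G$ to be single-valued. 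That is precisely the content of the paper's lift-and-descend argument, just phrased without passing to $\tilde G$.
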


\begin{proof} Let $q_G \: \tilde G \to G$ be the universal covering morphism. 
The assumption that $\pi_1(U) \to \pi_1(G)$ is surjective implies that 
$\tilde U := q_G^{-1}(U)$ is connected. Now 
$\tilde \phi := \phi \circ q_G \: \tilde U \tilde U^{-1} \to B(\cK)$ 
is an analytic positive definite function, hence extends by 
\cite[Thm.~A.7]{Ne12} to an analytic positive definite function 
$\tilde\phi$ on $\tilde G$. The restriction of $\tilde\phi$ to $\tilde U$ 
is constant on the fibers of $q_G$, which are of the form $g \ker(q_G)$. 
Using analyticity, we conclude that $\tilde \phi(gd) = \tilde\phi(g)$ holds 
for all $g \in \tilde G$ and $d \in \ker(q_G)$. Therefore $\tilde\phi$ factors 
through an analytic  function $\phi \: G \to B(U)$ which is obviously positive definite. 
\smartqed\qed\end{proof}

\begin{theorem} \mlabel{thm:locana} 
Let $G$ be a connected analytic  Fr\'echet--Lie group. 
Then a positive definite function 
$\phi \: G\to  B(V)$ which is analytic in an open identity neighborhood 
is analytic. 
\end{theorem}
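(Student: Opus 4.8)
Let $G$ be a connected analytic Fréchet–Lie group. Then a positive definite function $\phi \colon G \to B(V)$ which is analytic in an open identity neighborhood is analytic.

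The plan is to extract the analyticity of $\phi$ from its GNS data. By Remark~\ref{rem:gns} (after normalizing so that $\phi(\1)=\1$, which is routine, and noting that $\phi$ is automatically continuous, being positive definite and continuous at $\1$), realize $\phi$ as $\phi(g)=K_\1\pi_\phi(g)K_\1^{*}$, where $(\pi_\phi,\cH_\phi)$ is the GNS representation, the $K_g\colon\cH_\phi\to V$ are the bounded evaluation maps, $K_g=K_\1\pi_\phi(g)$ and $K_gK_h^{*}=\phi(gh^{-1})$. Then $\pi_\phi(g)K_\1^{*}v=K_{g^{-1}}^{*}v$, while $\|K_g\|$ and hence $\|\phi(g)\|\le\|\phi(\1)\|$ are bounded uniformly over $G$. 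The problem thus reduces to showing that for each $v\in V$ the orbit map $\beta_v\colon G\to\cH_\phi$, $g\mapsto\pi_\phi(g)K_\1^{*}v$, is analytic: then $g\mapsto\phi(g)v=K_\1\beta_v(g)$ is an analytic $V$-valued map, and since $\|\phi(g)\|$ is globally bounded a standard Cauchy-estimate argument in a chart upgrades this to analyticity of $g\mapsto\phi(g)$ as a $B(V)$-valued map.

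To obtain analyticity of $\beta_v$ near $\1$, I would pick a connected symmetric open identity neighbourhood $U$ with $U^{2}$ contained in the given domain of analyticity of $\phi$. Then the $B(V)$-valued kernel $Q(x,y):=\phi(yx^{-1})$ on $U\times U$ is analytic, because $(x,y)\mapsto yx^{-1}$ is analytic with values in $U^{2}$. The closed subspace $\cH_\phi^{U}:=\overline{\operatorname{span}}\{K_g^{*}v:g\in U,\ v\in V\}\subseteq\cH_\phi$, together with the vectors $K_g^{*}v$, is a copy of the reproducing kernel space of $Q$, since $\langle K_g^{*}v,K_h^{*}w\rangle=\langle Q(g,h)v,w\rangle$ for $g,h\in U$ (up to the usual convention-dependent adjoint). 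By the basic fact of the theory of analytic positive definite functions — that the reproducing kernel space of an analytic positive definite kernel consists of analytic functions and that the associated section $g\mapsto\bigl(v\mapsto Q(\cdot,g)v\bigr)$ is analytic (cf.\ Appendix~\ref{app:b} and \cite[Thm.~A.7]{Ne12}) — the map $g\mapsto K_g^{*}v$ is analytic from $U$ into $\cH_\phi$, hence $\beta_v(g)=K_{g^{-1}}^{*}v$ is analytic on the identity neighbourhood $U^{-1}=U$.

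Finally I would propagate analyticity from $U$ to all of $G$ using the group structure: for $g_0\in G$ and $g\in g_0U$ one has $\beta_v(g)=\pi_\phi(g_0)\,\beta_v(g_0^{-1}g)$, which displays $\beta_v$ on $g_0U$ as the composition of the analytic diffeomorphism $g\mapsto g_0^{-1}g$, the map $\beta_v$ (analytic on $U$), and the bounded — hence analytic — linear operator $\pi_\phi(g_0)$. Since analyticity is local and the sets $g_0U$ cover $G$, $\beta_v$ is analytic on all of $G$, which finishes the argument.

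The main obstacle is the input invoked in the second paragraph: that the reproducing kernel Hilbert space of an analytic operator-valued kernel consists of analytic functions and that its section map is analytic. For scalar kernels on an interval this is classical (the derivatives of the kernel control the Taylor coefficients of the section, which then lie in, and converge in, the RKHS), but in the present generality — operator-valued kernels on a Fréchet–Lie group, where analyticity must be checked along analytic curves and the multilinear Taylor data reassembled — it rests on the machinery of Appendix~\ref{app:b}. I would also want to verify carefully the elementary but slightly fiddly passage from ``$g\mapsto\phi(g)v$ analytic for all $v$, with $\|\phi(g)\|$ globally bounded'' to ``$g\mapsto\phi(g)$ analytic in $B(V)$'', and the reduction to $\phi(\1)=\1$ when $\phi(\1)$ is not invertible. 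Note that the tempting shortcut of pulling $\phi$ back to the universal cover and applying Theorem~\ref{thm:extension} does \emph{not} work, as it would require a positive definite function to be determined by its germ at $\1$, which is false in general precisely when the analyticity one is trying to establish is absent.
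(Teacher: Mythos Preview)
Your approach is correct and follows the same underlying idea as the paper: factor the positive definite kernel through a Hilbert space and use that analytic kernels yield analytic factorizations. The paper's proof is considerably more streamlined, however. It writes $\phi(gh^{-1}) = Q_g Q_h^*$ for an operator-valued map $Q \colon G \to B(\cH,V)$, observes that analyticity of $\phi$ near $\1$ makes the kernel $(g,h) \mapsto Q_g Q_h^*$ analytic on a neighbourhood of the diagonal in $G \times G$, and then invokes \cite[Thm.~A.3]{Ne12} (not Thm.~A.7, which concerns extension of positive definite functions and is the input to Theorem~\ref{thm:extension}) to conclude that $Q$ itself is analytic on all of $G$ as a $B(\cH,V)$-valued map. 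From this $\phi(g) = Q_g Q_\1^*$ is analytic immediately. This bypasses both of the steps you flagged as fiddly: the propagation from a neighbourhood of $\1$ to all of $G$ is absorbed into the cited result (which works on any analytic manifold, using only analyticity near the diagonal), and since $Q$ is already operator-valued there is no need to pass from ``$g \mapsto \phi(g)v$ analytic for each $v$'' to norm analyticity in $B(V)$. Your explicit propagation via left translation by $\pi_\phi(g_0)$ is a nice concrete realization of why the group structure helps, but it is not logically needed once the right black box is in hand.
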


\begin{proof} Since $\phi$ is positive definite, there exists a 
Hilbert space $\cH$ and a $Q \: G \to B(\cH,V)$ with 
$\phi(gh^{-1}) = Q_g Q_h^*$ for $g,h \in G$. 
Then the analyticity of the function $\phi$ in an open 
identity neighborhood of $G$ implies that the kernel 
$(g,h) \mapsto Q_g Q_h^*$ is analytic 
on a neighborhood of the diagonal $\Delta_G \subeq G \times G$. 
Therefore $Q$ is analytic by \cite[Thm.~A.3]{Ne12}, 
and this implies that $\phi(g) = Q_g Q_\1^*$ is analytic. 
\smartqed\qed\end{proof}

The following proposition describes a natural source 
of operator-valued positive definite functions. 

\begin{proposition} \mlabel{prop:b.1} Let $(\pi, \cH)$ be a unitary representation 
of the group $G$ and $H \subeq G$ be a subgroup. 
Let $V \subeq \cH$ be an isotypic $H$-subspace generating 
the $G$-module $\cH$ and $P_V \in B(\cH)$ be the orthogonal projection onto~$V$. 
Then $V$ is invariant under the commutant 
$\pi(G)' = B_G(\cH)$ and the map 
\[ \gamma \: B_G(\cH) \to B_H(V), \quad 
\gamma(A) = P_V A P_V \] 
is an injective morphism of von Neumann algebras 
whose range is the commutant of the image of the operator-valued 
positive definite function 
\[ \pi_V \: G \to B(V), \quad \pi_V(g) := P_V \pi(g) P_V.\] 
In particular, if the $H$-representation on $V$ is 
irreducible, then so is $\pi$. 
\end{proposition}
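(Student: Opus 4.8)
The plan is to establish the chain of claims in the order they are stated: first that $V$ is invariant under the commutant, then that $\gamma$ is a well-defined injective morphism of von Neumann algebras, then that its range is exactly the commutant of $\pi_V(G)$, and finally to read off the irreducibility consequence. For the first point, recall that $V$ is an isotypic $H$-subspace, so $V$ is invariant under $\pi(H)$ and equals the image of the canonical $H$-isotypic projection, which is a central idempotent in the von Neumann algebra generated by $\pi(H)$. Since $A \in \pi(G)' \subseteq \pi(H)'$ commutes with everything in $\pi(H)''$, in particular with the isotypic projection $P_V$, we get $A P_V = P_V A$, hence $A V \subseteq V$. This also immediately gives $\gamma(A) = P_V A P_V = A P_V = P_V A$, so $\gamma(A) = A|_V$ is the compression, and $\gamma$ is a unital $*$-homomorphism: multiplicativity follows from $\gamma(A)\gamma(B) = P_V A P_V B P_V = P_V A B P_V = \gamma(AB)$ using $P_V B = B P_V$ again, and $*$-preservation and normality are clear for a compression by a projection commuting with the algebra. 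Moreover $\gamma(A)$ commutes with $\pi(H)|_V$, so $\gamma$ lands in $B_H(V)$.

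For injectivity of $\gamma$: if $\gamma(A) = A|_V = 0$, then $A$ kills $V$; but $A$ commutes with all $\pi(g)$, so $A$ also kills $\pi(g)V$ for every $g \in G$, hence kills the closed span $\sum_g \pi(g)V = \cH$ (here we use that $V$ generates $\cH$ as a $G$-module), so $A = 0$.

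The heart of the argument is identifying the range of $\gamma$ with $\pi_V(G)' \subseteq B(V)$. One inclusion is easy: for $A \in \pi(G)'$ and any $g$, $\gamma(A)\pi_V(g) = P_V A P_V \pi(g) P_V = P_V A \pi(g) P_V = P_V \pi(g) A P_V = P_V \pi(g) P_V A P_V = \pi_V(g)\gamma(A)$, using $P_V A = A P_V$ twice, so $\gamma(A) \in \pi_V(G)'$. For the reverse inclusion — the main obstacle — I would argue via the GNS picture of Remark~\ref{rem:gns}: since $V$ is $G$-cyclic, $(\pi,\cH)$ is unitarily equivalent to the GNS representation attached to the positive definite function $\phi := \pi_V$, realized on a reproducing-kernel Hilbert space $\cH_\phi \subseteq V^G$ with kernel $K_gK_h^* = \phi(gh^{-1})$ and $P_V = K_\1$. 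Given $T \in \pi_V(G)' \subseteq B(V)$, I want to produce $A \in \pi(G)'$ with $P_V A P_V = T$; the natural candidate is the operator determined on the total set $\{K_g^* v : g \in G, v \in V\}$ by $A K_g^* v := K_g^* T v$. One checks this is well-defined and bounded exactly because $T$ commutes with all $\phi(gh^{-1}) = K_g K_h^*$: the inner product $\langle A K_g^* v, A K_h^* w\rangle = \langle K_g^* Tv, K_h^* Tw\rangle = \langle \phi(hg^{-1})^* Tv, Tw\rangle$ matches $\langle K_g^* v, K_h^* w\rangle = \langle \phi(hg^{-1})^* v, w\rangle$ after moving $T$ and $T^*$ across $\phi$, and a parallel computation with $T^*$ shows $A$ extends to a bounded operator which visibly commutes with right translations, i.e., with $\pi(G)$; finally $\gamma(A) = K_\1 A K_\1^* = T$ since $K_\1 K_\1^* = \phi(\1) = \1$. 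This shows $\gamma$ is surjective onto $\pi_V(G)'$, completing the identification of the range.

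The last sentence is then immediate: if the $H$-representation on $V$ is irreducible, $B_H(V) = \C\1$, so $\pi_V(G)' = \C\1$ as well (being a subalgebra of $B_H(V)$ equal to the range of $\gamma$, hence equal to all of $\C\1$ since $\gamma(\1)=\1$), and therefore $\pi(G)' \cong \pi_V(G)' = \C \1$, i.e., $\pi$ is irreducible.
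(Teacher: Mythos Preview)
Your argument is correct. The invariance of $V$ under $\pi(G)'$, the injectivity of $\gamma$, and the inclusion $\im\gamma \subeq \pi_V(G)'$ are handled essentially as in the paper; the paper is terser and leaves implicit your observation that $P_V$, being an isotypic projection, lies in $\pi(H)''$ and hence commutes with $\pi(G)' \subeq \pi(H)'$.

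The genuine difference is in the reverse inclusion $\pi_V(G)' \subeq \im\gamma$. The paper argues via projections: given a projection $Q \in \pi_V(G)'$, it forms the closed $G$-invariant subspace $\cH_Q$ generated by $QV$, checks that $\cH_Q \cap V = QV$, and takes $\tilde Q \in \pi(G)'$ to be the orthogonal projection onto $\cH_Q$; then $\gamma(\tilde Q) = Q$. Since $\im\gamma$ is itself a von Neumann algebra and von Neumann algebras are generated by their projections, this suffices. You instead lift an \emph{arbitrary} $T \in \pi_V(G)'$ directly in the GNS model of Remark~\ref{rem:gns}, setting $A K_g^* v := K_g^* T v$. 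Your route is more constructive and avoids the projection-generation fact; the paper's is more geometric, staying with invariant subspaces and never invoking the reproducing-kernel realization.

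One small point: your boundedness step is under-specified---a formal adjoint alone does not yield boundedness. The clean fix: for $\xi = \sum_i K_{g_i}^* v_i$ one has $\|A\xi\|^2 = \la M\, T^{\oplus}\mathbf v,\, T^{\oplus}\mathbf v\ra$ where $M = (\phi(g_j g_i^{-1}))_{ij} \geq 0$ in $M_n(B(V))$ and $T^{\oplus} = \diag(T,\ldots,T)$. Since $T$ (hence $T^*$) commutes with every $\phi(g)$, the operator $T^{\oplus}$ commutes with $M$ and thus with $M^{1/2}$, giving $\|A\xi\|^2 = \|T^{\oplus} M^{1/2}\mathbf v\|^2 \leq \|T\|^2\|M^{1/2}\mathbf v\|^2 = \|T\|^2\|\xi\|^2$. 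With this in place your construction goes through and $\gamma(A) = T$ as you claim.
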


\begin{proof} That $\gamma$ is injective follows from the assumption that 
$V$ generates $\cH$ under $G$. 
If the representation 
$(\rho,V)$ of $H$ is irreducible, then $\im(\gamma) \subeq \C \1$ 
implies that $\pi(G)' = \C \1$, so that $\pi$ is irreducible. 

We now determine the range of $\gamma$. For any  $A \in B_G(\cH)$, we have 
\[ P_V \pi(g)P_V P_V A P_V = P_V \pi(g) A P_V 
= P_V A \pi(g) P_V = P_V A P_V P_V \pi(g) P_V, \] 
i.e., $\gamma(A) = P_V A P_V$ commutes with $\pi_V(G)$. 
Since $\gamma$ is a morphism of von Neumann algebras, 
its range is also a von Neumann algebra of $V$ commuting 
with $\pi_V(G)$. If, conversely, an orthogonal projection 
$Q = Q^* = Q^2 \in B_K(V)$ commutes with 
$\pi_V(G)$, then 
\[ P_V \pi(G) Q V 
= P_V \pi(G) P_V Q V 
= Q P_V \pi(G) P_V V \subeq Q V\] 
implies that the closed $G$-invariant subspace $\cH_Q \subeq \cH$ 
generated by $QV$ satisfies $P_V \cH_Q \subeq Q V$, and therefore 
$\cH_Q \cap V = QV$. 
For the orthogonal projection $\tilde Q \in B(\cH)$ onto $\cH_Q$, which 
is contained in $B_G(\cH)$, this means that $\tilde Q\res_V = Q$. 
This shows that $\im(\gamma) = \pi_V(G)'.$ 
\smartqed\qed\end{proof}

\begin{remark} The preceding proposition is particularly useful if we 
have specific information on the set $\pi_V(G)$. 
As $\pi_V(h_1 g h_2) = \rho(h_1) \pi_V(g) \rho(h_2)$, it is determined 
by the values of $\pi_V$ on representatives of the $H$-double cosets in~$G$. 

(a) In the context of the lowest $K$-type $(\rho,V)$ of a unitary highest 
weight representation (cf.\ \cite{Ne00}), we can expect that 
$\pi_V(G) \subeq \rho_\C(K_\C)$ (by Harish--Chandra decomposition), 
so that $\pi_V(G)' = \rho_\C(K_\C)' = \rho(K)'$ and 
$\gamma$ is surjective. 

(b) In the context of Section~\ref{sec:3} and \cite{Ol78}, the representation 
$(\rho,V)$ of $H$ extends to a representation 
$\tilde\rho$ of a semigroup $S \supeq H$ 
and we obtain $\pi_V(G)' = \tilde\rho(S)'$. 

In both situations we have a certain induction procedure 
from representations of $K$ and $S$, respectively, to $G$-representations 
which preserves the commutant but which need not be defined for 
every representation of $K$, resp., $S$. 
\end{remark}

\begin{lemma} {\rm(\cite{NO13})}
\mlabel{lem:mult} Let $(S,*)$ be a unital involutive semigroup 
and $\phi \: S \to B(\cF)$ be a positive definite function 
with $\phi(\1) = \1$. We write $(\pi_\phi, \cH_\phi)$ for the representation 
on the corresponding reproducing kernel Hilbert space 
$\cH_\phi \subeq \cF^S$ by $(\pi_\phi(s)f)(t) := f(ts)$. 
Then the inclusion 
\[ \iota \: \cF \to \cH_\phi, \quad \iota(v)(s) := \phi(s)v \]
 is surjective if and only if 
$\phi$ is multiplicative, i.e., a representation. 
\end{lemma}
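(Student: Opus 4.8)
The plan is to identify $\iota$ with the canonical isometric embedding of the Gel'fand--Naimark--Segal construction and then to reduce surjectivity of $\iota$ to invariance of its range under the translation representation $\pi_\phi$, which in turn is a bare computation with the defining identity of $\cH_\phi$.

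First I would recall, as in Remark~\ref{rem:gns} (the argument there for groups applies verbatim to the unital involutive semigroup $(S,*)$ with the kernel $Q_\phi(s,t) = \phi(st^*)$), that writing $K_s \colon \cH_\phi \to \cF$, $f \mapsto f(s)$, for the bounded evaluation maps one has $K_sK_t^* = \phi(st^*)$; in particular $(K_\1^*v)(s) = K_sK_\1^*v = \phi(s\1^*)v = \phi(s)v = \iota(v)(s)$, so $\iota = K_\1^*$. Since $K_\1K_\1^* = \phi(\1) = \1$, the map $\iota$ is an isometry, so $\iota(\cF)$ is a \emph{closed} subspace of $\cH_\phi$; and by construction $\iota(\cF)$ generates $\cH_\phi$ under the operators $\pi_\phi(s)$ (its $\pi_\phi$-cyclic closure is everything). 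Consequently $\iota$ is surjective if and only if the closed subspace $\iota(\cF)$ is invariant under all $\pi_\phi(s)$.

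Next I would compute this invariance condition explicitly. Fix $s \in S$ and $v \in \cF$. Evaluating at $\1$ we get $\big(\pi_\phi(s)\iota(v)\big)(\1) = \iota(v)(\1 s) = \iota(v)(s) = \phi(s)v$, so if $\pi_\phi(s)\iota(v)$ lies in $\iota(\cF)$ at all, then necessarily $\pi_\phi(s)\iota(v) = \iota(\phi(s)v)$. Thus $\iota(\cF)$ is $\pi_\phi$-invariant precisely when $\pi_\phi(s)\iota(v) = \iota(\phi(s)v)$ for all $s \in S$, $v \in \cF$. Now evaluate both sides of this equality of elements of $\cH_\phi \subeq \cF^S$ at an arbitrary $t \in S$: the left side is $\iota(v)(ts) = \phi(ts)v$, the right side is $\iota(\phi(s)v)(t) = \phi(t)\phi(s)v$. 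Since elements of $\cH_\phi$ are genuine $\cF$-valued functions on $S$, the two elements coincide iff $\phi(ts)v = \phi(t)\phi(s)v$ for every $t$. Running over all $s \in S$ and $v \in \cF$, the invariance of $\iota(\cF)$ is therefore equivalent to $\phi(ts) = \phi(t)\phi(s)$ for all $s,t \in S$, i.e.\ to $\phi$ being multiplicative; together with $\phi(\1) = \1$ this makes $\phi$ a unital (hence, by positive definiteness, a $*$-) representation. Combining with the previous paragraph gives the claimed equivalence.

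I do not expect a genuine obstacle here: the whole argument is bookkeeping with the defining reproducing identity $K_sK_t^* = \phi(st^*)$ and the cyclicity/isometry statements of Remark~\ref{rem:gns}. The only points needing a little care are keeping the involution straight (the kernel is $\phi(st^*)$, not $\phi(st)$, which is why $\iota = K_\1^*$ rather than some other adjoint) and recording that $\iota(\cF)$ is closed — for which one really needs $\phi(\1) = \1$, so that $\iota$ is isometric — before invoking "closed plus $\pi_\phi$-cyclic implies all of $\cH_\phi$."
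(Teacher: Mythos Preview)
Your argument is correct. The paper does not actually supply a proof of this lemma---it merely cites \cite{NO13}---so there is nothing to compare against. Your reduction (surjectivity of the isometry $\iota=K_\1^*$ $\Leftrightarrow$ $\pi_\phi$-invariance of the closed cyclic subspace $\iota(\cF)$ $\Leftrightarrow$ $\phi(ts)=\phi(t)\phi(s)$) is clean and complete; the only ingredients used are the reproducing identity $K_sK_t^*=\phi(st^*)$, the isometry $K_\1K_\1^*=\phi(\1)=\1$, and the fact that evaluation at $\1$ recovers $v$ from $\iota(v)$, all of which are exactly the properties recorded in Remark~\ref{rem:gns}.
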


\begin{remark} \mlabel{rem:b.6} 
The preceding lemma can also be expressed without referring to positive definite 
functions and the corresponding reproducing kernel space. In this context it asserts the following.
Let $\pi \: S \to B(\cH)$ be a $*$-representation of a unital involutive semigroup 
$(S,*)$, $\cF \subeq \cH$ a closed cyclic subspace and 
$P \: \cH \to \cF$ the orthogonal projection. Then the function 
\[ \phi \: S \to B(\cF), \quad \phi(s) := P \pi(s)P^* \] 
is multiplicative if and only if $\cF = \cH$. 
\end{remark}

\section{$C^*$-methods for  direct limit groups} 
\mlabel{app:c}

In this appendix we explain how to apply $C^*$-techniques to obtain 
direct integral decompositions of unitary representations 
of direct limit groups. 

We recall that, for a $C^*$-algebra $\cA$, its multiplier algebra 
$M(\cA)$ is a $C^*$-algebra containing $\cA$ as an ideal, and in 
every faithful representation $\cA \into B(\cH)$, it is given by 
\[ M(\cA) = \{M \in B(\cH) \: M\cA + \cA M \subeq \cA\}.\] 

Let $G = \indlim G_n$ be a direct limit of locally compact groups 
and $\alpha_n \: G_n \to G_{n+1}$ denote the connecting maps. We assume 
that these maps are closed embeddings.  
Then we have natural homomorphisms 
\[ \beta_n \: L^1(G_n) \to M(L^1(G_{n+1})) \] 
of Banach algebras, and since the action of $G_n$ on 
$L^1(G_{n+1})$ is continuous, $\beta_n$ is non-degenerate in the sense 
that $\beta(L^1(G_n)) \cdot L^1(G_{n+1})$ is dense in $L^1(G_{n+1})$. 
On the level of $C^*$-algebras we likewise obtain morphisms 
\[ \beta_n \: C^*(G_n) \to M(C^*(G_{n+1})). \] 

A state of $G$ (=normalized continuous positive definite function) 
now corresponds to a sequence $(\phi_n)$ of states of the groups $G_n$ 
with $\alpha_n^*\phi_{n+1} = \phi_n$ for every $n \in \N$. 
Passing to the $C^*$-algebras $C^*(G_n)$, we can view these functions 
also as states of the $C^*$-algebras. Then the compatibility condition 
is that the canonical extension $\tilde\phi_n$ of 
$\phi_n$ to the multiplier algebra satisfies 
\[ \beta_n^*\tilde\phi_{n+1}  = \phi_n.\]

\begin{remark} \mlabel{rem:5.16} 
The $\ell^1$-direct sum 
$\cL := \oplus^1 L^1(G_n)$ carries the structure of a Banach-$*$-algebra 
(cf.\ \cite{SV75}). 
Every unitary representation $(\pi, \cH)$ of $G$ defines a 
sequence of non-degenerate representations $\pi_n \: L^1(G_n) \to B(\cH_n)$ 
which are compatible in the sense that 
\[ \pi_n = \alpha_n^*\tilde\pi_{n+1}.\] 
Conversely, every such sequence of representations on a Hilbert space 
$\cH$ leads to a sequence 
$\rho_n \: G_n \to \U(\cH)$ of continuous unitary representations, which 
are uniquely determined by 
\[ \rho_n(g) = \tilde\pi_n(\eta_{G_n}(g)), \] 
where $\eta_{G_n} \: G_n \to M(L^1(G_n))$ denotes the canonical 
action by left multipliers. For 
$f \in L^1(G_{n})$ and $h \in L^1(G_{n+1})$ we then have 
\begin{align*}
& \rho_{n+1}(\alpha_n(g)) \pi_n(h) \pi_{n+1}(f) 
= \rho_{n+1}(\alpha_n(g)) \pi_{n+1}(\beta_n(h)f)
= \pi_{n+1}(\alpha_n(g)\beta_n(h)f)\\
&= \pi_{n+1}(\beta_n(g * h)f) 
= \pi_{n}(g * h)\pi_{n+1}(f) 
= \rho_n(g) \pi_n(h) \pi_{n+1}(f), 
\end{align*}
which leads to 
\[ \rho_{n+1} \circ \alpha_n = \rho_n.\] 
Therefore the sequence $(\rho_n)$ is coherent and thus defines a 
unitary representation of $G$ on $\cH$. We conclude that 
the continuous unitary representations of $G$ are in one-to-one 
correspondence with the coherent sequences of non-degenerate 
representations $(\pi_n)$ of the Banach-$*$-algebras 
$L^1(G_n)$ (cf.\ \cite[p.~60]{SV75}). 

Note that the non-degeneracy condition on the sequence $(\beta_n)$ 
is much stronger than the non-degeneracy condition on the corresponding 
representation of the algebra $\cL$. The group 
$G_{n+1}$ does not act by multipliers on $L^1(G_n)$, so that there 
is no multiplier action of $G$ on $\cL$. 
However, we have a sufficiently strong structure to apply 
$C^*$-techniques to unitary representations of $G$. 
\end{remark}

\begin{theorem} \mlabel{thm:4.1} Let $\cA$ be a separable $C^*$-algebra and 
$\pi \: \cA \to \cD$ a homomorphism into the algebra $\cD$ of decomposable 
operators on a direct integral $\cH$. Then there exists for each $x \in X$ a representation 
$(\pi_x, \cH_x)$ of $\cA$ such that $\pi \cong \int_X^{\oplus} \pi_x \, d\mu(x)$.  

If $\pi$ is non-degenerate and $\cH$ is separable, 
then almost all the representations $\pi_x$ are non-degenerate. 
\end{theorem}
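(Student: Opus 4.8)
The plan is to decompose the direct integral Hilbert space together with the algebra action using the standard machinery for decomposing representations of separable $C^*$-algebras over a standard Borel space. First I would set up the measurable field: write $\cH = \int_X^\oplus \cH_x\, d\mu(x)$ and fix a fundamental sequence of measurable vector fields. Since $\cA$ is separable, pick a countable dense $\ast$-subalgebra $\cA_0 \subseteq \cA$ (or just a countable dense subset closed under the algebra operations over $\Q+i\Q$). For each $a \in \cA_0$, the operator $\pi(a)$ is decomposable, so there is a measurable field $x \mapsto \pi(a)_x \in B(\cH_x)$ with $\pi(a) = \int_X^\oplus \pi(a)_x\, d\mu(x)$, and this field is essentially unique. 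The key point is that the algebraic relations in $\cA_0$ — linearity over the countable scalar field, multiplicativity, the $\ast$-relation, and the norm bound $\|\pi(a)\| \le \|a\|$ — each hold $\mu$-almost everywhere because they hold for the global operators and the decomposition respects the obvious operations (sums, products, adjoints of decomposable operators decompose coordinatewise a.e.). Intersecting the countably many conull sets on which these relations hold, I obtain a conull set $X_0$ such that for $x \in X_0$ the assignment $a \mapsto \pi(a)_x$ is a bounded $\ast$-homomorphism $\cA_0 \to B(\cH_x)$ that is $\|\cdot\|$-contractive, hence extends uniquely by continuity to a representation $\pi_x \colon \cA \to B(\cH_x)$. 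On $X \setminus X_0$ set $\pi_x = 0$. One then checks that $x \mapsto \pi_x$ is still a measurable field and that $\pi \cong \int_X^\oplus \pi_x\, d\mu(x)$, which follows by verifying the two sides agree on the dense subalgebra $\cA_0$ and then on all of $\cA$.

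For the non-degeneracy claim, suppose $\pi$ is non-degenerate and $\cH$ is separable. I would fix an approximate identity $(e_n)_{n\in\N}$ for $\cA$; since $\cA$ is separable we may take it to be a sequence (for instance from a countable dense subset of the usual net). Non-degeneracy of $\pi$ means $\pi(e_n) \to \1_\cH$ in the strong operator topology. For a fixed vector $\xi = (\xi_x) \in \cH$ this gives $\int_X \|\pi(e_n)_x \xi_x - \xi_x\|^2\, d\mu(x) \to 0$, so along a subsequence $\pi(e_n)_x \xi_x \to \xi_x$ for $\mu$-a.e.\ $x$. Running this over a countable total family of vector fields $(\xi^{(k)})$ — which exists because $\cH$, and hence a.e.\ $\cH_x$, is separable — and passing to a single conull set via a diagonal argument, I get a conull $X_1$ such that for $x \in X_1$ the operators $\pi_{x}(e_{n})$ (along the relevant subsequence) converge strongly to $\1_{\cH_x}$ on a total set, hence everywhere by the uniform bound $\|\pi_x(e_n)\| \le 1$. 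That is exactly non-degeneracy of $\pi_x$ for $x \in X_1$.

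The step I expect to be the main obstacle is the measurability bookkeeping: making sure that after restricting to the conull set $X_0$ where the algebraic relations hold, the pointwise-defined extensions $\pi_x$ assemble into a genuine measurable field of representations (so that the direct integral $\int_X^\oplus \pi_x\, d\mu$ makes sense and is unitarily equivalent to $\pi$), and similarly that the subsequence/diagonal argument for the approximate identity can be carried out with a single subsequence working simultaneously for a total family of vector fields. Both are standard in the von Neumann algebra literature (Dixmier, \emph{Les $C^*$-alg\`ebres et leurs repr\'esentations}, and the theory of direct integrals of Hilbert spaces), and the separability of $\cA$ and of $\cH$ is precisely what makes the countable exhaustion arguments go through; I would cite that literature rather than reprove the measurable selection facts from scratch.
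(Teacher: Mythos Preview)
Your proposal is correct and follows essentially the same route as the paper: the paper simply cites Dixmier \cite[Lemma~8.3.1]{Dix64} for the first part (precisely the countable--dense--subalgebra argument you sketch) and, for non-degeneracy, invokes \cite[Ch.~II, no.~2.3, Prop.~4]{Dix69} to pass from strong convergence $\pi(E_n)\to\1$ to a subsequence with $\pi_x(E_{n_k})\to\1$ strongly for almost every $x$, which is exactly the $L^2$-to-pointwise/subsequence mechanism you spell out. Your diagonalization over a countable total family of vector fields is the standard proof of that proposition, so there is no substantive difference.
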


\begin{proof} The first part is \cite[Lemma~8.3.1]{Dix64} (see also \cite{Ke78}). 
Suppose that $\pi$ is non-degenerate and let $(E_n)_{n \in \N}$ 
be an approximate identity on $\cA$.
Then $\pi(E_n) \to \1$ holds strongly in $\cH$ and 
\cite[Ch.\ II, no.~2.3, Prop.\ 4]{Dix69} implies the existence 
of a subsequence $(n_k)_{k \in \N}$ such that $\pi_x(E_{n_k}) \to \1$ holds strongly for almost 
every $x \in X$. For any such $x$, the representation $\pi_x$ is non-degenerate.   
\smartqed\qed\end{proof}

\begin{theorem} \mlabel{thm:dirlim} 
Let $G = \indlim G_n$ be a direct limit of separable locally compact 
groups with closed embeddings $G_n \into G_{n+1}$ and $(\pi, \cH)$ be a continuous separable unitary representation. 
For any maximal abelian subalgebra $\cA \subeq \pi(G)'$, we then obtain a 
direct integral decomposition $\pi \cong \int_X^\oplus \pi_x\, d\mu(x)$ into continuous 
unitary representations of $G$. 
\end{theorem}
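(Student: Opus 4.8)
The plan is to combine the $C^*$-algebraic reformulation of unitary representations of direct limit groups (Remark~\ref{rem:5.16}) with the direct integral disintegration theorem for separable $C^*$-algebras (Theorem~\ref{thm:4.1}). First I would record the setup: since $\cH$ is separable, the maximal abelian von Neumann subalgebra $\cA \subeq \pi(G)'$ is generated by a single self-adjoint operator, hence there is a standard Borel space $X$, a $\sigma$-finite measure $\mu$, and a measurable field of Hilbert spaces $(\cH_x)_{x \in X}$ together with an isomorphism $\cH \cong \int_X^\oplus \cH_x\, d\mu(x)$ under which $\cA$ becomes the algebra of diagonal operators; consequently the commutant $\cA' = \pi(G)''$ consists of decomposable operators and $\cA = \cA'' $ is exactly the diagonalizable ones. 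Thus each bounded operator in $\pi(G)''$, and in particular each $\pi(g)$ and each $\pi_n(f)$ for $f \in L^1(G_n)$, is decomposable.

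Next I would run the disintegration for the countably many $C^*$-algebras $C^*(G_n)$ simultaneously. For each fixed $n$, the representation $\pi_n \: C^*(G_n) \to B(\cH)$ takes values in the decomposable operators, so Theorem~\ref{thm:4.1} yields a measurable field of representations $(\pi_{n,x}, \cH_x)$ with $\pi_n \cong \int_X^\oplus \pi_{n,x}\, d\mu(x)$, and since $\pi_n$ is non-degenerate and $\cH$ is separable, almost every $\pi_{n,x}$ is non-degenerate. Intersecting the countably many conull sets, I obtain a single conull set $X_0 \subeq X$ on which all the $\pi_{n,x}$ are simultaneously defined and non-degenerate. The key compatibility point is that the coherence relations $\pi_n = \alpha_n^* \tilde\pi_{n+1}$ from Remark~\ref{rem:5.16}, being an identity between decomposable operators (after passing to multiplier algebras and using that the canonical extension to multipliers is compatible with disintegration), hold $\mu$-almost everywhere in the fiberwise sense; shrinking $X_0$ further to a conull set I arrange that for every $x \in X_0$ the sequence $(\pi_{n,x})_n$ is a coherent sequence of non-degenerate representations of the algebras $L^1(G_n)$. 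By the converse direction of Remark~\ref{rem:5.16}, each such coherent sequence defines a continuous unitary representation $\pi_x$ of $G = \indlim G_n$ on $\cH_x$, and by construction $\pi \cong \int_{X_0}^\oplus \pi_x\, d\mu(x)$ as representations of $G$.

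The main obstacle I anticipate is the simultaneous-disintegration / measurability bookkeeping: one must check that the field $x \mapsto \pi_x$ of group representations is itself measurable (so that $\int^\oplus \pi_x\, d\mu$ genuinely makes sense as a representation of $G$), and that the almost-everywhere coherence can be upgraded to everywhere-coherence on a single conull set uniformly in $n$. This is handled by choosing a countable dense $*$-subalgebra (or a countable generating set) in each separable $C^*$-algebra $C^*(G_n)$, verifying the coherence identities only on this countable set of elements and a countable set of pairs $(f,h)$ as in the displayed computation of Remark~\ref{rem:5.16}, and then invoking separability to pass from the countable data to the full algebras; the continuity of each $\pi_x$ is automatic once it arises from non-degenerate $L^1(G_n)$-representations via $\rho_{n,x}(g) = \tilde\pi_{n,x}(\eta_{G_n}(g))$. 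I would also note explicitly that the decomposability of $\pi(g)$ for $g \in G$ follows because $\pi(g) \in \pi(G)'' \subeq \cA' = \cD$, which is what allows the whole scheme to close up.
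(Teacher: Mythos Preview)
Your approach is essentially correct and parallel in spirit to the paper's, but the paper takes a cleaner route that sidesteps the coherence bookkeeping you flag as the main obstacle. Rather than disintegrating each $C^*(G_n)$ separately and then verifying almost-everywhere compatibility of the resulting fields $(\pi_{n,x})_n$, the paper introduces a \emph{single} separable $C^*$-algebra $\cB \subeq B(\cH)$, namely the $C^*$-algebra generated by the images $\cB_n$ of all the integrated representations $L^1(G_n) \to B(\cH)$, and applies Theorem~\ref{thm:4.1} once to $\cB$. The fiber representations $(\pi_x,\cH_x)$ of $\cB$ then restrict to the subalgebras $\cB_n$ automatically coherently, because they are restrictions of one and the same representation. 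One still needs non-degeneracy of each $\pi_x\res_{\cB_n}$, obtained (as in your argument) by applying the second clause of Theorem~\ref{thm:4.1} to each $\cB_n$ and intersecting the countably many conull sets. The payoff of the paper's packaging is that the delicate step you anticipate---showing that the canonical extension to multiplier algebras commutes with disintegration---never has to be carried out: coherence is a structural consequence of working inside a single ambient $C^*$-algebra, after which Remark~\ref{rem:5.16} (equivalently the argument of \cite{SV75}) assembles the continuous unitary $G$-representation on each fiber. Your route works, but buys its generality at the cost of exactly the measurability and multiplier-compatibility checks you identified; the paper's device of forming $\cB$ trades that for a one-line observation.
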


\begin{proof} According to the classification of commutative $W^*$-algebras, we 
have $\cA \cong L^\infty(X,\mu)$ for a localizable measure space 
$(X,\fS,\mu)$ (\cite[Prop.~1.18.1]{Sa71}). We therefore obtain a 
direct integral decomposition of the corresponding Hilbert space $\cH$. 
To obtain a corresponding direct integral decomposition of the representation 
of $G$, we consider the $C^*$-algebra $\cB$ generated by 
the subalgebras $\cB_n$ generated by the image of the integrated representations 
$L^1(G_n) \to B(\cH)$. Then each $\cB_n$ is separable and therefore $\cB$ is also separable. 
Hence Theorem~\ref{thm:4.1} leads to non-degenerate representations 
$(\pi_x, \cH_x)$ of $\cB$ whose restriction to every $\cB_n$ is non-degenerate. 

In \cite{SV75}, the $\ell^1$-direct sum 
$\cL := \oplus_{n \in \N}^1 L^1(G_n)$ is used as a replacement for the group algebra. 
From the representation $\pi \: \cL \to \cB$ we obtain a representation 
$\rho_x$ of this Banach-$*$-algebra whose restrictions to the subalgebras $L^1(G_n)$ 
are non-degenerate. Now the argument in \cite[p.~60]{SV75} 
(see also Remark~\ref{rem:5.16} above) implies that the corresponding continuous 
unitary representations of the subgroups $G_n$ combine to a continuous 
unitary representation $(\rho_x, \cH_x)$ of $G$. 
\smartqed\qed\end{proof}

\begin{remark} Let $(\pi, \cH)$ be a continuous unitary representation 
of the direct limit $G= \indlim G_n$ of locally compact groups. 
Let $\cA_n := \pi_n(C^*(G_n))$ and write 
$\cA := \la \cA_n \: n \in \N\ra_{C^*}$ for the $C^*$-algebra 
generated by the $\cA_n$. Then $\cA'' = \pi(G)''$ follows immediately 
from $\cA_n'' = \pi_n(G_n)''$ for each $n$. 

From the the non-degeneracy of the multiplier action of 
$C^*(G_n)$ on $C^*(G_{n+1})$ it follows that 
\[ C^*(G_n) C^*(G_{n+1}) = C^*(G_{n+1}),\] 
which leads to 
\[ \cA_n \cA_{n+1}  = \cA_{n+1}.\] 
We have a decreasing sequence of closed-$*$-ideals 
\[ \cI_n := \oline{\sum_{k \geq n} \cA_k} \subeq \cA\]  
such that $G_n$ acts continuously by multipliers on $\cI_n$. 
A representation of $\cI_n$ is non-degenerate if and only if 
its restriction to $\cA_n$ is non-degenerate because 
$\cA_n \cI_n = \cI_n$. 

If a representation $(\rho,\cK)$ of $\cA$ is non-degenerate on 
all these ideals, then it is non-degenerate on every $\cA_n$, 
hence defines a continuous unitary representation of~$G$. 
\end{remark}

\begin{remark} Theorem~\ref{thm:dirlim} implies in particular 
the validity of the disintegration arguments 
in \cite[Thm.~3.6]{Ol78} and 
\cite[Prop.~2,4]{Pi90}. In \cite[Lemma~2.6]{Ol84} one also finds a very 
brief argument concerning the disintegration of ``holomorphic'' 
representations, namely that all the constituents are again 
``holomorphic''. We think that this is not obvious and requires additional 
arguments. 
\end{remark}

\end{document}